\newtheorem{definition}{Definition}
\newtheorem{theorem}{Theorem}
\newtheorem{lemma}{Lemma}
\newtheorem{proposition}{Proposition}
\newtheorem{corollary}{Corollary}
\newtheorem{remark}{Remark}
\newcommand{\Poincare}{Poincar\'e}
\DeclareMathOperator{\Shrink}{Shrink}
\DeclareMathOperator{\Div}{div}
\renewcommand{\div}{\Div}
\newcommand{\RR}{\mathbb{R}}
\newcommand{\CC}{\mathbb{C}}
\newcommand{\ZZ}{\mathbb{Z}}
\DeclareMathOperator{\loc}{loc}
\DeclareMathOperator{\dom}{dom}
\DeclareMathOperator{\Ran}{ran}
\DeclareMathOperator{\TV}{TV}
\DeclareMathOperator{\BV}{BV}
\DeclareMathOperator{\BD}{BD}
\DeclareMathOperator{\BGV}{BGV}
\DeclareMathOperator{\TGV}{TGV}
\DeclareMathOperator{\ICTGV}{ICTGV}
\DeclareMathOperator{\osci}{osci}
\DeclareMathOperator{\dist}{dist}
\DeclareMathOperator{\linspan}{span}
\DeclareMathOperator{\range}{ran}
\DeclareMathOperator*{\argmin}{arg\,min}
\newcommand{\infconv}{\hspace{0.08333em}\Box\hspace{0.08333em}}
\newcommand{\inprod}{\cdot}
\newcommand{\tensor}{\otimes}
\newcommand{\symgrad}{\mathcal{E}}
\newcommand{\grad}{\nabla}
\newcommand{\set}[2]{\{#1:#2\}}
\newcommand{\sett}[1]{\{#1\}}
\newcommand{\radon}{\mathcal{M}}
\newcommand{\expE}{\mathrm{e}}
\newcommand{\im}{\mathrm{i}}
\newcommand{\conj}[1]{\overline{#1}}
\newcommand{\seq}[1]{\{#1\}}
\newcommand{\open}[2]{{]{#1,#2}[}}
\newcommand{\leftopen}[2]{{]{#1,#2}]}}
\newcommand{\abs}[2][]{|{#2}|_{#1}}
\newcommand{\norm}[2][]{\|{#2}\|_{#1}}
\newcommand{\scp}[3][]{\langle{#2},{#3}\rangle_{#1}}
\newcommand{\dd}[1]{\,\mathrm{d}{#1}}
\newcommand{\bdry}{\partial}
\newcommand{\embeds}{\hookrightarrow}
\newcommand{\tabincell}[2]{\begin{tabular}{@{}#1@{}}#2\end{tabular}}
\begin{document}

{ \renewcommand{\addcontentsline}[3]{}%
	
\title{Infimal convolution of oscillation total generalized variation
  for the recovery of images with structured texture} \author{Yiming
  Gao%
  \thanks{Department of Mathematics, Nanjing University of Science and
    Technology, 210094 Nanjing, Jiangsu, China. Email:
    \texttt{gao\_yiming@hotmail.com}.} \and Kristian Bredies%
  \thanks{Institute of Mathematics and Scientific Computing,
    University of Graz, Heinrichstra\ss{}e 36, A-8010 Graz,
    Austria. Email: \texttt{kristian.bredies@uni-graz.at}. The
    Institute for Mathematics and Scientific Computing is a member of
    NAWI Graz (\texttt{http://www.nawigraz.at/}).}  } \date{\today}
\maketitle

\paragraph{Abstract:}
We propose a new type of regularization functional for images called
oscillation total generalized variation (TGV) which can represent
structured textures with oscillatory character in a specified
direction and scale. The infimal convolution of oscillation TGV with
respect to several directions and scales is then used to model images
with structured oscillatory texture. Such functionals constitute a
regularizer with good texture preservation properties and can flexibly
be incorporated into many imaging problems. We give a detailed
theoretical analysis of the infimal-convolution-type model with
oscillation TGV in function spaces. Furthermore, we consider
appropriate discretizations of these functionals and introduce a
first-order primal-dual algorithm for solving general variational
imaging problems associated with this regularizer. Finally, numerical
experiments are presented which show that our proposed models can
recover textures well and are competitive in comparison to existing
state-of-the-art methods.

\paragraph{Mathematics subject classification:}
94A08, %
68U10, %
26A45, %
90C90. %

\paragraph{Key words:}
Oscillation total generalized variation, infimal-convolution-type
regularization, texture modeling, cartoon/texture decomposition, image
denoising, image inpainting, undersampled magnetic resonance imaging.

\section{Introduction}
\label{sec:introduction}

After the introduction of total variation (TV) to image processing
\cite{ROF}, variational models were developed for a wide range of
applications in imaging during the last decades.
In that context, variational models that base on image decomposition
have also become increasingly popular. The concept of these models is
to decompose an image into two or more components which inherit
distinguishing characteristics.  The classical TV (ROF) model for
image denoising can, for instance, be regarded as a decomposition
model:
\begin{equation}
\min_{\substack{u\in \BV(\Omega), \ v \in L^2(\Omega)\\ u + v = f}}
\ \frac{\lambda}2 \norm{v}^2 + J(u),
\end{equation}
where $f$ is the observed noisy image, $J(u)=\int_\Omega |Du|$ is the
total variation, i.e., the Radon norm of the distributional
derivative, and $\|\cdot\|$ represents the $L^2$ norm, both associated
with a bounded domain $\Omega \subset
\RR^d$.  %
In such an approach, we decompose the image to a noise-free image
component $u$ belonging to $\BV(\Omega)$ and a noise or small-scale
texture component in $L^2(\Omega)$. In \cite{meyer}, Y.~Meyer pointed
out some limitations of the above model and introduced a new space
$G(\Omega)=W^{-1,\infty}(\Omega)$, which is larger than $L^2(\Omega)$,
to model %
oscillating patterns that are typical for structured texture:
\begin{equation}
\label{eq:tv_g_norm}
\min_{\substack{u \in \BV(\Omega), \ v \in G(\Omega)\\ u+v = f}} 
\lambda \norm[G]{v} + J(u),
\end{equation}
where $G(\Omega)$ denotes the space
$G(\Omega)=\{v=\div g:g=(g_1,\ldots,g_d)\in L^\infty(\Omega)^d\}$ with
the norm
$\|v\|_G=\inf\ \{\|g\|_\infty:v=\div g,g=(g_1,\ldots,g_d)\in
L^\infty(\Omega)^d\}$
and again, $J(u) = \int_\Omega \abs{Du}$.  In this model, $u$
represents a piecewise constant function, consisting of homogeneous
regions with sharp boundaries and is usually called cartoon
component. The other part $v$ in $G(\Omega)$ contains oscillating
patterns, like textures and noise. This model can be considered as the
original cartoon-texture decomposition model. However, the $G$-space
may be difficult to handle in implementations. Vese and Osher
\cite{vesedecom} approximated the $G$-space by a Sobolev space of
negative differentiability order $W^{-1,p}(\Omega)$, which can, in
practice, more easily be implemented by partial differential
equations. Later, Aujol et al.~\cite{aujoldecom} made a
modification to the $G$-norm term in~\eqref{eq:tv_g_norm} which is
replaced by constraining $v$ to the set
$G_\mu(\Omega)=\{v\in G(\Omega):\|v\|_G\leq\mu\}$. As a consequence,
the problem can be solved alternatingly by Chambolle's projection
method \cite{Chambolle} with respect to the two variables $u$ and $v$.

Certainly, the aim of above models is to find an appropriate norm to
describe textures. However, in general, the above norms can represent
all kinds of oscillatory parts. As a consequence, since noise can be
regarded as small-scale oscillating texture, these models do not deal
well with noise. In order to tackle this disadvantage, in
\cite{lowrank}, Schaeffer and Osher incorporated robust principal
component analysis (PCA) \cite{RPCA} into cartoon-texture
decomposition models based on the patch method. For the texture part,
the authors suggest to decompose the image into small, non-overlapping
patches, written as vectors. The collection of the patch vectors is
assumed to be (highly) linearly dependent and thus to have low
rank. This suggests to minimize the nuclear norm of the patch-vector
matrix. However, the choice of the block-size of the patches for this
model is challenging and also, the numerical optimization procedures
for the associated functionals have to rely on computationally
expensive matrix-decomposition methods such as singular-value
thresholding, for instance.  Alternatively, rather than considering
the cartoon and texture components separately, another efficient
texture preservation approach is using non-local methods which is also
based on patches. For more details, we refer readers to
\cite{nlm1,nlm2,nltv1,nltv2}. Furthermore, in order to enhance and
reconstruct line structures well, Holler and Kunisch \cite{ICTVMartin}
proposed an infimal convolution of TGV-type~(ICTGV) functionals which
can capture some directional line textures. Disadvantages are, of
course, the bias of this regularization approach towards certain
directions and the fact that we cannot expect to recover texture that
is not composed of different line structures. In \cite{DTGV}, Kongskov
and Dong proposed a new directional total generalized variation (DTGV)
functional to capture the directional information of an image, which
can be considered as a special case of ICTGV.

Another approach to decompose an image is to compute sparse
representations based on dictionaries. %
The basic idea is to find two suitable dictionaries, one adapted to
represent textures, and the other to represent the cartoon part. In
the papers \cite{starck1,starck2}, Elad et al.~combined the
basis pursuit denoising (BPDN) algorithm and the TV-regularization
scheme to the following decomposition model:
\begin{equation}
\min_{\alpha_{\mathrm{t}},\alpha_{\mathrm{n}}} \|\alpha_{\mathrm{t}}\|_1+\|\alpha_{\mathrm{n}}\|_1+\frac{\lambda}{2}\|f-T_{\mathrm{t}}\alpha_{\mathrm{t}}-T_{\mathrm{n}}\alpha_{\mathrm{n}}\|^2+\gamma J(T_{\mathrm{n}}\alpha_{\mathrm{n}}),
\end{equation}
where $T_{\mathrm{t}},T_{\mathrm{n}}$ are dictionary synthesis
operators for the texture and cartoon parts
$\alpha_{\mathrm{t}},\alpha_{\mathrm{n}}$, respectively, and $J$ is again the
total variation. Penalizing with TV forces the cartoon content
$T_{\mathrm{n}}\alpha_{\mathrm{n}}$ to have a sparser gradient and to
be closer to a piecewise constant image. Here, the choice of the
dictionary synthesis operators $T_{\mathrm{t}},T_{\mathrm{n}}$ is the
main challenge for this approach. %
The best choice for one image depends on experience and may be not
suitable for another image with different structures. The authors
recommended that for the texture content, one may use local discrete
cosine transform (DCT) or the Gabor transform, and for the cartoon
content one may use curvelet, ridgelets, contourlets and so on.
This approach is reported to perform well for the restoration of
texture images \cite{starck1,starck2}.  Inspired by this, Cai, Osher
and Shen \cite{framelet} constructed a cartoon-texture model with
tight framelets \cite{frameletinp} and local DCT which can easily be
realized algorithmically by split Bregman iterations. %

Recently, filtering approaches have been widely used in image
decomposition. Gilboa et al.~proposed a generalization of the
structure-texture decomposition method based on TV scale space which
is also known as TV flow in
\cite{SpectralDecom1,SpectralDecom2,Horesh}. The aim is to construct a
TV spectral framework to analyze dominant features in the transform
domain, filter in that domain, and then perform an inverse transform
back to the spatial domain to obtain the filtered response.  In
\cite{SpectralDecom3}, Burger et al.~extended this notion to general
one-homogeneous functionals, not only TV regularization. Furthermore,
Buades et al.~proposed a non-linear filter that decomposes the image
into a geometric and an oscillatory part in \cite{Buades1,Buades2}, in
which a non-linear low-high frequency decomposition is computed by
using isotropic filters. However, the cartoon parts produced by these
filters are not well-structured and textures also contain other
oscillatory components. Furthermore, these methods are not variational
models and can not easily be transferred to other image processing
problems such as solving inverse problems, for
instance. %

In this paper, we propose a new regularization which can represent
structured texture that consists of different directions and frequencies.
The regularization is called oscillation total generalized variation (oscillation TGV) and defined as follows:
\begin{equation}\label{TTGV}
  \TGV^{\osci}_{\alpha,\beta,\mathbf{c}}(u) =\min_{w\in \text{BD}(\Omega)} \alpha\|\grad u-w\|_\mathcal{M}+\beta\|\mathcal{E}w+\mathbf{c}u\|_\mathcal{M}, \quad\text{for $u\in L^1(\Omega)$},
\end{equation}
where $\alpha,\beta>0$, $\BD(\Omega)$ denotes the space of vector
fields of bounded deformation, the operator $\grad$ denotes the weak
derivative, the operator $\mathcal{E}$ is the weak symmetrized
derivative $\mathcal{E}w=\frac{1}{2}(\nabla w+\nabla w^T)$ and
$\|\cdot\|_\mathcal{M}$ is the Radon norm. 
Moreover,
$\mathbf{c} \in \RR^{d \times d}$ and $\mathbf{c}u$ corresponds to the
matrix field given by
$(\mathbf{c} u)_{ij}=c_{ij}u$ %
for $i,j=1,\ldots,d$. In~\eqref{TTGV}, we set
$\mathbf{c} = \omega \tensor \omega$ for some $\omega \in \RR^d$,
i.e., $c_{ij} = \omega_i\omega_j$. 
For a precise description
of these notions, we ask for the reader's patience until
Section~\ref{sec:osci_tgv}.
This regularization functional reduces to the
original second-order TGV functional when $\mathbf{c}=0$, i.e.,
\begin{equation}
  \label{eq:TGV}
  \TGV_{\alpha,\beta}^2(u) = \min_{w \in \BD(\Omega)} \alpha \norm[\radon]{\grad u - w} + \beta \norm[\radon]{\symgrad w},
\end{equation}
which has been widely-used in imaging applications, such as, for
instance, denoising, (dynamic) MRI and QSM reconstruction, diffusion
tensor image restoration, JPEG and MPEG decompression, zooming,
optical flow estimation
\cite{TGVdecom,TGVdecom2,TGVMPEG,TGVinv2,TGV,TGVinv1,ICTVMartin,TGVMR,MRPET,TGVqsm,TGVoptical,DynamicMR,TGVTensor},
and many more.
Furthermore, we consider $m$-fold infimal convolution of oscillation TGV:
\begin{align}
  \notag
  \ICTGV^{\osci}_{\vec{\alpha},\vec{\beta},\vec{\mathbf{c}}}(u) 
  &= (\TGV^{\osci}_{\alpha_1,\beta_1,\mathbf{c}_1}\infconv\ldots\infconv \TGV^{\osci}_{\alpha_m,\beta_m,\mathbf{c}_m})(u) \\
  \label{ICTGV}
  &= \inf_{u_1 + \ldots + u_m = u} \ \sum_{i=1}^m 
    \TGV^{\osci}_{\alpha_i,\beta_i, \mathbf{c}_i}(u_i)
\end{align}
where $\vec{\alpha}=(\alpha_1\ldots,\alpha_m)$,
$\vec{\beta}=(\beta_1,\ldots,\beta_m)$ and
$\vec{\mathbf{c}}=(\mathbf{c}_1,\ldots,\mathbf{c}_m)$. In
applications, $\mathbf{c}_1=0$ such that $u_1$ represents the cartoon
part interpreted as piecewise smooth function, and $u_2,\ldots,u_m$
are the texture components with the directions depending on the values
of $\mathbf{c}_2,\ldots,\mathbf{c}_m$. We give a detailed theoretical
analysis in an infinite-dimensional function space setting and apply
regularization with
$\ICTGV^{\osci}_{\vec{\alpha},\vec{\beta},\vec{\mathbf{c}}}$ to
several image processing problems.

The paper is organized as follows. In Section~\ref{sec:osci_tgv}, we
start with the introduction of the oscillation TGV functional which,
on the one hand, measures smoothness but possesses, on the other hand,
a kernel consisting of structured oscillations. Then, we give a
detailed analysis of it and show some main properties. In
Section~\ref{seq:inf_conv_osci_tgv}, we focus on
$\ICTGV^{\osci}_{\vec{\alpha},\vec{\beta},\vec{\mathbf{c}}}$. First,
we establish a theoretic analysis for the general functionals of
infimal-convolution type and apply it to infimal convolutions of
oscillation TGV.  As a result, we obtain well-posedness for Tikhonov
regularization of linear inverse problems with
$\ICTGV^{\osci}$-penalty.  The discretization and a first-order
primal-dual optimization algorithm for solving Tikhonov-regularized
linear inverse problems with infimal-convolution of oscillation TGV
regularization are presented in Section~\ref{sec:discretization}.  In
particular, we show how the differential operators in $\TGV^{\osci}$
have to be discretized such that the discrete kernel coincides with
discrete structured oscillations. In Section~\ref{sec:applications},
we illustrate our approach for several imaging problems such as
denoising, inpainting and MRI reconstruction,
and demonstrate the efficiency of our new regularization compared to a
variety of methods, including state-of-the-art methods. Finally,
conclusions are drawn in Section~\ref{sec:conclusions}.

\section{Oscillation TGV and basic properties}
\label{sec:osci_tgv}

This section is devoted to the introduction of oscillation total
generalized variation ($\TGV^{\osci}$) and the derivation of some
immediate properties.  Throughout this paper, we will focus on
second-order TGV and its generalizations although further
generalization to higher orders might also be possible. 

Let us first introduce the required spaces and notions.
For simplicity, we assume for the rest of the paper that
$\Omega\subset \RR^d$ is a bounded Lipschitz domain where $d \geq 1$
is the image dimension.  If $\mu$ is an $X$-valued distribution on
$\Omega$ where $X \in \sett{\RR, \RR^d, S^{d \times d}}$ and
$S^{d \times d}$ denotes the set of symmetric $d \times d$ matrices
equipped with the Frobenius matrix inner product and norm, then the
Radon norm of $\mu$ is given as
\[
\norm[\radon]{\mu} = \sup \ \set{\scp{\mu}{\phi}}{\phi \in C_c^{\infty}(\Omega,X),
	\ \norm[\infty]{\phi} \leq 1}
\]
where $C_c^\infty(\Omega,X)$ denotes the space of $X$-valued test
functions on $\Omega$, i.e., the set of arbitrarily smooth
$\phi: \Omega \to X$ with compact support, and $\scp{\cdot}{\cdot}$
denotes the duality pairing of distributions and test functions. Here,
the supremum is $\infty$ whenever the set is unbounded from above. In
case $\norm[\radon]{\mu} < \infty$, the distribution $\mu$ can
continuously be extended, by density, to a Radon measure in
$\radon(\Omega,X) = C_0(\Omega,X)^*$ in which case the Radon norm is
the dual norm and we can identify $\mu$ with its corresponding
measure. Note that in particular, the density of
$\set{\phi \in C_c^\infty(\Omega,X)}{\norm[\infty]{\phi} \leq 1}$ in
the closed unit ball of $C_0(\Omega,X)$ can easily be established
using smooth cut-off functions and the fact that for non-negative
mollifiers $\rho$ we have
$\norm[\infty]{\rho \ast \phi} \leq \norm[\infty]{\phi}$.

A $\RR$-valued distribution $u$ always admits a $\RR^d$-valued
distribution as weak derivative which we denote by $\nabla u$ and
which is given by $\scp{\nabla u}{\phi} = \scp{u}{-\div \phi}$ for
each $\phi \in C_c^\infty(\Omega, \RR^d)$. Likewise, for an
$\RR^d$-valued distribution $w$, the weak symmetrized derivative is a
$S^{d \times d}$-valued distribution which is denoted by $\symgrad w$
and defined by $\scp{\symgrad w}{\phi} = \scp{w}{-\div \phi}$ for each
$\phi \in C_c^\infty(\Omega,S^{d \times d})$. With the usual (tacit)
identification of functions in $u \in L^1_{\loc}(\Omega, X)$ with
$X$-valued distributions via
$T_u(\phi) = \int_\Omega u \inprod \phi \dd{x}$ for
$\phi \in C_c^\infty(\Omega, X)$, the spaces of bounded variation
$\BV(\Omega)$ and bounded deformation $\BD(\Omega)$ are given by
\[
\begin{aligned}
\BV(\Omega) &= \set{u \in L^1(\Omega)}{\norm[\radon]{\grad u} <
	\infty}, & \norm[\BV]{u} &= \norm[1]{u} + \norm[\radon]{\grad u}, \\
\BD(\Omega) &= \set{w \in L^1(\Omega,\RR^d)}{\norm[\radon]{\symgrad
		w} < \infty}, & \norm[\BD]{w} &= \norm[1]{u} +
\norm[\radon]{\symgrad w}.
\end{aligned}
\]
These spaces are well-studied Banach spaces and we refer to
\cite{AmbrosioBD,BV1,BV2,TemamBD} for an overview and more details.

\subsection{Definition of oscillation TGV}

We start with a motivation for the definition as in~\eqref{TTGV}.
Observe that functions $u$ representing sinusoidal oscillations along
a fixed direction are given as follows:
\begin{equation}\label{oscifunc}
u(x)=C_1\cos(\omega \inprod x)+C_2\sin(\omega \inprod x)
\quad \text{for} \quad
x \in \Omega.
\end{equation}
Here, $C_1, C_2 \in \RR$ are constants, $\omega\in\RR^d$,
$\omega \neq 0$ is a direction vector and $\inprod$ represents the
standard inner product of vectors in
$\RR^d$. Figure~\ref{texture_conti} shows a visualization of these
oscillatory functions~\eqref{oscifunc} in $\Omega\subset\RR^2$ with
different values of $\omega_1$ and $\omega_2$. Our goal is to find a
differential equation that is, in some sense, compatible with the
second-order TGV functional according to \cite{TGV} and such that the
functions $u$ according~\eqref{oscifunc} are exactly the solutions for
all $C_1,C_2 \in \RR$. For this purpose, consider the equation
\begin{equation}\label{kerfunc}
\mathcal{E}\nabla u+\mathbf{c}u=0,
\end{equation}
where $\mathbf{c}=\omega\otimes\omega$ is the tensor (or outer)
product, i.e., $c_{ij}=\omega_i\omega_j$, $i,j=1,\ldots,d$.  Indeed,
this equation already possesses the desired properties.

\begin{figure}
	\center{} 
	\subfigure[]{%
		\begin{minipage}{0.15\linewidth}
			\includegraphics[width=\textwidth]{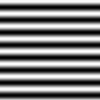}
			\\[-1em]
	\end{minipage}}
	\subfigure[]{%
		\begin{minipage}{0.15\linewidth}
			\includegraphics[width=\textwidth]{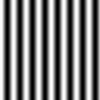}
			\\[-1em]
	\end{minipage}}
	\subfigure[]{%
		\begin{minipage}{0.15\linewidth}
			\includegraphics[width=\textwidth]{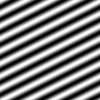}
			\\[-1em]
	\end{minipage}}
	\subfigure[]{%
		\begin{minipage}{0.15\linewidth}
			\includegraphics[width=\textwidth]{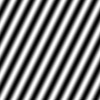}
			\\[-1em]
	\end{minipage}}
	\subfigure[]{%
		\begin{minipage}{0.15\linewidth}
			\includegraphics[width=\textwidth]{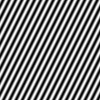}
			\\[-1em]
	\end{minipage}}
	\subfigure[]{%
		\begin{minipage}{0.15\linewidth}
			\includegraphics[width=\textwidth]{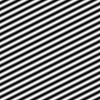}
			\\[-1em]
	\end{minipage}}
	\caption{Visualization of sinusoidal oscillations on $\Omega\subset \RR^2$ according to~\eqref{oscifunc}. (a) $\omega_1=0,\omega_2=0.5$; (b) $\omega_1=0.5,\omega_2=0$; (c) $\omega_1=0.25,\omega_2=0.5$; (d) $\omega_1=0.5,\omega_2=0.25$; (e) $\omega_1=1,\omega_2=0.5$; (f) $\omega_1=0.5,\omega_2=1$.}
	\label{texture_conti}
\end{figure}

\begin{lemma}\label{contiker}
  Let $\mathbf{c} = \omega \tensor \omega$ with $\omega \in \RR^d$,
  $\omega \neq 0$. Then, a function $u \in C^2(\Omega)$ solves~\eqref{kerfunc}
  if and only if $u$ has a representation~\eqref{oscifunc} for some
  $C_1,C_2 \in \RR$.
\end{lemma}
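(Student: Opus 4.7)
The plan is to reduce the PDE system \eqref{kerfunc} to simple constraints after an orthogonal change of coordinates aligned with $\omega$, and to recover the one-dimensional harmonic oscillator equation along that distinguished direction.

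First I would observe that for a $C^2$ function $u$ the Jacobian $\nabla u$ has symmetric derivative equal to the Hessian, so $\mathcal{E}\nabla u = (\partial_i\partial_j u)_{ij}$, and \eqref{kerfunc} becomes the pointwise system
\begin{equation*}
\partial_i \partial_j u = -\omega_i \omega_j \, u \qquad \text{for all } i,j=1,\ldots,d.
\end{equation*}
The ``if'' direction is then a straightforward computation: for $u(x) = C_1 \cos(\omega \cdot x) + C_2 \sin(\omega \cdot x)$, one obtains $\partial_i \partial_j u = -\omega_i\omega_j(C_1\cos(\omega\cdot x) + C_2\sin(\omega\cdot x)) = -\omega_i\omega_j u$, as required.

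For the ``only if'' direction, set $e = \omega/|\omega|$ and complete it to an orthonormal basis $\{e_1,\ldots,e_{d-1},e\}$. Writing $x = \sum_k y_k e_k + s e$ and $\tilde u(y,s) = u(x)$, the system above translates, by contracting with pairs of basis vectors, into the three pointwise identities
\begin{equation*}
\partial_{y_k}\partial_{y_l} \tilde u = 0, \qquad
\partial_{y_k}\partial_s \tilde u = 0, \qquad
\partial_s^2 \tilde u = -|\omega|^2 \tilde u,
\end{equation*}
for $k,l = 1,\ldots,d-1$, using that $e_k \cdot \omega = 0$ and $e \cdot \omega = |\omega|$. On any ball contained in $\Omega$, the first two identities force $\tilde u(y,s) = A(s) + \sum_{k=1}^{d-1} B_k y_k$ with $B_k \in \RR$ constant. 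Substituting into the third identity yields $A''(s) + |\omega|^2 A(s) + |\omega|^2 \sum_k B_k y_k \equiv 0$, and comparing coefficients in $y$ gives $B_k = 0$ and $A''+|\omega|^2 A = 0$. Hence locally $\tilde u(y,s) = \alpha\cos(|\omega|s) + \beta\sin(|\omega|s)$, i.e.\ $u(x) = \alpha\cos(\omega\cdot x) + \beta\sin(\omega\cdot x)$ (using $|\omega|s = \omega\cdot x$), up to an additive shift in the argument which, by the angle-addition formulas, can be absorbed into a different choice of $C_1,C_2$.

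The remaining step is to pass from this local representation to a global one on the connected Lipschitz domain $\Omega$. Since $\cos(\omega\cdot x)$ and $\sin(\omega\cdot x)$ are linearly independent on every open subset (as $\omega \neq 0$), the coefficients $C_1,C_2$ obtained on two overlapping balls must coincide, and a standard chaining argument along a path in $\Omega$ produces globally constant $C_1,C_2$ yielding \eqref{oscifunc} throughout $\Omega$. The only non-routine point is the coefficient-matching argument in the ball that rules out a nonzero linear part $\sum_k B_k y_k$; this is the decisive step, and it works because the $y$-variable truly ranges over an open set within the ball, forcing the offending coefficients to vanish.
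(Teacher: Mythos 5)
Your proof is correct, but it takes a genuinely different route from the paper's. The paper proves the ``only if'' direction by induction on the dimension $d$: it applies the induction hypothesis to the first $d$ coordinates to get $u(x)=C_1(x_{d+1})\sin(\omega'\cdot x')+C_2(x_{d+1})\cos(\omega'\cdot x')$, then uses the equations involving $x_{d+1}$ together with a full-rank evaluation matrix, a case distinction on whether $\omega_{d+1}=0$, coefficient comparison, and the angle-sum identities to pin down the constants. You instead exploit the rotational covariance of the system $\partial_i\partial_j u=-\omega_i\omega_j u$ (consistent with the paper's later Proposition~\ref{rotation}) to align $\omega$ with one axis, after which the system decouples into $\nabla_y^2\tilde u=0$, $\partial_{y_k}\partial_s\tilde u=0$ and the scalar oscillator $\partial_s^2\tilde u=-\abs{\omega}^2\tilde u$; the decisive coefficient-matching step that kills the linear part $\sum_k B_k y_k$ is sound because $y$ ranges over an open set and $\abs{\omega}\neq 0$. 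Your approach buys a shorter, conceptually cleaner argument with no induction and no case analysis, at the cost of a coordinate change; the paper's approach stays entirely in the original coordinates and is more elementary but requires the more intricate bookkeeping. Your explicit patching argument (local uniqueness of the coefficients via linear independence of $\cos(\omega\cdot x)$ and $\sin(\omega\cdot x)$ on open sets, then chaining over the connected domain) is actually \emph{more} careful than the paper's, which simply asserts that it suffices to work on a hypercube. One tiny remark: since $s=e\cdot x$, the representation $A(s)=\alpha\cos(\abs{\omega}s)+\beta\sin(\abs{\omega}s)$ already gives~\eqref{oscifunc} directly with $\abs{\omega}s=\omega\cdot x$; no additive shift in the argument needs to be absorbed.
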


\begin{proof}
  Note that it suffices to prove the statement locally, i.e., without
  loss of generality, $\Omega$ is an open hypercube in $\RR^d$ of the
  form
  $\Omega = \open{a_1}{b_1} \times \cdots \times \open{a_d}{b_d}$.

  Clearly,
  $(\symgrad \grad )_{ij} = \frac{\partial^2}{\partial x_i \partial
    x_j}$
  and
  $\frac{\partial^2}{\partial x_i \partial x_j} \sin(\omega \inprod x)
  = -\omega_i \omega_j \sin(\omega \inprod x)$
  as well as
  $\frac{\partial^2}{\partial x_i \partial x_j} \cos(\omega \inprod x)
  = -\omega_i \omega_j \cos(\omega \inprod x)$. Thus, if $u \in C^2(\Omega)$
  has the representation~\eqref{oscifunc}, then~\eqref{kerfunc} follows. 
  
  For the converse direction, we proceed inductively with respect to
  the dimension $d$. Starting with $d = 1$, equation~\eqref{kerfunc}
  reads as $\frac{\partial^2 u}{\partial x_1^2} u + \omega_1^2 u = 0$
  where $\omega_1 \neq 0$ which is an ordinary differential equation
  with solutions of the form~\eqref{oscifunc}. Next, fix $d \geq 1$
  and suppose that~\eqref{kerfunc} implies~\eqref{oscifunc} for some
  $C_1,C_2 \in \RR$ whenever $u \in C^2(\Omega)$ for
  $\Omega \subset \RR^d$ an open hypercube.  Let
  $\omega \in \RR^{d+1}$ such that $\omega \neq 0$. After a possible
  interchange of axes, we can assume, without loss of generality, that
  $(\omega_1,\ldots,\omega_d) \neq 0$. Let $\Omega \subset \RR^{d+1}$
  be an open hypercube and $u \in C^2(\Omega)$ satisfy~\eqref{kerfunc}
  (in dimension $d+1$). Then, in particular,
  $\frac{\partial^2 u}{\partial x_i \partial x_j} + \omega_i\omega_j u
  = 0$
  for $i,j= 1,\ldots,d$. Denoting by $x'$ the projection of a vector
  $x \in \RR^{d+1}$ onto the first $d$ components, i.e.,
  $x' = (x_1,\ldots,x_d)$, the induction hypothesis gives
  \[
  u(x) = C_1(x_{d+1}) \sin(\omega' \inprod x') + C_2(x_{d+1})
  \cos(\omega' \inprod x') \quad \text{for} \quad x \in \Omega.
  \]
  As $x \mapsto \omega' \inprod x'$
  is non-constant, we are able to choose $x, \bar{x} \in \Omega$ such that
  the matrix 
  \[
  \begin{bmatrix}
    \sin(\omega' \inprod x') & \cos(\omega' \inprod x') \\
    \sin(\omega' \inprod \bar{x}') & \cos(\omega' \inprod \bar{x}')
  \end{bmatrix}
  \]
  has full rank.  Using that
  $\frac{\partial^2 u}{\partial x_{d+1}^2} + \omega_{d+1}^2 u = 0$ in
  neighborhoods of $x$ and $\bar{x}$ consequently yields that there
  are constants $C_{1,1}, C_{1,2}, C_{2,1}, C_{2,2}$ such that
  \[
  C_k(x_{d+1}) =
  \begin{cases}
    C_{k,1} x_{d+1} + C_{k,2} & \text{if} \ \omega_{d+1} = 0, \\
    C_{k,1} \sin(\omega_{d+1}x_{d+1}) + C_{k,2}
    \cos(\omega_{d+1} x_{d+1}) & \text{otherwise},
  \end{cases}
  \quad 
  \]
  for $k=1,2$.  If $\omega_{d+1} = 0$, then
  $\frac{\partial^2 u}{\partial x_i \partial x_{d+1}} + \omega_i
  \omega_{d+1} u = 0$
  for $i \in \sett{1,\ldots,d}$ such that $\omega_i \neq 0$ gives that
  $C_{1,1} \cos(\omega' \inprod x') - C_{2,1} \sin(\omega' \inprod x') =
  0$
  for $x \in \Omega$. Consequently, $C_{1,1} = C_{2,1} = 0$ and the
  induction step is complete.

  In the case $\omega_{d+1} \neq 0$,
  $\frac{\partial^2 u}{\partial x_i \partial x_{d+1}} + \omega_i
  \omega_{d+1} u = 0$
  for $i \in \sett{1,\ldots,d}$ such that $\omega_i \neq 0$ implies
  \[
  \begin{aligned}
    u(x) &= \bigl( C_{1,2} \sin(\omega_{d+1} x_{d+1}) - C_{1,1}
    \cos(\omega_{d+1} x_{d+1}) \bigr) \cos(\omega' \inprod x') \\
    & \quad + \bigl( C_{2,1} \cos(\omega_{d+1} x_{d+1}) - C_{2,2}
    \sin(\omega_{d+1} x_{d+1}) \bigr) \sin(\omega' \inprod x')
  \end{aligned}
  \]
  for $x \in \Omega$. Comparing coefficients yields $C_{2,2} = -C_{1,1}$
  as well as $C_{2,1} = C_{1,2}$. Plugging this into the representation
  of $u$ gives, by virtue of the angle sum identities for sine and cosine,
  \[
  u(x) = C_{2,1} \sin(\omega' \inprod x' + \omega_{d+1} x_{d+1}) 
  + C_{2,2} \cos(\omega' \inprod x' + \omega_{d+1} x_{d+1}) =
  C_{2,1} \sin(\omega \inprod x) + C_{2,2} \cos(\omega \inprod x)
  \]
  for $x \in \Omega$, proving the induction step for
  $\omega_{d+1} \neq 0$.
\end{proof}

Based on this observation, we aim at designing a functional that
is suitable to measure piecewise oscillatory functions $u$ with direction
$\omega \neq 0$, i.e.,
\begin{equation*}
  u=\sum_{i=1}^{n}\chi_{\Omega_i}(x)u_i(x), \quad \text{with}\quad \chi_{\Omega_i}(x)=\begin{cases}
    1,&\text{if $x\in \Omega_i$},\\
    0,&\text{otherwise},
  \end{cases}
\end{equation*}
where $\Omega_1\cup\ldots\cup \Omega_n=\Omega$,
$\Omega_i\cap \Omega_j=\emptyset$, $\forall i\neq j$ and
$u_i(x)=C_{1,i}\cos(\omega \inprod x) + C_{2,i}\sin(\omega \inprod
x)$, for some $C_{1,i}, C_{2,i} \in\RR$, $i=1,\ldots,n$.

Let us now motivate the definition of oscillation TGV. It is easily
seen that the kernel of second-order TGV according to~\eqref{eq:TGV}
can be obtained by solving $\grad u - w = 0$ and $\symgrad w = 0$ for
$u \in \BV(\Omega), w \in \BD(\Omega)$. This leads to all $u$ for
which $\symgrad \grad u = 0$ holds in the distributional sense which
can be shown to be exactly the affine functions.  Based on this and
the fact that oscillatory functions are solving~\eqref{kerfunc}, we
see in turn that this class can be represented by $\grad u - w = 0$
and $\symgrad w + \mathbf{c}u = 0$ for
$u \in \BV(\Omega), w \in \BD(\Omega)$.  This leads to the following
generalization of second-order TGV we call oscillation TGV.

\begin{definition}
Let $\alpha, \beta > 0$ and $\mathbf{c} = \omega \tensor \omega$ where
$\omega \in \RR^d$. For $u \in L^1(\Omega)$, the oscillation TGV
functional is defined as follows:
\begin{equation}\label{TTGV2}
\TGV^{\osci}_{\alpha,\beta,\mathbf{c}}(u) =\min_{w\in \BD(\Omega)} \alpha\|\grad u-w\|_\mathcal{M}+\beta\|\mathcal{E}w+\mathbf{c}u\|_\mathcal{M}
\end{equation}
if $u \in \BV(\Omega)$ and
$\TGV^{\osci}_{\alpha,\beta,\mathbf{c}}(u) = \infty$ if
$u \in L^1(\Omega) \setminus \BV(\Omega)$.
\end{definition}

The usage of the minimum will be justified later.
Note that for $\omega = 0$, the definition reduces to the well-known
second-order TGV with weights $\alpha,\beta$, i.e., we have
$\TGV^{\osci}_{\alpha,\beta,\mathbf{c}}(u) = \TGV_{\alpha,\beta}^2(u)$
if $\omega = 0$.  One can easily see that oscillation TGV is proper
and convex; we focus on showing its lower semi-continuity in
$L^p$-spaces along with some basic properties in the following.

\subsection{Basic properties}
For the purpose of establishing lower semi-continuity, we need some
tools and notions from convex analysis, see, for instance,
\cite{Rockafellar,Combettes}, for an overview.  For a functional
$F: X\rightarrow \leftopen{-\infty}{\infty}$, the effective domain is
defined as $\dom F=\{x\in X:F(x)<\infty\}$, and $F$ is called proper if $\dom F\neq \emptyset$. The indicator
functional is defined as
\begin{equation*}
\mathcal{I}_{K}(x)=
\begin{cases}
0  &\text{if  $x\in K$} ,\\
+\infty &\text{otherwise}.\\
\end{cases}
\end{equation*}
As usual, we also denote by $\langle\cdot,\cdot\rangle$ the duality
product of $X^\ast$ and $X$ between the Banach spaces $X^\ast$ and
$X$.  Moreover, recall that the convex conjugate functional for a
proper $F: X \to \leftopen{-\infty}{\infty}$ is given by
$F^\ast:X^\ast\to \leftopen{-\infty}{\infty}$ with
$F^\ast(x^\ast)=\sup\limits_{x \in X} \ \langle x^\ast,x\rangle-F(x)$.
With these prerequisites, we give a predual formulation of oscillation
TGV which will then imply lower semi-continuity and in particular
justify the minimum in the definition~\eqref{TTGV2}.

\begin{proposition}\label{dualTTGV}
  Fix $\alpha, \beta > 0$ and $\mathbf{c} = \omega \tensor \omega$,
  $\omega \in \RR^d$. %
  For each $u\in L^1(\Omega)$, we have
  \begin{equation}\label{dualTTGV2}
    \TGV^{\osci}_{\alpha,\beta,\mathbf{c}}(u)=\sup\left\{\int_\Omega u(\div^2\phi+\mathbf{c}\inprod\phi) \dd{x}: \phi\in C_c^\infty(\Omega,S^{d\times d}),\|\phi\|_\infty\leq\beta,\|\div\phi\|_\infty\leq\alpha \right\},
  \end{equation}
  where 
  $\mathbf{c} \inprod \phi$ denotes the pointwise Frobenius inner
  product of $\mathbf{c}$ and the symmetric matrix field
  $\phi$. Moreover, the minimum in the definition of
  $\TGV^{\osci}_{\alpha,\beta,\mathbf{c}}(u)$ according
  to~\eqref{TTGV2} is obtained.
\end{proposition}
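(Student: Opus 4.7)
The strategy is to derive the predual formulation by a Fenchel--Rockafellar-type min--sup interchange, and then obtain attainment of the minimum in \eqref{TTGV2} by a direct-method argument in $\BD(\Omega)$. More precisely, the plan is to start from the dual characterization of the Radon norm, namely
\[
\norm[\radon]{\mu} = \sup\set{\scp{\mu}{\psi}}{\psi \in C_c^\infty(\Omega,X), \norm[\infty]{\psi} \leq 1},
\]
and apply it to the two terms in \eqref{TTGV2}. After pulling the supremum through the positive constants $\alpha,\beta$, this rewrites the infimum in \eqref{TTGV2} as
\[
\inf_{w \in \BD(\Omega)} \sup_{\substack{\psi \in C_c^\infty(\Omega,\RR^d),\,\norm[\infty]{\psi}\leq\alpha \\ \phi \in C_c^\infty(\Omega,S^{d\times d}),\,\norm[\infty]{\phi}\leq\beta}} \scp{\grad u - w}{\psi} + \scp{\symgrad w + \mathbf{c}u}{\phi}.
\]

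Next, I would use that $u \in L^1(\Omega)$, $w \in L^1(\Omega,\RR^d)$ and $\psi,\phi$ are smooth and compactly supported to carry out integration by parts: $\scp{\grad u}{\psi} = -\int_\Omega u \div \psi \dd{x}$, $\scp{\symgrad w}{\phi} = -\int_\Omega w \inprod \div \phi \dd{x}$, and $\scp{\mathbf{c}u}{\phi} = \int_\Omega u (\mathbf{c} \inprod \phi) \dd{x}$. The inner expression becomes
\[
\int_\Omega u\bigl(-\div\psi + \mathbf{c} \inprod \phi\bigr) \dd{x} - \int_\Omega w \inprod (\psi + \div\phi) \dd{x}.
\]
The term linear in $w$ suggests reversing the order of $\inf$ and $\sup$ via Fenchel--Rockafellar duality (the qualification condition is trivially met, e.g.\ at $w=0$). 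After the swap, minimizing over $w \in \BD(\Omega)$ (and noting that $C_c^\infty(\Omega,\RR^d) \subset \BD(\Omega)$ is enough to test) forces the constraint $\psi + \div \phi = 0$, so $\psi = -\div\phi$. Substituting and using $-\div\psi = \div^2\phi$, the admissibility conditions collapse to $\norm[\infty]{\phi} \leq \beta$ and $\norm[\infty]{\div\phi} \leq \alpha$, yielding exactly \eqref{dualTTGV2}.

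For attainment of the minimum in \eqref{TTGV2}, I would use the direct method. Any minimizing sequence $(w_n) \subset \BD(\Omega)$ satisfies, by the triangle inequality for the Radon norm together with $\norm[\radon]{w_n} = \norm[1]{w_n}$,
\[
\norm[1]{w_n} \leq \norm[\radon]{\grad u - w_n} + \norm[\radon]{\grad u}, \qquad \norm[\radon]{\symgrad w_n} \leq \norm[\radon]{\symgrad w_n + \mathbf{c}u} + \norm[\radon]{\mathbf{c}u},
\]
so $(w_n)$ is bounded in $\BD(\Omega)$. By the weak*-compactness of bounded sequences in $\BD(\Omega)$ (see \cite{TemamBD}), a subsequence converges weakly* to some $w^\ast \in \BD(\Omega)$, and weak*-lower semi-continuity of the Radon norm yields that $w^\ast$ realizes the infimum. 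This also allows replacing $\inf$ by $\min$ in \eqref{TTGV2}.

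The main obstacle is justifying the min--sup interchange cleanly in infinite dimensions: a naive Ky~Fan/Sion argument requires compactness of one of the variables, which is missing here, so one should phrase the exchange as a Fenchel--Rockafellar duality between the proper convex functional $w \mapsto \alpha \norm[\radon]{\grad u - w}$ on $L^1(\Omega,\RR^d)$ and $z \mapsto \beta \norm[\radon]{z + \mathbf{c}u}$ composed with the closed operator $\symgrad$, verify the qualification condition, and compute the two conjugates explicitly (they reduce to indicator functions of sup-norm balls plus linear terms in $u$). Once the duality formula is in place, strong duality also shows that the supremum in \eqref{dualTTGV2} is finite precisely when $u \in \BV(\Omega)$ and that the minimum in \eqref{TTGV2} equals it, consistent with the definition of $\TGV^{\osci}_{\alpha,\beta,\mathbf{c}}$.
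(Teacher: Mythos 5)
Your plan dualizes in the wrong direction, and this is a genuine gap rather than a presentational choice. To obtain \eqref{dualTTGV2} you propose Fenchel--Rockafellar duality with the $w$-problem as primal, i.e.\ $G(w) = \alpha\norm[\radon]{\grad u - w}$ and $F(z) = \beta\norm[\radon]{z + \mathbf{c}u}$ composed with $\symgrad$. The qualification condition is indeed unproblematic there (since $F$ is finite and Lipschitz on all of $\radon(\Omega,S^{d\times d})$), but the dual problem this produces is a supremum over $y^* \in \radon(\Omega,S^{d\times d})^* = C_0(\Omega,S^{d\times d})^{**}$ constrained to $\norm[{\radon^*}]{y^*} \leq \beta$. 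These dual variables are not continuous functions, and one cannot restrict the (attained) supremum to the subset $C_c^\infty(\Omega,S^{d\times d})$ without losing the equality: that subset is only weak*-dense in the ball, while the dual objective contains the conjugate $G^*(-\symgrad^* y^*)$, which is not weak*-continuous. The same defect afflicts the informal ``swap $\inf$ and $\sup$, then the linear-in-$w$ term forces $\psi = -\div\phi$'' step: the weak inequality $\inf\sup \geq \sup\inf$ gives ``$\geq$'' in \eqref{dualTTGV2} for free, but the reverse inequality is exactly what your interchange is supposed to deliver and is never actually justified. The paper resolves this by running the duality the other way: it takes the supremum in \eqref{dualTTGV2} as the \emph{primal} problem over the Banach spaces $X = C_0^2(\Omega,S^{d\times d})$ and $Y = C_0^1(\Omega,\RR^d)$ with $\Lambda = \div$ (after a preliminary density argument replacing $C_c^\infty$ by $C_0^2$, which your proposal also omits), verifies the Attouch--Brezis condition there, and reads off \eqref{TTGV2} as the attained dual problem, with $w$ ranging over $Y^*$ and then shown to reduce to $\BD(\Omega)$.

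Your direct-method argument for attainment of the minimum in \eqref{TTGV2} is essentially sound (boundedness of a minimizing sequence in $\BD(\Omega)$ when $u \in \BV(\Omega)$, weak* compactness, lower semi-continuity of the Radon norm), and it is a legitimate alternative to the paper's mechanism, where attainment comes for free because the $w$-problem sits on the \emph{dual} side of the Attouch--Brezis theorem. But it only patches the attainment claim; the identity \eqref{dualTTGV2} itself still requires the predual setup.
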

\begin{proof}
  Firstly observing that $C_c^\infty(\Omega,S^{d\times d})$ is dense
  in the Banach space $C_0^2(\Omega,S^{d\times d})$ with respect to
  the $C^2$ norm given by
  $\norm[C^2]{\phi} = \max\ (\norm[\infty]{\phi}, \norm[\infty]{\grad
    \phi}, \norm[\infty]{\grad^2 \phi})$,
  we can replace $C_c^\infty(\Omega,S^{d\times d})$
  in~\eqref{dualTTGV2} by $C_0^2(\Omega,S^{d\times d})$: Indeed, for
  $\phi \in C_0^2(\Omega,S^{d \times d})$ such that
  $\norm[\infty]{\phi} \leq \beta$ and
  $\norm[\infty]{\div \phi} \leq \alpha$ one may choose a sequence
  $\seq{\phi^n}$ in $C_c^2(\Omega,S^{d \times d})$ such that
  $\lim_{n \to \infty} \phi^n = \phi$ in
  $C_0^2(\Omega, S^{d \times d})$.  Convolving with smooth mollifiers,
  we can moreover ensure that, without loss of generality, each
  $\phi^n \in C_c^\infty(\Omega,S^{d \times d})$.  Now, letting
  $\lambda_n = \max \ (\norm[\infty]{\phi^n}/\beta, \norm[\infty]{\div
    \phi^n}/\alpha, 1)$
  we see by continuous embedding
  $C_0^2(\Omega, S^{d \times d}) \embeds C_0(\Omega,S^{d \times d})$
  as well as continuity of
  $\div: C_0^2(\Omega, S^{d \times d}) \to C_0(\Omega,\RR^d)$ that
  $\lim_{n \to \infty} \lambda_n = 1$. Thus,
  $\lim_{n \to \infty} \lambda_n^{-1} \phi^n = \phi$ and for each $n$,
  we have $\norm[\infty]{\lambda_n^{-1} \phi^n} \leq \beta$ as well as
  $\norm[\infty]{\div \lambda_n^{-1} \phi^n} \leq \alpha$. This proves
  the density.

  Choosing $X=C_0^2(\Omega,S^{d\times d})$,
  $Y=C_0^1(\Omega,\mathbb{R}^d)$ with norm
  $\norm[{C^1}]{\eta} = \max\ ( \norm[\infty]{\eta},
  \norm[\infty]{\grad \eta})$,
  $\Lambda=\div\in\mathcal{L}(X,Y)$, and defining
  \begin{equation*}
    \begin{aligned}
      F_1&: X\rightarrow \leftopen{-\infty}{\infty}, 
      & F_1(\phi) &=\mathcal{I}_{\{\|\cdot\|_\infty\leq\beta\}}(\phi)-\langle\mathbf{c}u,\phi \rangle,\\
      F_2&:  Y\rightarrow \leftopen{-\infty}{\infty},
      & F_2(\eta) & = \mathcal{I}_{\{\|\cdot\|_\infty\leq\alpha\}}(\eta)-\langle u,\div\eta\rangle,
    \end{aligned}
  \end{equation*}
  where $\scp{\inprod}{\inprod}$ is the $L^1$--$L^\infty$ duality
  pairing, we can equivalently write the supremum in~\eqref{dualTTGV2}
  as
  \begin{equation*}
    -\inf_{\phi\in X} F_1(\phi)+F_2(\Lambda\phi).
  \end{equation*}
  According to \cite{Attouch}, the Fenchel--Rockafellar duality
  formula is valid if
  \begin{equation*}
    Y=\bigcup_{\lambda\geq0}\lambda(\dom F_2-\Lambda \dom F_1).
  \end{equation*}
  This is indeed true since 
  for each $\eta\in Y$, we can write
  $\eta=\lambda(\lambda^{-1}\eta- \div 0)$ with
  $\|\lambda^{-1}\eta\|_\infty\leq\alpha$ and $0\in \dom F_1$. Thus,
  there is no duality gap between the primal and the dual problems and
  the dual problem admits a solution, i.e.,
  \begin{equation*}
    \Bigl( \inf_{\phi\in X}  F_1(\phi)+F_2(\Lambda\phi) \Bigr)
    + \Bigl( \min_{w\in Y^\ast} F_1^\ast(-\Lambda^\ast w)+F_2^\ast(w) \Bigr) =0.
  \end{equation*}
  (In the case the primal functional is unbounded from below, this
  equation has to be understood in the sense that the dual functional
  is constant $\infty$.)  Now, we calculate the convex conjugates on
  the right-hand side.
  For, a distribution $w \in Y^\ast = C_0^1(\Omega,\RR^d)^\ast$ we
  have that $-\Lambda^* w = -\div^* w = \symgrad w$ in the
  distributional sense: Indeed, testing $-\div^*w$ with
  $\phi \in C_c^\infty(\Omega,S^{d \times d})$ gives
  $\scp{-\div^* w}{\phi} = \scp{w}{-\div \phi} = \scp{\symgrad
    w}{\phi}$
  for the duality pairing $\scp{\cdot}{\cdot}$ of distributions and
  test functions. Thus,
  \begin{equation*}
    F_1^\ast(-\Lambda^\ast w)=\sup_{\scriptstyle \phi\in X, \atop \scriptstyle \|\phi\|_\infty\leq\beta}
    \langle w, - \div \phi\rangle+ \langle\mathbf{c}u,\phi \rangle
    = \sup_{\substack{\phi \in C_c^\infty(\Omega,S^{d\times d}), \\ \norm[\infty]{\phi} \leq \beta}}
    \langle\mathcal{E} w,\phi\rangle+ \langle\mathbf{c}u,\phi \rangle
    =
    \beta\|\mathcal{E}w+\mathbf{c}u\|_\mathcal{M}
  \end{equation*}
  where, by definition of the Radon norm for distributions,
  $\norm[\radon]{\symgrad w + \mathbf{c}u} = \infty$ if 
  $\symgrad w + \mathbf{c}u$ is not in
  $\radon(\Omega,S^{d \times d})$.  
  Note that for the above identity, we used that
  $\set{\phi \in C_c^\infty(\Omega,S^{d \times
      d})}{\norm[\infty]{\phi} \leq \beta}$
  is dense in
  $\set{\phi \in C_0^2(\Omega,S^{d \times d})}{\norm[\infty]{\phi}
    \leq \beta}$
  which can be seen analogously to the density argument at the
  beginning of the proof.  Now, since $u \in L^1(\Omega)$, we have
  $\mathbf{c}u \in L^1(\Omega, S^{d \times d})$ and
  $\norm[\radon]{\symgrad w + \mathbf{c}u}$ is finite if and only if
  $\symgrad w \in \radon(\Omega,S^{d \times d})$. But this is the case
  if and only if $w \in \BD(\Omega)$.

  Furthermore, we see that $\div^\ast = -\grad$ in the distributional
  sense, so by arguments analogous to the above, it follows that
  \begin{equation*}
    F_2^\ast(w)=\sup_{\scriptstyle \eta\in Y, \atop \scriptstyle \|\eta\|_\infty\leq\alpha}\langle w,\eta\rangle+\langle u, \div\eta\rangle=\alpha\|\grad u-w\|_\mathcal{M}
  \end{equation*}
  where, again, $\norm[\radon]{\grad u - w} = \infty$ if
  $\grad u - w \notin \radon(\Omega, \RR^d)$. Consequently, for
  $w \in \BD(\Omega)$, the norm is finite if and only if
  $u \in \BV(\Omega)$. 

  These considerations yield that indeed, minimizing
  $F_1^\ast(-\Lambda^\ast w) + F_2^\ast(w)$ is equivalent to
  minimizing the functional in~\eqref{TTGV2}, that it suffices to
  choose $w$ from $\BD(\Omega)$ instead of $Y^*$, and that the minimum
  is $\infty$ if and only if $u \notin \BV(\Omega)$.  This gives the
  desired identity~\eqref{dualTTGV2}.

  In particular, as the optimal value for the dual problem is
  attained, there always exists a $w \in \BD(\Omega)$ which admits the
  minimum in~\eqref{TTGV2}.
\end{proof}

Since the dual formulation of oscillation TGV is a pointwise supremum
of continuous functions, we can easily derive that it is lower
semi-continuous with respect to the $L^p$-topology for each
$p \in [1,\infty]$. In summary, it is a semi-norm that shares the
following properties:

\begin{proposition}
  \label{prop:tgv_osci_lsc}
  The functional $\TGV^{\osci}_{\alpha,\beta,\mathbf{c}}$ is a proper,
  and lower semi-continuous one-homo\-geneous functional on
  $L^1(\Omega)$ that satisfies the triangle inequality.  Further,
  $\TGV^{\osci}_{\alpha,\beta,\mathbf{c}}(u) = 0$ if and only if $u$
  satisfies~\eqref{oscifunc} for some $C_1,C_2 \in \RR$ in case of
  $\omega \neq 0$ and $u$ affine in case of $\omega = 0$.
\end{proposition}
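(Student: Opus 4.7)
My plan is to address the four assertions in turn, leaning on the predual representation~\eqref{dualTTGV2} and the attainment of the minimum established in Proposition~\ref{dualTTGV}. Properness is immediate since $w = 0$ is admissible for $u \equiv 0$, giving $\TGV^{\osci}_{\alpha,\beta,\mathbf{c}}(0) = 0$. For lower semi-continuity on $L^p(\Omega)$ with arbitrary $p \in [1,\infty]$, I would observe that for each admissible $\phi \in C_c^\infty(\Omega, S^{d \times d})$ the integrand $\div^2 \phi + \mathbf{c} \inprod \phi$ belongs to $C_c^\infty(\Omega) \subset L^q(\Omega)$ for every $q$, so $u \mapsto \int_\Omega u\,(\div^2 \phi + \mathbf{c} \inprod \phi) \dd{x}$ is continuous on $L^p(\Omega)$; the pointwise supremum of such continuous linear forms is then lower semi-continuous.

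For one-homogeneity I would substitute $w \mapsto \lambda w$ in the minimum defining $\TGV^{\osci}_{\alpha,\beta,\mathbf{c}}(\lambda u)$ and invoke the homogeneity of the Radon norm, obtaining $\TGV^{\osci}_{\alpha,\beta,\mathbf{c}}(\lambda u) = \lambda\,\TGV^{\osci}_{\alpha,\beta,\mathbf{c}}(u)$ for $\lambda > 0$; the case $\lambda = 0$ is covered by properness. The triangle inequality is standard: for $u_1, u_2$ with respective minimizers $w_1, w_2 \in \BD(\Omega)$, the field $w_1 + w_2$ is admissible for $u_1 + u_2$, and the triangle inequality for $\norm[\radon]{\cdot}$ yields the desired bound.

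The kernel characterization is the main content. Sufficiency is a direct computation: if $u$ has the form~\eqref{oscifunc}, I would take $w = \grad u$, so that $\grad u - w = 0$, while the identities $\partial_i \partial_j \sin(\omega \inprod x) = -\omega_i\omega_j \sin(\omega \inprod x)$ (and analogously for cosine) from the proof of Lemma~\ref{contiker} give $\symgrad w + \mathbf{c}u = 0$; the affine case with $\omega = 0$ is analogous. For necessity, if $\TGV^{\osci}_{\alpha,\beta,\mathbf{c}}(u) = 0$, the attained minimum furnishes $w \in \BD(\Omega)$ with $\grad u = w$ and $\symgrad w + \mathbf{c} u = 0$ distributionally, whence $\symgrad \grad u + \mathbf{c} u = 0$ in $\mathcal{D}'(\Omega)$. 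Since Lemma~\ref{contiker} requires $C^2$ regularity, I would mollify: $u_\varepsilon := u \ast \rho_\varepsilon \in C^\infty(\Omega_\varepsilon)$ satisfies the same constant-coefficient equation pointwise on $\Omega_\varepsilon$, so on any open hypercube $Q \Subset \Omega$ and for small $\varepsilon$, Lemma~\ref{contiker} yields coefficients $C_1^\varepsilon, C_2^\varepsilon$ realizing $u_\varepsilon|_Q$ as in~\eqref{oscifunc}. The local solution space of~\eqref{kerfunc} on $Q$ being two-dimensional, the $L^1(Q)$-convergence $u_\varepsilon \to u$ forces $C_i^\varepsilon \to C_i$ to limits that represent $u$ on $Q$. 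Uniqueness of the coefficients on overlapping hypercubes then patches this into a global representation on the connected Lipschitz domain $\Omega$, and the $\omega = 0$ case proceeds identically with affine local solutions.

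The main obstacle is precisely this last regularity/structure step, where the a priori only $\BV$ regularity of $u$ has to be bridged to the $C^2$ hypothesis of Lemma~\ref{contiker}. The mollification argument accomplishes this without invoking heavy elliptic regularity theory: the smoothed functions already lie in the two-dimensional (or affine) kernel, so the passage back to $u$ reduces to a finite-dimensional limit.
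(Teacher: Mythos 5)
Your proposal is correct and follows essentially the same route as the paper: lower semi-continuity from the predual representation as a pointwise supremum of continuous linear functionals on $L^1(\Omega)$, and the kernel characterization by mollifying the weak equation $\symgrad \grad u + \mathbf{c}u = 0$, applying Lemma~\ref{contiker} to the smooth regularizations, and passing to the limit in the finite-dimensional local solution space. The only cosmetic differences are that you obtain one-homogeneity and the triangle inequality directly from the primal formulation (the paper invokes the conjugate-of-a-balanced-indicator argument from \cite{TGV}) and that you make the patching of local representations over overlapping hypercubes explicit, a point the paper leaves implicit.
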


\begin{proof}
  We only discuss the case $\omega \neq 0$ as, by the above remark,
  $\TGV^{\osci}_{\alpha,\beta,\mathbf{c}}$ reduces to second-order TGV in case
  $\omega = 0$ for which the claimed properties were established in
  \cite{TGV}.

  By Proposition~\ref{dualTTGV},
  $\TGV^{\osci}_{\alpha,\beta,\mathbf{c}}$ is the convex conjugate of
  a convex indicator functional associated with a nonempty, convex and
  balanced set. Thus, it is proper, convex and positive
  one-homogeneous; we refer to the proof of statement 1 of Proposition
  3.3 in \cite{TGV}. In particular,
  $\TGV^{\osci}_{\alpha,\beta,\mathbf{c}}$ satisfies the triangle
  inequality. Further, for each
  $\phi \in C_c^\infty(\Omega,S^{d \times d})$, the mapping
  $u \mapsto \int_\Omega u (\div^2 \phi + \mathbf{c} \inprod \phi)
  \dd{x}$
  is linear and continuous on $L^1(\Omega)$, so
  $\TGV^{\osci}_{\alpha,\beta,\mathbf{c}}$ is lower semi-continuous on
  $L^1(\Omega)$ as the pointwise supremum of continuous functionals,
  see~\eqref{dualTTGV2}.

  Finally, for $u \in L^1(\Omega)$, we have
  $\TGV^{\osci}_{\alpha,\beta,\mathbf{c}}(u) = 0$ if and only if
  $\int_\Omega u (\div^2 \phi + \mathbf{c} \inprod \phi) \dd{x} = 0$
  for all $\phi \in C_c^\infty(\Omega, S^{d \times d})$ with
  $\norm[\infty]{\phi} \leq \beta$,
  $\norm[\infty]{\div \phi} \leq \alpha$. The latter, however, is
  equivalent to
  $\int_\Omega u (\div^2 \phi + \mathbf{c} \inprod \phi) \dd{x} = 0$
  for all $\phi \in C_c^2(\Omega, S^{d \times d})$ which, in turn,
  means $\symgrad \grad u + \mathbf{c} u = 0$ in $\Omega$ in the weak
  sense.
  
  We show that this equation is also satisfied in the strong sense.
  For this purpose, observe that it is in particular true for
  $u \ast \rho_\epsilon$ with a mollifier
  $\rho_\epsilon \in C_c^\infty(B_\epsilon(0))$ and each subdomain
  $\Omega' \subset \Omega$ with
  $\dist(\Omega', \bdry\Omega) > \epsilon$.  Indeed, this follows from
  \[
  \int_{\Omega'} (u \ast \rho_\epsilon) (\div^2 \phi + \mathbf{c}
  \inprod \phi) \dd{x} = \int_\Omega u \bigl(\div^2 (\phi \ast
  \bar\rho_\epsilon) + \mathbf{c} \inprod (\phi \ast
  \bar\rho_\epsilon) \bigr) \dd{x} = 0
  \]
  where $\bar \rho_\epsilon(x) = \rho_\epsilon(-x)$, since
  $\phi \ast \bar \rho_\epsilon \in C_c^\infty(\Omega, S^{d \times d})$
  whenever $\phi \in C_c^\infty(\Omega', S^{d \times d})$. But
  $u \ast \rho_\epsilon \in C^\infty_c(\Omega')$, so we have
  $\symgrad \grad (u \ast \rho_\epsilon) + \mathbf{c}(u \ast
  \rho_\epsilon) = 0$
  in $\Omega'$ also in the strong sense. Thus, $u \ast \rho_\epsilon$
  satisfies~\eqref{oscifunc} in $\Omega'$. Fixing $\Omega'$ and
  letting $\epsilon \to 0$ now gives $u \ast \rho_\epsilon \to u$ in
  $L^1(\Omega')$. In particular, $u \ast \rho_\epsilon$ converges to
  $u$ in the finite-dimensional space of functions in $L^1(\Omega')$
  satisfying~\eqref{oscifunc}, which implies by closedness of
  finite-dimensional spaces that $u$ also satisfies~\eqref{oscifunc}
  for some $C_1, C_2 \in \RR$ and, consequently, $u \in C^2(\Omega')$.
  As $\Omega'$ can be chosen arbitrarily such that
  $\overline{\Omega'}$ is compact in $\Omega$, the latter also holds
  true in $\Omega$.
\end{proof}

Furthermore, $\TGV^{\osci}_{\alpha,\beta,\mathbf{c}}$ is translation invariant
as well as rotationally invariant with respect to a rotated $\mathbf{c}$:
\begin{proposition}\label{rotation}
  For each $y \in \RR^d$ and $u \in L^1(\Omega)$, if
  $\tilde u(x) = u(x-y)$, then
  $\TGV^{\osci}_{\alpha,\beta,\mathbf{c}}(\tilde{u}) =
  \TGV^{\osci}_{\alpha,\beta,\mathbf{c}}(u)$,
  i.e., $\TGV^{\osci}_{\alpha,\beta,\mathbf{c}}$ is
  translation-invariant.

  Furthermore, for each orthonormal matrix
  $O\in\mathbb{R}^{d\times d}$ and $u\in L^1(\Omega)$, if
  $\tilde{u}(x)=u(Ox)$, then
  $\TGV^{\osci}_{\alpha,\beta,\mathbf{c}}(\tilde{u}) =
  \TGV^{\osci}_{\alpha,\beta,\tilde{\mathbf{c}}}(u)$
  with $\tilde{\mathbf{c}} = O\omega \tensor O\omega$.  In other
  words, ${\rm TGV}^{\osci}_{\alpha,\beta,\mathbf{c}}$ is rotationally
  invariant.
\end{proposition}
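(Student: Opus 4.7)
The plan is to work from the predual representation of oscillation TGV established in Proposition~\ref{dualTTGV} and, in each of the two statements, construct a bijection of admissible test functions $\phi$ that intertwines the transformation of $u$ with a matching transformation of the integrand $u(\div^2\phi+\mathbf{c}\inprod\phi)$; a change of variables then identifies the two suprema.

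For translation invariance, with $y\in\RR^d$ and $\tilde u(\cdot)=u(\cdot-y)$, the natural choice is $\tilde\phi(\cdot)=\phi(\cdot-y)$. Since $\div^2$ commutes with translations, $\mathbf{c}\inprod$ is pointwise linear, and the $L^\infty$-bounds on $\phi$ and $\div\phi$ are unaffected, substituting $z=x-y$ in the predual integral immediately yields the equality of the two suprema.

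For rotational invariance, the natural lift of an orthogonal $O$ to a symmetric-matrix-valued test field is the conjugation $\tilde\phi(x)=O^T\phi(Ox)O$. I would verify four facts: (i) $\tilde\phi$ is symmetric with $\|\tilde\phi\|_\infty=\|\phi\|_\infty$, by orthogonal invariance of the Frobenius norm; (ii) the chain rule combined with $\sum_j O_{lj}O_{mj}=\delta_{lm}$ gives $\div\tilde\phi(x)=O^T(\div\phi)(Ox)$, and in particular $\|\div\tilde\phi\|_\infty=\|\div\phi\|_\infty$; (iii) repeating the same type of calculation yields $\div^2\tilde\phi(x)=(\div^2\phi)(Ox)$; (iv) the zeroth-order term transforms as $\mathbf{c}\inprod\tilde\phi(x)=\omega^T O^T\phi(Ox)O\omega=(O\omega)^T\phi(Ox)(O\omega)=\tilde{\mathbf{c}}\inprod\phi(Ox)$ with $\tilde{\mathbf{c}}=O\omega\tensor O\omega$. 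A change of variables $z=Ox$ (with unit Jacobian) then lines up the integrals and produces $\TGV^{\osci}_{\alpha,\beta,\mathbf{c}}(\tilde u)=\TGV^{\osci}_{\alpha,\beta,\tilde{\mathbf{c}}}(u)$.

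The main technical obstacle is the index bookkeeping in (ii)--(iii), where one must apply the chain rule carefully and invoke $O^TO=I$ twice to extract the clean invariance of $\div^2$ under conjugation. Conceptually the proof is forced, since oscillation TGV was engineered precisely so that the kernel equation~\eqref{kerfunc} transforms covariantly under rigid motions once $\mathbf{c}$ is correspondingly rotated. A minor point is that the transformations move the domain ($\Omega\mapsto\Omega+y$ or $O^{-1}\Omega$); I would address this either by assuming $\Omega$ is invariant under the relevant action, or by reading the identities as comparing oscillation TGV functionals on the respective transformed domains.
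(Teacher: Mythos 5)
Your proposal is correct and follows essentially the same route as the paper: both work from the predual representation of Proposition~\ref{dualTTGV}, set up a bijection between the admissible test-function sets via translation respectively orthogonal conjugation (the paper writes $\tilde\phi(x)=O\phi(O^Tx)O^T$, the inverse convention of yours, and cites the identities for $\div\tilde\phi$ and $\div^2\tilde\phi$ from the appendix of the original TGV paper), and conclude by a change of variables together with $\mathbf{c}\inprod(O^T\phi O)=(O\mathbf{c}O^T)\inprod\phi$ and $O\mathbf{c}O^T=O\omega\tensor O\omega$. Your closing remark about the transformed domains is also how the paper handles it, namely by comparing the functionals on $\Omega+y$ and $O^T\Omega$ against the one on $\Omega$.
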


\begin{proof}
  First, let us define the following set
  \begin{equation*}
    K_{\alpha,\beta}(\Omega)=\left\{\phi\in 
      C_c^\infty(\Omega,S^{d\times d}): \|\phi\|_\infty\leq\beta, \ 
      \|\div\phi\|_\infty\leq\alpha \right\}.
  \end{equation*}
  For $\phi \in K_{\alpha,\beta}(\Omega + y)$ we set
  $\tilde \phi(x) = \phi(x + y)$. Then,
  $\norm[\infty]{\tilde \phi} \leq \beta$,
  $\div \tilde \phi(x) = \div \phi(x+y)$ and
  $\div^2 \tilde \phi(x) = \div^2 \phi(x+y)$ such that
  $\norm[\infty]{\div \tilde \phi} \leq \alpha$ and, consequently,
  $\tilde \phi \in K_{\alpha,\beta}(\Omega)$. Interchanging the roles
  of $\phi$ and $\tilde \phi$, it follows that
  $\phi \in K_{\alpha,\beta}(\Omega + y)$ if and only if
  $\tilde \phi \in K_{\alpha,\beta}(\Omega)$. For
  $\phi \in K_{\alpha,\beta}(\Omega + y)$ it holds that
  \[
  \int_{\Omega + y} \tilde u(x) \bigl ( \div^2 \phi(x) + \mathbf{c}
  \inprod \phi(x) \bigr) \dd{x} = \int_\Omega u(x) \bigl( \div^2
  \tilde \phi(x) + \mathbf{c} \inprod \tilde \phi(x) \bigr) \dd{x},
  \]
  hence taking the supremum yields with the above that
  $\TGV_{\alpha,\beta,\mathbf{c}}^{\osci}(\tilde u) =
  \TGV_{\alpha,\beta,\mathbf{c}}^{\osci}(u)$.

  For $\phi \in K_{\alpha,\beta}(O^T\Omega)$ we set
  $\tilde \phi (x) = O \phi(O^T x) O^T$ which satisfies
  $\tilde \phi \in C_c^2(\Omega, S^{d \times d})$ as well as
  $\norm[\infty]{\tilde \phi} \leq \beta$, the latter due to the
  invariance of the Frobenius norm under orthonormal similarity
  transforms. Moreover, we have
  \[
  \div \tilde \phi(x) = \div \phi(O^Tx) O^T, \qquad
  \div^2 \tilde \phi(x) = \div^2 \phi(O^T x),
  \]
  see, for instance, (A.4) in \cite{TGV}. Therefore,
  $\norm[\infty]{\div \tilde \phi} \leq \alpha$, so
  $\tilde \phi \in K_{\alpha,\beta}(\Omega)$.  Interchanging the roles
  of $\phi$ and $\tilde \phi$ it follows that
  $\phi \in K_{\alpha,\beta}(O^T\Omega)$ if and only if
  $\tilde \phi \in K_{\alpha,\beta}(\Omega)$.

  Eventually, for each $\phi\in K_{\alpha,\beta}(O^T\Omega)$, we have 
  \begin{equation*}
    \begin{aligned}
      \int_{O^T\Omega} \tilde u(x)
      \bigl(\div^2\phi(x)+\mathbf{c}\inprod\phi(x) \bigr) \dd{x} &=
      \int_{O^T\Omega} u(Ox)
      \bigl(\div^2\phi(x)+\mathbf{c}\inprod\phi(x) \bigr) \dd{x}
      \\
      &=\int_\Omega u(x)
      \bigl(\div^2\phi(O^Tx)+\mathbf{c}\inprod\phi(O^Tx) \bigr)
      \dd{x} \\
      &=\int_\Omega u(x) \bigl(\div^2\tilde{\phi}(x)+\mathbf{c}\inprod
      (O^T\tilde{\phi}(x)O) \bigr)\dd{x} \\
      &= \int_\Omega u(x) \bigl(\div^2\tilde{\phi}(x)+
      (O\mathbf{c}O^T) \inprod \tilde{\phi}(x) \bigr) \dd{x}.
    \end{aligned}
  \end{equation*}
  Taking the supremum and noting that
  $O \mathbf{c} O^T = O\omega \tensor O\omega = \tilde{\mathbf{c}}$
  then leads to
  $\TGV^{\osci}_{\alpha,\beta,\mathbf{c}}(\tilde u)
  =\TGV^{\osci}_{\alpha,\beta,\tilde{\mathbf{c}}}(u)$.
\end{proof}

We would like to examine Banach spaces that are associated with the
semi-norm $\TGV^{\osci}_{\alpha,\beta,\mathbf{c}}$. To construct a
norm, we add, as it is also done in case of $\TGV$, the $L^1$-norm. The
resulting Banach space
\[
\BGV^{\osci}(\Omega) = \set{u \in
  L^1(\Omega)}{\TGV^{\osci}_{\alpha,\beta,\mathbf{c}}(u) < \infty},
\qquad
\norm[\BGV^{\osci}]{u} = \norm[1]{u} + \TGV^{\osci}_{\alpha,\beta,\mathbf{c}}(u)
\]
then coincides with $\BV(\Omega)$. Indeed, this is a consequence of
the equivalence of norms which is proven in the following lemma.
\begin{lemma}\label{BVTBGV}
  For each $\omega \in \RR^d$, there exist constants
  $0 < c < C < \infty$ such that for each $u\in L^1(\Omega)$, we have
  \begin{equation}\label{TTGVBV}
    c\|u\|_{\BV}\leq\|u\|_1+\TGV^{\osci}_{\alpha,\beta,\mathbf{c}}(u)\leq C\|u\|_{\BV}.
  \end{equation}
\end{lemma}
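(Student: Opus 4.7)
The plan is to reduce the inequality to the corresponding equivalence for standard second-order TGV, which is established in \cite{TGV}, by exploiting the triangle inequality on the $\symgrad w + \mathbf{c}u$ term.

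For the upper bound, I would plug $w = 0$ into the definition~\eqref{TTGV2}, yielding
\[
\TGV^{\osci}_{\alpha,\beta,\mathbf{c}}(u) \leq \alpha \norm[\radon]{\grad u} + \beta \norm[\radon]{\mathbf{c} u}.
\]
Since $\mathbf{c}u \in L^1(\Omega, S^{d \times d})$, its Radon norm is just its $L^1$ norm, and the pointwise Frobenius norm of $\mathbf{c}u(x)$ equals $|\mathbf{c}|_F\abs{u(x)} = \abs{\omega}^2 \abs{u(x)}$. Combining these estimates and adding $\norm[1]{u}$ gives $\norm[1]{u} + \TGV^{\osci}_{\alpha,\beta,\mathbf{c}}(u) \leq C \norm[\BV]{u}$ for $C = 1 + \max(\alpha, \beta \abs{\omega}^2)$.

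For the lower bound, the key observation is that by the triangle inequality in $\radon$,
\[
\norm[\radon]{\symgrad w + \mathbf{c}u} \geq \norm[\radon]{\symgrad w} - \norm[\radon]{\mathbf{c}u} \geq \norm[\radon]{\symgrad w} - \abs{\omega}^2 \norm[1]{u}
\]
for any $w \in \BD(\Omega)$. Adding $\alpha \norm[\radon]{\grad u - w}$ and taking the infimum over $w$ yields
\[
\TGV^{\osci}_{\alpha,\beta,\mathbf{c}}(u) \geq \TGV_{\alpha,\beta}^2(u) - \beta \abs{\omega}^2 \norm[1]{u}.
\]
Then I would invoke the known equivalence of norms for second-order TGV from \cite{TGV}, namely that there exists $c_0 > 0$ such that $c_0 \norm[\BV]{u} \leq \norm[1]{u} + \TGV_{\alpha,\beta}^2(u)$ for all $u \in L^1(\Omega)$. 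Chaining these two estimates produces
\[
c_0 \norm[\BV]{u} \leq (1 + \beta \abs{\omega}^2) \norm[1]{u} + \TGV^{\osci}_{\alpha,\beta,\mathbf{c}}(u) \leq (1 + \beta \abs{\omega}^2) \bigl( \norm[1]{u} + \TGV^{\osci}_{\alpha,\beta,\mathbf{c}}(u) \bigr),
\]
from which we can read off $c = c_0 / (1 + \beta \abs{\omega}^2)$.

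There is really no substantial obstacle here: the argument is a pure reduction and the only technical point is the bound $\norm[\radon]{\mathbf{c}u} \leq \abs{\omega}^2 \norm[1]{u}$, which is immediate from $\mathbf{c}u$ being an $L^1$ function rather than a genuine distribution. The case $\omega = 0$ is already covered by Proposition~\ref{prop:tgv_osci_lsc} combined with the cited result from \cite{TGV}, so the argument above only adds a controlled low-order perturbation when $\omega \neq 0$.
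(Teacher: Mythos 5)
Your proposal is correct and follows essentially the same route as the paper: the upper bound via $w=0$, and the lower bound by reducing to the known $\BV$--$\BGV^2_{\alpha,\beta}$ norm equivalence through the estimate $\norm[\radon]{\symgrad w}\leq\norm[\radon]{\symgrad w+\mathbf{c}u}+\abs{\omega}^2\norm[1]{u}$ (the paper inserts $+\mathbf{c}u-\mathbf{c}u$ and applies the triangle inequality before minimizing over $w$, which is algebraically the same as your reverse-triangle-inequality formulation). The only point worth making explicit is that for the lower bound one may restrict to $u\in\BV(\Omega)$, since otherwise both sides are $+\infty$ by definition of $\TGV^{\osci}_{\alpha,\beta,\mathbf{c}}$.
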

\begin{proof}
  Setting $w=0$ in the definition~\eqref{TTGV2} implies immediately
  that
  \begin{equation*}
    \TGV^{\osci}_{\alpha,\beta,\mathbf{c}}(u)\leq 
    \alpha\|\grad u\|_\mathcal{M}+\beta\|\mathbf{c} u\|_1
    \leq \alpha \TV(u) + \beta \abs{\omega}^2 \norm[1]{u}.
  \end{equation*}
  Adding the $L^1$-norm on both sides give the second inequality
  of~\eqref{TTGVBV} for $C=\max\{\alpha, 1 + \beta\abs{\omega}^2 \}$.
  
  For the converse inequality, we may assume $u \in \BV(\Omega)$ since
  otherwise, $\TGV^{\osci}_{\alpha,\beta,\mathbf{c}}(u)=\infty$ by
  definition.  Due to the norm equivalence in $\BV(\Omega)$ and
  $\BGV^2_{\alpha,\beta}(\Omega)$ shown, e.g., in
  \cite{TGVTensor,TGVinv1,TGVinv2}, there exists a constant $C_1>0$
  such that for each $w \in \BD(\Omega)$,
  \begin{equation*}
    \begin{split}
      C_1\|u\|_{\BV}&\leq \|u\|_1+\TGV^2_{\alpha,\beta}(u)\\
      &\leq\|u\|_1+\alpha\|\grad u-w\|_\mathcal{M}+\beta\|\mathcal{E} w\|_\mathcal{M}\\
      &=\|u\|_1+\alpha\|\grad u-w\|_\mathcal{M}+\beta||\mathcal{E} w+\mathbf{c}u-\mathbf{c}u\|_\mathcal{M}\\
      &\leq \|u\|_1+\alpha\|\grad u-w\|_\mathcal{M}+\beta\|\mathcal{E} w+\mathbf{c}u\|_\mathcal{M}+\beta\|\mathbf{c}u\|_1\\
      &\leq C_2(\|u\|_1+\alpha\|\grad
      u-w\|_\mathcal{M}+\beta\|\mathcal{E}
      w+\mathbf{c}u\|_\mathcal{M})
    \end{split}
  \end{equation*}
  where $C_2=\max\{1,1 + \beta\abs{\omega}^2 \}$. Choosing
  $c=\frac{C_1}{C_2}$ and minimizing over $w \in \BD(\Omega)$ finally
  yields the first inequality of~\eqref{TTGVBV}.
\end{proof}

Depending on the results of Lemma \ref{BVTBGV}, we can further deduce
following inequality of \Poincare--Wirtinger type which implies the
coercivity of oscillation TGV in the $\BV$-space.

\begin{proposition}\label{TBGVcoer}
  Let
  $P: L^1(\Omega) \rightarrow
  \ker(\TGV^{\osci}_{\alpha,\beta,\mathbf{c}})$
  a linear and continuous projection onto the kernel
  $\ker(\TGV^{\osci}_{\alpha,\beta,\mathbf{c}})$ (which exists),
  then there is a $C>0$ such that
  \begin{equation}\label{embedding}
    \|u-Pu\|_{\BV}\leq C \TGV^{\osci}_{\alpha,\beta,\mathbf{c}}(u) \quad \text{for all}\quad u\in \BV(\Omega).
  \end{equation}
\end{proposition}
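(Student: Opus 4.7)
The plan is to establish \eqref{embedding} by a standard contradiction and compactness argument based on three ingredients: the compact embedding $\BV(\Omega) \embeds L^1(\Omega)$, the $L^1$-lower semi-continuity of $\TGV^{\osci}_{\alpha,\beta,\mathbf{c}}$ from Proposition~\ref{prop:tgv_osci_lsc}, and the norm equivalence of Lemma~\ref{BVTBGV}. The existence of a continuous linear projection $P$ onto $\ker(\TGV^{\osci}_{\alpha,\beta,\mathbf{c}})$, mentioned parenthetically in the statement, is automatic since this kernel is finite-dimensional: by Proposition~\ref{prop:tgv_osci_lsc} it is spanned by $\{\cos(\omega\inprod x),\sin(\omega\inprod x)\}$ when $\omega\neq 0$ and by the affine functions when $\omega=0$. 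Picking a basis $\{e_i\}$ of the kernel and dual functionals $\ell_i \in L^\infty(\Omega) = L^1(\Omega)^*$ via Hahn--Banach, I may assume that $Pu = \sum_i \ell_i(u) e_i$ is $L^1$-continuous.

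Assume toward a contradiction that no such constant $C$ exists. Then there is a sequence $\{u_n\} \subset \BV(\Omega)$ with $\|u_n - Pu_n\|_{\BV} > n\,\TGV^{\osci}_{\alpha,\beta,\mathbf{c}}(u_n)$. Set $v_n = (u_n-Pu_n)/\|u_n-Pu_n\|_{\BV}$, so that $\|v_n\|_{\BV}=1$, $\TGV^{\osci}_{\alpha,\beta,\mathbf{c}}(v_n) < 1/n$ by positive one-homogeneity, and $Pv_n = P(u_n-Pu_n) = Pu_n - P^2 u_n = 0$ by the projection property. Since $\{v_n\}$ is bounded in $\BV(\Omega)$, the compact embedding $\BV(\Omega) \embeds L^1(\Omega)$ yields, along a subsequence (not relabeled), a limit $v_n \to v$ in $L^1(\Omega)$.

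By $L^1$-lower semi-continuity, $\TGV^{\osci}_{\alpha,\beta,\mathbf{c}}(v) \leq \liminf_{n\to\infty} \TGV^{\osci}_{\alpha,\beta,\mathbf{c}}(v_n) = 0$, so $v$ lies in the kernel and hence $Pv = v$; combined with the $L^1$-continuity of $P$ and $Pv_n = 0$, this forces $v = 0$. Applying Lemma~\ref{BVTBGV} to each $v_n$ then yields
\[
1 = \|v_n\|_{\BV} \leq c^{-1}\bigl(\|v_n\|_1 + \TGV^{\osci}_{\alpha,\beta,\mathbf{c}}(v_n)\bigr) \longrightarrow c^{-1}(\|v\|_1 + 0) = 0,
\]
the desired contradiction. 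The only technical point worth noting is that $P$ must be continuous as an $L^1 \to L^1$ map in order to pass to the limit $0 = Pv_n \to Pv$; this is the natural reading of ``continuous projection'' in the statement and, as discussed, is always achievable thanks to the finite-dimensionality of the kernel.
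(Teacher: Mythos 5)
Your proof is correct and follows essentially the same contradiction-and-compactness argument as the paper: compact embedding $\BV(\Omega)\embeds L^1(\Omega)$, $L^1$-lower semi-continuity to show the limit lies in $\ker(\TGV^{\osci}_{\alpha,\beta,\mathbf{c}})\cap\ker(P)$ and hence vanishes, then the norm equivalence of Lemma~\ref{BVTBGV} to upgrade to $\BV$-convergence and contradict the normalization. The only step left tacit is that $\TGV^{\osci}_{\alpha,\beta,\mathbf{c}}(u_n-Pu_n)=\TGV^{\osci}_{\alpha,\beta,\mathbf{c}}(u_n)$ (needed before invoking one-homogeneity), which follows from the triangle inequality and $Pu_n$ lying in the kernel, exactly as the paper notes explicitly.
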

\begin{proof}
  If the statement is not true, then there exists a sequence
  $\{u^n\}$ in $\BV(\Omega)$ such that
  \begin{equation*}
    \|u^n-Pu^n\|_{\BV}=1,  \quad \TGV^{\osci}_{\alpha,\beta,\mathbf{c}}(u^n)\leq \frac{1}{n},
  \end{equation*}
  for all $n\in\mathbb{N}$.
  By the compact embedding $\BV(\Omega)\hookrightarrow
  L^1(\Omega)$, it follows that
  $\lim\limits_{n\rightarrow\infty}u^n-Pu^n=u^\ast$
  in $L^1(\Omega)$
  for some subsequence (not relabeled).  
  We have $P(u^n-Pu^n)=0$
  for all $n$,
  consequently, the limit has to satisfy $Pu^\ast=0$
  by continuity of $P$ in $L^1(\Omega)$.
  
  As $P$
  projects onto the kernel of
  $\TGV^{\osci}_{\alpha,\beta,\mathbf{c}}$,
  we have $\TGV^{\osci}_{\alpha,\beta,\mathbf{c}}(Pu^n)=0$
  for all $n$
  and can deduce that
  $\TGV^{\osci}_{\alpha,\beta,\mathbf{c}}(u^n-Pu^n)=\TGV^{\osci}_{\alpha,\beta,\mathbf{c}}(u^n)$.
  On the other hand, the lower semi-continuity of
  $\TGV^{\osci}_{\alpha,\beta,\mathbf{c}}$ implies
  \begin{equation*}
    \TGV^{\osci}_{\alpha,\beta,\mathbf{c}}(u^\ast)\leq \liminf_{n\rightarrow\infty}\TGV^{\osci}_{\alpha,\beta,\mathbf{c}}(u^n)=0,
  \end{equation*}
  hence
  $u^\ast\in \ker(\TGV^{\osci}_{\alpha,\beta,\mathbf{c}})\cap \ker(P)$.
  As $P$ is a projection with
  $\range(P) = \ker(\TGV^{\osci}_{\alpha,\beta,\mathbf{c}})$, this
  implies $u^* = Pu^* = 0$, so
  $\lim\limits_{n\rightarrow\infty}u^n-Pu^n = 0$ in $L^1(\Omega)$.

  Now, according to Lemma~\ref{BVTBGV}, there is a $C_1>0$ such that
  \begin{equation}\label{upu2}
    \|u^n-Pu^n\|_{\BV}\leq C_1 \bigl (\|u^n-Pu^n\|_1+\TGV^{\osci}_{\alpha,\beta,\mathbf{c}}(u^n) \bigr),
  \end{equation}
  so 
  $\lim\limits_{n\rightarrow\infty}u^n-Pu^n=0$ in $\BV(\Omega)$
  which is a contradiction
  to $\|u^n-Pu^n\|_{\BV}=1$ for all $n$.
\end{proof}

\begin{remark}
  \label{rem:osci_tgv_coer}
  By continuous embedding $\BV(\Omega) \embeds L^p(\Omega)$ for $1 \leq 
  p \leq d/(d-1)$ we have in particular coercivity of
  $\TGV^{\osci}_{\alpha,\beta,\mathbf{c}}$
  in $L^p(\Omega)$ according to
  \[
  \norm[p]{u - Pu} \leq C \TGV^{\osci}_{\alpha,\beta,\mathbf{c}}(u)
  \quad
  \text{for each} \quad u \in L^p(\Omega).
  \]
\end{remark}

 \section{Infimal convolution of oscillation TGV functionals}
\label{seq:inf_conv_osci_tgv}

So far, we examined oscillation TGV functionals associated with a
single direction $\omega \in \RR^d$. Next, we aim at incorporating
multiple directions $\omega_1,\ldots,\omega_m \in \RR^d$ into one
functional in order to represent structured oscillations in an image
with respect to multiple orientations and frequencies. This will be
done in terms of the infimal convolution.
Recall that, given two functionals $J_1$ and $J_2$, their infimal
convolution is defined by
\begin{equation}
  (J_1\infconv J_2)(u)=\inf_v J_1(u-v)+J_2(v)=\inf_{u=u_1+u_2} J_1(u_1)+J_2(u_2).
\end{equation}
For $m$ functionals $J_1,\ldots,J_m$ the definition is iterated leading to
\[
(J_1 \infconv \ldots \infconv J_m)(u) = \inf_{u = u_1 + \ldots + u_m}
\sum_{i=1}^m J_i(u_i).
\]
We aim at studying the $m$-fold
infimal convolution of oscillation TGV given as follows:
\begin{equation}\label{ICTGV2}
  \ICTGV^{\osci}_{\vec{\alpha},\vec{\beta},\vec{\mathbf{c}}}(u)= (\TGV^{\osci}_{\alpha_1,\beta_1,\mathbf{c}_1}\infconv\ldots\infconv \TGV^{\osci}_{\alpha_m,\beta_m,\mathbf{c}_m})(u)=\inf_{u=u_1+\ldots+u_m} \sum_{i=1}^{m}  \TGV^{\osci}_{\alpha_i,\beta_i,\mathbf{c}_i}(u_i).
\end{equation} 
Here, $\vec\alpha = (\alpha_1,\ldots,\alpha_m)$,
$\vec\beta = (\beta_1,\ldots,\beta_m)$ are vectors of positive weights
and $\vec{\mathbf{c}} = (\mathbf{c}_1,\ldots,\mathbf{c}_m)$ is given
by $\mathbf{c}_i = \omega_i \tensor \omega_i$ for
$\omega_1,\ldots,\omega_m \in \RR^d$.

It is well-known that the infimal convolution of the convex
functionals is also convex.  Also, since
$\TGV^{\osci}_{\alpha_i,\beta_i,\mathbf{c}_i}(0) = 0$ for each $i$, one can
easily obtain that
$\ICTGV^{\osci}_{\vec{\alpha},\vec{\beta},\vec{\mathbf{c}}}$ is
proper. Therefore, in the following section, we concentrate on the
coercivity and lower semi-continuity of
$\ICTGV^{\osci}_{\vec{\alpha},\vec{\beta},\vec{\mathbf{c}}}$ in
$L^p$-spaces.

As it is typical for infimal convolutions, the lower semi-continuity
and coercivity of
$\ICTGV^{\osci}_{\vec{\alpha},\vec{\beta},\vec{\mathbf{c}}}$ is not
immediate. We study these properties in a more abstract framework that
also allows for generalization to a larger class of regularizers.

\begin{definition}
  Let $X$ be a real Banach space and $\Phi: X \to [0,\infty]$. Then,
  $\Phi$ is called a \emph{lower semi-continuous semi-norm} on $X$
  if $\Phi$ satisfies the following properties:
  \begin{compactenum}
  \item $\Phi$ is proper,
  \item $\Phi$ is positive one-homogeneous, i.e., $\Phi(\lambda u) 
    = \abs{\lambda} \Phi(u)$ for all $\lambda \in \RR$, $u \in X$,
  \item $\Phi$ satisfies the triangle inequality,
    i.e., $\Phi(u+v) \leq \Phi(u) + \Phi(v)$ for all $u,v \in X$,
  \item $\Phi$ is lower semi-continuous, i.e.,
    $\Phi(u) \leq \liminf_{n \to \infty} \Phi(u^n)$ whenever
    $u^n \to u$ in $X$ as $n \to \infty$,
  \item there is a linear, continuous and onto projection operator
    $P: X \to \ker(\Phi)$.
  \end{compactenum}
  Further, $\Phi$ is called \emph{coercive} in $X$ if there exists a
  $C > 0$ such that
  \[
  \norm[X]{u - Pu} \leq C \Phi(u) \quad \text{for all} \quad u \in X.
  \]
\end{definition}

Clearly, $\TGV^{\osci}_{\alpha,\beta,\mathbf{c}}$ are lower
semi-continuous semi-norms on $\BV(\Omega)$ and $L^p(\Omega)$ for all
$1 \leq p \leq \infty$ (see Proposition~\ref{prop:tgv_osci_lsc} and
recall that one can always find continuous projections onto
finite-dimensional subspaces). They are coercive in $\BV(\Omega)$ and
in $L^p(\Omega)$ for $1 \leq p \leq d/(d-1)$, see
Proposition~\ref{TBGVcoer} and Remark~\ref{rem:osci_tgv_coer}.

In the following, we fix a Banach space $X$ and let $\Phi_1$, $\Phi_2$
be lower semi-continuous semi-norms on $X$ with associated projections
\[
P_1: X \to \ker(\Phi_1), \qquad
P_2: X \to \ker(\Phi_2).
\]
Further, assume that there is a linear, continuous and onto projection
operator
\[
Q: X \to \ker(\Phi_1) \cap \ker(\Phi_2).
\]
In this situation, we have the following basic estimate.

\begin{lemma}\label{exactlemma}
  Let $\Phi_1, \Phi_2$ be lower semi-continuous semi-norms on $X$ and
  $Q: X \to \ker(\Phi_1) \cap \ker(\Phi_2)$ be a linear, continuous
  and onto projection.  Then, there is a $C>0$ such that
  \begin{equation}\label{vbdd}
    \|v\|_{X}\leq C (\|v-P_1v\|_X+\|v-P_2v\|_X)
    \quad \text{for all}\quad v\in \Ran(I-Q).
  \end{equation}
\end{lemma}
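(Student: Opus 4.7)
Plan:

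I would proceed by contradiction, reducing the estimate to a compactness extraction that exploits the structure of the projections $P_1$ and $P_2$. Suppose no such $C$ exists; then there is a sequence $\seq{v_n} \subset \Ran(I-Q)$ with $\norm[X]{v_n} = 1$ and $\norm[X]{v_n - P_1 v_n} + \norm[X]{v_n - P_2 v_n} \to 0$.

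The first step is to extract a norm-convergent subsequence of $\seq{v_n}$. Note that $\norm[X]{P_1 v_n} \leq \norm[X]{v_n} + \norm[X]{v_n - P_1 v_n}$ is bounded, and $\seq{P_1 v_n}$ lies in the closed subspace $\ker(\Phi_1)$. In the setting of interest, where $\ker(\Phi_1)$ is finite-dimensional (as for $\TGV^{\osci}$ by Proposition~\ref{prop:tgv_osci_lsc}), this bounded sequence admits a subsequence with $P_1 v_{n_k} \to u^*$ in $X$. Combining this with $v_n - P_1 v_n \to 0$ yields $v_{n_k} \to u^*$ in $X$, and continuity of the norm gives $\norm[X]{u^*} = 1$.

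The second step identifies $u^*$ as $0$, producing the contradiction. Passing to the limit in $v_{n_k} - P_i v_{n_k} \to 0$ using continuity of $P_i$ gives $u^* = P_i u^* \in \ker(\Phi_i)$ for $i = 1,2$, so $u^* \in \ker(\Phi_1) \cap \ker(\Phi_2) = \Ran(Q)$. On the other hand, each $v_n$ lies in $\ker(Q)$, which is closed by continuity of $Q$, so $u^* \in \ker(Q)$. Since $Q$ is a projection, $\Ran(Q) \cap \ker(Q) = \sett{0}$, forcing $u^* = 0$ and contradicting $\norm[X]{u^*} = 1$.

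The main difficulty is the extraction of the convergent subsequence in the first step; this is precisely where compactness enters the argument. In the concrete applications the kernels of $\TGV^{\osci}$ are finite-dimensional spaces of affine or sinusoidal functions, so $P_1,P_2$ have finite-dimensional range and the extraction is immediate. In a purely abstract setting this step would require additional compactness assumptions on $P_1$ and $P_2$, or a compact embedding of $X$ into an ambient space in which $\Phi_1,\Phi_2$ are still lower semi-continuous; without such input, bounded sequences in $\ker(\Phi_1)$ need not have norm-convergent subsequences and the argument breaks.
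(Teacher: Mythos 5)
Your argument is correct in the regime where the lemma is actually used, but it establishes a slightly weaker statement than the one written: the extraction of a norm-convergent subsequence requires $\ker(\Phi_1)$ to be finite-dimensional (or the projections to be compact), and you say so explicitly. The paper takes a genuinely different route: it considers $Tv = (v-P_1v,\, v-P_2v)$ as an operator on the closed subspace $\Ran(I-Q)$, proves injectivity by the same algebra as your second step ($Tv=0$ forces $v\in\ker(\Phi_1)\cap\ker(\Phi_2)=\Ran(Q)$, hence $v\in\Ran(I-Q)\cap\ker(I-Q)=\{0\}$), asserts that $\Ran(T)$ is closed, and invokes the open mapping theorem to bound $T^{-1}$, which is exactly~\eqref{vbdd}. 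Formally this avoids any mention of finite dimensions, which is why the lemma can be stated in the abstract generality it has; but for an injective operator the closedness of $\Ran(T)$ is essentially equivalent to the conclusion, and it is precisely there that the compactness you demand enters (for instance via a compact-perturbation argument, $T$ being the diagonal embedding $v\mapsto(v,v)$ minus the finite-rank map $(P_1,P_2)$). Indeed, without finite-dimensionality the statement can fail: two infinite-dimensional closed subspaces of a Hilbert space with trivial intersection but zero angle yield unit vectors $v_n\in\ker(\Phi_1)$ with $\dist(v_n,\ker(\Phi_2))\to 0$, violating~\eqref{vbdd} with $Q=0$. So your contradiction-plus-compactness argument, which mirrors the proof of Proposition~\ref{TBGVcoer}, is a valid alternative in the finite-dimensional setting of Lemma~\ref{exact} and Proposition~\ref{prop:inf_conv_semi_norms}, and its explicit compactness requirement makes visible the hypothesis that the paper's own proof uses silently when it declares $\Ran(T)$ closed.
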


\begin{proof}
  Consider the mapping $T: \Ran(I-Q)\rightarrow \Ran(T)$ according to
  \begin{equation*}
    Tv= \begin{pmatrix}
      v-P_1v\\v-P_2v
    \end{pmatrix},
  \end{equation*} 
  which defines a mapping between Banach spaces since the fact that
  $P_1, P_2, Q$ are continuous projection operators implies that
  $\Ran(I-Q)$ and $\Ran(T)$ are closed.  Obviously, $T$ is linear,
  continuous and surjective.  To see that $T$ is also injective, let
  $Tv = 0$ for some $v \in X$, i.e., $v-P_1v=0$ and $v-P_2v=0$.  Then,
  $v \in \ker(\Phi_1)\cap \ker(\Phi_2)$ which implies $Qv=v$ such that
  $(I-Q)v=0$ follows. Hence, $v\in \Ran(I-Q)\cap\ker(I-Q)=\{0\}$,
  i.e., $v=0$, as $Q$ is, once again, a continuous projection
  operator. By the open mapping theorem, we have that $T^{-1}$ is
  bounded, which leads to the desired result.
\end{proof}

First, we show the exactness and lower semi-continuity of the
infimal convolution of two lower semi-continuous semi-norms under
suitable assumptions.

\begin{lemma}\label{exact}
  Let $\Phi_1, \Phi_2$ be lower semi-continuous and coercive
  semi-norms on the reflexive Banach space $X$ 
  with finite-dimensional kernels.
  Then $\Phi_1 \infconv \Phi_2$ is exact, i.e.,
  for each $u \in X$ there is a $v^* \in X$ such that
  \begin{equation}\label{exact2}
    v^* \in \argmin_{v \in X} \ \Phi_1(u-v) + \Phi_2(v).
  \end{equation}
  Moreover, $\Phi_1 \infconv \Phi_2$ constitutes a lower
  semi-continuous semi-norm on $X$ with finite-dimensional kernel.

  For general $X$, the statements remain true if, for $i=1$ or
  $i=2$, the sublevel sets
  $\set{u \in X}{\norm[X]{u} + \Phi_i(u) \leq C}$ are weakly relatively
  compact for each $C > 0$.
\end{lemma}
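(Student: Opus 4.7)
The plan is to verify the five defining properties of a lower semi-continuous semi-norm for $\Phi_1 \infconv \Phi_2$, treating exactness as the technical core since it simultaneously yields the minimizer in~\eqref{exact2}, drives the lower semi-continuity argument, and identifies the kernel. Properness, positive one-homogeneity and the triangle inequality are essentially free from the definition of the infimal convolution: $\Phi_i(0) = 0$ shows properness, a substitution $v = \lambda w$ under the infimum gives homogeneity, and splitting optimal decompositions of $u + u'$ gives subadditivity.

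For exactness, I would fix $u \in X$, take a minimizing sequence $(v^n)$ for $\Phi_1(u - \cdot) + \Phi_2(\cdot)$, and first exploit that the objective is invariant under translation of $v$ by any element of $\ker\Phi_1 \cap \ker\Phi_2$ (a consequence of the semi-norm property), replacing $v^n$ by $v^n - Q v^n$ to assume $v^n \in \Ran(I - Q)$. Lemma~\ref{exactlemma} then bounds
\[
\|v^n\|_X \leq C \bigl(\|v^n - P_1 v^n\|_X + \|v^n - P_2 v^n\|_X\bigr).
\]
Coercivity of $\Phi_2$ controls $\|v^n - P_2 v^n\|_X$ by $\Phi_2(v^n)$ directly. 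For the first summand I would use the identity $v^n - P_1 v^n = (u - P_1 u) - \bigl((u - v^n) - P_1(u - v^n)\bigr)$ together with coercivity of $\Phi_1$ applied to $u - v^n$ to bound it by $\|u - P_1 u\|_X + C \Phi_1(u - v^n)$. Since a minimizing sequence has both $\Phi_1(u - v^n)$ and $\Phi_2(v^n)$ bounded, $(v^n)$ is bounded in $X$; reflexivity then furnishes a weak subsequential limit $v^*$, and weak lower semi-continuity of the convex lsc $\Phi_i$ yields $\Phi_1(u - v^*) + \Phi_2(v^*) \leq \liminf_n \Phi_1(u - v^n) + \Phi_2(v^n)$, attaining the infimum.

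Lower semi-continuity of $\Phi_1 \infconv \Phi_2$ I would derive from a twin argument: for $u^n \to u$ with $\liminf (\Phi_1 \infconv \Phi_2)(u^n)$ finite, pick exact decompositions $u^n = (u^n - v^n) + v^n$, reduce to $v^n \in \Ran(I - Q)$, and repeat the boundedness estimate, where the contribution of $u^n$ is controlled by its convergence. A weak subsequential limit $v^*$ then produces a decomposition of $u$ whose cost bounds the liminf from above. The kernel is identified by exactness itself: $(\Phi_1 \infconv \Phi_2)(u) = 0$ forces an attained decomposition $u = u_1 + u_2$ with $u_i \in \ker\Phi_i$, so $\ker(\Phi_1 \infconv \Phi_2) = \ker\Phi_1 + \ker\Phi_2$. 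This sum of two finite-dimensional subspaces is finite-dimensional, and any finite-dimensional subspace of a Banach space admits a continuous linear projection, completing the list of axioms.

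The main obstacle is the boundedness of minimizing sequences: coercivity of $\Phi_1$ and $\Phi_2$ individually only controls distance to the respective kernel, and a genuine bound on $\|v^n\|_X$ requires the combination afforded by Lemma~\ref{exactlemma} together with the preliminary reduction modulo $\ker\Phi_1 \cap \ker\Phi_2$. For the non-reflexive case I would replace the appeal to reflexivity by the weak relative compactness hypothesis on the sublevel sets of $\|\cdot\|_X + \Phi_i$ for the relevant $i \in \{1,2\}$: the same estimate shows $\|v^n\|_X + \Phi_i(v^n)$ is bounded for that $i$, producing a weakly convergent subsequence, after which the rest of the proof proceeds without change.
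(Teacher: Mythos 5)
Your proposal is correct and follows essentially the same route as the paper: reduction of the (near-)minimizing sequence modulo the projection $Q$ onto $\ker\Phi_1\cap\ker\Phi_2$, boundedness via Lemma~\ref{exactlemma} combined with the two coercivity estimates, weak subsequential compactness from reflexivity (or the sublevel-set hypothesis), and weak lower semi-continuity of the convex lsc $\Phi_i$ to pass to the limit. The only difference is organizational --- the paper proves lower semi-continuity along arbitrary convergent sequences first and obtains exactness by specializing to constant sequences, whereas you prove exactness first and run the same estimate a second time for lsc --- which changes nothing of substance.
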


\begin{proof}
  Clearly, $\Phi_1 \infconv \Phi_2$ is a non-negative functional.
  Choose a convergent sequence $\{u^n\}$ in $X$ with limit $u$ such
  that $\liminf_{n\to \infty} (\Phi_1 \infconv \Phi_2)(u^n) < \infty$.
  Going to subsequences (without relabeling), we may assume that
  $\lim_{n \to \infty} (\Phi_1 \infconv \Phi_2)(u^n)$ exists and is
  finite.  Thus, we can find a sequence $\{v^n\}$ in $X$ such that
  $\Phi_1(u^n - v^n) + \Phi_2(v^n) \leq (\Phi_1 \infconv \Phi_2)(u^n)
  + \tfrac1n$
  for all $n$, meaning in particular that
  $\liminf_{n \to \infty} \Phi_1(u^n - v^n) + \Phi_2(v^n) \leq \lim_{n \to
    \infty} (\Phi_1 \infconv \Phi_2)(u^n)$.

  Further, as
  $\ker(\Phi_1), \ker(\Phi_2), \ker(\Phi_1) \cap \ker(\Phi_2)$ are
  finite-dimensional subspaces of $X$, we can find linear, continuous
  and onto projections $P_1,P_2,Q$ onto the respective spaces as
  previously stated.

  Now, choosing $\tilde{v}^n=v^n-Qv^n$ instead of $v^n$ does not
  change the functional values since
  \begin{equation*}
    \begin{split}
      \Phi_1(u^n-\tilde{v}^n)+\Phi_2(\tilde{v}^n)
      &=\Phi_1(u^n-v^n+Qv^n)+\Phi_2(v^n-Qv^n)\\
      &=\Phi_1(u^n-v^n)+\Phi_2(v^n),
    \end{split}
  \end{equation*}
  as a consequence of each $Qv^n \in \ker(\Phi_1) \cap \ker(\Phi_2)$.
  In particular, we have that $\Phi_1(u^n-\tilde{v}^n)$ and
  $\Phi_2(\tilde{v}^n)$ are bounded. Since $\Phi_1$ and $\Phi_2$ are
  coercive in $X$, there is a constant $C >0$ such that
  $\|u^n-\tilde{v}^n-P_1(u^n-\tilde{v}^n)\|_X \leq C
  \Phi_1(u^n-\tilde{v}^n)$
  and $\|\tilde{v}^n-P_2\tilde{v}^n\|_X \leq C\Phi_2(\tilde{v}^n)$ for
  each $n$.  Since $\norm[X]{u^n - P_1 u^n}$ is bounded due to the
  convergence $u^n - P_1 u^n \to u - P_1u$, we have that
  $\|\tilde{v}^n-P_1\tilde{v}^n\|_X$ and
  $\|\tilde{v}^n-P_2\tilde{v}^n\|_X$ are bounded, so
  Lemma~\ref{exactlemma} implies that $\{\tilde{v}^n\}$ is bounded in
  $X$. By reflexivity, there exists a weakly convergent subsequence
  (not relabeled) such that
  $\lim\limits_{n\rightarrow\infty}\tilde{v}^n=v^\ast$ weakly in
  $X$. Lower semi-continuity and convexity of $\Phi_1$ and $\Phi_2$
  implies
  $\Phi_1(u-v^\ast)\leq\liminf\limits_{n\rightarrow\infty}
  \Phi_1(u^n-\tilde{v}^n)$
  and
  $\Phi_2(v^\ast)\leq\liminf\limits_{n\rightarrow\infty}
  \Phi_2(\tilde{v}^n)$. Then,
\begin{equation*}
  (\Phi_1 \infconv \Phi_2)(u) \leq \Phi_1(u - v^\ast) + \Phi_2(v^\ast)
  \leq \liminf_{n \to \infty} \Phi_1(u^n - v^n) + \Phi_2(v^n) \leq
  \lim_{n \to \infty} (\Phi_1 \infconv \Phi_2)(u^n).
\end{equation*}
This shows that $\Phi_1 \infconv \Phi_2$ is lower
semi-continuous. Plugging in constant sequences, i.e., $u^n = u$ for
all $n$, yields in particular that
$(\Phi_1 \infconv \Phi_2)(u) = \Phi_1(u - v^\ast) + \Phi_2(v^\ast)$, meaning
that $\Phi_1 \infconv \Phi_2$ is exact.

For $\Phi_1 \infconv \Phi_2$ being a lower semi-continuous semi-norm
in $X$, it remains to show that the functional is proper, positive
one-homogeneous and satisfies the triangle inequality. But these
properties are immediate from the definition and $\Phi_1, \Phi_2$
being lower semi-continuous semi-norms. Finally, note that
$(\Phi_1 \infconv \Phi_2)(u) = 0$ if and only if
$u \in \overline{\ker(\Phi_1) + \ker(\Phi_2)}$. But since, again,
$\ker(\Phi_1), \ker(\Phi_2)$ are finite-dimensional we have that
$\overline{\ker(\Phi_1) + \ker(\Phi_2)} = \ker(\Phi_1) + \ker(\Phi_2)$
is finite-dimensional and therefore, there exists a linear, continuous
and onto projection $P: X \to \ker(\Phi_1) + \ker(\Phi_2)$. This
establishes the remaining property.

To deduce the result without the reflexivity assumption on $X$, we
assume, without loss of generality, that the sets
$\set{u \in X}{\norm[X]{u} + \Phi_2(u) \leq C}$ are weakly relatively
compact for each $C > 0$. Proceeding along the lines of the above
proof and observing that $\{\tilde v^n\}$ as well as
$\{\Phi_2(\tilde v^n)\}$ are still bounded then allows to extract the
weakly convergent subsequence of $\{\tilde v^n\}$. The rest of the
proof is completely analogous.
\end{proof}

In order to use $\Phi_1 \infconv \Phi_2$ as a regularizer, we still
need criteria for coercivity of this infimal convolution
functional. We will obtain this property from a norm equivalence
result that is a generalization of the norm equivalence in
Lemma~\ref{BVTBGV}.

\begin{proposition}
  \label{prop:inf_conv_semi_norms}
  Let $X,Y$ be Banach spaces with $X$ continuously embedded in $Y$.
  Assume that $\Phi_1,\Phi_2$ are lower semi-continuous semi-norms on
  $Y$, are coercive on $X$, possess a finite-dimensional kernel and
  satisfy
  \[
  c_i \norm[X]{u} \leq \norm[Y]{u} + \Phi_i(u) \leq C_i \norm[X]{u}
  \quad \text{for all} \quad u \in X
  \]
  for constants $0 < c_i < C_i < \infty$, $i=1,2$ independent of $u$.
  Then, $\Phi_1 \infconv \Phi_2$ satisfies
  \begin{equation}
     \label{BVTBGVinf}
     c \norm[X]{u} \leq \norm[Y]{u} + (\Phi_1 \infconv \Phi_2)(u) \leq C
     \norm[X]{u} \quad \text{for all} \quad u \in X
  \end{equation}
  for $0 < c < C < \infty$ independent of $u$.
  
  Moreover, if $X$ is compactly embedded in $Y$, then
  $\Phi_1 \infconv \Phi_2$ is a lower semi-continuous semi-norm on $Y$ and
  coercive on $X$, i.e., there exists a $C' > 0$ such that
  \begin{equation}
    \label{infembedding}
    \norm[X]{u - Pu} \leq C'(\Phi_1 \infconv \Phi_2)(u) \quad \text{for
      all} \quad u \in X,
  \end{equation}
  where $P: Y \to \ker(\Phi_1) + \ker(\Phi_2)$ is a linear, continuous
  and onto projector on the kernel of $\Phi_1 \infconv \Phi_2$.
\end{proposition}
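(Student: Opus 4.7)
The plan is to first establish the norm equivalence~\eqref{BVTBGVinf} by a direct argument using Lemma~\ref{exactlemma}, and then derive the lower semi-continuity and coercivity of $\Phi_1 \infconv \Phi_2$ from the compactness hypothesis combined with Lemma~\ref{exact} and a contradiction argument modelled on Proposition~\ref{TBGVcoer}. The upper bound in~\eqref{BVTBGVinf} is immediate: choosing the trivial decomposition $u_2 = 0$ yields $(\Phi_1 \infconv \Phi_2)(u) \leq \Phi_1(u) \leq C_1 \norm[X]{u}$, and combining with $\norm[Y]{u} \leq K \norm[X]{u}$ from the continuous embedding gives the result.

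For the lower bound, the core technical step, I would fix $u \in X$ with $(\Phi_1 \infconv \Phi_2)(u) < \infty$ and, for $\varepsilon > 0$, pick a near-optimal decomposition $u = u_1 + u_2$ with $u_1, u_2 \in X$ (automatic, as finiteness of each $\Phi_i$ forces $X$-membership via the assumed equivalence) satisfying $\Phi_1(u_1) + \Phi_2(u_2) \leq (\Phi_1 \infconv \Phi_2)(u) + \varepsilon$. Because adding an element of $\ker(\Phi_1) \cap \ker(\Phi_2)$ to $u_1$ and subtracting it from $u_2$ preserves both $\Phi_i$-values and the equality $u_1 + u_2 = u$, I replace $u_2$ by $u_2 - Qu_2$ and $u_1$ by $u_1 + Qu_2$, so that without loss of generality $u_2 \in \Ran(I - Q)$, where $Q$ is a continuous projection onto the finite-dimensional intersection $\ker(\Phi_1) \cap \ker(\Phi_2)$. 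Lemma~\ref{exactlemma}, applied in $Y$, then controls $\norm[Y]{u_2}$ by $\norm[Y]{u_2 - P_1 u_2} + \norm[Y]{u_2 - P_2 u_2}$. The second summand is bounded via coercivity of $\Phi_2$ on $X$ and the embedding by a constant times $\Phi_2(u_2)$, while the splitting $u_2 - P_1 u_2 = (u - P_1 u) - (u_1 - P_1 u_1)$ controls the first summand by $\norm[Y]{u}$ plus a constant times $\Phi_1(u_1)$. Combining, $\norm[Y]{u_2}$, and then $\norm[Y]{u_1} \leq \norm[Y]{u} + \norm[Y]{u_2}$, are bounded by a constant times $\norm[Y]{u} + \Phi_1(u_1) + \Phi_2(u_2)$. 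Applying the assumed single-$\Phi$ equivalence to each term of $\norm[X]{u} \leq \norm[X]{u_1} + \norm[X]{u_2}$ and letting $\varepsilon \to 0$ yields $\norm[X]{u} \leq C(\norm[Y]{u} + (\Phi_1 \infconv \Phi_2)(u))$.

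With the equivalence in hand, the compactness assumption delivers the remaining conclusions. Any sequence with $\norm[Y]{u^n} + \Phi_i(u^n) \leq C$ is bounded in $X$ by the assumed equivalence, hence precompact, and a fortiori weakly precompact, in $Y$ by the compact embedding; so the weak compactness hypothesis of Lemma~\ref{exact} holds with the ambient space taken to be $Y$, yielding that $\Phi_1 \infconv \Phi_2$ is a lower semi-continuous semi-norm on $Y$ with finite-dimensional kernel $\ker(\Phi_1) + \ker(\Phi_2)$ that admits a continuous projection $P$. For the coercivity~\eqref{infembedding} I would argue by contradiction as in Proposition~\ref{TBGVcoer}: if $\norm[X]{u^n - Pu^n} = 1$ and $(\Phi_1 \infconv \Phi_2)(u^n) \to 0$, the norm equivalence~\eqref{BVTBGVinf} bounds $\{u^n - Pu^n\}$ in $X$, so the compact embedding extracts a subsequence converging in $Y$ to some $u^*$; continuity of $P$ on $Y$ forces $Pu^* = 0$, while the now-established lower semi-continuity of $\Phi_1 \infconv \Phi_2$ forces $u^* \in \ker(\Phi_1 \infconv \Phi_2) = \Ran(P)$, hence $u^* = 0$; reapplying~\eqref{BVTBGVinf} to $u^n - Pu^n$ then gives $c \norm[X]{u^n - Pu^n} \to 0$, contradicting the normalization.

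The principal obstacle is the lower bound of the norm equivalence: the normalization of the near-optimal decomposition via the common-kernel projection $Q$, the clean reduction to Lemma~\ref{exactlemma}, and the careful bookkeeping of cross-terms such as $u_2 - P_1 u_2$ are where all the work sits. Once~\eqref{BVTBGVinf} is in place the compactness assumption plugs directly into Lemma~\ref{exact} and into the standard contradiction argument used already in Proposition~\ref{TBGVcoer}, so the remaining assertions follow essentially routinely.
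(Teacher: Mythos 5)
Your proof is correct, but the central step --- the lower bound in \eqref{BVTBGVinf} --- is argued along a genuinely different route from the paper. The paper proves it by contradiction: it takes a normalized sequence with $\norm[X]{u^n}=1$ and $\norm[Y]{u^n}+(\Phi_1\infconv\Phi_2)(u^n)\le 1/n$, applies coercivity to a near-optimal decomposition to show that $u^n - P_1u^n - (P_2-P_1)v^n \to 0$ in $X$, and then uses that $P_1u^n+(P_2-P_1)v^n$ lies in the finite-dimensional space $\ker(\Phi_1)+\ker(\Phi_2)$ and tends to $0$ in $Y$, hence also in $X$; it never invokes Lemma~\ref{exactlemma} or the projection $Q$ at this point. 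You instead give a direct, quantitative estimate: normalize the near-optimal decomposition by $Q$, control $\norm[Y]{u_2}$ via Lemma~\ref{exactlemma} applied in $Y$, and propagate the bound through the single-functional equivalences. Your route yields explicit constants and avoids the soft ``norms are equivalent on finite-dimensional spaces'' step, at the cost of heavier bookkeeping; the paper's route is shorter but non-constructive. The remainder of your argument (lower semi-continuity via Lemma~\ref{exact} with ambient space $Y$ and relative compactness of the sublevel sets, coercivity by the contradiction scheme of Proposition~\ref{TBGVcoer}) coincides with the paper. One caveat: your parenthetical claim that finiteness of $\Phi_i(u_i)$ ``forces $X$-membership via the assumed equivalence'' does not follow from the stated hypotheses, which assert the equivalence only for $u\in X$; what you actually need is that near-optimal decompositions can be chosen with $u_1,u_2\in X$. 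The paper's proof makes the same tacit assumption (it picks the $v^n$ ``in $X$''), and it is harmless in the intended application where finiteness of $\TGV^{\osci}$ forces membership in $\BV(\Omega)$, but in the abstract setting it should be recorded as a hypothesis rather than derived.
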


\begin{proof}
  We will first establish the norm equivalence of
  $u \mapsto \norm[Y]{u} + (\Phi_1 \infconv \Phi_2)(u)$ with the norm
  in $X$. For that purpose, let $u \in X$ and estimate, employing the
  definition of the infimal convolution and the fact that $\Phi_2$ is
  semi-norm,
  \[
  \norm[Y]{u} + (\Phi_1 \infconv \Phi_2)(u) \leq
  \norm[Y]{u} + \Phi_1(u) \leq C_1 \norm[X]{u}.
  \]
  Next, suppose that the converse estimate is not true. Then, there
  exists a sequence $\{u^n\}$ in $X$ with
  \[
  \norm[X]{u^n} = 1, \qquad
  \norm[Y]{u^n} + (\Phi_1 \infconv \Phi_2)(u^n) \leq \frac1n
  \]
  for all $n$. Hence, $\lim\limits_{n \to \infty} u^n = 0$ in $Y$ and
  one can find a sequence $\{v^n\}$ in $X$ such that
  $\Phi_1(u^n - v^n) \leq \tfrac2n$ and $\Phi_2(v^n) \leq \tfrac2n$
  for all $n$. By the coercivity of $\Phi_1$ and $\Phi_2$ in $X$, we
  have
  \[
  \norm[X]{(u^n - v^n) - P_1(u^n - v^n)} \leq C_1' \Phi_1(u^n - v^n),
  \qquad
  \norm[X]{v^n  - P_2v^n} \leq C_2' \Phi_2(v^n)
  \]
  where $P_i: Y \to \ker(\Phi_i)$ and $C_i'$ are the projections and
  coercivity constants associated with the $\Phi_i$, respectively, for
  $i=1,2$. Consequently,
  \[
  \norm[X]{u^n - P_1u^n - (P_2 - P_1)v^n} \leq \norm[X]{u^n - v^n -
    P_1(u^n - v^n)} + \norm[X]{v^n - P_2v^n}
 \to 0
  \]
  as $n \to \infty$. Since $\norm[Y]{u^n} \to 0$ as $n \to \infty$, it
  follows by continuous embedding $X \embeds Y$ that
  $P_1u^n + (P_2 - P_1)v^n \to 0$ as $n \to \infty$ in $Y$. However,
  this sequence is contained in the finite-dimensional space
  $\ker(\Phi_1) + \ker(\Phi_2)$, so we also have
  $\lim\limits_{n \to \infty} P_1u^n + (P_2 - P_1)v^n = 0$ in
  $X$. Hence, $\norm[X]{u^n} \to 0$ as $n \to \infty$ which is a
  contradiction. So, there has to be a constant $c > 0$ such that
  \[
  c \norm[X]{u} \leq \norm[Y]{u} + (\Phi_1 \infconv \Phi_2)(u) \quad
  \text{for all} \quad u \in X.
  \]

  Now, assume that $X$ is compactly embedded in $Y$. Then,
  $\Phi_1 \infconv \Phi_2$ is a lower semi-continuous semi-norm on
  $Y$. This follows from applying Lemma~\ref{exact} setting $X = Y$
  and observing that the sublevel sets
  $\set{u \in Y}{\norm[Y]{u} + \Phi_i(u) \leq C}$ are relatively compact
  in $Y$ for each $C > 0$ and $i=1,2$. Indeed, the latter follows from
  the norm estimate
  $\norm[X]{u} \leq c_i^{-1}\bigl(\norm[Y]{u} + \Phi_i(u)\bigr)$ as
  well as the compact embedding of $X$ in $Y$.

  Finally, the coercivity of $\Phi_1 \infconv \Phi_2$ in $X$ follows
  in the lines of proof of Proposition~\ref{TBGVcoer}. Choose a
  projector $P: Y \to \ker(\Phi_1) + \ker(\Phi_2)$
  according to the statement which has to exist as the kernels are
  finite-dimensional. 
  Assuming that~\eqref{infembedding} does not hold leads to the
  existence of a sequence $\{u^n\}$ for which
  \[
  \norm[X]{u^n - Pu^n} = 1, \qquad
  (\Phi_1 \infconv \Phi_2)(u^n) \leq \frac1n
  \]
  for each $n$. By compact embedding, for a subsequence of $\{u^n\}$
  (not relabeled), we have $u^n - Pu^n \to u^*$ as $n \to \infty$ in
  $Y$ for some $u^* \in Y$. Since $P$ is a projection, one obtains
  $P(u^n - Pu^n) = 0$ for each $n$ such that by continuity of $P$ in
  $Y$, it follows that $Pu^* = 0$, i.e., $u^* \in \ker(P)$.  Moreover,
  $(\Phi_1 \infconv \Phi_2)(u^n - Pu^n) = (\Phi_1 \infconv
  \Phi_2)(u^n)$
  for each $n$, such that $(\Phi_1 \infconv \Phi_2)(u^n - Pu^n) \to 0$
  as $n \to \infty$.  By lower semi-continuity of
  $\Phi_1 \infconv \Phi_2$ in $Y$,
  $(\Phi_1 \infconv \Phi_2)(u^*) = 0$,
  meaning that $u^* \in \range(P)$. Together with $u^* \in \ker(P)$
  and $P$ being a projection, this implies $u^* = 0$.
  
  Thus, the norm equivalence~\eqref{BVTBGVinf} implies
  \[
  \norm[X]{u^n - Pu^n} \leq c^{-1} \bigl( \norm[Y]{u^n - Pu^n}
  + (\Phi_1 \infconv \Phi_2)(u^n) \bigr) \to 0
  \quad \text{as} \quad n \to \infty,
  \]
  which is a contradiction to $\norm[X]{u^n - Pu^n} = 1$ for all $n$.
  Hence, $\Phi_1 \infconv \Phi_2$ has to be coercive as stated.
\end{proof}

Having a lower semi-continuous and coercive semi-norm $\Phi$ allows to
solve linear inverse problems $Ku = f$ with $\Phi$ as regularization
functional. The following result is easily obtained using techniques from
calculus of variations (see, for instance, \cite{TGVinv1,TGVinv2,TGVmulti}). However,
we provide the argumentation for the sake of completeness.

\begin{proposition}
  \label{prop:general_min}
  Let $X,Y$ be Banach spaces, $X$ be reflexive, $K: X \to Y$ linear
  and continuous, $F: Y \to {]{-\infty, \infty}]}$ convex, lower
  semi-continuous, coercive and bounded from below, and $\Phi$ a lower
  semi-continuous and coercive semi-norm on $X$ with
  finite-dimensional kernel.  Then, there exists a solution to the
  minimization problem
  \begin{equation}
    \label{eq:general_min}
    \min_{u \in X} \ F(Ku) + \Phi(u).
  \end{equation}
\end{proposition}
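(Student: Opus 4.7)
The plan is to apply the direct method of the calculus of variations. Since $F$ is bounded from below and $\Phi \geq 0$, the objective $J(u) := F(Ku) + \Phi(u)$ has finite infimum, so I pick a minimizing sequence $\{u^n\}$ in $X$. The key step will be to show that (after a harmless modification) this sequence is bounded in $X$; reflexivity will then supply a weak cluster point, and weak lower semi-continuity of $F$ and $\Phi$ will close the argument.

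The main obstacle is that $\Phi$ only controls the component of $u^n$ outside $\ker(\Phi)$, whereas $F \circ K$ only controls components outside $\ker(K)$. The finite-dimensional subspace $K_0 := \ker(\Phi) \cap \ker(K)$ is therefore invisible to $J$, and a minimizing sequence could drift to infinity along $K_0$ while leaving $J$ unchanged. To handle this, I fix a continuous projection $P\colon X \to \ker(\Phi)$ (which exists by hypothesis on $\Phi$) and a continuous projection $Q\colon X \to K_0$ (which exists since $K_0$ is finite-dimensional), and replace $u^n$ by $\tilde u^n = u^n - QPu^n$. Because $QPu^n \in K_0$ lies in both kernels, we have $J(\tilde u^n) = J(u^n)$, so $\{\tilde u^n\}$ remains a minimizing sequence. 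By construction $P\tilde u^n$ lies in the finite-dimensional space $(I - Q)(\ker(\Phi))$, on which $K$ is injective: if $v \in (I-Q)(\ker\Phi)$ and $Kv = 0$, then $v \in K_0$, whence $v = Qv$ and thus $v = (I-Q)v = 0$.

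Next I would verify boundedness in $X$. Coercivity of $\Phi$ on $X$ gives $\norm[X]{\tilde u^n - P\tilde u^n} \leq C\,\Phi(\tilde u^n)$, which is bounded because $\{J(\tilde u^n)\}$ is bounded. Coercivity of $F$ together with boundedness of $\{F(K\tilde u^n)\}$ yields boundedness of $\{K\tilde u^n\}$ in $Y$, and hence of $\{KP\tilde u^n\} = \{K\tilde u^n - K(\tilde u^n - P\tilde u^n)\}$ by continuity of $K$. Since $K$ is injective on the finite-dimensional space containing $\{P\tilde u^n\}$, norm equivalence in finite dimensions yields boundedness of $\{P\tilde u^n\}$ in $X$. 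Adding the two pieces, $\{\tilde u^n\}$ is bounded in $X$.

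Finally, by reflexivity of $X$ a subsequence (not relabeled) satisfies $\tilde u^n \wrightarrow u^\ast$ in $X$; continuity of $K$ implies $K\tilde u^n \wrightarrow Ku^\ast$ in $Y$, and since $F$ and $\Phi$ are convex and strongly lower semi-continuous, they are weakly lower semi-continuous on $Y$ and $X$ respectively. Consequently
\[
F(Ku^\ast) + \Phi(u^\ast) \leq \liminf_{n\to\infty} F(K\tilde u^n) + \liminf_{n\to\infty} \Phi(\tilde u^n) \leq \liminf_{n\to\infty} J(\tilde u^n) = \inf_{u \in X} J(u),
\]
so $u^\ast$ is a minimizer. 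The main technical point is the modification step isolating and discarding the shared invisible directions $K_0 = \ker(\Phi) \cap \ker(K)$; everything else is routine use of coercivity, reflexivity, and weak lower semi-continuity.
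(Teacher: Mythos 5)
Your proof is correct and follows essentially the same route as the paper: modify the minimizing sequence by subtracting its component in the finite-dimensional space $\ker(\Phi)\cap\ker(K)$, use coercivity of $\Phi$ and of $F$ together with injectivity of $K$ on a finite-dimensional complement inside $\ker(\Phi)$ to obtain boundedness, and conclude via reflexivity and weak lower semi-continuity. The only cosmetic difference is that you subtract $QPu^n$ where the paper subtracts $Qu^n$ after arranging $PQ=Q$; both serve the same purpose.
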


\begin{proof}
  There is only something to show if the objective functional is
  proper.  In that case, it is bounded from below, so one can choose
  an infimizing sequence $\{u^n\}$ for which the objective functional
  converges to a finite value. Denoting by
  $Z = \ker(\Phi) \cap \ker(K)$, we see that $Z$ has finitely many
  dimensions and therefore, there exists a linear, continuous and onto
  projection operator $Q: X \to Z$.  Denoting by $P: X \to \ker(\Phi)$
  the projection associated with the semi-norm $\Phi$, we can moreover
  achieve that $PQ = Q$.
  
  As $Z$ is a subspace of both
  $\ker(\Phi)$ and $\ker(K)$, we can replace $u^n$ by $u^n - Qu^n$ and
  get
  \[
  F\bigl( K(u^n - Qu^n) \bigr)
  + \Phi(u^n - Qu^n) =
  F(Ku^n)+ \Phi(u^n),
  \]
  implying that $\{u^n - Qu^n\}$ is an infimizing sequence. Thus, we
  can assume without loss of generality that $\{u^n\}$ is an
  infimizing sequence in the reflexive Banach space $\range(I -Q)$.
  Now, $\{u^n\}$ has to be bounded: From the coercivity of $\Phi$ in
  $X$ we get that $\norm[X]{u^n - Pu^n}$ is bounded. Furthermore, as
  $F$ is coercive, $\norm[Y]{Ku^n}$ is bounded, so we have
  \[
  \norm[Y]{KPu^n} \leq \norm[Y]{Ku^n} + \norm{K} \norm[X]{u^n - Pu^n}
  \leq C
  \]
  for some $C > 0$. However, $K$ is injective on
  $\range\bigl(P(I - Q)\bigr) = \range(P-Q)$: For $Ku = 0$ and
  $u \in \range(P - Q) \subset \range(P)$ it follows that
  $u \in \ker(K) \cap \ker(\Phi) = Z$, so $Qu = u$ as well as $Pu = u$
  since $Z \subset \ker(\Phi)$. Hence, $(P - Q)u = 0$ which implies
  that $u = 0$ as $u \in \range(P-Q)$ and $P - Q$ is a projection.
  Now, $\range(P-Q)$ is finite-dimensional, so there exists a constant
  $c > 0$ such that $c \norm[X]{Pu} \leq \norm[Y]{KPu}$ for each
  $u \in \range(I-Q)$. This implies, since $\norm[Y]{KPu^n}$ is
  bounded, that $\norm[X]{Pu^n}$ is bounded and hence, that $\{u^n\}$
  is bounded in $X$.

  By reflexivity of $X$, we can extract a weakly convergent
  subsequence with limit $u^*$. One easily sees that the objective
  functional is convex and lower semi-continuous, so $u^*$ has to be a
  minimizer.
\end{proof}

It is then immediate that Tikhonov functionals for the regularized
solution of $Ku = f$ admit minimizers.

\begin{corollary}
  \label{cor:general_tikh}  
  In the situation of Proposition~\ref{prop:general_min}, let
  $f \in Y$ and $1 \leq q < \infty$. Then, there exists a
  minimizer of 
  \begin{equation}
    \label{eq:general_tikhonov}
    \min_{u \in X} \ \frac{\norm[Y]{Ku - f}^q}{q} + \Phi(u).
  \end{equation}
  and the minimum is finite.
\end{corollary}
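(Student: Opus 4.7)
The plan is to deduce the corollary as a direct application of Proposition~\ref{prop:general_min} with the choice $F: Y \to [0, \infty)$ defined by $F(y) = \frac{1}{q}\norm[Y]{y - f}^q$, so the only real work is to check the four hypotheses on $F$.

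The convexity of $F$ follows from writing $F$ as the composition of the convex, non-negative map $y \mapsto \norm[Y]{y - f}$ with the non-decreasing convex function $t \mapsto t^q/q$ on $[0,\infty)$. Lower semi-continuity is even stronger, namely continuity, coming from continuity of the norm and of $t \mapsto t^q/q$. Coercivity of $F$ on $Y$ follows from the reverse triangle inequality $\norm[Y]{y} \leq \norm[Y]{y - f} + \norm[Y]{f}$: if $\norm[Y]{y} \to \infty$ then $\norm[Y]{y - f} \to \infty$, so $F(y) \to \infty$. Boundedness from below is immediate since $F \geq 0$. With all hypotheses of Proposition~\ref{prop:general_min} verified, a minimizer $u^* \in X$ of~\eqref{eq:general_tikhonov} exists.

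To see that the minimum is finite, it suffices to exhibit one point where the objective is finite. Since $\Phi$ is a semi-norm, positive one-homogeneity with scalar $\lambda = 0$ gives $\Phi(0) = 0$, hence evaluating the objective at $u = 0$ yields $F(K \cdot 0) + \Phi(0) = \frac{1}{q}\norm[Y]{f}^q < \infty$. Combined with the boundedness from below of the objective, this shows the infimum, and therefore the minimum attained at $u^*$, is finite. No step here is truly an obstacle; the proof is essentially a verification exercise once Proposition~\ref{prop:general_min} is available.
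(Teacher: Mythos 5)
Your proposal is correct and matches the paper's intent exactly: the paper gives no separate proof, treating the corollary as an immediate application of Proposition~\ref{prop:general_min} with $F(y) = \tfrac1q\norm[Y]{y-f}^q$, and your verification of convexity, lower semi-continuity, coercivity, boundedness from below, and the finiteness of the minimum via $u=0$ and $\Phi(0)=0$ supplies precisely the routine details the authors omitted.
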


The general results presented here allow us to obtain properties of
the general infimal convolution of oscillation TGV model with
an arbitrary number of functionals.

\begin{theorem}
  \label{thm:ictgv}
  The functional
  $\ICTGV_{\vec{\alpha},\vec{\beta},\vec{\mathbf{c}}}^{\osci}$
  according to~\eqref{ICTGV2} is a lower semi-continuous semi-norm on
  $L^1(\Omega)$ with
  \[
  \begin{aligned}
    \ker(\ICTGV_{\vec{\alpha},\vec{\beta},\vec{\mathbf{c}}}^{\osci}) &=
    \linspan\set{x \mapsto \sin(\omega_i \inprod x), x \mapsto
      \cos(\omega_i \inprod x)}{i=1,\ldots,m \ \text{with} \ \omega_i \neq 0} \\
    & \quad
    +
    \begin{cases}
      \set{x \mapsto a \inprod x + b}{a \in \RR^d, b \in \RR}
      & \text{if} \ \omega_i = 0 \ \text{for some} \ i, \\
      \sett{0} & \text{else}.
    \end{cases}
  \end{aligned}
  \]
  It is coercive on $\BV(\Omega)$ and there exist
  constants $0 < c < C < \infty$ such that for each
  $u \in \BV(\Omega)$, it holds that
  \[
  c\norm[\BV]{u} \leq \norm[1]{u} +
  \ICTGV_{\vec{\alpha},\vec{\beta},\vec{\mathbf{c}}}^{\osci}(u) \leq C
  \norm[\BV]{u}.
  \]
\end{theorem}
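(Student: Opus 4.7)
The plan is to establish the theorem by induction on the number $m$ of convolved functionals, using Proposition~\ref{prop:inf_conv_semi_norms} as the main engine with the pair $X = \BV(\Omega)$ and $Y = L^1(\Omega)$. The compact embedding $\BV(\Omega) \embeds L^1(\Omega)$ is classical (Rellich--Kondrachov for bounded Lipschitz domains), so the compactness hypothesis of that proposition is at our disposal. Associativity of the infimal convolution means we can peel off one $\TGV^{\osci}$ factor at a time.

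For the base case $m=1$, all assertions are already in place: $\TGV^{\osci}_{\alpha_1,\beta_1,\mathbf{c}_1}$ is a lower semi-continuous semi-norm on $L^1(\Omega)$ by Proposition~\ref{prop:tgv_osci_lsc}, the norm equivalence with $\|\cdot\|_{\BV}$ is Lemma~\ref{BVTBGV}, coercivity on $\BV(\Omega)$ is Proposition~\ref{TBGVcoer}, and the (finite-dimensional) kernel is characterized via Lemma~\ref{contiker} together with Proposition~\ref{prop:tgv_osci_lsc}. For the induction step, assume the claim for $m-1$, denote the resulting functional by $\Phi_1$, and let $\Phi_2 := \TGV^{\osci}_{\alpha_m,\beta_m,\mathbf{c}_m}$. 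Both functionals are lower semi-continuous semi-norms on $L^1(\Omega)$ with finite-dimensional kernels that are coercive on $\BV(\Omega)$ and obey the two-sided norm equivalence $c_i\|u\|_{\BV}\leq \|u\|_1+\Phi_i(u)\leq C_i\|u\|_{\BV}$. Proposition~\ref{prop:inf_conv_semi_norms} then delivers directly that $\Phi_1\infconv\Phi_2 = \ICTGV^{\osci}_{\vec{\alpha},\vec{\beta},\vec{\mathbf{c}}}$ is a lower semi-continuous semi-norm on $L^1(\Omega)$ that is coercive on $\BV(\Omega)$ and satisfies the stated norm equivalence.

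For the kernel, I would use the exactness of the infimal convolution from Lemma~\ref{exact}: if $(\Phi_1\infconv\Phi_2)(u)=0$, then a minimizer $v^*$ attains the value, forcing $u-v^*\in\ker(\Phi_1)$ and $v^*\in\ker(\Phi_2)$, whence $u\in\ker(\Phi_1)+\ker(\Phi_2)$; the reverse inclusion is immediate. Iterating this identity yields $\ker(\ICTGV^{\osci}_{\vec{\alpha},\vec{\beta},\vec{\mathbf{c}}}) = \sum_{i=1}^{m}\ker(\TGV^{\osci}_{\alpha_i,\beta_i,\mathbf{c}_i})$. Plugging in the single-direction kernels from Proposition~\ref{prop:tgv_osci_lsc}, namely $\linspan\{x\mapsto\sin(\omega_i\inprod x),x\mapsto\cos(\omega_i\inprod x)\}$ when $\omega_i\neq 0$ and the space of affine functions when $\omega_i=0$, produces exactly the formula stated in the theorem. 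Since this is a finite sum of finite-dimensional subspaces, the total kernel is finite-dimensional and admits a linear continuous projection from $L^1(\Omega)$ onto it.

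The real work is all encapsulated in Proposition~\ref{prop:inf_conv_semi_norms}; the only subtlety to verify at each inductive step is that its hypotheses are faithfully reproduced in the conclusion, so that the induction can be continued. In particular one needs that the partial infimal convolution still has a finite-dimensional kernel and still satisfies the $\BV$ norm equivalence --- both are explicitly part of the conclusion of that proposition, which is precisely why it was set up in this abstract form. No other difficulty is anticipated; the argument is a clean induction driven by the previously established machinery.
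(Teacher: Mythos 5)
Your proposal is correct and follows essentially the same route as the paper: induction on $m$ with the base case supplied by Propositions~\ref{prop:tgv_osci_lsc} and~\ref{TBGVcoer} and Lemma~\ref{BVTBGV}, and the induction step carried entirely by Proposition~\ref{prop:inf_conv_semi_norms} applied to $X=\BV(\Omega)$ compactly embedded in $Y=L^1(\Omega)$. Your explicit verification that the kernel equals $\sum_i \ker(\TGV^{\osci}_{\alpha_i,\beta_i,\mathbf{c}_i})$ via exactness is a detail the paper leaves implicit (it is contained in the proof of Lemma~\ref{exact}), but it is the intended argument.
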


\begin{proof}
  We proceed inductively with respect to $m$. For $m=1$, the result is
  clear from Propositions~\ref{prop:tgv_osci_lsc} and~\ref{TBGVcoer}
  as well as Lemma~\ref{BVTBGV}, as already mentioned in the beginning
  of this section.  The induction step is then a consequence of
  Proposition~\ref{prop:inf_conv_semi_norms} with $X=\BV(\Omega)$ and
  $Y = L^1(\Omega)$, where $X$ is compactly embedded in $Y$.
\end{proof}

\begin{remark}
  In imaging applications, it is meaningful to choose $\omega_i = 0$
  for some $i$, i.e.,
  $\TGV_{\alpha_i,\beta_i,\mathbf{c}_i}^{\osci} =
  \TGV_{\alpha_i,\beta_i}^2$,
  such that piecewise smooth parts are captured the usual TGV
  functional of second order.
  
  Moreover, one might require that texture parts are sparse in a
  certain sense. The latter can, for instance, be incorporated in the
  infimal convolution model by adding a $L^1$-term such that, for
  $\omega_i \neq 0$, the infimal convolution is performed with respect
  to
  $\gamma_i\norm[1]{\:\cdot\:} +
  \TGV_{\alpha_i,\beta_i,\mathbf{c}_i}^{\osci}$
  for some $\gamma_i > 0$.
  This result is the following variant of~\eqref{ICTGV2}:
  \begin{equation}
    \label{eq:ictgv_l1}
    \ICTGV_{\vec{\alpha},\vec{\beta},\vec{\mathbf{c}},\vec{\gamma}}^{\osci} =
    (\gamma_1 \norm[1]{\:\cdot\:} + \TGV^{\osci}_{\alpha_1,\beta_1,\mathbf{c}_1})
    \infconv \ldots \infconv 
    (\gamma_m \norm[1]{\:\cdot\:} + \TGV^{\osci}_{\alpha_m,\beta_m,\mathbf{c}_m})
  \end{equation}
  where $\vec{\gamma} = (\gamma_1,\ldots,\gamma_m)$ satisfies
  $\gamma_1,\ldots,\gamma_m \geq 0$.

  One can easily see that each
  $\gamma_i\norm[1]{\:\cdot\:} +
  \TGV_{\alpha_i,\beta_i,\mathbf{c}_i}^{\osci}$
  is a lower semi-continuous semi-norm on $L^1(\Omega)$ that is
  coercive on $\BV(\Omega)$. Its kernel, however, is trivial if
  $\gamma_i > 0$. Thus, Theorem~\ref{thm:ictgv} can be extended to
  this variant of $\ICTGV^{\osci}$ with the modification that only $i$
  for which $\gamma_i = 0$ contribute to the kernel.
\end{remark}

Finally, the infimal convolution of oscillation TGV can be used as
regularization term for Tikhonov functionals associated with linear
inverse problems.

\begin{theorem}
  \label{thm:ictgv_tikh}
  Let $1 \leq p<\infty$, $p\leq d/(d-1)$ and
  $K: L^p(\Omega)\rightarrow Y$ a linear and continuous operator
  mapping to a Banach space $Y$, $f\in Y$ and $1 \leq q <
  \infty$. Then, the problem
  \begin{equation}
    \label{eq:ictgv_tikh}
    \min_{u\in L^p(\Omega)} \frac{1}{q}\|Ku-f\|^q_Y + \ICTGV_{\vec{\alpha},
      \vec{\beta}, \vec{\mathbf{c}}}^{\osci}(u)
  \end{equation}
  admits a solution.
\end{theorem}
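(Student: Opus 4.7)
The plan is to invoke Corollary~\ref{cor:general_tikh} with $X = L^p(\Omega)$, $F(y) = \tfrac{1}{q}\norm[Y]{y - f}^q$ and $\Phi = \ICTGV^{\osci}_{\vec{\alpha},\vec{\beta},\vec{\mathbf{c}}}$. The data fidelity $F$ is trivially convex, lower semi-continuous, coercive, and non-negative, and by Theorem~\ref{thm:ictgv} the regularizer $\Phi$ has a finite-dimensional kernel. What remains is to verify that $\Phi$ is a lower semi-continuous and coercive semi-norm on $L^p(\Omega)$.

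Since $\Omega$ is bounded, the embedding $L^p(\Omega) \embeds L^1(\Omega)$ is continuous, so convergence in $L^p$ implies convergence in $L^1$ and the $L^1$-lower semi-continuity established in Theorem~\ref{thm:ictgv} transfers to $L^p$. For coercivity, the projection $P: L^1(\Omega) \to \ker(\Phi)$ supplied by Theorem~\ref{thm:ictgv} satisfies $\norm[\BV]{u - Pu} \leq C\, \Phi(u)$, and combined with the Sobolev embedding $\BV(\Omega) \embeds L^{d/(d-1)}(\Omega)$ and the hypothesis $p \leq d/(d-1)$ this yields $\norm[p]{u - Pu} \leq C'\, \Phi(u)$ for all $u \in L^p(\Omega)$.

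For $p > 1$ the space $L^p(\Omega)$ is reflexive and Corollary~\ref{cor:general_tikh} closes the argument. The main obstacle is the boundary case $p = 1$, where reflexivity fails. I would adapt the direct-method proof of Proposition~\ref{prop:general_min}, substituting the compact embedding $\BV(\Omega) \embeds L^1(\Omega)$ for reflexivity: take a minimizing sequence $\{u^n\}$, subtract a continuous projection $Q: L^1(\Omega) \to \ker(\Phi) \cap \ker(K)$ (the intersection is finite-dimensional by Theorem~\ref{thm:ictgv}), use coercivity of $F$ together with injectivity of $K$ on the finite-dimensional space $\range(P - Q)$ to bound the kernel component $Pu^n - Qu^n$, and use the $\BV$-coercivity of $\Phi$ to bound $u^n - Pu^n$ in $\BV$. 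One then extracts a strongly $L^1$-convergent subsequence via Rellich--Kondrachov for $\BV$ and concludes by continuity of $K: L^1(\Omega) \to Y$ and of $F$, together with the $L^1$-lower semi-continuity of $\Phi$, that the limit is a minimizer.
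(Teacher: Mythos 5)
Your argument is correct, and for $p>1$ it coincides with the paper's: both reduce to Corollary~\ref{cor:general_tikh} on the reflexive space $L^p(\Omega)$ after transferring lower semi-continuity from $L^1(\Omega)$ via the continuous embedding $L^p(\Omega)\embeds L^1(\Omega)$ and coercivity from $\BV(\Omega)$ via $\BV(\Omega)\embeds L^{d/(d-1)}(\Omega)$. Where you genuinely diverge is the boundary case $p=1$. You re-run the direct method of Proposition~\ref{prop:general_min}, replacing weak compactness by the compact embedding $\BV(\Omega)\embeds L^1(\Omega)$; this works: after subtracting the $\ker(\Phi)\cap\ker(K)$ component, the minimizing sequence is bounded in $\BV(\Omega)$ by coercivity of $\Phi$ together with the finite-dimensional injectivity argument for the kernel part, and strong $L^1$-convergence of a subsequence suffices to pass to the limit in both terms. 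The paper instead disposes of $p=1$ with a one-line reduction: since $d/(d-1)>1$ and $\Omega$ is bounded, the restriction of $K$ to $L^{p'}(\Omega)$ for any $1<p'\leq d/(d-1)$ remains linear and continuous, and because the effective domain of $\ICTGV^{\osci}_{\vec{\alpha},\vec{\beta},\vec{\mathbf{c}}}$ is contained in $\BV(\Omega)\subset L^{p'}(\Omega)$, minimizing over $L^{p'}(\Omega)$ is the same problem, so one may assume $p>1$ from the outset. Your route is longer but more self-contained and would survive even in situations where the regularizer's domain did not embed into a reflexive $L^{p'}$-space; the paper's reduction is shorter but leans on exactly that embedding.
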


\begin{proof}
  Without loss of generality, one can assume that $p > 1$ since
  $d/(d-1) > 1$ for all $d$ and $K|_{L^p(\Omega)}: L^p(\Omega) \to Y$
  is linear and continuous if $K: L^1(\Omega) \to Y$ is linear and
  continuous. Then, with $X = L^p(\Omega)$,
  $\ICTGV_{\vec{\alpha}, \vec{\beta}, \vec{\mathbf{c}}}^{\osci}$ is a
  lower semi-continuous semi-norm on $X$ as it is a lower
  semi-continuous semi-norm on $L^1(\Omega)$ and
  $X \embeds L^1(\Omega)$. Moreover, it is coercive on $X$ as it is
  coercive on $\BV(\Omega)$ and $\BV(\Omega) \embeds X$ (see also
  Remark~\ref{rem:osci_tgv_coer}). Hence, application of
  Corollary~\ref{cor:general_tikh} yields the result.
\end{proof}

\begin{remark}
  It is clear that the theorem is still true if one replaces
  $\ICTGV_{\vec{\alpha}, \vec{\beta}, \vec{\mathbf{c}}}^{\osci}$ by
  the variant
  $\ICTGV_{\vec{\alpha},\vec{\beta},\vec{\mathbf{c}},\vec{\gamma}}^{\osci}$
  according to~\eqref{eq:ictgv_l1}.
\end{remark}

\section{Discretization and a numerical optimization algorithm}
\label{sec:discretization}

\subsection{A finite-difference discretization}
We start with a finite-difference discretization of
$\TGV^{\osci}$. Regarding the applications in this paper we just
consider the two-dimensional case and rectangular domains. The
framework can, however, easily be adapted to higher dimensions and
more general domains.  Following essentially the presentation in
\cite{TGV,TGVmulti,TGVPRS}, we first replace the domain
$\Omega\subset\RR^2$ by a discrete rectangular unit-length grid
\begin{equation*}
  \Omega=
  \set{(i,j)}{i,j\in \mathbb{N}, 1\leq i\leq N_1, 
    1\leq j\leq N_2}
\end{equation*}
where $(N_1, N_2)$, $N_1, N_2 \geq 2$ are the image dimensions. Let us
further introduce the vector spaces of functions
\begin{equation*}
U=\{u:\Omega\rightarrow \mathbb{R}\}
\end{equation*}
and $V=U\times U$, $W=U\times U\times U$. The space $U$ will be
endowed with the scalar product
$\langle u, u' \rangle_U =\sum_{(i,j)\in\Omega}^{}u_{i,j}u'_{i,j}$ for
$u,u' \in U$ and the norm $\|u\|_U=\sqrt{\langle u,u\rangle_U}$.
Likewise, the spaces $V$ and $W$ are equipped with the scalar products
$\scp[V]{v}{v'} = \scp[U]{v_1}{v_1'} + \scp[U]{v_2}{v_2'}$ and
$\scp[W]{w}{w'} = \scp[U]{w_1}{w_1'} + \scp[U]{w_2}{w_2'} +
2\scp[U]{w_3}{w_3'}$, respectively.

The oscillation TGV functional will be discretized by finite
differences. The forward and backward partial differentiation
operators are:
\begin{equation*}
\begin{aligned}
(\partial_x^+u)_{i,j}&=\begin{cases}
u_{i+1,j}-u_{i,j},&\text{if}\ 1\leq i<N_1,\\
0,&\text{if}\ i=N_1,
\end{cases}
&
\quad
(\partial_x^-u)_{i,j}&=\begin{cases}
u_{i,j}-u_{i-1,j},&\text{if}\ 1 < i \leq N_1,\\
0,&\text{if}\ i=1,
\end{cases}
\\
(\partial_y^+u)_{i,j}&=\begin{cases}
u_{i,j+1}-u_{i,j},&\text{if}\ 1 \leq j< N_2,\\
0, &\text{if}\ j=N_2,
\end{cases}
&
(\partial_y^-u)_{i,j}&=\begin{cases}
u_{i,j}-u_{i,j-1},&\text{if}\ 1 < j \leq N_2,\\
0, &\text{if}\ j=1,
\end{cases}
\end{aligned}
\end{equation*}
With these, the gradient $\nabla$ and symmetric gradient $\mathcal{E}$
are given as follows:
\begin{equation*}
\nabla: U\rightarrow V,\quad\nabla u =\begin{pmatrix}
\partial_x^+ u\\
\partial_y^+ u\\
\end{pmatrix},
\qquad
\mathcal{E}: V\rightarrow W,\quad\mathcal{E} v =\begin{pmatrix}
\partial_x^- v_1\\
\partial_y^- v_2\\
\frac{1}{2}(\partial_y^- v_1+\partial_x^- v_2)
\end{pmatrix}.
\end{equation*}
The divergences of vector fields and symmetric matrix fields are
defined as the negative adjoint of the gradient and symmetric
gradient, respectively.  These can be expressed in terms of partial
differentiation operators with suitable boundary conditions:
\begin{equation*}
  \begin{aligned}
    (\partial_x^{*+}u)_{i,j}&= \begin{cases}
      u_{2,j},&\text{if}\ i=1,\\
      u_{i+1,j}-u_{i,j},&\text{if}\ 1<i<N_1,\\
      -u_{N_1,j},&\text{if}\ i=N_1,
    \end{cases}
    & \quad
    (\partial_x^{*-}u)_{i,j}&= \begin{cases}
      u_{1,j},&\text{if}\ i=1,\\
      u_{i,j}-u_{i-1,j},&\text{if}\ 1<i<N_1,\\
      -u_{N_1-1,j},&\text{if}\ i=N_1,
    \end{cases}\\
    (\partial_y^{*+}u)_{i,j}&= \begin{cases}
      u_{i,2},&\text{if}\ j=1,\\
      u_{i,j+1}-u_{i,j},&\text{if}\ 1<j<N_2,\\
      -u_{i,N_2},&\text{if}\ j=N_2.
    \end{cases}
    &
    (\partial_y^{*-}u)_{i,j}&= \begin{cases}
      u_{i,1},&\text{if}\ j=1,\\
      u_{i,j}-u_{i,j-1},&\text{if}\ 1<j<N_2,\\
      -u_{i,N_2-1},&\text{if}\ j=N_2,
    \end{cases}
  \end{aligned}
\end{equation*}
such that
\begin{equation*}
\div_1:V\rightarrow U,\quad \div v=\partial_x^{*-} v_1+\partial_y^{*-} v_2,
\qquad
\div_2:W\rightarrow V,\quad \div_2 w=\begin{pmatrix}
  \partial_x^{*+} w_1+\partial_y^{*+} w_3\\
  \partial_x^{*+} w_3+\partial_y^{*+} w_2
\end{pmatrix}.
\end{equation*}
By direct computation, one sees that indeed, $\nabla^*=-\div_1$ as well
as $\mathcal{E}^*=-\div_2$. 
A discrete definition of oscillation TGV can now either be derived
from the primal representation~\eqref{TTGV} or the dual
representation~\eqref{dualTTGV2}.  In any case, we need the notion of
the $1$ and $\infty$ norm in the discrete spaces:
\begin{equation*}
  \begin{aligned}
    u&\in U: & \norm[1]{u} &= \sum_{(i,j)\in \Omega} \abs{u_{i,j}}, &
  \norm[\infty]{u} &= \max_{(i,j)\in\Omega} \ \abs{u_{i,j}}, \\
  v&\in V: & \norm[1]{v} &= \sum_{(i,j)\in\Omega} \bigl((v_1)_{i,j}^2 +
  (v_2)_{i,j}^2\bigr)^{1/2}, 
  &\|v\|_\infty &= \max_{(i,j)\in\Omega}\ \bigl((v_1)_{i,j}^2+(v_2)_{i,j}^2\bigr)^{1/2},\\
  w&\in W:
  &\norm[1]{w} &= \sum_{(i,j) \in \Omega} \bigl((w_1)_{i,j}^2+(w_2)_{i,j}^2+2(w_3)_{i,j}^2\bigr)^{1/2}, \\
  && \|w\|_\infty &= \max_{(i,j)\in\Omega}\
  \bigl((w_1)_{i,j}^2+(w_2)_{i,j}^2+2(w_3)_{i,j}^2\bigr)^{1/2}.
  \end{aligned}
\end{equation*}
With these prerequisites, the discrete functional associated with
$\alpha > 0$, $\beta > 0$ and
$\mathbf{c}=\begin{bmatrix} c_1&c_3\\c_3&c_2
\end{bmatrix}$ reads as follows
\begin{align}
  \notag
  \TGV^{\osci}_{\alpha,\beta,\mathbf{c}}(u) 
  &=
    \min_{v \in V} \ \alpha \norm[1]{\grad u - v} 
    + \beta \norm[1]{\symgrad v + \mathbf{c}u} \\
  \label{eq:discrete_tgv_osci}
  &=
    \max_{p \in V, \ q \in W} \ \scp[U]{u}{\div_1 p + \mathbf{c} \inprod q} \quad \text{subject to} \
    \left\{\begin{aligned}
        \norm[\infty]{p} & \leq \alpha, 
        \norm[\infty]{q} \leq \beta, \\
        p &= \div_2 q.
      \end{aligned}\right.
\end{align}
Indeed, one can easily verify that Fenchel--Rockafellar duality holds,
for instance, by continuity of
$v \mapsto \alpha \norm[1]{\grad u - v} + \beta \norm[1]{\symgrad v +
  \mathbf{c}u}$
in the finite-dimensional space $V$, and that the minimization and
maximization problems in~\eqref{eq:discrete_tgv_osci} are dual to each other.

\subsection{Discrete kernel representation}
We did not detail yet on the choice of $\mathbf{c}$. As our motivation
for oscillation TGV was to capture oscillations of the
type~\eqref{oscifunc} by adapting the kernel of the regularizer
exactly to these functions, our aim is to transfer this property to
the discrete setting.  However, a straightforward adaptation of the
choice of $\mathbf{c}$ in the continuous setting, i.e.,
$\mathbf{c} = \omega \tensor \omega$ for $\omega \in \RR^2$,
$\omega \neq 0$ will lead to a non-trivial kernel only in exceptional
situations. The reason for this is that in the continuous setting,
elements in the kernel satisfy certain differential equations with
entries of $\mathbf{c}$ as coefficients (see the proof of
Lemma~\ref{contiker}). These become difference equations in the
discrete setting whose solutions generally do not coincide with
discrete versions of the continuous solutions if one does not adapt
the coefficients in $\mathbf{c}$. This induces a mismatch that
typically leads to a trivial kernel; see the derivation below for more
details.

Hence, we have to choose the coefficients of $\mathbf{c}$ in order to
fit the above discretization of the differential operators. For this
purpose, we examine the kernel of
$\TGV^{\osci}_{\alpha,\beta,\mathbf{c}}$ according
to~\eqref{eq:discrete_tgv_osci} for a
$\mathbf{c}=\begin{bmatrix} c_1&c_3\\c_3&c_2
\end{bmatrix}$
with $c_1,c_2,c_3 \in \RR$ such that $\mathbf{c} \neq 0$.
Then, one immediately sees that the kernel is given
by all $u \in U$ that satisfy $\mathcal{E}\nabla u+\mathbf{c}u=0$ in
$\Omega$.  This corresponds to
\begin{equation}\label{kerequationdiscrete}
  \left\{\begin{aligned}
      \partial^-_x\partial^+_x u+c_{1}u&=0,\\
      \partial^-_y\partial^+_y u+c_{2}u&=0,\\
      \frac{\partial^-_y\partial^+_x+\partial^-_x\partial^+_y}{2} u+c_{3}u&=0.\\
  \end{aligned}\right.
\end{equation}
For the sake of simplicity we focus, in the following, on the
``interior points'' of $\Omega$ and neglect boundary effects. As a
consequence, Equation~\eqref{kerequationdiscrete} might not be
satisfied at discrete boundary points.  This leads to
\begin{equation}\label{kernelconditiondis}
  \left\{
    \begin{aligned}
      u_{i+1,j}-2u_{i,j}+u_{i-1,j}+c_1u_{i,j}&=0,\\
      u_{i,j+1}-2u_{i,j}+u_{i,j-1}+c_2u_{i,j}&=0,\\
      u_{i+1,j}-u_{i,j}-u_{i+1,j-1}+u_{i,j-1}+u_{i-1,j}-u_{i,j}-u_{i-1,j+1}+u_{i,j+1}+2c_3u_{i,j}&=0,\\
    \end{aligned}
  \right.
\end{equation}
for all $(i,j) \in \ZZ^2$ where $u$ is defined on all discrete points,
i.e., $u: \ZZ^2 \to \RR$.

We would like to achieve that for $\omega \in \RR^2$, $u$
satisfies~\eqref{kernelconditiondis} if and only if there exist
constants $C_1, C_2 \in \RR$ such that for each $(i,j)$ it holds that
\begin{equation}
  \label{eq:discrete_kernel_cond}
  u_{i,j} = C_1 \sin(\omega_1i + \omega_2j) + C_2\cos(\omega_1i + \omega_2j).
\end{equation}
For that purpose let us, in analogy to the presentation in
Section~\ref{sec:osci_tgv}, derive the solution space
for~\eqref{kernelconditiondis}.
Taking into account the first equation
of~\eqref{kernelconditiondis}, which can be regarded as one-dimensional
if we fix the subscript $j$, and rearranging leads to the recurrence
relation
\begin{equation}\label{iteration}
u_{i+1,j}=(2-c_1)u_{i,j}-u_{i-1,j}
\end{equation}
whose solutions can be described as follows.
Denoting by $\lambda \in \CC$ a root of
$t \mapsto t^2 - (2-c_1)t + 1$, it holds that
\begin{equation}
  \label{eq:recurrence_solution}
  u_{i,j} =
  \begin{cases}
    C_{1,j}\lambda^i + C_{2,j} \conj{\lambda}^i & \text{if} \ 
    \lambda \ \text{is a simple root}, \\
    C_{1,j}\lambda^i + C_{2,j} i\lambda^i & \text{else},
  \end{cases}
\end{equation}
for suitable constants $C_{1,j}, C_{2,j} \in \CC$.  Without loss of
generality, $\lambda$ is given by
\begin{equation*}
\lambda=\frac{2-c_1 + \sqrt{c_1^2-4c_1}}{2}
\end{equation*}
where $\sqrt{\cdot}$ is a complex square root that is defined on the
real axis.  Let us first assume that $\lambda$ is simple such that
$\conj{\lambda}$ is the other root. In view of the Euler formulas
$\sin(t) = \tfrac1{2\im} (\expE^{\im t} - \expE^{-\im t})$,
$\cos(t) = \tfrac12 (\expE^{\im t} + \expE^{-\im t})$, the
representation~\eqref{eq:discrete_kernel_cond} can only be achieved if
$\abs{\lambda} = 1$.
This in
turn, is the case if and only if $c_1^2 - 4c_1 < 0$, which is
equivalent to $0 < c_1 < 4$.  Thus, in order to
satisfy~\eqref{eq:discrete_kernel_cond}, the equation
\begin{equation}\label{eigenvalue}
  \lambda =\frac{2-c_1 + \mathrm{i}\sqrt{4c_1-c_1^2}}{2}=\expE^{ \mathrm{i}\omega_1}=\cos(\omega_1) +\mathrm{i}\sin(\omega_1)
\end{equation}
has to hold for some complex root, which means that
\begin{equation}\label{c1}
  c_1= 2 - 2\cos(\omega_1).
\end{equation}
In particular, this can only be true if $\omega_1 \notin \pi \ZZ$, in
which case we can express $u$ by
\begin{equation}\label{ui}
u_{i,j}=D_{1,j}\cos(\omega_1i)+D_{2,j}\sin(\omega_1i)\quad 
\text{for all} \quad i,j
\end{equation}
where $D_{1,j}, D_{2,j} \in \RR$ for each $j$. Fixing, for instance,
$i=1,2$ and noting that the matrix $
\begin{bmatrix}
  \cos(\omega_1) & \sin(\omega_1) \\
  \cos(2\omega_1) & \sin(2\omega_1)
\end{bmatrix}
$
has full rank, the second equation in~\eqref{kernelconditiondis} leads
to the recurrence relation $D_{i,j+1} = (2 - c_2)D_{i,j} - D_{i,j-1}$
for $i=1,2$ and all $j$. Analogous to the above we get, in case of
$0 < c_2 < 4$ that $c_2 = 2 - 2 \cos(\omega_2)$,
$\omega_2 \notin \pi \ZZ$ and
\begin{equation}
  \label{eq:coefficients_dij}
  D_{i,j} = A_i \cos(\omega_2 j) + B_i \sin(\omega_2 j) \quad \text{for}
  \quad i=1,2
\end{equation}
and all $j$.  Plugging this into~\eqref{ui} yields, by virtue of the
angle sum and difference identities,
\[
\begin{aligned}
u_{i,j} &= A_1 \cos(\omega_1 i)\cos(\omega_2 j) + B_1 \cos(\omega_1 i)
\sin(\omega_2 j) + A_2 \sin(\omega_1 i) \cos(\omega_2 j) + B_2 \sin(\omega_1 i) \sin(\omega_2 j) \\
& =  T_1 \cos(\omega_1 i + \omega_2 j) + 
T_2 \cos(\omega_1 i - \omega_2 j) 
+
T_3 \sin(\omega_1 i + \omega_2 j) + 
T_4  \sin(\omega_1 i - \omega_2 j)
\end{aligned}
\]
with $T_1 = \frac{A_1 - B_2}{2}$, $T_2 = \frac{A_1 + B_2}{2}$,
$T_3 =\frac{A_2 + B_1}{2}$ and $T_4 = \frac{A_2 - B_1}{2}$.  If one
plugs the first two equations of~\eqref{kernelconditiondis} into the
third equation, one gets
\begin{equation}
  \label{eq:kernel_combined}
  u_{i+1,j-1} + u_{i-1,j+1} = (2 - c_1 - c_2 + 2c_3) u_{ij}
\end{equation}
which corresponds to, using the above as well as
$c_1 = 2 - 2\cos(\omega_1)$, $c_2 = 2 - 2\cos(\omega_2)$,
\begin{multline*}
  2 \cos(\omega_1 - \omega_2) \bigl( T_1 \cos(\omega_1 i +\omega_2 j)
  +
  T_3 \sin(\omega_1 i +\omega_2 j) \bigr) \\
  + 2 \cos(\omega_1 + \omega_2) \bigl( T_2 \cos(\omega_1 i - \omega_2
  j) + T_4 \sin(\omega_1 i - \omega_2 j) \bigr) \\
  = 2(\cos(\omega_1)
  + \cos(\omega_2) + c_3 - 1) \bigl( T_1 \cos(\omega_1 i + \omega_2 j) + 
  T_2 \cos(\omega_1 i - \omega_2 j) 
  \\
  +
  T_3 \sin(\omega_1 i + \omega_2 j) + 
  T_4  \sin(\omega_1 i - \omega_2 j) \bigr).
\end{multline*}
As $\omega_1, \omega_2 \notin \pi \ZZ$, we have
$\cos(\omega_1 - \omega_2) \neq \cos(\omega_1 + \omega_2)$. Thus,
setting
$c_3 = 1 + \cos(\omega_1 - \omega_2) - \cos(\omega_1) -
\cos(\omega_2)$
implies that the equation can only be true for all $i,j$ if
$T_2 = T_4 = 0$. But this means that $u$ has the desired
representation~\eqref{eq:discrete_kernel_cond}. Note that here, the
above choice of $c_3$ is crucial as any other choice (expect
$c_3 = 1 + \cos(\omega_1 + \omega_2) - \cos(\omega_1) -
\cos(\omega_2)$)
implies a trivial kernel. As mentioned above, this is what typically
happens without an adaptation of $\mathbf{c}$ to the discrete setting.

We still have to discuss the remaining cases. Starting with
$c_2 \in \{0,4\}$ which corresponds, in view of
$c_2 = 2 - 2 \cos(\omega_2)$, to $\omega_2 \in \pi \ZZ$, such that the
root in~\eqref{eq:recurrence_solution} is not simple and the
representation~\eqref{eq:coefficients_dij} becomes
\begin{equation}
  \label{eq:kernel_coefficient2}
  D_{i,j} = (A_i + B_i j)\cos(\omega_2 j) \quad \text{for} \quad i=1,2.
\end{equation}
Now, since $\cos(\omega_2) \in \{-1,1\}$, it follows that 
$\cos(\omega_1i)\cos(\omega_2j) = \cos(\omega_1 i + \omega_2 j)$ as well
as $\sin(\omega_1i)\cos(\omega_2j) = \sin(\omega_1 i + \omega_2 j)$ such that
\[
u_{i,j} = (A_1 + B_1 j) \cos(\omega_1 i + \omega_2 j) + (A_2 + B_2 j)
\sin(\omega_1 i + \omega_2 j).
\]
Employing~\eqref{eq:kernel_combined} then leads to
\begin{multline*}
  2\cos(\omega_1 - \omega_2) \bigl( (A_1 + B_1j) \cos(\omega_1 i +
  \omega_2 j) + (A_2 + B_2j) \sin(\omega_1i +\omega_2j) \bigr) \\
  + 2 \sin(\omega_1 - \omega_2) \bigl(B_1 \sin(\omega_1 i + \omega_2
  j) - B_2 \cos(\omega_1 i + \omega_2 j) \bigr)
  \\
  = 2(\cos(\omega_1) + \cos(\omega_2) + c_3 - 1) \bigl( (A_1 + B_1 j)
  \cos(\omega_1 i + \omega_2 j) + (A_2 + B_2 j) \sin(\omega_1 i +
  \omega_2 j) \bigr),
\end{multline*}
such that setting again
$c_3 = 1 + \cos(\omega_1 - \omega_2) - \cos(\omega_1) -
\cos(\omega_2)$,
this can only be true for all $i,j$ if $B_1 = B_2 = 0$ since
$\sin(\omega_1 - \omega_2) \neq 0$ as a consequence of
$\omega_1 \notin \pi \ZZ$, $\omega_2 \in \pi \ZZ$. But this means that
$u$ has the desired representation~\eqref{eq:discrete_kernel_cond}.
If we interchange the $i$- and $j$-axis, the same reasoning applies in
case of $c_1 \in \{0,4\}$ and $0 < c_2 < 4$, or, equivalently,
$\omega_1 \in \pi \ZZ$ and $\omega_2 \notin \pi \ZZ$.

Finally, if $c_1,c_2 \in \{0,4\}$ which corresponds to
$\omega_1, \omega_2 \in \pi \ZZ$, the
identity~\eqref{ui} becomes
\[
u_{i,j} = (D_{1,j} + D_{2,j} i) \cos(\omega_1 i) 
\quad \text{for all} \quad i,j.
\]
Proceeding as above leads to~\eqref{eq:kernel_coefficient2} and,
consequently,
\[
u_{i,j} = (A_1 + B_1j + A_2i + B_2ij) \cos(\omega_1 i + \omega_2 j).
\]
Employing~\eqref{eq:kernel_combined} again yields
\begin{multline*}
  2 \cos(\omega_1 - \omega_2) \bigl(
  A_1 + B_1 j + A_2 i + B_2(ij-1) \bigr) \cos(\omega_1 i + \omega_2 j) \\
  = 2 ( \cos(\omega_1) + \cos(\omega_2) + c_3 - 1) 
  \bigl(A_1 + B_1j + A_2i + B_2ij \bigr) \cos(\omega_1 i + \omega_2 j).
\end{multline*}
With
$c_3 = 1 + \cos(\omega_1 - \omega_2) - \cos(\omega_1) -
\cos(\omega_2)$
and considering all $i,j$, we can, however, only deduce that
$B_2 = 0$, such that the
representation~\eqref{eq:discrete_kernel_cond} is not obtainable in
this situation. Thus, we have to assume that
$(\omega_1,\omega_2) \notin \pi \ZZ^2$.

In order to be complete let us eventually note that if
$c_1 \notin [0,4]$ or $c_2 \notin [0,4]$, the root $\lambda$
in~\eqref{eq:recurrence_solution} is simple and satisfies
$\abs{\lambda} \neq 1$ such that~\eqref{eq:discrete_kernel_cond} can
also not be obtained. In summary, we have proved the following proposition:

\begin{proposition}
  Let $\omega = (\omega_1, \omega_2) \in \RR^2 \setminus \pi \ZZ^2$ and
  \begin{equation}
    \label{comega}
    \mathbf{c} =
    \begin{bmatrix}
      2 - 2 \cos(\omega_1) & 1 + \cos(\omega_1 - \omega_2) - \cos(\omega_1) - \cos(\omega_2) \\
      1 + \cos(\omega_1 - \omega_2) - \cos(\omega_1) - \cos(\omega_2) & 2 -
      2\cos(\omega_2)
    \end{bmatrix}.
  \end{equation}
  Then, $u: \ZZ^2 \to \RR$ satisfies
  $\symgrad \grad u + \mathbf{c}u = 0$ in the sense
  that~\eqref{kernelconditiondis} holds for all $(i,j) \in \ZZ^2$ if
  and only if there are constants $C_1, C_2 \in \RR$ such that
  \[
  u_{i,j} = C_1 \cos(\omega_1 i + \omega_2 j) + C_2 \sin(\omega_1 i +
  \omega_2 j) \qquad \text{for all} \qquad (i,j) \in \ZZ^2.
  \]
\end{proposition}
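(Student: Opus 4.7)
The plan is to prove both implications. The ``if'' direction is a direct verification using product-to-sum identities: for any $u$ of the stated form one has $u_{i+1,j}+u_{i-1,j}=2\cos(\omega_1)u_{i,j}$, $u_{i,j+1}+u_{i,j-1}=2\cos(\omega_2)u_{i,j}$ and $u_{i+1,j-1}+u_{i-1,j+1}=2\cos(\omega_1-\omega_2)u_{i,j}$. Substituting these into \eqref{kernelconditiondis} together with $c_1=2-2\cos(\omega_1)$, $c_2=2-2\cos(\omega_2)$ and $c_3=1+\cos(\omega_1-\omega_2)-\cos(\omega_1)-\cos(\omega_2)$ makes every equation collapse to $0=0$.

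For the ``only if'' direction I would fix $j$ and read the first equation of \eqref{kernelconditiondis} as a second-order linear recurrence $u_{i+1,j}=(2-c_1)u_{i,j}-u_{i-1,j}$ in $i$, whose characteristic polynomial $t^2-(2-c_1)t+1$ has roots $\expE^{\pm\im\omega_1}$. When $\omega_1\notin\pi\ZZ$ these roots are simple and on the unit circle, yielding $u_{i,j}=D_{1,j}\cos(\omega_1 i)+D_{2,j}\sin(\omega_1 i)$; when $\omega_1\in\pi\ZZ$ the root $\pm 1$ is a double root and an extra $i$-affine factor appears. Evaluating at two values of $i$ for which the associated $2\times 2$ trigonometric matrix has full rank, the second equation transfers verbatim to each $D_{k,j}$ as a recurrence in $j$, solved analogously. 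Expanding the products $\cos(\omega_1 i)\cos(\omega_2 j)$ etc.\ by the angle-sum identities then writes $u_{i,j}$ in the generic subcase as
\[
u_{i,j} = T_1\cos(\omega_1 i{+}\omega_2 j)+T_2\cos(\omega_1 i{-}\omega_2 j)+T_3\sin(\omega_1 i{+}\omega_2 j)+T_4\sin(\omega_1 i{-}\omega_2 j),
\]
possibly augmented with affine-in-$i$ or affine-in-$j$ factors in the degenerate subcases.

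To eliminate $T_2$ and $T_4$ I would combine the three equations of \eqref{kernelconditiondis} into the single identity $u_{i+1,j-1}+u_{i-1,j+1}=(2-c_1-c_2+2c_3)u_{i,j}$. Inserting the trial form and using the definition of $c_3$ makes the right-hand side equal to $2\cos(\omega_1-\omega_2)u_{i,j}$. The sum-phase modes $T_1,T_3$ pass automatically, while the difference-phase modes $T_2,T_4$ enter with coefficient $2\cos(\omega_1+\omega_2)$ on the left versus $2\cos(\omega_1-\omega_2)$ on the right, forcing $T_2=T_4=0$ because $\cos(\omega_1-\omega_2)-\cos(\omega_1+\omega_2)=2\sin(\omega_1)\sin(\omega_2)\neq 0$ whenever $\omega_1,\omega_2\notin\pi\ZZ$. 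This recovers the desired representation in the generic subcase.

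The main obstacle is the careful bookkeeping of the degenerate subcases where exactly one of $\omega_1,\omega_2$ lies in $\pi\ZZ$: here the relevant recurrence has a double root, so the intermediate form of $u$ carries an unwanted affine factor, and moreover $\sin(\omega_1)\sin(\omega_2)=0$ so the previous vanishing argument collapses. In this situation the combined identity, after cancelling the purely trigonometric part, leaves a residual proportional to $\sin(\omega_1-\omega_2)$ multiplying the affine coefficients; this factor is still nonzero under the standing hypothesis $(\omega_1,\omega_2)\notin\pi\ZZ^2$, so the affine modes are forced to vanish and the argument closes. The corner case $\omega_1,\omega_2\in\pi\ZZ$ is excluded precisely because there both $\sin(\omega_1-\omega_2)$ and $\cos(\omega_1-\omega_2)-\cos(\omega_1+\omega_2)$ vanish, which shows that the hypothesis $(\omega_1,\omega_2)\notin\pi\ZZ^2$ is sharp.
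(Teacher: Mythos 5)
Your proposal is correct and follows essentially the same route as the paper: reading the first equation as a second-order recurrence in $i$ with characteristic roots $\expE^{\pm\im\omega_1}$, transferring the second equation to the coefficients $D_{k,j}$ via a full-rank evaluation at two indices, and then using the combined identity $u_{i+1,j-1}+u_{i-1,j+1}=(2-c_1-c_2+2c_3)u_{i,j}$ to annihilate the difference-phase modes (respectively the affine factors in the degenerate subcases) through the nonvanishing of $\cos(\omega_1-\omega_2)-\cos(\omega_1+\omega_2)=2\sin(\omega_1)\sin(\omega_2)$, respectively of $\sin(\omega_1-\omega_2)$. The only cosmetic difference is that you verify the ``if'' direction explicitly by product-to-sum identities, whereas the paper leaves it implicit in the construction of the solution space.
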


In particular, given a pair
$\omega=(\omega_1,\omega_2) \in \RR^2 \setminus \pi\ZZ^2$, we can
represent texture with oscillations in the corresponding direction and
frequency by the discrete functional~\eqref{eq:discrete_tgv_osci} and
$\mathbf{c}$ according to~\eqref{comega}. For
$(\omega_1,\omega_2) = 0$, Equation~\eqref{comega} yields
$\mathbf{c} = 0$, such that~\eqref{eq:discrete_tgv_osci} corresponds
to a discrete $\TGV_{\alpha,\beta}^2$.  Thus, a discrete infimal
convolution of oscillation TGV reads as follows:
\begin{align}
  \notag
  \ICTGV_{\vec{\alpha},\vec{\beta},\vec{\mathbf{c}}}^{\osci}(u) &=
  \min_{\substack{u_1,\ldots, u_m \in U, \\ w_1,\ldots,w_m \in V, \\u = u_1 + \ldots + u_m}} \sum_{i=1}^m
  \alpha_i \norm[1]{\grad u_i - w_i} + \beta_i \norm[1]{\symgrad w_i + \mathbf{c}_i u_i}
  \\
  \label{eq:discrete_ictgv}
  & = \max_{\substack{r \in U, \ p_1,\ldots,p_m \in V, \\ q_1,\ldots,q_m
      \in W}} \ \scp{u}{r} \quad \text{subject to} \quad
    \left\{\begin{aligned}
        \norm[\infty]{p_i} &\leq \alpha_i, \norm[\infty]{q_i} \leq \beta_i, \\
        p_i &= \div_2 q_i, \\
        r &= \div_1 p_i + \mathbf{c}_i \cdot q_i,
    \end{aligned}\right.
\end{align}
where $\vec{\alpha} = (\alpha_1,\ldots,\alpha_m)$,
$\vec{\beta} = (\beta_1,\ldots,\beta_m)$ are vectors of positive
parameters and
$\vec{\mathbf{c}} = (\mathbf{c}_1, \ldots, \mathbf{c}_m)$ are given
by~\eqref{comega} for pairs
$(\omega_{1,1}, \omega_{1,2}),\ldots, (\omega_{m,1}, \omega_{m,2}) \in
\RR^2 \setminus \pi \ZZ^2 \cup \{0\}$.

\begin{remark}
  \label{rem:ictgv_parameter_choice}
  The direction and frequency parameters in~\eqref{comega} can, for
  instance, be chosen as follows.  Suppose, we would like to detect
  eight different texture directions with unit frequency as well as a
  cartoon part, i.e., $m=9$. The first part in the infimal-convolution
  model is the cartoon component and therefore,
  $\omega_{1,1} = \omega_{1,2} = 0$, leading to $\mathbf{c}_1=0$.
  Further, we choose eight pairs $(\omega_1,\omega_2)$ to uniformly
  cover the (unsigned) unit directions:
  \begin{equation}\label{omega}
    \left\{
      \begin{aligned}
        \omega_{i,1}=\sin(\tfrac{(i-2)\pi}{8}), \\
        \omega_{i,2}=\cos(\tfrac{(i-2)\pi}{8}), \\
      \end{aligned}\right.
    \quad \text{for} \quad i=2,3,\ldots,9.
  \end{equation}
  If one aims at obtaining textures of higher frequency, one can, for
  instance, either replace $(\omega_{i,1}, \omega_{i,2})$ by
  $(2\omega_{i,1}, 2\omega_{i,2})$ or add the latter pairs to the
  model, resulting in $m=17$, i.e., one cartoon component, eight
  directions of low frequency and eight directions of high frequency.
  In our experiments, we utilized either the simple model ($m=9$) or
  the extended model ($m=17$).

  See Figure~\ref{texture} for a visualization of the kernels $u$
  associated with $\TGV^{\osci}_{\alpha_i,\beta_i,\mathbf{c}_i}$ for
  the above choice.
\end{remark}

\begin{figure}
  \center{} 
  \subfigure[$k=0$]{%
    \begin{minipage}{0.115\linewidth}
      \includegraphics[width=\textwidth]{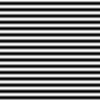}
      \\[-1em]
    \end{minipage}}
  \subfigure[$k=1$]{%
    \begin{minipage}{0.115\linewidth}
      \includegraphics[width=\textwidth]{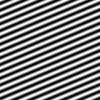}
      \\[-1em]
    \end{minipage}}
  \subfigure[$k=2$]{%
    \begin{minipage}{0.115\linewidth}
      \includegraphics[width=\textwidth]{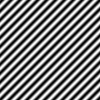}
      \\[-1em]
    \end{minipage}}
  \subfigure[$k=3$]{%
    \begin{minipage}{0.115\linewidth}
      \includegraphics[width=\textwidth]{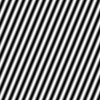}
      \\[-1em]
    \end{minipage}}
  \subfigure[$k=4$]{%
    \begin{minipage}{0.115\linewidth}
      \includegraphics[width=\textwidth]{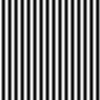}
      \\[-1em]
    \end{minipage}}
  \subfigure[$k=5$]{%
    \begin{minipage}{0.115\linewidth}
      \includegraphics[width=\textwidth]{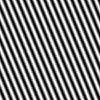}
      \\[-1em]
    \end{minipage}}
  \subfigure[$k=6$]{%
    \begin{minipage}{0.115\linewidth}
      \includegraphics[width=\textwidth]{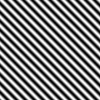}
      \\[-1em]
    \end{minipage}}
  \subfigure[$k=7$]{%
    \begin{minipage}{0.115\linewidth}
      \includegraphics[width=\textwidth]{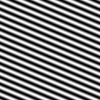}
      \\[-1em]
    \end{minipage}}
  \caption{Elements from the kernel of discrete oscillation TGV
    according to~\eqref{eq:discrete_tgv_osci} for the choice
    $\omega_1 = \sin(\tfrac{k\pi}8)$, $\omega_2 = \cos(\tfrac{k\pi}8)$
    and $\mathbf{c}$ according to~\eqref{omega} and constants $C_1=C_2=1$.}
\label{texture}
\end{figure}

\subsection{A numerical optimization algorithm}
Now, we focus on numerical algorithms for solving the Tikhonov
minimization problem~\eqref{eq:ictgv_tikh} for $q=2$ which is
associated with $\ICTGV^{\osci}$-regularization of the inverse problem
$Ku = f$. We assume that the linear operator $K: U \to Y$, where $Y$
is a Hilbert space is already given in a discretized form. With the
representation~\eqref{eq:discrete_ictgv},
problem~\eqref{eq:ictgv_tikh} reads as
\begin{equation*}
  \min_{\substack{u_1,\ldots,u_m \in U, \\ w_1,\ldots, w_m \in V}} \ 
  \frac{1}{2}\|K\sum_{i=1}^{m}u_i-f\|^2 + \sum_{i=1}^{m} \alpha_i\|\nabla u_i-w_i\|_1+\beta_i\|\mathcal{E} w_i+\mathbf{c}_iu_i\|_1.
\end{equation*}
For the image processing problems we are considering,
we also use a discrete version of the
$\ICTGV^{\osci}_{\vec{\alpha},\vec{\beta},\vec{\mathbf{c}},\vec{\gamma}}$-functionals
according to~\eqref{eq:ictgv_l1} with additional parameters
$\vec{\gamma} = (\gamma_1,\ldots,\gamma_m)$ where $\gamma_i \geq 0$
for each $i=1,\ldots,m$. This is might in particular be beneficial for
texture detection in the presence of noise as it enforces sparsity of
the respective texture component and prevents noise being recognized
as texture.
The modification then leads to the following minimization problem:
\begin{equation}
  \label{eq:discrete_problem}
  \min_{\substack{u_1,\ldots,u_m \in U, \\ w_1,\ldots, w_m \in V}}\
  \frac{1}{2}\|K\sum_{i=1}^{m}u_i-f\|^2
  + \sum_{i=1}^{m} \alpha_i\|\nabla u_i-w_i\|_1+\beta_i\|\mathcal{E} w_i+\mathbf{c}_iu_i\|_1+\gamma_i\|u_i\|_1,
\end{equation}
where we set $\gamma_i = 0$ if $\mathbf{c} = 0$ in order to prevent a
sparsification of the cartoon component (recall that then,
$\TGV^{\osci}_{\alpha,\beta,\mathbf{c}} = \TGV^2_{\alpha,\beta}$).
Our aim is to employ primal-dual algorithms which operate on
equivalent saddle-point formulations of the minimization problem.
For~\eqref{eq:discrete_problem}, one can easily see that
Fenchel--Rockafellar duality is applicable (see, e.g., \cite{Ekeland}), such that primal-dual solutions are equivalent to
solutions of the saddle-point problem
\begin{align}
  \notag
  \min_{\substack{u_1,\ldots,u_m \in U, \\ w_1,\ldots, w_m \in V}}  \ 
  &\max_{\substack{p_1,\ldots, p_m \in V, \\ q_1,\ldots, q_m \in W, \\
  \lambda \in Y}} \ 
  \Bigl( \langle K\sum_{i=1}^{m}u_i,\lambda\rangle + 
    \sum_{i=1}^{m} \langle \nabla u_i-w_i,p_i\rangle+\langle \mathcal{E} w_i+\mathbf{c}_i u_i,q_i\rangle \Bigr) \\
  \label{minmaxTTGVdecom}
  & + \Bigl( \sum_{i=1}^m \gamma_i\|u_i\|_1  \Bigr) -
    \Bigl(\scp{f}{\lambda} + \frac{\|\lambda\|^2}{2} + 
    \sum_{i=1}^{m}\mathcal{I}_{\{\|\,\cdot\,\|_\infty\leq \alpha_i\}}(p_i)
    + \mathcal{I}_{\{\|\,\cdot\,\|_\infty\leq \beta_i\}}(q_i)
    \Bigr).
\end{align}
This reformulation allows to use, e.g., the primal-dual method in
\cite{CP}, which we will shortly describe in the following.
It solves abstract convex-concave saddle-point problems of the form
\begin{equation}\label{pdorig}
\min_{x\in\mathcal{X}} \ \max_{y\in\mathcal{Y}} \ \langle \mathcal{K}x,y\rangle +G(x)-F^\ast(y),
\end{equation}
where $\mathcal{X},\mathcal{Y}$ are Hilbert spaces,
$\mathcal{K}:\mathcal{X}\rightarrow\mathcal{Y}$ is a continuous linear
mapping, and the functionals
$G:\mathcal{X}\rightarrow {]{-\infty,\infty}]}$ and
$F^\ast:\mathcal{Y}\rightarrow {]{-\infty,\infty}]}$ are proper, convex and
lower semi-continuous. The problem~\eqref{pdorig} is associated to the
Fenchel--Rockafellar primal-dual problems
\begin{equation}
  \min_{x\in\mathcal{X}} \ F(\mathcal{K}x)+G(x), \qquad 
\max_{y \in \mathcal{Y}} \ -F^*(y) -G^*(-\mathcal{K}^*y).
\end{equation}
In order to state the algorithm clearly, we have to give the notion of
resolvent operators $(I+\tau\partial G)^{-1}$ and
$(I+\sigma\partial F^\ast)^{-1}$, respectively, which correspond to
the solution operators of certain minimization problems, the so-called
proximal operators:
\begin{equation*}
\begin{split}
& x^\ast=(I+\tau\partial G)^{-1}(\bar{x})=\argmin_{x\in\mathcal{X}} \ \frac{\|x-\bar{x}\|_\mathcal{X}^2}{2}+\tau G(x),\\
&y^\ast=(I+\sigma\partial F^\ast)^{-1}(\bar{y})=\argmin_{y\in\mathcal{Y}} \ \frac{\|y-\bar{y}\|_\mathcal{Y}^2}{2}+\sigma F^\ast(y)
\end{split}
\end{equation*}
where $\tau,\sigma>0$ are step-size parameters we need to choose
suitably. Given the initial point
$(x^0,y^0)\in\mathcal{X}\times\mathcal{Y}$ and setting
$\bar{x}^0=x^0$, the iterative procedure in \cite{CP} can be written
as follows:
\begin{equation}
\label{eq:primal_dual_alg}
\begin{cases}
&y^{n+1}=(I+\sigma\partial F^\ast)^{-1}(y^n+\sigma \mathcal{K}\bar{x}^n),\\
&x^{n+1}=(I+\tau\partial G)^{-1}(x^n-\tau \mathcal{K}^\ast y^{n+1}),\\
&\bar{x}^{n+1}=2x^{n+1}-x^n.
\end{cases}
\end{equation}
It converges if the condition
$\tau\sigma\|\mathcal{K}\|^2 < 1$ is satisfied which corresponds to
choosing appropriate values for $\tau,\sigma>0$.

Next, we will delineate the iteration~\eqref{eq:primal_dual_alg}
adapted to our problem~\eqref{minmaxTTGVdecom} which can be
reformulated into above structure by redefining its variables and
operators as:
\begin{equation*}
\begin{split}
&x=(u_1,w_1,\ldots,u_m,w_m)\in\mathcal{X}=\underbrace{(U\times V) \times\cdots\times (U\times V)}_{\text{$m$-times}},\\
&y=(p_1,q_1,\ldots,p_m,q_m,\lambda)\in\mathcal{Y}=\underbrace{(V\times W) \times\cdots\times (V\times W)}_{\text{$m$-times}} \times Y,\\
& \mathcal{K}=\begin{pmatrix}
\begin{pmatrix}
\nabla & -I \\
\mathbf{c}_1 & \mathcal{E} 
\end{pmatrix}  & 0 & \cdots&0\\
0& \begin{pmatrix}
\nabla & -I \\
\mathbf{c}_2 & \mathcal{E} 
\end{pmatrix} & \cdots &0\\
\vdots& \vdots&  \ddots &\vdots \\
0& 0&\cdots & \begin{pmatrix}
\nabla & -I \\
\mathbf{c}_m & \mathcal{E} 
\end{pmatrix}\\
\begin{pmatrix}
K & 0 
\end{pmatrix}& \begin{pmatrix}
K & 0 
\end{pmatrix} & \cdots &\begin{pmatrix}
K & 0 
\end{pmatrix}
\end{pmatrix},
\end{split}
\end{equation*}
as well as
\begin{equation*}
\begin{split}
  & G(x)=\sum_{i=1}^{m}\gamma_i\|u_i\|_1,\\
  & F^\ast(y) = \langle
  f,\lambda\rangle+\frac{\|\lambda\|^2}{2}+\sum_{i=1}^{m} \mathcal{I}_{\{\|\cdot\|_\infty\leq
    \alpha_i\}}(p_i)+\mathcal{I}_{\{\|\cdot\|_\infty\leq
    \beta_i\}}(q_i).
\end{split}
\end{equation*}
In order to guarantee the convergence condition
$\tau\sigma\| \mathcal{K}\|^2 < 1$, we have to estimate the norm of
the operator $\mathcal{K}$. At first, if $\mathbf{c} =
\begin{bmatrix}
  c_1 & c_3 \\ c_3 & c_2
\end{bmatrix}$, then 
\[
\norm{\mathbf{c}}^2 = \frac{c_1^2}{2} + \frac{c_2^2}{2} + c_3^2 + \frac{c_1 + c_2}{2} \sqrt{(c_1 - c_2)^2 + 4c_3^2}.
\]
If we denote by
$c^2=\max\{\|\mathbf{c}_1\|^2,\ldots,\|\mathbf{c}_m\|^2\}$ and observe that
$\|\nabla\|^2<8$ and $\|\mathcal{E}\|^2<8$, then, after some computations,
one obtains the estimate
\[
\norm{\mathcal{K}}^2 < \frac{c^2 + (c+1) \sqrt{(c-1)^2 + 32} + 17}{2} +
m \norm{K}^2 = C_0\left(c,m,K\right),
\]
thus, in order to ensure convergence, it suffices to choose
$\tau, \sigma >0$ such that $\sigma\tau = \tfrac1{C_0\left(c,m,K\right)}$.

By the above analysis, the primal-dual method for solving the saddle-point
formulation~\eqref{minmaxTTGVdecom} of the imaging
problem~\eqref{eq:discrete_problem} reads as
Algorithm~\ref{TTGVdecomalgo}.
\begin{algorithm}[t]
  \caption{Primal-Dual Method for infimal convolution oscillation TGV}
  \label{TTGVdecomalgo}
  \textbf{Initialization:} Choose $\tau,\sigma$ and $x^0,y^0$, set $\bar{x}^0=x^0$;

  \textbf{Iteration:} Update according to the following steps
  \begin{algorithmic}
    \STATE $\lambda^{n+1}= \bigl(\lambda^n+\sigma(K\sum\limits_{i=1}\limits^{m}\bar{u}_i^n-f) \bigr)/(1+\sigma)$;
    \FOR{$i=1,\ldots,m$} 
    \STATE $p_i^{n+1}=\mathcal{P}_{\alpha_i}\bigl(p_i^n+\sigma(\nabla \bar{u}_i^n-\bar{w}_i^n)\bigr)$;
    \STATE $q_i^{n+1}=\mathcal{P}_{\beta_i}\bigl(q_i^n+\sigma(\mathcal{E}\bar{w}_i^n+\mathbf{c}_i \bar{u}_i^n)\bigr)$;
    \STATE $\tilde{u}_i^{n+1}=u_i^n-\tau(K^\ast\lambda^{n+1}-\div_1 p_i^{n+1}+\mathbf{c}_i q_i^{n+1})$;
    \STATE $u_i^{n+1}=\Shrink_{\tau\gamma_i}(\tilde{u}_i^{n+1})$;
    \STATE $w_i^{n+1}=w_i^n+\tau(p_i^{n+1}+\div_2 q_i^{n+1})$;
    \STATE $\bar{u}_i^{n+1}=2u_i^{n+1}-u_i^n$;
    \STATE $\bar{w}_i^{n+1}=2w_i^{n+1}-w_i^n$;
    \ENDFOR
  \end{algorithmic}
  \textbf{If converged:} Return $u = \sum\limits_{i=1}^m u_i^{n}$;
\end{algorithm}
Note that in the algorithm, the corresponding proximal operators are
already written in an explicit form (see, for instance,
\cite{TGVmulti} for more details).  In particular, the expressions in
the updates of $p_i^{n+1}$, $q_i^{n+1}$ and $u^{n+1}_i$ read as
follows:
\begin{equation*}
  \mathcal{P}_{\eta}(t) = 
  \min \Bigl(1, \frac{\eta}{\abs{t}} \Bigr) t,
  \qquad
  \Shrink_{\eta}(t) = \max \Bigl( 0, 1 - \frac{\eta}{\abs{t}} \Bigr) t.
\end{equation*}

\section{Applications and numerical experiments}
\label{sec:applications}

In this section, we discuss applications of infimal convolution of
oscillation TGV regularization to several image processing problems as
well as associated numerical experiments including comparisons to some
existing state-of-the-art methods for the respective problems. The
applications cover cartoon/texture decomposition, image denoising,
image inpainting and undersampled magnetic resonance imaging (MRI).

In particular, we assess the reconstruction results in terms of the
peak signal to noise ratio (PSNR) and structural similarity index
(SSIM). %
All parameters in the proposed and reference methods were manually
tuned to give the best results in terms of these measures; we report
them for the proposed methods in the figure captions of each case. The
manual tuning of the parameters was performed, for each method, on a
reasonably exhaustive set of parameters and was based on our experience
as well as the recommendations of the authors of the respective
method; we refer to the supplementary material Section D for
additional details on the implementations and parameter choice.  All
experiments were performed in MATLAB R2016a running on a computer
with %
Intel Core i5 CPU at 3.30 GHz and 8 GB of memory.  For some
implementations, we utilized the ``gpuArray'' feature of MATLAB's
parallelization toolbox to achieve an embarrasing parallel
implementation of the code for which the numerical experiments were
also run on a NVIDIA GTX 1070 GPU.  In all of the
examples, the range of image values is $[0,1]$. Note also that in all
experiments, the iteration number was set to a fixed value of 2000, a
number that was experimentally determined to be sufficient for
convergence of the algorithm. %
Moreover, unless stated otherwise, direction and frequency parameters
are chosen according to Remark~\ref{rem:ictgv_parameter_choice}, i.e.,
for $m=9$, there are eight equispaced texture directions and for
$m=17$, there are eight equispaced texture directions and for each
direction, there is a low and high frequency component.  Also note
that we did not perform an estimation of the directions and
frequencies from the data.

\subsection{Cartoon/texture decomposition and image denoising}
The proposed regularization can be applied to both cartoon/texture
decomposition as well as image denoising by setting $K = I$ in the
Tikhonov minimization problem~\eqref{eq:ictgv_tikh} as well as
$Y = L^2(\Omega)$ and $p=q=2$, i.e.,
\begin{equation}
  \label{TTGVdenoise}
  \min_{u \in L^2(\Omega)} \ \frac12 \norm[2]{u - f}^2 +
  \ICTGV^{\osci}_{\vec{\alpha}, \vec{\beta}, \vec{\mathbf{c}}}(u).
\end{equation}
In the case of cartoon/texture decomposition, we can use
$\ICTGV^{\osci}$ according to~\eqref{ICTGV2}, i.e., setting
$\gamma_i = 0$ for all $i$ in the algorithm. In contrast, in the case
of noise, which can be regarded as small-scale textures, the noise
will influence the texture detection and lead to each texture
component having undesired ghost stripes. In order to avoid this
effect, we use the variant with added $L^1$-norm according
to~\eqref{eq:ictgv_l1} in order to sparsify the texture which leads to
replacing
$\ICTGV^{\osci}_{\vec{\alpha}, \vec{\beta}, \vec{\mathbf{c}}}$ by
$\ICTGV^{\osci}_{\vec{\alpha}, \vec{\beta},
  \vec{\mathbf{c}},\vec{\gamma}}$ in~\eqref{TTGVdenoise}.

We performed several numerical experiments in order to study the
effectiveness as well as benefits and potential weaknesses of this
approach.  In our first experiment, we show the performance of the new
regularization
for a synthetic image with one piecewise affine region and four
directional textures corresponding to the angles
$0,\frac{\pi}{4},\frac{\pi}{2}$ and $\frac{3\pi}{4}$. Choosing a
$\ICTGV^{\osci}$ functional as in
Remark~\ref{rem:ictgv_parameter_choice} but with these four directions
instead of eight (leading to $m=5$) gives a decomposition as depicted
in Figure~\ref{decomsyn}.
One can see that the infimal convolution of oscillation TGV models are
almost perfect in decomposing the image into the respective cartoon
and texture parts. Moreover, Figure~\ref{decomsyn} shows denoising
results for the same synthetic image which are also almost perfect. Of
course, the reason for this behavior is that the synthetic ground
truth corresponds, in the respective regions, exactly to the kernels
of the $\TGV^{\osci}$-functionals utilized in the model; leading to a
low value of the $\ICTGV^{\osci}$-functional and a stable minimum of
the objective functional.

\begin{figure}
  \begin{minipage}{0.76\linewidth}
    \center{}
    \subfigure[original]{%
      \begin{minipage}{0.31\linewidth}
        \includegraphics[width=\textwidth]{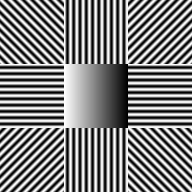}
        \\[-1em]
      \end{minipage}}
    \subfigure[cartoon $u_1$]{%
      \begin{minipage}{0.31\linewidth}
        \includegraphics[width=\textwidth]{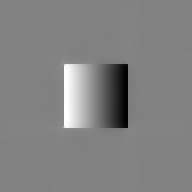}
        \\[-1em]
      \end{minipage}}
    \subfigure[$\text{angle} =0$, $u_2$]{%
      \begin{minipage}{0.31\linewidth}
        \includegraphics[width=\textwidth]{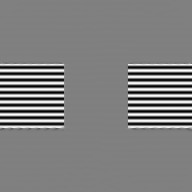}
        \\[-1em]
      \end{minipage}}
    \subfigure[$\text{angle} =\frac{\pi}{4}$, $u_3$]{%
      \begin{minipage}{0.31\linewidth}
        \includegraphics[width=\textwidth]{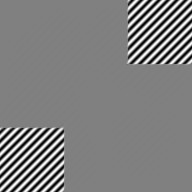}
        \\[-1em]
      \end{minipage}}
    \subfigure[$\text{angle} =\frac{\pi}{2}$, $u_4$]{%
      \begin{minipage}{0.31\linewidth}
        \includegraphics[width=\textwidth]{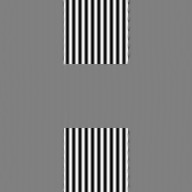}
        \\[-1em]
      \end{minipage}}
    \subfigure[$\text{angle} =\frac{3\pi}{4}$, $u_5$]{%
      \begin{minipage}{0.31\linewidth}
        \includegraphics[width=\textwidth]{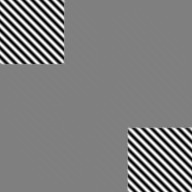}
        \\[-1em]
      \end{minipage}}
  \end{minipage}%
  \begin{minipage}{0.236\linewidth}
    \center{}
    \subfigure[noisy image]{%
      \begin{minipage}{\linewidth}
        \includegraphics[width=\textwidth]{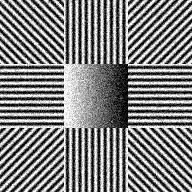}
        \\[-1em]
      \end{minipage}}
    \subfigure[restored image]{%
      \begin{minipage}{\linewidth}
        \includegraphics[width=\textwidth]{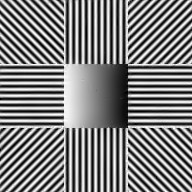}
        \\[-1em]
      \end{minipage}}
  \end{minipage}
  \caption{Decomposition/denoising of a synthetic image. (a)
    Original image of dimensions $192\times192$; (b) Cartoon component
    of the image; (c)--(f) The four directional texture
    components. Parameter choice:
    $\alpha_1=0.06, \beta_1=2\alpha_1, \gamma_1=0,
    \alpha_i=0.5\alpha_1, \beta_i=2\alpha_i, \gamma_i=0,
    i=2,\ldots,5$.
    (g) Noisy image corrupted by Gaussian noise with zero mean
    and standard deviation $\sigma=0.1$; (h) Image restored by
    model~\eqref{TTGVdenoise}. Parameter choice:
    $\alpha_1=0.12, \beta_1=2\alpha_1, \gamma_1=0,
    \alpha_i=0.5\alpha_1, \beta_i=2\alpha_i,
    \gamma_i=0.4\alpha_i, i=2,\ldots,5$. }
  \label{decomsyn}
\end{figure}

\begin{figure}
  \center{}
  \subfigure[noise-free test image]{%
    \begin{minipage}{0.3\linewidth}
      \includegraphics[width=\textwidth]{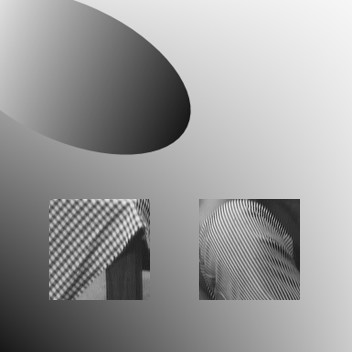}
      \\[-1em]
    \end{minipage}}
  \subfigure[noisy test image ($\sigma=0.05$)]{%
    \begin{minipage}{0.3\linewidth}
      \includegraphics[width=\textwidth]{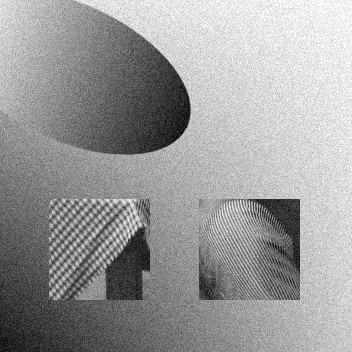}
      \\[-1em]
    \end{minipage}}
  
  \center{} 
  \subfigure[TV-$G$-norm]{%
    \begin{minipage}{0.22\linewidth}
      \includegraphics[width=\textwidth]{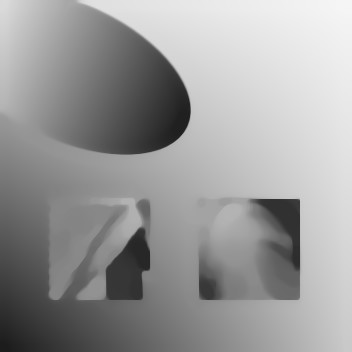}
      \\[\smallskipamount]
      \includegraphics[width=\textwidth]{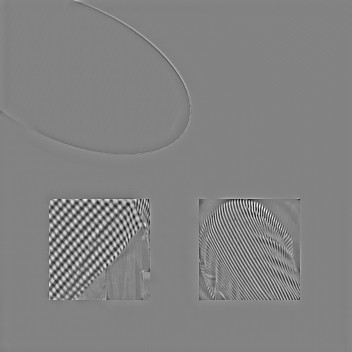}
      \\[-1em]
    \end{minipage}}
  \subfigure[TV-$H^{-1}$ ]{%
    \begin{minipage}{0.22\linewidth}
      \includegraphics[width=\textwidth]{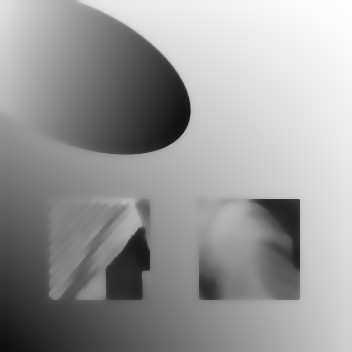}
      \\[\smallskipamount]
      \includegraphics[width=\textwidth]{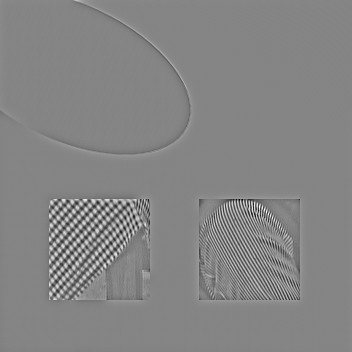}
      \\[-1em]
    \end{minipage}}
  \subfigure[Fra+LDCT]{%
    \begin{minipage}{0.22\linewidth}
      \includegraphics[width=\textwidth]{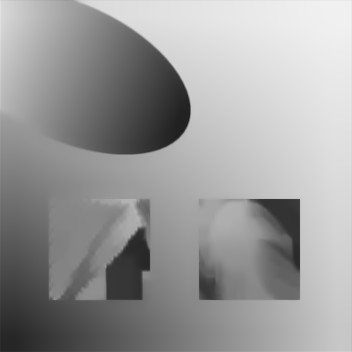}
      \\[\smallskipamount]
      \includegraphics[width=\textwidth]{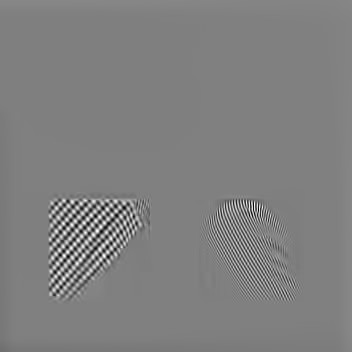}
      \\[-1em]
    \end{minipage}}
  \subfigure[proposed]{%
    \begin{minipage}{0.22\linewidth}
      \includegraphics[width=\textwidth]{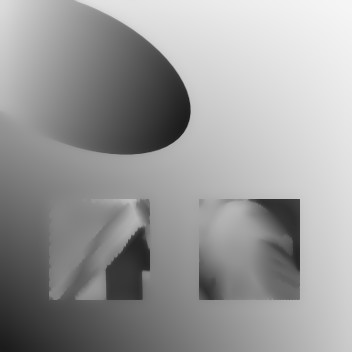}
      \\[\smallskipamount]
      \includegraphics[width=\textwidth]{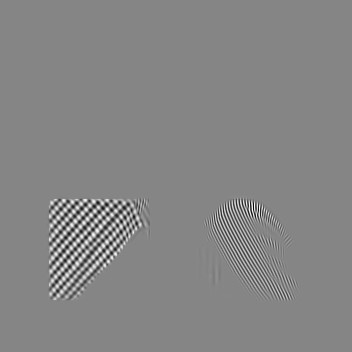}
      \\[-1em]
    \end{minipage}}

  \center{} 
  \subfigure[TV-$G$-norm]{%
    \begin{minipage}{0.22\linewidth}
      \includegraphics[width=\textwidth]{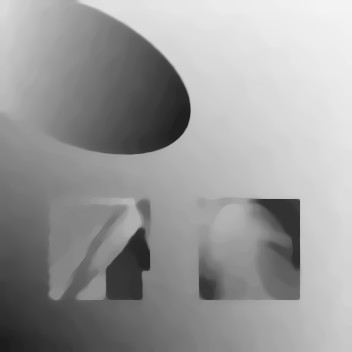}
      \\[\smallskipamount]
      \includegraphics[width=\textwidth]{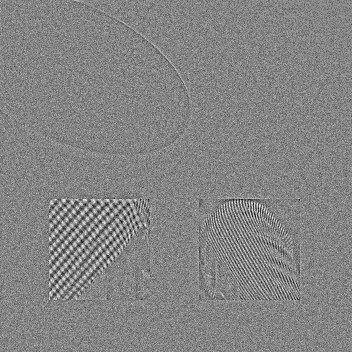}
      \\[-1em]
    \end{minipage}}
  \subfigure[TV-$H^{-1}$ ]{%
    \begin{minipage}{0.22\linewidth}
      \includegraphics[width=\textwidth]{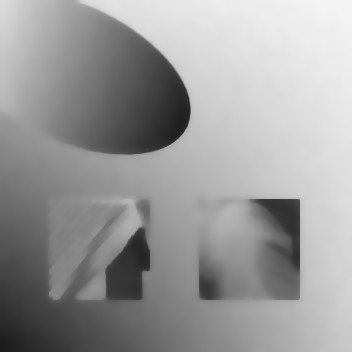}
      \\[\smallskipamount]
      \includegraphics[width=\textwidth]{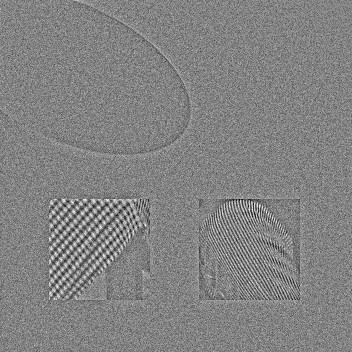}
      \\[-1em]
    \end{minipage}}
  \subfigure[Fra+LDCT]{%
    \begin{minipage}{0.22\linewidth}
      \includegraphics[width=\textwidth]{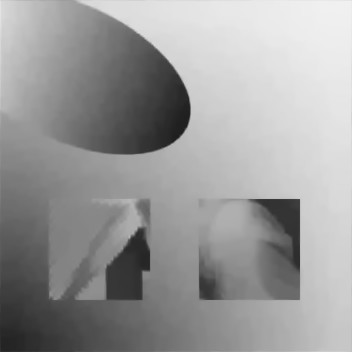}
      \\[\smallskipamount]
      \includegraphics[width=\textwidth]{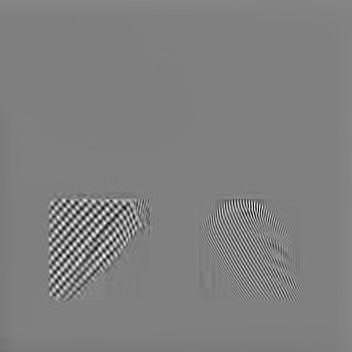}
      \\[-1em]
    \end{minipage}}
  \subfigure[proposed]{%
    \begin{minipage}{0.22\linewidth}
      \includegraphics[width=\textwidth]{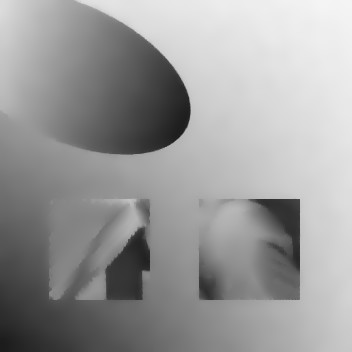}
      \\[\smallskipamount]
      \includegraphics[width=\textwidth]{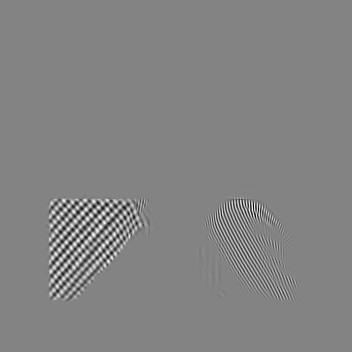}
      \\[-1em]
    \end{minipage}}

  \caption{Comparison of cartoon/texture decomposition methods for a
    synthetic image.  (a) A noise-free test image consisting of a
    cartoon part and textured regions ($352 \times 352$ pixels); (b) A
    noisy version of (a) with Gaussian noise of zero mean and variance
    $\sigma=0.05$; (c)--(f) Decomposition results for the noise-free
    image (top: cartoon part, bottom: texture part); (g)--(j)
    Decomposition results for the noisy image. Parameter choice for
    the proposed model:
    $\alpha_1=0.07, \beta_1=2\alpha_1, \gamma_1=0,
    \alpha_i=0.9\alpha_1, \beta_i=\alpha_i, \gamma_i=0.1\alpha_i,
    i=2,\ldots,17$.}
  \label{decomposition_test}
\end{figure}

In Figure~\ref{decomposition_test} we show the decomposition/denoising
performance for a synthetic image with cartoon part and some textured
regions in comparison to the $\TV$-$G$-norm model \cite{aujoldecom},
$\TV$-$H^{-1}$ model \cite{tvH1} and the framelet/local discrete
cosine transform (Fra+LDCT) model \cite{framelet}. We performed
experiments both for a noise-free and noisy image (with Gaussian noise
of variance $\sigma=0.05$). In the results, one can see that some
cartoon components (such as edges) and undesired information (such as
noise), appear in the texture parts of the $\TV$-$G$-norm and
$\TV$-$H^{-1}$ model which do not appear in the Fra+LDCT and the
proposed model. Comparing the cartoon components of all models, one
can observe staircasing effects in the $\TV$-$G$-norm, $\TV$-$H^{-1}$
and Fra+LDCT model as well as some texture artifacts in the
$\TV$-$H^{-1}$ model. Due to the utilization of $\TGV$ for the cartoon
component, these artifacts do not appear in the $\ICTGV^{\osci}$
model.  Focusing on the texture component, both Fra+LDCT and the
proposed model yield high-quality results. In the Fra+LDCT results,
however, one can, for instance, see some low-frequency components in
the trousers that are attributable to a shading and hence, should
belong to the cartoon part. In contrast, the proposed $\ICTGV^{\osci}$
model does correctly assign these shadings to the cartoon
part. Moreover, the boundaries of the texture seem to be better
delineated in the proposed model compared to Fra+LDCT decomposition
which shows some spill-over effects in the texture part. In summary,
one can observe that the $\ICTGV^{\osci}$ decomposition/denoising
model is effective in separating cartoon, texture and noise.

We also performed numerical experiments for the denoising of three
natural images (``barbara'', ``zebra'' and ``parrots'') and two
different noise levels (Gaussian noise with zero mean and standard
deviation $\sigma= 0.05$ and $0.1$, respectively).  The ``barbara''
and ``zebra'' test images have plentiful textures, some of which are
large scale and some are small. They can be interpreted as different
frequency components, so we used the $\ICTGV^{\osci}$-model with
sixteen texture components according to
Remark~\ref{rem:ictgv_parameter_choice}, i.e., $m=17$ for these
images. For the ``parrots'' image, we performed the experiments with
eight texture components, i.e., $m=9$.

The results were compared to a selection of popular and
state-of-the-art variational image models: $\TGV$ of second order,
non-local $\TV$ (NLTV) \cite{nltv1,nltv2}, ICTGV \cite{ICTVMartin} as
well as framelet/local DCT (Fra+LDCT) \cite{framelet}. Additionally,
we compared to BM3D \cite{bm3d} which is a dedicated denoising method.
For all models, we manually tuned the parameters to give the best peak
signal-to-noise ratio (PSNR). 
  However, as some methods tend to
produce ghost artifacts for PSNR-optimal parameters, we also tuned
parameters in order to yield visually pleasing results; at least from
the subjective viewpoint of the authors (namely, the NLTV, ICTGV,
Fra+LDCT and the proposed model). The PSNR and SSIM values of these
experiments are reported in Tables~\ref{table:denoisingPSNR}
and~\ref{table:denoisingSSIM}, respectively.

\begin{table}\small
	\center{}
	\begin{tabular}{|c|c|c|c|c|c|c|c|}
		\hline
		\hline
		noise level&image&TGV &NLTV&ICTGV&Fra+LDCT& BM3D& proposed model \\
		\hline
		\multirow{3}{*}{$\sigma=0.05$}
		& barbara & 27.41 & 32.05 (31.28) &31.15 (30.33)& 31.70 (31.19) & \textbf{34.43} & 32.21 (31.44)\\
		\cline{2-8}& zebra & 28.74 & 31.16 (30.07) &31.08 (30.43)& 30.81 (30.06) & \textbf{32.25} & 31.33 (30.46) \\
		\cline{2-8}& parrots & 34.77 & 35.10 (34.08) &36.49 (35.63)& 36.10 (35.49) & \textbf{37.40} & 36.61 (36.01) \\
		\hline
		\multirow{3}{*}{$\sigma=0.1$}
		& barbara & 25.34 & 27.58 (27.15) &27.35 (26.71)& 28.05 (27.66) & \textbf{31.06} & 28.45 (27.89)\\
		\cline{2-8}& zebra & 25.24 & 27.35 (26.93) &27.35 (26.83)& 27.06 (26.44) & \textbf{28.80} & 27.65 (27.00) \\
		\cline{2-8}& parrots & 32.51 & 31.39 (30.13) &33.28 (32.35) & 32.91 (32.65) & \textbf{34.08} & 33.32 (32.70) \\
		\hline
	\end{tabular}
	\caption{\label{table:denoisingPSNR} Comparison of denoising performance for noise levels $\sigma=0.05$ and $\sigma=0.1$ in terms of PSNR. (The values in parentheses correspond to the visually-optimized results.)}
\end{table}

\begin{table}\small
	\center{}
        \scalebox{0.9}{%
	\begin{tabular}{|c|c|c|c|c|c|c|c|}
		\hline
		\hline
		noise level&image&TGV &NLTV&ICTGV&Fra+LDCT& BM3D& proposed model \\
		\hline
		\multirow{3}{*}{$\sigma=0.05$}
		& barbara & 0.8043 & 0.9005 (0.8917)&0.8859 (0.8833) & 0.8857 (0.8988) &\textbf{0.9365}& 0.9004 (0.8988)\\
		\cline{2-8}&zebra & 0.8386 & 0.8647 (0.8419) &0.8761 (0.8735) & 0.8653 (0.8671) & \textbf{0.8998} & 0.8859 (0.8777) \\
		\cline{2-8}& parrots & 0.9157 & 0.8956 (0.9062) &0.9324 (0.9272) & 0.9259 (0.9257) & \textbf{0.9416} & 0.9358 (0.9303) \\
		\hline
		\multirow{3}{*}{$\sigma=0.1$}
		& barbara & 0.7297 & 0.7553 (0.7814) &0.7835 (0.7840)& 0.8214 (0.8201) &\textbf{0.8920} & 0.8235 (0.8120)\\
		\cline{2-8}&zebra & 0.7472 & 0.7271 (0.7540) &0.7796 (0.7897)& 0.7868 (0.7795) & \textbf{0.8237} & 0.7965 (0.7948) \\
		\cline{2-8}& parrots & 0.8887 & 0.8312 (0.8530) &0.8975 (0.8889)& 0.8903 (0.8928) & \textbf{0.9025} & 0.8979 (0.8936) \\
		\hline
	\end{tabular}}
	\caption{\label{table:denoisingSSIM} Comparison of denoising performance for noise levels $\sigma=0.05$ and $\sigma=0.1$ in terms of SSIM. (The values in parentheses correspond to the visually-optimized results.)}
\end{table}

\begin{figure}
  \center{} 
    \subfigure[ground truth]{%
      \begin{minipage}{0.22\linewidth}
        \includegraphics[width=\textwidth]{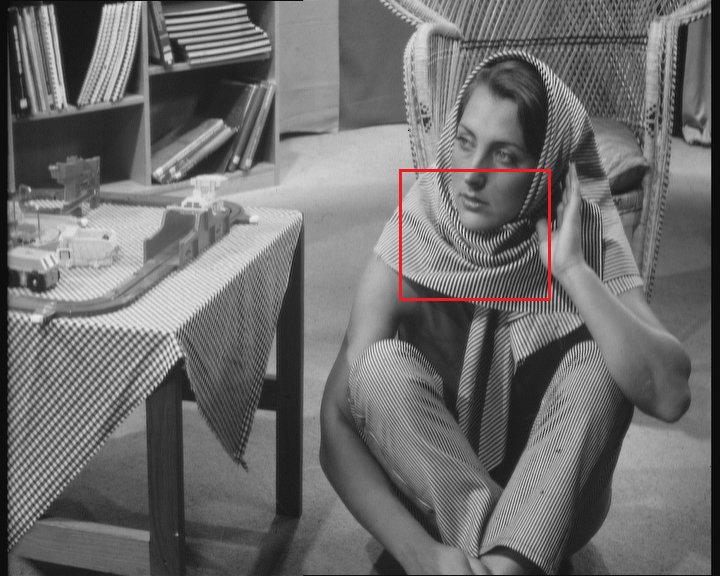}
        \\[\smallskipamount]
        \includegraphics[width=\textwidth]{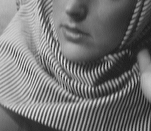}
        \\[-1em]
      \end{minipage}}
  \subfigure[noisy image ($\sigma=0.05$)]{%
    \begin{minipage}{0.22\linewidth}
      \includegraphics[width=\textwidth]{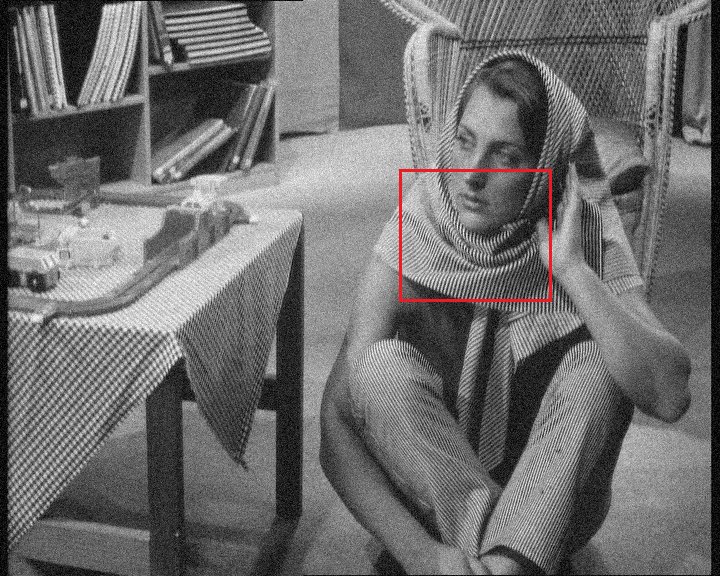} 
      \\[\smallskipamount]
      \includegraphics[width=\textwidth]{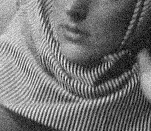}
      \\[-1em]
    \end{minipage}}
  \subfigure[TGV]{%
    \begin{minipage}{0.22\linewidth}
      \includegraphics[width=\textwidth]{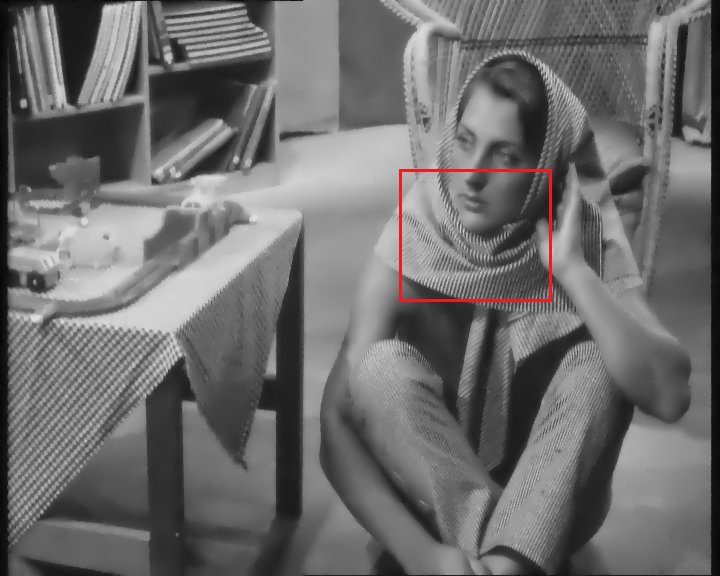}
      \\[\smallskipamount]
      \includegraphics[width=\textwidth]{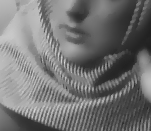}
      \\[-1em]
    \end{minipage}}
  \subfigure[NLTV]{%
    \begin{minipage}{0.22\linewidth}
      \includegraphics[width=\textwidth]{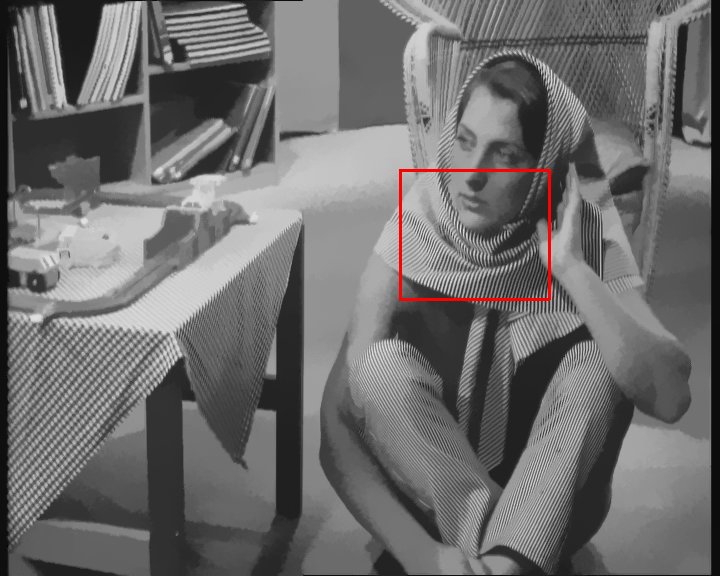}
      \\[\smallskipamount]
      \includegraphics[width=\textwidth]{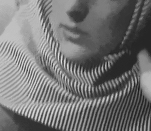}
      \\[-1em]
    \end{minipage}}

  \subfigure[$\ICTGV$]{%
	\begin{minipage}{0.22\linewidth}
		\includegraphics[width=\textwidth]{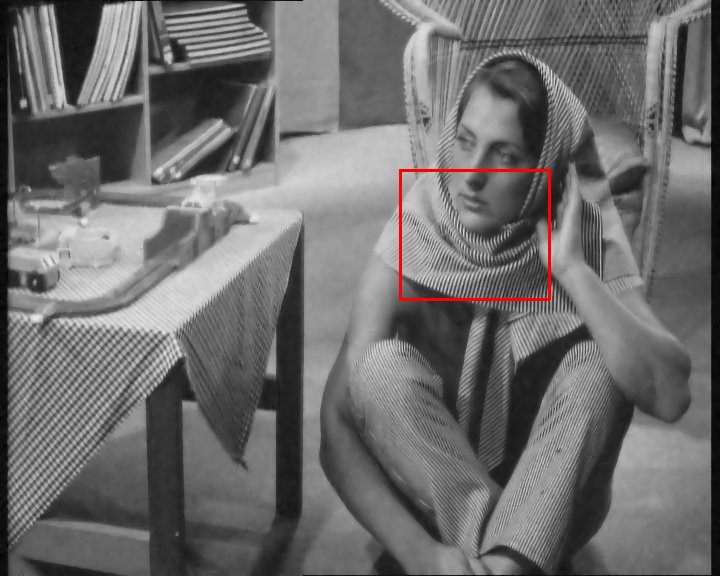}
		\\[\smallskipamount]
		\includegraphics[width=\textwidth]{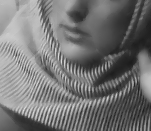}
		\\[-1em]
\end{minipage}}
  \subfigure[Fra+LDCT]{%
    \begin{minipage}{0.22\linewidth}
      \includegraphics[width=\textwidth]{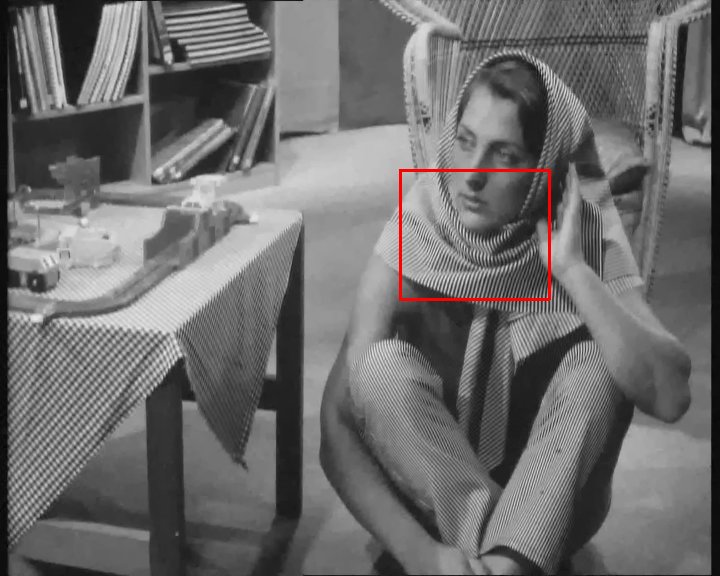}
      \\[\smallskipamount]
      \includegraphics[width=\textwidth]{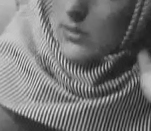}
      \\[-1em]
    \end{minipage}}
  \subfigure[BM3D]{%
    \begin{minipage}{0.22\linewidth}
      \includegraphics[width=\textwidth]{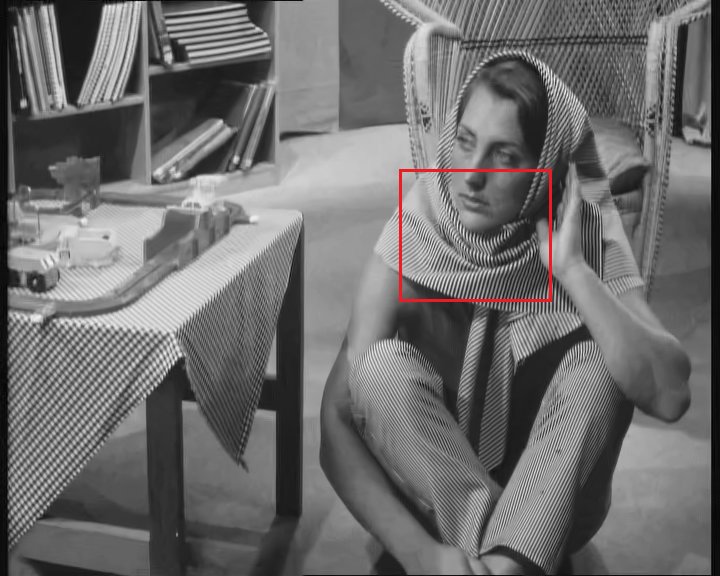}
      \\[\smallskipamount]
      \includegraphics[width=\textwidth]{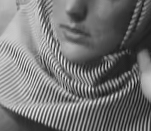}
      \\[-1em]
    \end{minipage}}
  \subfigure[proposed model]{%
    \begin{minipage}{0.22\linewidth}
      \includegraphics[width=\textwidth]{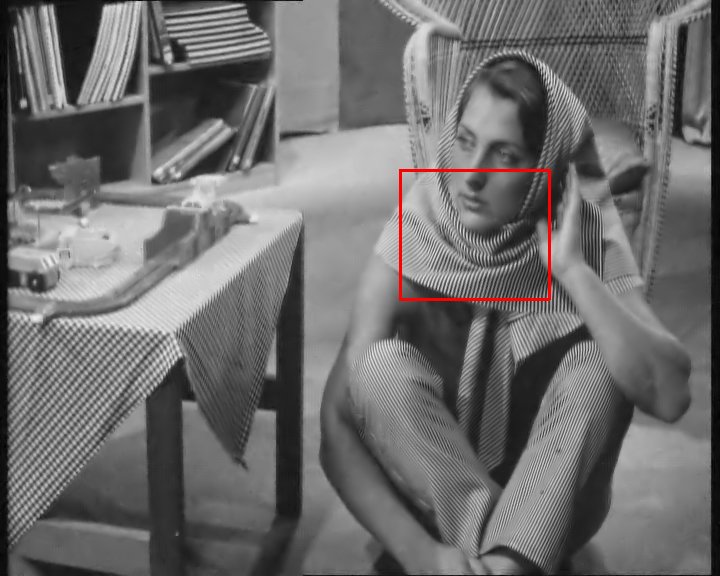}
      \\[\smallskipamount]
      \includegraphics[width=\textwidth]{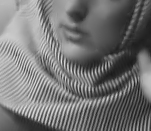}
      \\[-1em]
    \end{minipage}}
  \caption{Denoising results for barbara image with noise level
    $\sigma=0.05$. All parameters are tuned for visually-optimal
    results. Parameter choice for the proposed model:
    $\alpha_1=0.05, \beta_1=\alpha_1, \gamma_1=0,
    \alpha_i=0.9\alpha_1, \beta_i=\alpha_i, \gamma_i=0.1\alpha_i,
    i=2,\ldots,17$.}
  \label{imdenoisebar}
\end{figure}
Figure~\ref{imdenoisebar} shows an example of the denoising
performance of the methods on the ``barbara'' test image with noise
level $\sigma=0.05$ in detail. Additionally, the outcome of the
$\ICTGV^{\osci}$-denoising for all tests is presented in
Figure~\ref{fig:imdenoise_all}.  For the remaining results, we refer
to the supplementary material (Section A).
\begin{figure}
  \center{} 
  \subfigure[barbara ($\sigma = 0.05$)]{%
    \begin{minipage}{0.266667\linewidth}
      \includegraphics[width=\textwidth]{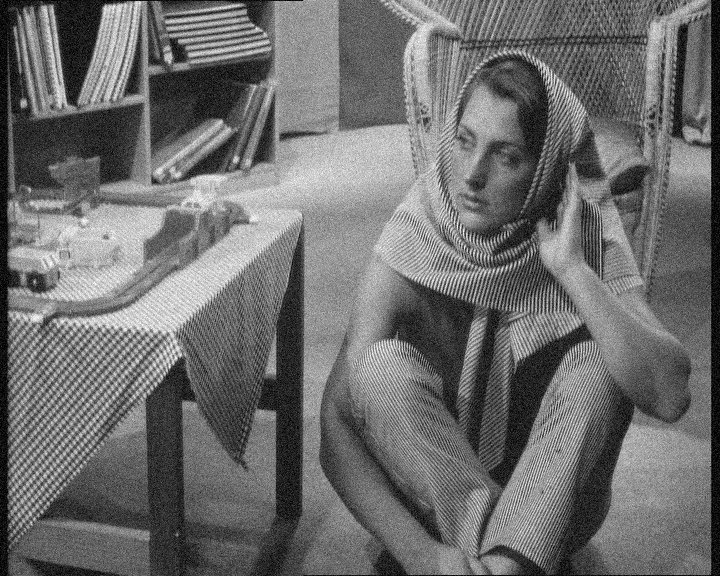}
      \\[\smallskipamount]
      \includegraphics[width=\textwidth]{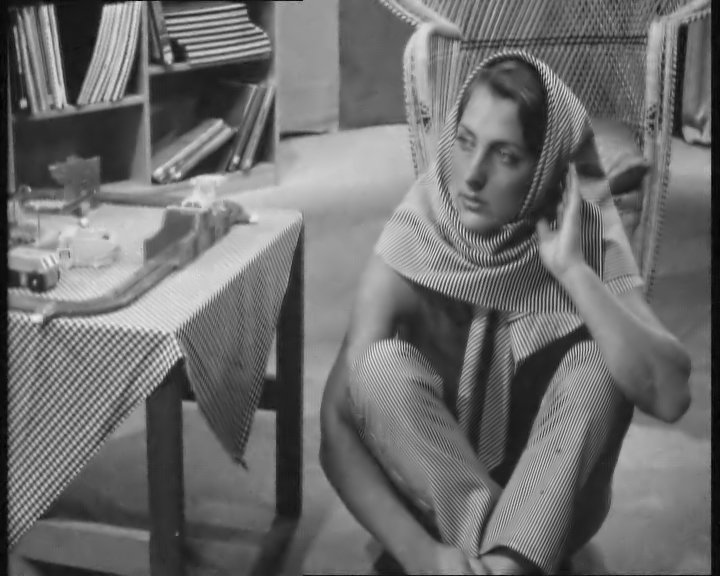}
      \\[-1em]
    \end{minipage}}
  \subfigure[zebra ($\sigma = 0.05$)]{%
    \begin{minipage}{0.32\linewidth}
      \includegraphics[width=\textwidth]{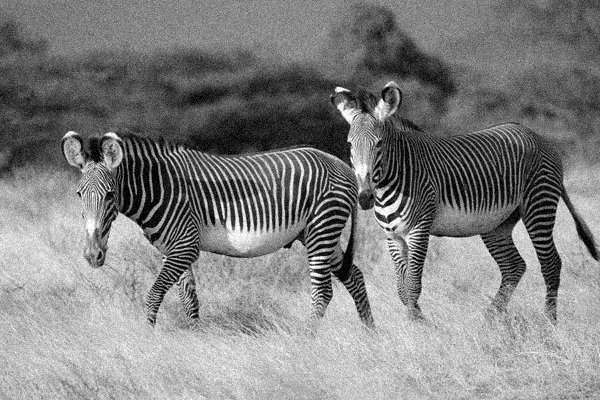}
      \\[\smallskipamount]
      \includegraphics[width=\textwidth]{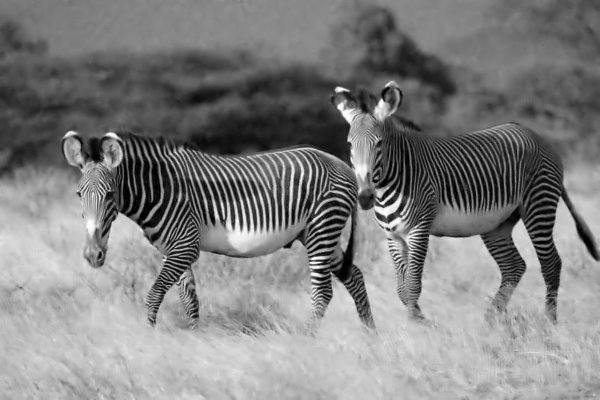}
      \\[-1em]
    \end{minipage}}
  \subfigure[parrots ($\sigma = 0.05$)]{%
    \begin{minipage}{0.32\linewidth}
      \includegraphics[width=\textwidth]{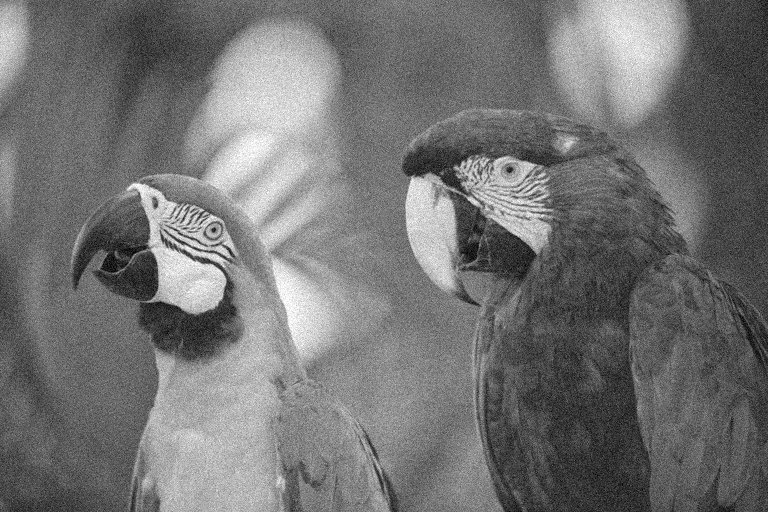}
      \\[\smallskipamount]
      \includegraphics[width=\textwidth]{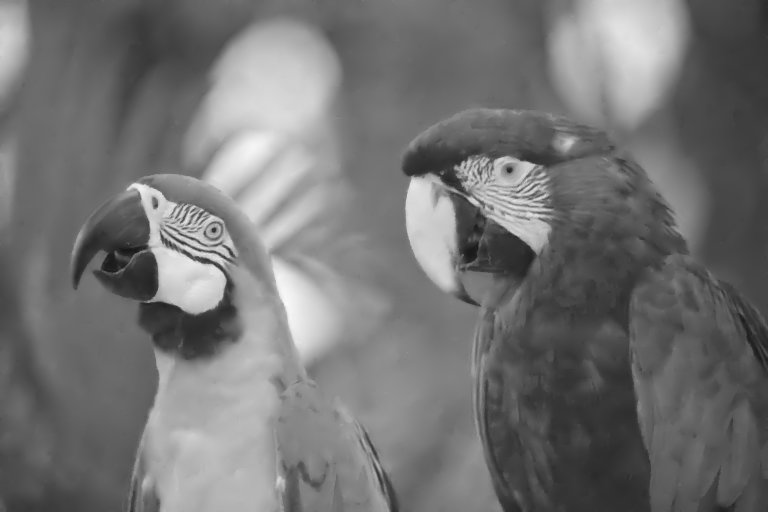}
      \\[-1em]
    \end{minipage}}

  \subfigure[barbara ($\sigma = 0.1$)]{%
    \begin{minipage}{0.266667\linewidth}
      \includegraphics[width=\textwidth]{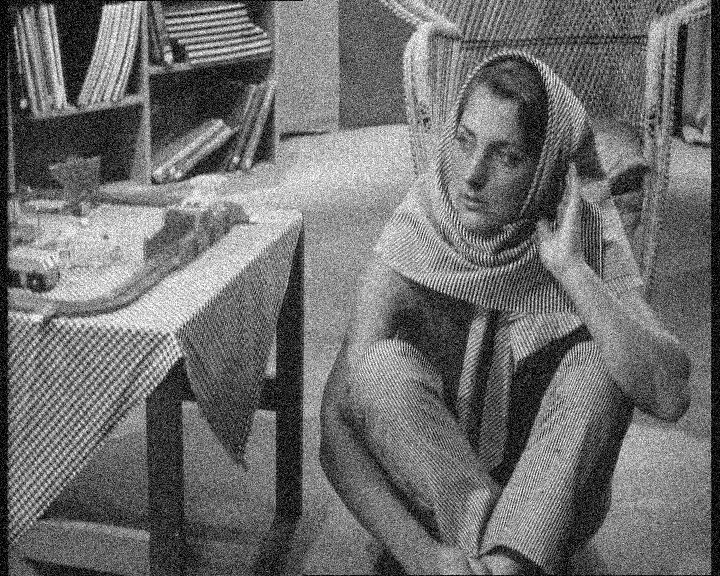}
      \\[\smallskipamount]
      \includegraphics[width=\textwidth]{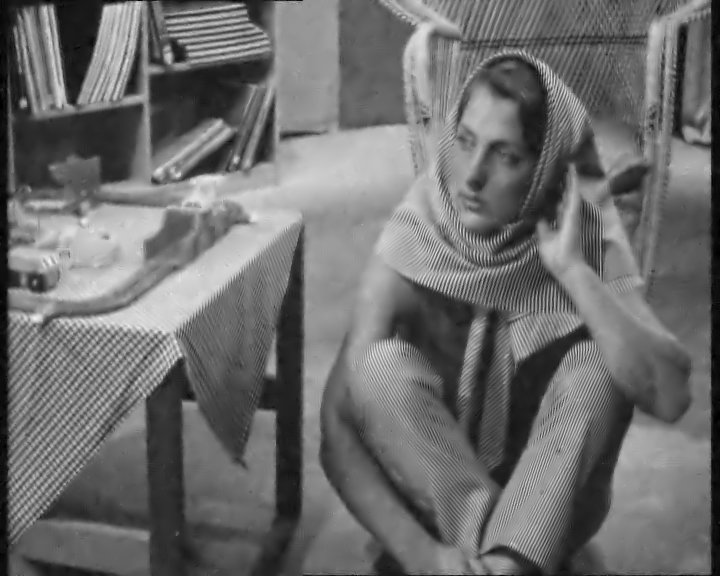}
      \\[-1em]
    \end{minipage}}
  \subfigure[zebra ($\sigma = 0.1$)]{%
    \begin{minipage}{0.32\linewidth}
      \includegraphics[width=\textwidth]{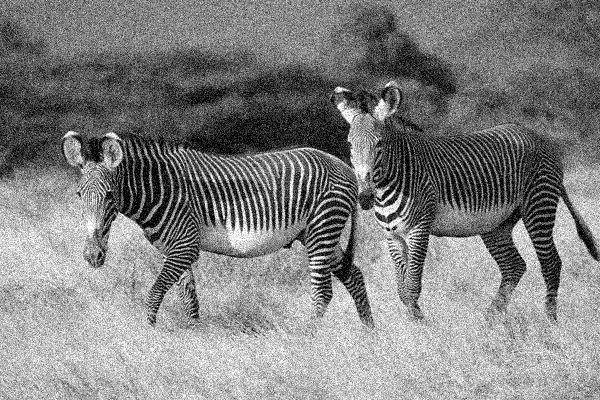}
      \\[\smallskipamount]
      \includegraphics[width=\textwidth]{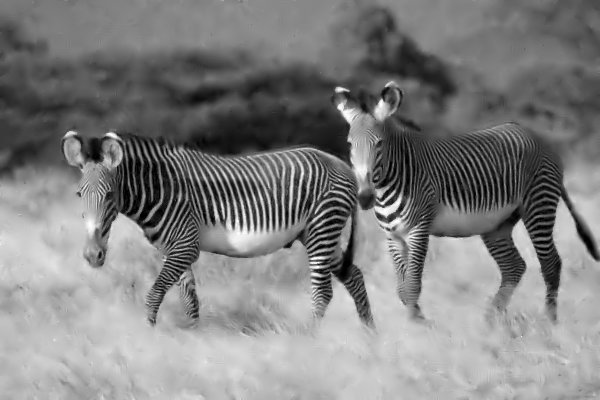}
      \\[-1em]
    \end{minipage}}
  \subfigure[parrots ($\sigma = 0.1$)]{%
    \begin{minipage}{0.32\linewidth}
      \includegraphics[width=\textwidth]{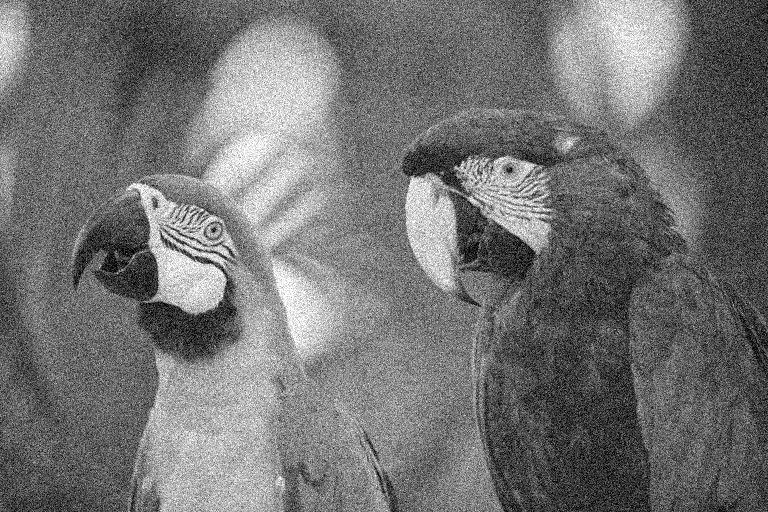}
      \\[\smallskipamount]
      \includegraphics[width=\textwidth]{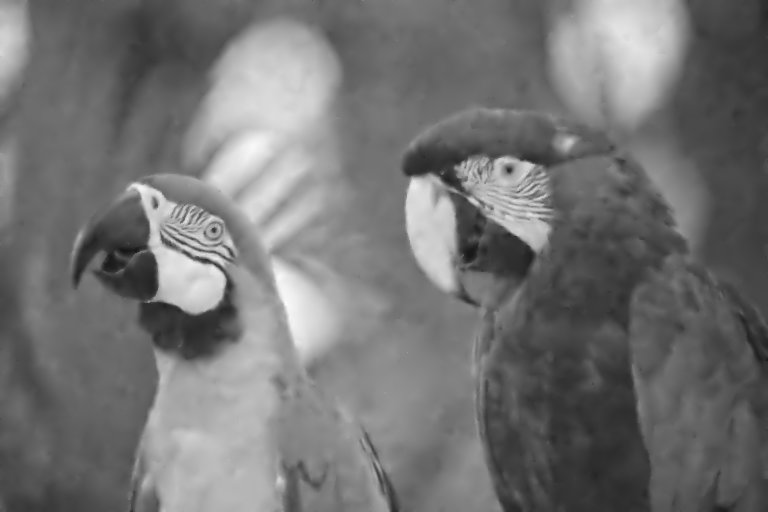}
      \\[-1em]
    \end{minipage}}
  \caption{Results of $\ICTGV^{\osci}$-denoising for natural images at
    different noise levels. For more details and comparisons, see the
    supplemental material.}
  \label{fig:imdenoise_all}
\end{figure}
One can see in Figure~\ref{imdenoisebar} that the proposed model is
able to preserve more textures than the TGV model alone which is
designed for piecewise smooth functions but does not specifically take
textures into account.
In contrast, NLTV is, due to its non-local nature, able to reconstruct
parts of the texture. It suffers, however, from the staircasing
behaviour that is typical for TV-like approaches.
The Fra+LDCT and BM3D method yield smoother reconstructions in pure
cartoon regions while also capturing the texture very well. They
suffer, however, from ghosting artifacts in the smooth regions which
leads, for instance, to a unnatural appearance of the face in the
``barbara'' image. %
The ICTGV model does not show these artifacts, but the texture
denoising is not very well, see the trousers of the ``barbara'' image,
where the line-like textures still look irregular. In comparison, the
proposed $\ICTGV^{\osci}$ model is able to capture both the texture
and the smooth regions without producing ghosting artifacts.
Nevertheless, for very fine textures, such as on the left trouser of
the ``barbara'' image, the Fra+LDCT and BM3D approaches still
reconstruct more of the details. This might be the reason for BM3D
slightly outperforming the proposed methods in terms of quantitative
measures (see Tables~\ref{table:denoisingPSNR}
and~\ref{table:denoisingSSIM}). The BM3D method is, however,
specifically optimized for denoising and not variational, such that a
generalization to inverse problems is not immediate.
Among the variational approaches, the $\ICTGV^{\osci}$ model performs
best in terms of quantitative measures. 

Convergence graphs that show
the evolution of the root mean square error~(RMSE) for TGV, ICTGV and
$\ICTGV^{\osci}$~($m=9,17$) in terms of iteration number and
computation time (with respect to GPU-supported MATLAB
implementations)
can be found in Figure~\ref{converg}.  The experiments were performed
for denoising the ``barbara'' image (dimensions $720\times576$) and
with primal-dual methods for TGV and ICTGV that are similar to
Algorithm~\ref{TTGVdecomalgo}. One can see that the essential
improvement of the RMSE is achieved, for all methods, in a few
hundreds of iterations and within 30 seconds, respectively.  From a
practical point of view this means that the proposed model and
algorithm quickly improves image quality. %
However,
these results stem from an interplay between model and algorithm, and
there might be more efficient algorithms for the same model for which
the behaviour is different in terms of RMSE improvement.
Finally, to give an overview of the computational complexity, we also
ran each algorithm with 2000 iterations for three times and took the
average of the computation time, see, again~Figure~\ref{converg}.

\begin{figure}
  \begin{minipage}{0.31\linewidth}
    \centerline{
      \includegraphics[width=\textwidth]{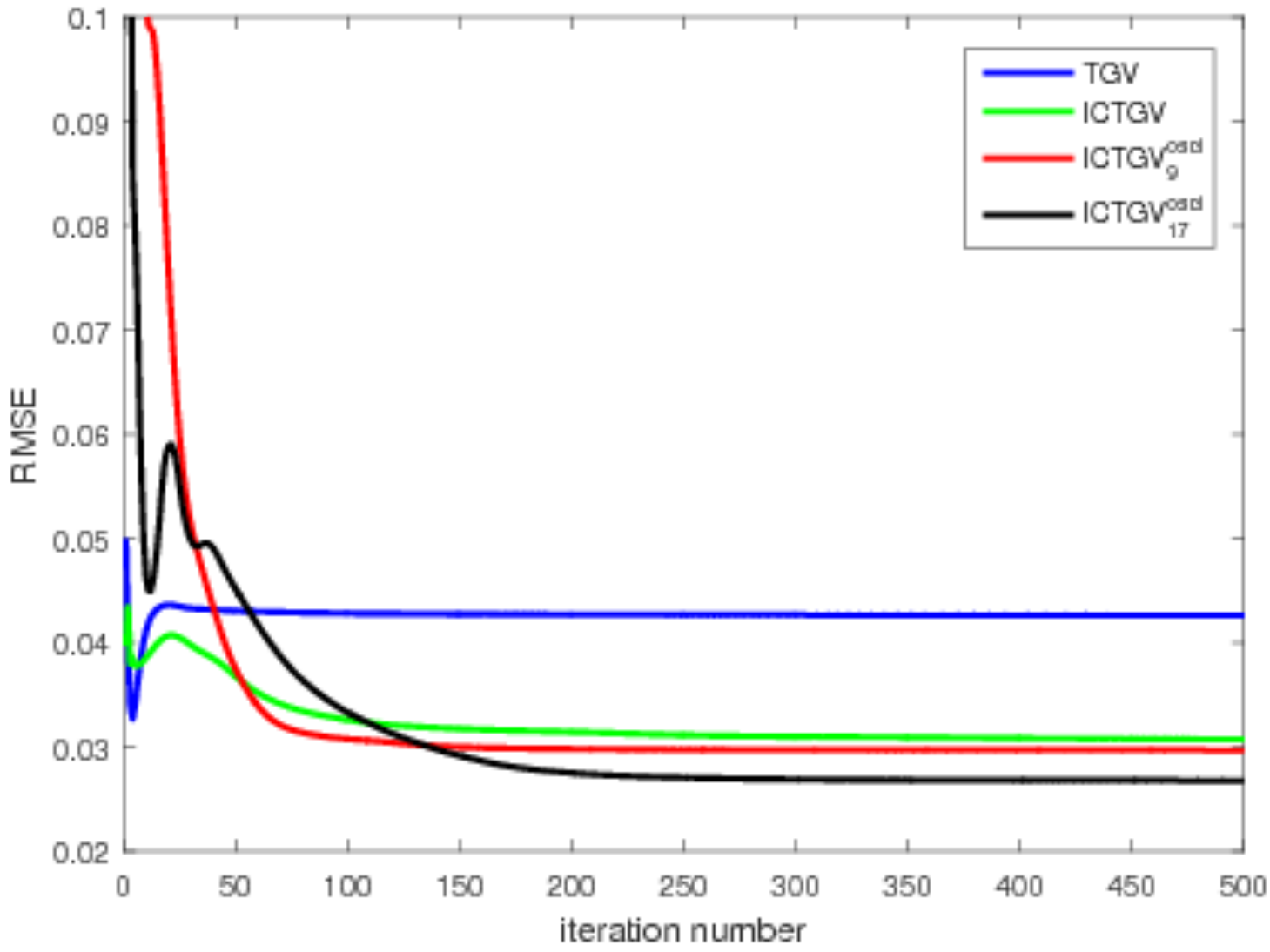}}
    \centerline{(a)}
  \end{minipage}\hfill%
  \begin{minipage}{0.31\linewidth}
    \centerline{
      \includegraphics[width=\textwidth]{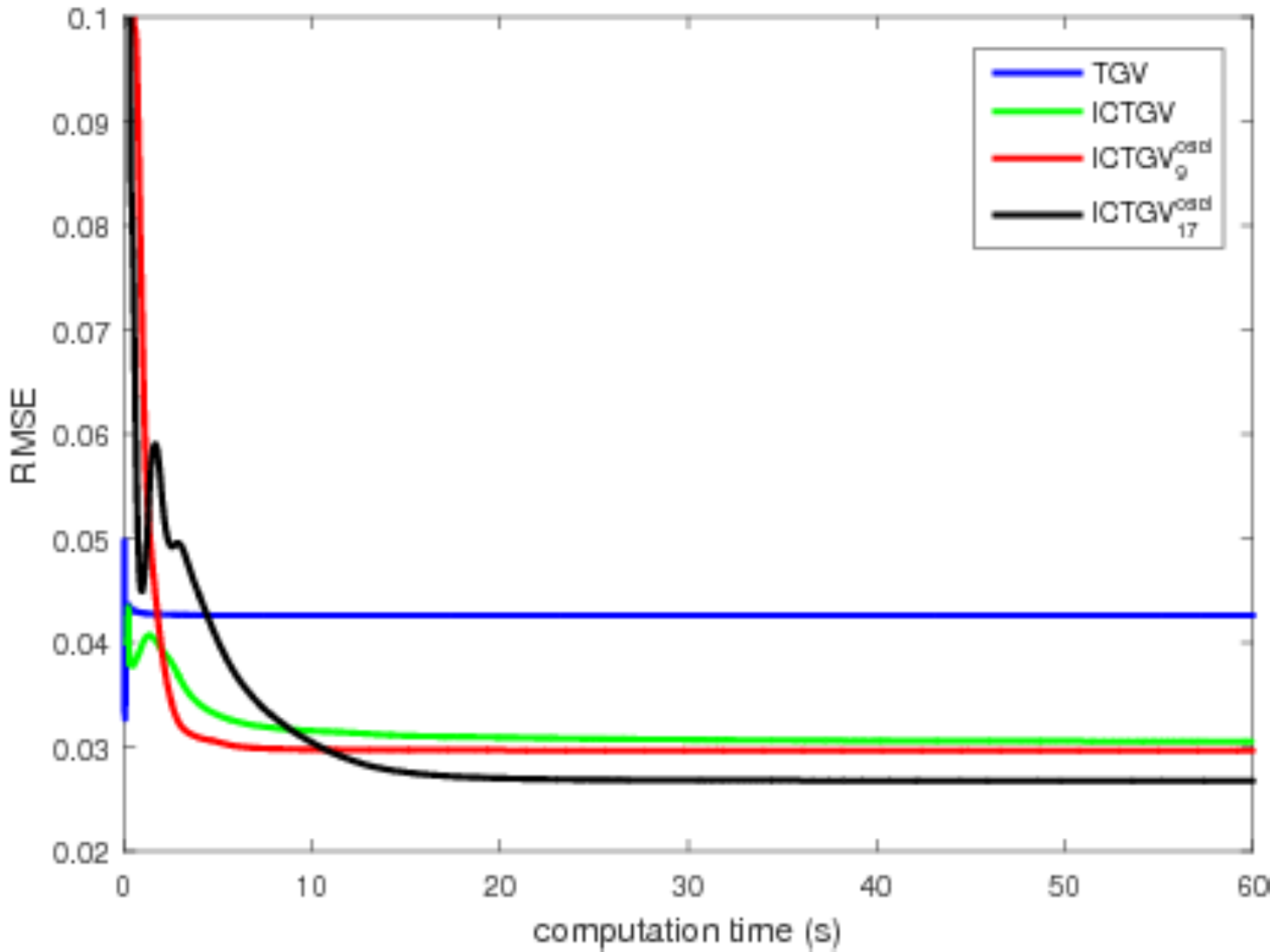}}
    \centerline{(b)}
  \end{minipage}\hfill%
  \begin{minipage}{0.35\linewidth}
    \centerline{
      \begin{tabular}{|c|c|}
        \hline\hline 
        method & time \\
        \hline
        TGV& \phantom{1}14.83s \\ 
        \hline 
        ICTGV&  104.46s\\ 
        \hline 
        $\ICTGV^{\osci}$ ($m=9$)& \phantom{1}87.89s \\ 
        \hline 
        $\ICTGV^{\osci}$ ($m=17$)&  164.97s\\ 
        \hline 
      \end{tabular} }
    \medskip
    \centerline{(c)}
  \end{minipage}
  \caption{Convergence graph and computational complexity of TGV,
    ICTGV and $\ICTGV^{\osci}$ for ``barbara'' image with noise level
    $\sigma=0.05$. (a) the RMSE values versus iterations; (b) the RMSE
    values versus GPU time; (c) the computation time~(GPU) with 2000
    iterations.}
	\label{converg}
\end{figure}

Finally, the supplementary material also contains a
comparison for the ``barbara'' image with a varying number of
oscillation directions in the extended model, i.e., 4, 6, 8, 10 and 12
directions (Section A.3). There, one can see that the restored images
with 4 and 6 directions lose some textures, whereas the majority of
textures is recovered using 8, 10 and 12 directions. Hence, the choice
of 8 directions in~\eqref{omega} indeed offers a good balance between
performance and complexity of the model.

\subsection{Image inpainting}

Inpainting, i.e., the restoration of missing regions of an image, is a
well-studied standard image processing problem.
It can, for example, be used to
repair corruptions in digital photos or in ancient drawings, and to
fill in the missing pixels of images transmitted through a noisy
channel. In this subsection, we will apply the new regularizer to a
restoration problem where pixels are missing randomly with a certain
rate. %
We model this situation as follows. For $\Omega$ the image domain, let
$\Omega' \subset \Omega$ the non-empty subdomain on which data is not
missing. Given $f \in L^2(\Omega')$, we are seeking a
$u^* \in L^2(\Omega)$ that minimizes $\ICTGV^{\osci}$ in the variant
that promotes sparse textures. In terms of the operator
$K: L^2(\Omega) \to L^2(\Omega')$, $Ku = u|_{\Omega'}$ projecting $u$
to the non-corrupted region, this corresponds to solving
\begin{equation}\label{TTGVinp}
  \min_{u \in L^2(\Omega)} \ \ICTGV^{\osci}_{\vec{\alpha},\vec{\beta},\vec{\mathbf{c}},\vec{\gamma}}(u) \quad \text{subject to} \quad Ku = f.
\end{equation}
Proposition~\ref{prop:general_min} applied to
$F = \mathcal{I}_{\{f\}}$ and
$\Phi =
\ICTGV^{\osci}_{\vec{\alpha},\vec{\beta},\vec{\mathbf{c}},\vec{\gamma}}$
yields existence in this situation. 
In order to solve~\eqref{TTGVinp}, one can use a slightly modified
version of Algorithm~\ref{TTGVdecomalgo}: Note that the only difference
between the discrete version of this model and the %
version~\eqref{minmaxTTGVdecom} is the fidelity term, where the term
$\langle
K\sum\limits_{i=1}\limits^{m}u_i-f,\lambda\rangle-\frac{\|\lambda\|^2}{2}$
is replaced by
$\langle K\sum\limits_{i=1}\limits^{m}u_i-f,\lambda\rangle$.
Thus, $\lambda$ in Algorithm~\ref{TTGVdecomalgo} is updated according
to
\begin{equation*}
  \lambda^{n+1}=\lambda^n+\sigma \Bigl(K\sum\limits_{i=1}\limits^{m}\bar{u}_i^n-f \Bigr).
\end{equation*}

\begin{figure}
  \center{} 
    \subfigure[$60\%$ missing]{%
    \begin{minipage}{0.25\linewidth}
      \includegraphics[width=\textwidth]{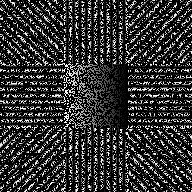}
      \\[-1em]
    \end{minipage}}
    \subfigure[$75\%$ missing]{%
	\begin{minipage}{0.25\linewidth}
		\includegraphics[width=\textwidth]{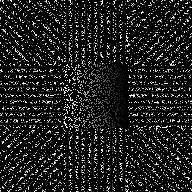}
		\\[-1em]
    \end{minipage}}
    \subfigure[$90\%$ missing]{%
    \begin{minipage}{0.25\linewidth}
      \includegraphics[width=\textwidth]{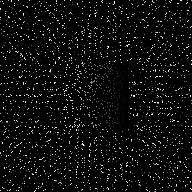}
      \\[-1em]
    \end{minipage}}

  \vspace*{-0.5em}
    \subfigure[inpainted image]{%
    \begin{minipage}{0.25\linewidth}
      \includegraphics[width=\textwidth]{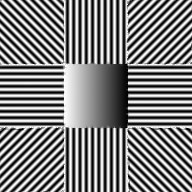}
      \\[-1em]
    \end{minipage}}
    \subfigure[inpainted image]{%
	\begin{minipage}{0.25\linewidth}
		\includegraphics[width=\textwidth]{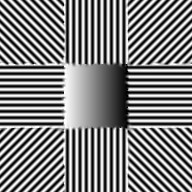}
		\\[-1em]
    \end{minipage}}
    \subfigure[inpainted image]{%
    \begin{minipage}{0.25\linewidth}
      \includegraphics[width=\textwidth]{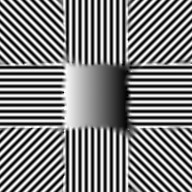}
      \\[-1em]
    \end{minipage}}
  \caption{Inpainting results for a synthetic image ($192 \times 192$
    pixels). (a)--(c) Corrupted image with $60\%$, $75\%$ and $90\%$
    pixels missing, respectively; (d)--(f) Images inpainted by
    model~\eqref{TTGVinp}. Parameter choice:
    $\alpha_1=0.04, \beta_1=\alpha_1, \gamma_1=0, \alpha_i=\alpha_1,
    \beta_i=0.5\alpha_i, \gamma_i=0.02\alpha_i, i=2,\ldots,5$.}
  \label{inpsyn}
\end{figure}
We performed numerical experiments using the inpainting
model~\eqref{TTGVinp} and the described modification of
Algorithm~\ref{TTGVdecomalgo}. Our first experiment aimed at
recovering the synthetic image already utilized in the
decomposition/denoising experiments, see Figure~\ref{decomsyn}. In
this experiment, 60\%, 75\% and 90\% of the pixels were randomly
removed from the image and inpainted by
$\ICTGV^{\osci}$-minimization. The results are shown in
Figure~\ref{inpsyn}. There, one can see that the recovery from 60\%
and 75\% of the missing pixels is almost perfect; whereas for 90\% of
missing pixels, the texture is also perfectly recovered while the
boundaries of the regions deviate from the ground truth. The latter
may be explained by the fact that in this situation, not enough data
is available to unambiguously reconstruct the boundaries. In contrast,
there is still enough information to determine the respective
parameters of the affine and oscillatory parts. Overall, the
experiment shows that $\ICTGV^{\osci}$-regularization is able to
effectively reconstruct oscillatory texture in inpainting problems.

Further, experiments with natural images (``barbara'', ``zebra'' and
``fish'') were carried out. For these images, we also performed a
comparison to TGV of second order and the framelet/local DCT model.
Figure~\ref{iminpaintfish} shows a comparison of the inpainting
performances for selected examples; we refer to the supplementary
material Section B for more detailed results. Again, for the
``barbara'' and ``zebra'' image, sixteen texture components were used,
i.e., $m=17$, and for the ``fish'' image, experiments were performed
with eight texture directions, i.e.,
$m=9$. %
One can clearly see in Figure~\ref{iminpaintfish} that our proposed
model is able to restore the corrupted images in high quality with the
majority of the textures being recovered, such as, for instance, the
table cloth and the scarf in the ``barbara'' image as well as the
stripes on the bodies of the zebra and the fish.  The same holds for
the Fra+LDCT-based model while the TGV-model does not recover well the
texture and leaves many unconnected ``speckles''. The latter is
typical for derivative- and measure-based models and does also apply,
e.g., to $\TV$-inpainting.  %
Furthermore, Tables~\ref{table:inpaintingPSNR}
and~\ref{table:inpaintingSSIM} show the PSNR and SSIM values of the
three models at different rates of missing pixels. As we see, the
proposed model performs better than other two models by these two
evaluation standards. However, despite the favorable quantitative
comparison to Fra+LDCT model, the proposed model has only limited
ability of recovering textures of very high frequency. Looking at the
left trouser in the ``barbara'' image near the arm, there are some
textures that are not inpainted correctly. In contrast, in the result
of Fra+LDCT-model, this region is recovered well.
\begin{figure}
  \center{} 
  \subfigure[$50\%$ missing]{%
    \begin{minipage}{0.23\linewidth}
      \includegraphics[width=\textwidth]{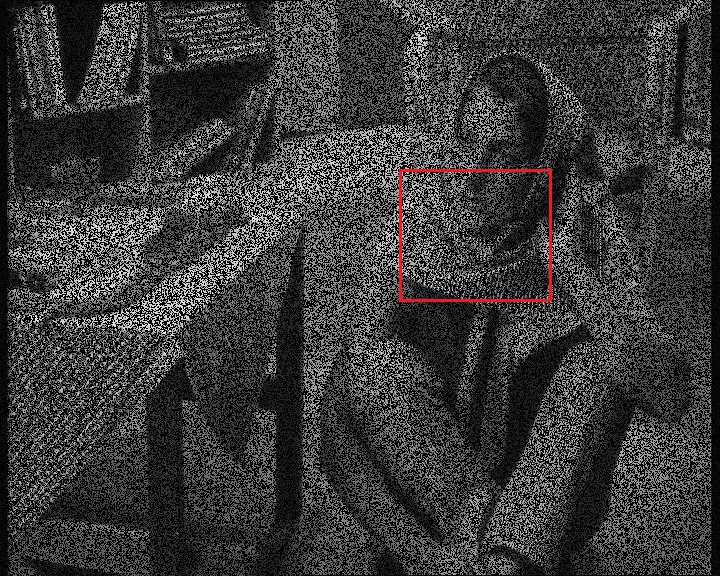}
      \\[\smallskipamount]
      \includegraphics[width=\textwidth]{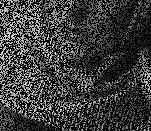}
      \\[-1em]
    \end{minipage}}
  \subfigure[TGV]{%
    \begin{minipage}{0.23\linewidth}
      \includegraphics[width=\textwidth]{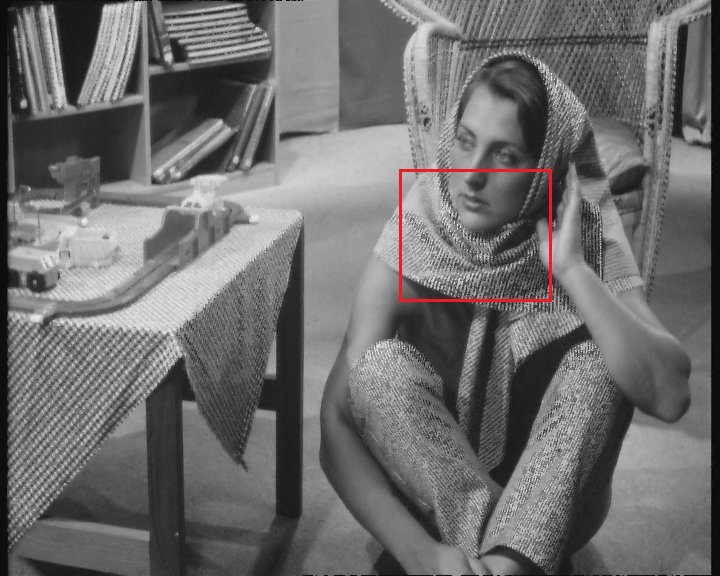}
      \\[\smallskipamount]
      \includegraphics[width=\textwidth]{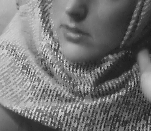}
      \\[-1em]
    \end{minipage}}
  \subfigure[Fra+LDCT]{%
    \begin{minipage}{0.23\linewidth}
      \includegraphics[width=\textwidth]{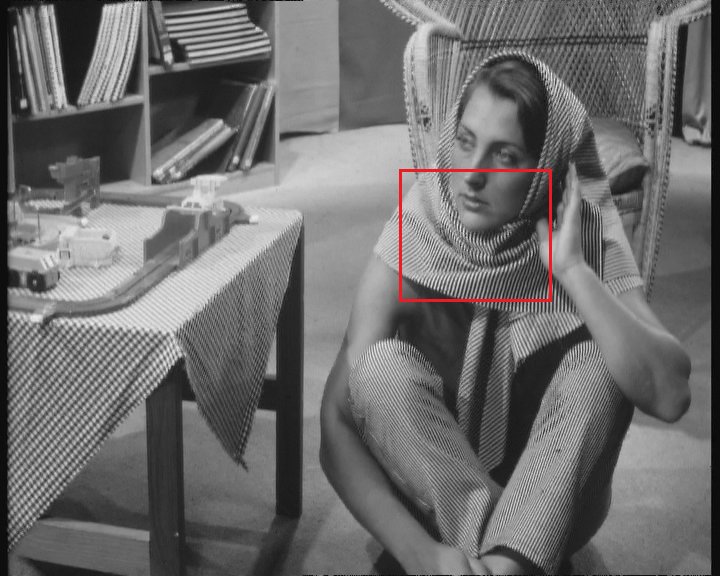}
      \\[\smallskipamount]
      \includegraphics[width=\textwidth]{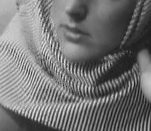}
      \\[-1em]
    \end{minipage}}
  \subfigure[proposed model]{%
    \begin{minipage}{0.23\linewidth}
      \includegraphics[width=\textwidth]{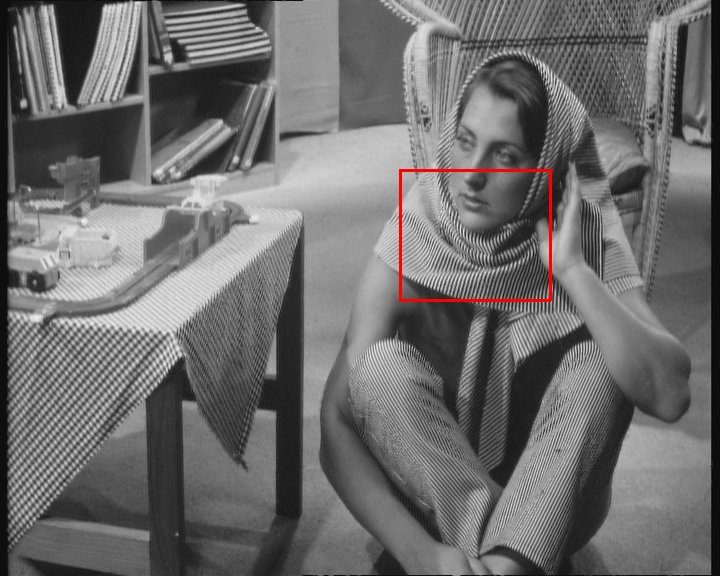}
      \\[\smallskipamount]
      \includegraphics[width=\textwidth]{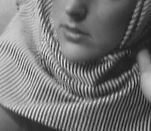}
      \\[-1em]
    \end{minipage}}

  \vspace*{-0.5em}
  \subfigure[$50\%$ missing]{%
    \begin{minipage}{0.23\linewidth}
      \includegraphics[width=\textwidth]{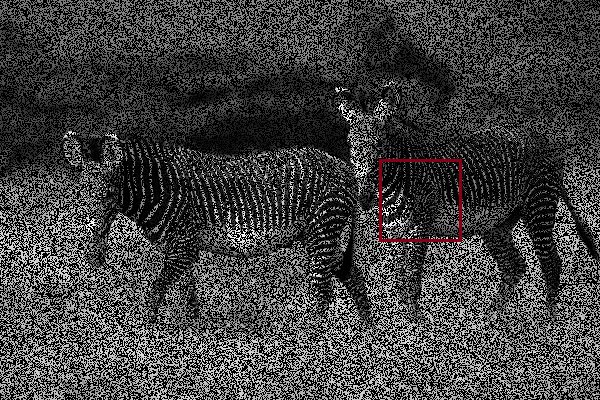}
      \\[\smallskipamount]
      \includegraphics[width=\textwidth]{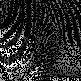}
      \\[-1em]
    \end{minipage}}
  \subfigure[TGV]{%
    \begin{minipage}{0.23\linewidth}
      \includegraphics[width=\textwidth]{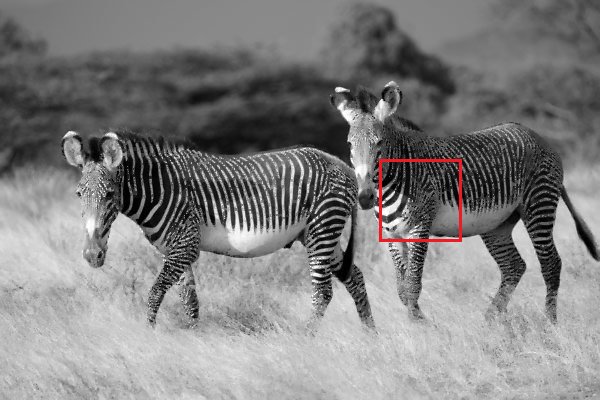}
      \\[\smallskipamount]
      \includegraphics[width=\textwidth]{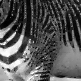}
      \\[-1em]
    \end{minipage}}
  \subfigure[Fra+LDCT]{%
    \begin{minipage}{0.23\linewidth}
      \includegraphics[width=\textwidth]{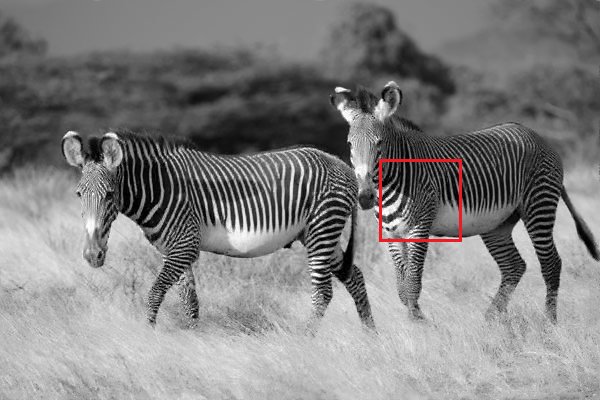}
      \\[\smallskipamount]
      \includegraphics[width=\textwidth]{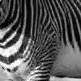}
      \\[-1em]
    \end{minipage}}
  \subfigure[proposed model]{%
    \begin{minipage}{0.23\linewidth}
      \includegraphics[width=\textwidth]{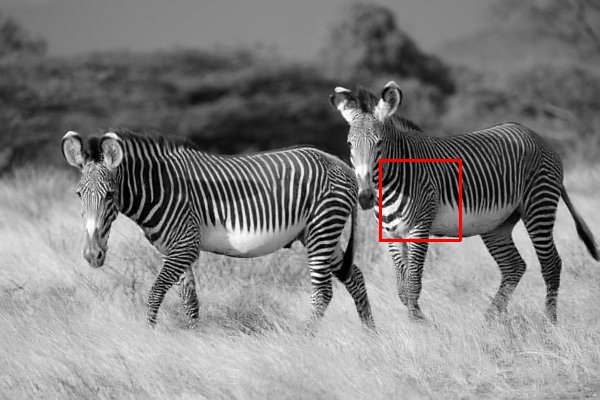}
      \\[\smallskipamount]
      \includegraphics[width=\textwidth]{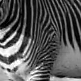}
      \\[-1em]
    \end{minipage}}

  \vspace*{-0.5em}
  \subfigure[$50\%$ missing]{%
    \begin{minipage}{0.23\linewidth}
      \includegraphics[width=\textwidth]{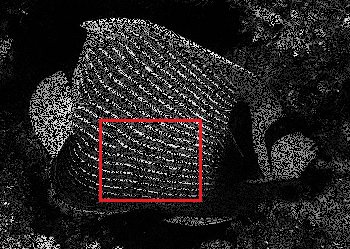}
      \\[\smallskipamount]
      \includegraphics[width=\textwidth]{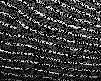}
      \\[-1em]
    \end{minipage}}
  \subfigure[TGV]{%
    \begin{minipage}{0.23\linewidth}
      \includegraphics[width=\textwidth]{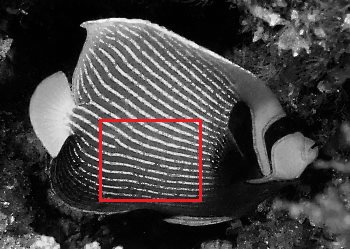}
      \\[\smallskipamount]
      \includegraphics[width=\textwidth]{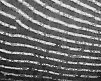}
      \\[-1em]
    \end{minipage}}
  \subfigure[Fra+LDCT]{%
    \begin{minipage}{0.23\linewidth}
      \includegraphics[width=\textwidth]{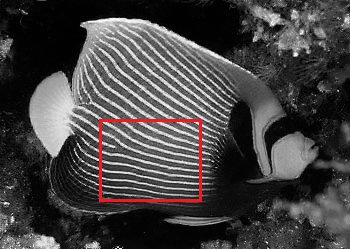}
      \\[\smallskipamount]
      \includegraphics[width=\textwidth]{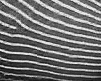}
      \\[-1em]
    \end{minipage}}
  \subfigure[proposed model]{%
    \begin{minipage}{0.23\linewidth}
      \includegraphics[width=\textwidth]{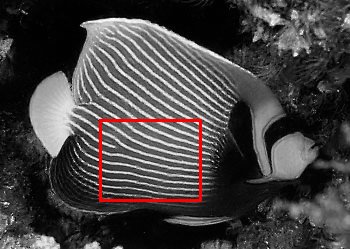}
      \\[\smallskipamount]
      \includegraphics[width=\textwidth]{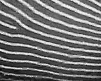}
      \\[-1em]
    \end{minipage}}
  \caption{Inpainting results for ``barbara'', ``zebra'' and ``fish''
    image.  (a)--(d), (e)--(h), (i)--(l): Reconstruction from 50\%
    missing pixels for proposed and reference methods for the
    respective test images. For more details and results, see the supplementary
    material.}
  \label{iminpaintfish}
\end{figure}

\begin{table}\small
  \center{}
  \begin{tabular}{|c|c|c|c|c|c|c|c|c|c|}
    \hline
    \hline
    \multirow{2}{*}{image}
    & \multicolumn{3}{|c|}{TGV} &  \multicolumn{3}{|c|}{Fra+LDCT}&  \multicolumn{3}{|c|}{proposed model} \\
    \cline{2-10}&$50\%$ & $60\%$ &$70\%$ & $50\%$  &$60\%$ & $70\%$&$50\%$ & $60\%$&$70\%$\\
    \hline barbara & 27.49 & 26.26 & 25.09 & 32.75 & 30.76 & 28.53 & \textbf{34.03} & \textbf{31.86} & \textbf{29.49}\\
    \hline zebra &26.00 & 24.23 & 22.65 & 28.50 & 26.67 & 25.08 & \textbf{30.09} & \textbf{28.11} & \textbf{26.42}\\
    \hline fish & 22.70 & 20.81 & 19.01 & 24.54 & 23.01 & 21.22 & \textbf{25.10} & \textbf{23.50} & \textbf{21.54} \\
    \hline
  \end{tabular}
  \caption{\label{table:inpaintingPSNR} Comparison of inpainting performance for different rates of missing pixels in terms of PSNR.}
\end{table}

\begin{table}\small
  \center{}
  \begin{tabular}{|c|c|c|c|c|c|c|c|c|c|}
    \hline
    \hline
    \multirow{2}{*}{image}
    & \multicolumn{3}{|c|}{TGV} &  \multicolumn{3}{|c|}{Fra+LDCT}&  \multicolumn{3}{|c|}{proposed model} \\
    \cline{2-10}&$50\%$ & $60\%$ &$70\%$ & $50\%$  &$60\%$ & $70\%$&$50\%$ & $60\%$&$70\%$\\
    \hline barbara & 0.8942 & 0.8562 & 0.8081 & 0.9549 & 0.9320 & 0.8953 & \textbf{0.9591} & \textbf{0.9390} & \textbf{0.9078}\\
    \hline zebra & 0.9016 & 0.8597 & 0.8051 & 0.9213 & 0.8863 & 0.8438 & \textbf{0.9463} & \textbf{0.9194} & \textbf{0.8839}\\
    \hline fish & 0.8676 & 0.8059 & 0.7255 & 0.8923 & 0.8462 & 0.7902 & \textbf{0.9059} & \textbf{0.8596} & \textbf{0.7977} \\
    \hline
  \end{tabular}
  \caption{\label{table:inpaintingSSIM} Comparison of inpainting performance for different rates of missing pixels in terms of SSIM.}
\end{table}

\subsection{Undersampled magnetic resonance imaging}

Since compressed sensing techniques have been shown to be effective
for image reconstruction in magnetic resonance imaging (MRI) with
undersampling \cite{SparMRI,CSMRI}, i.e., where much less data is available than
usually required, variational methods were becoming increasingly
popular for undersampling MRI \cite{TGVMR,MRITV,ParaMRI} in
particular and medical image processing in general.  As MRI samples
data in the Fourier domain, it is particularly well-suited for these
methods. In this subsection, we discuss
$\ICTGV^{\osci}$-regularization for recovering an image given
incomplete Fourier data. We model the situation as follows. Let
$\Omega \subset \RR^2$ be a bounded image domain and $(\Omega', \mu)$
a finite Borel measure space on a bounded $\Omega' \subset \RR^2$,
where $\Omega'$ represents the set of measured Fourier data.  The
restriction of the Fourier transform $\mathcal{PF}$ then maps
$L^2(\Omega) \to L^2(\Omega',\mu)$ since both $\Omega$ and $\Omega'$
are bounded and the Fourier transform $\mathcal{F}$ on the whole space
maps $L^1(\RR^2)$ functions to continuous functions on $\RR^2$.  Given
$f \in L^2(\Omega',\mu)$, the undersampled MRI reconstruction
corresponds to solving $Ku = f$ for $K = \mathcal{PF}$, such that the
$\ICTGV^{\osci}$-regularized solution (in its sparse-texture variant)
reads as
\begin{equation}\label{TTGVrec}
  \min_{u \in L^2(\Omega)} \ \frac12 \int_{\Omega'} \abs{Ku - f}^2 \dd{\mu} + \ICTGV^{\osci}_{\vec{\alpha},\vec{\beta},\vec{\mathbf{c}},\vec{\gamma}}(u).
\end{equation}
Clearly, the problem~\eqref{TTGVrec} admits a solution, see
Theorem~\ref{thm:ictgv_tikh}. In applications, the Borel measure space
$(\Omega',\mu)$ could, for instance, be $\Omega' \subset \RR^2$ a
bounded domain and $\mu = \mathcal{L}^2$ the Lebesgue measure,
$\Omega'$ a finite collection of line segments and
$\mu = \mathcal{H}^1 \llcorner \Omega'$ the one-dimensional Hausdorff
measure restricted to $\Omega'$ (which models so-called \emph{radial
  sampling} in case of radially centered line segments), or $\Omega'$
is a finite collection of points and
$\mu = \mathcal{H}^0 \llcorner \Omega'$ a collection of delta-measures
(modelling compressed-sensing-inspired sampling).

For the discretization, we chose $\mathcal{F}$ to represent the
discrete Fourier transform on the discrete grid $\Omega$ which is
usually realized by a fast algorithm (FFT). Further, the discrete
domain $\Omega'$ is chosen as $\Omega' \subset \Omega$ and
$\mathcal{P}$ is the projection operator which restricts Fourier
coefficients to $\Omega'$. In total, $K = \mathcal{PF}$ as in the
continuous setting and Algorithm~\ref{TTGVdecomalgo} can be used to
compute a solution.

We performed numerical experiments for undersampled MRI with radial
sampling patterns associated with a varying number of sampling lines;
see Figure \ref{sampleline} for the illustration of a selection mask
with 70 equispaced radial sampling lines which corresponds to a
sampling rate of approximately $14.74\%$ for an image of
$512\times512$ pixels.
\begin{figure}
  \center{} 
  \includegraphics[width=0.3\textwidth]{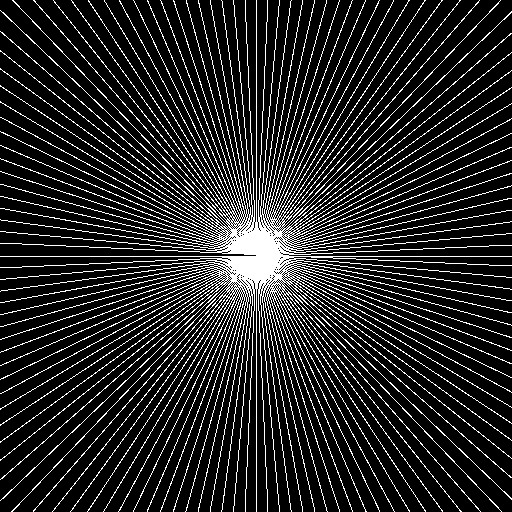}
  \caption{Example of a sampling pattern associated with radial
    sampling (70 equispaced radial lines, image resolution:
    $512\times512$ pixels).}
  \label{sampleline}
\end{figure}
The experimental results for MR image reconstruction are shown in
comparison to TGV-Shearlet MRI reconstruction in \cite{TGVshear} which
is based on simultaneous TGV and shearlet-transform-based
regularization.

\begin{table}
  \center{}
  \begin{tabular}{|c|c|c|c|c|c|c|c|c|}
    \hline
    \hline
    \multicolumn{2}{|c|}{sampling lines}&40 & 50 &60& 70 &80 & 90&100\\
    \hline
    \multirow{3}{*}{TGV-shearlet}
                                        &foot & \textbf{30.81} &\textbf{32.59}& 33.75 &34.99 & 36.51&37.34&38.59\\
    \cline{2-9}&knee & 28.01 &28.77& 29.59 &30.10 & 30.84 &31.50 &32.07\\
    \cline{2-9}&brain & 34.05 & 35.71 & 37.18 &38.44 & 39.66 &40.51 &41.43\\
    \hline
    \multirow{3}{*}{proposed model}
    	&foot & 29.76 & 32.41 & \textbf{33.89} & \textbf{35.27} & \textbf{37.07} & \textbf{38.25} & \textbf{39.66}\\
    \cline{2-9}&knee & \textbf{28.40} & \textbf{29.16} & \textbf{29.98} & \textbf{30.56} & \textbf{31.23} & \textbf{31.82} & \textbf{32.36}\\
    \cline{2-9}&brain & \textbf{34.64} & \textbf{36.97} & \textbf{38.99} & \textbf{40.33} & \textbf{41.65} & \textbf{42.71} & \textbf{43.67}\\
    \hline
  \end{tabular}
  \caption{\label{table:reconstructionPSNR} Comparison of MRI reconstruction performance for different sampling rates in terms of PSNR.}
\end{table}

\begin{table}
  \center{}
  \begin{tabular}{|c|c|c|c|c|c|c|c|c|}
    \hline
    \hline
    \multicolumn{2}{|c|}{sampling lines}&40 & 50 &60& 70 &80 & 90&100\\
    \hline
    \multirow{3}{*}{TGV-shearlet}
                                        &foot & 0.8451 & 0.8669 & 0.8823 & 0.8956 & 0.9098 & 0.9175 & 0.9262\\
    \cline{2-9}&knee & 0.7356 & 0.7656 & 0.7950 & 0.8133 & 0.8348 & 0.8508 & 0.8661\\
    \cline{2-9}&brain & 0.8867 & 0.9059 & 0.9209 & 0.9318 & 0.9408 & 0.9466 & 0.9523\\
    \hline
    \multirow{3}{*}{proposed model}
                                       &foot & \textbf{0.8653} & \textbf{0.9280} & \textbf{0.9465} & \textbf{0.9567} & \textbf{0.9702} & \textbf{0.9774} & \textbf{0.9822}\\
                                       \cline{2-9}&knee & \textbf{0.7581} & \textbf{0.7861} & \textbf{0.8148} & \textbf{0.8344} & \textbf{0.8570} & \textbf{0.8720} & \textbf{0.8878}\\
                                       \cline{2-9}&brain & \textbf{0.9055} & \textbf{0.9421} & \textbf{0.9600} & \textbf{0.9681} & \textbf{0.9743} & \textbf{0.9784} & \textbf{0.9818}\\
    \hline
  \end{tabular}
  \caption{\label{table:reconstructionSSIM} Comparison of MRI reconstruction performance for different sampling rates in terms of SSIM.}
\end{table}

For the MRI reconstruction experiment, we chose some fully sampled MR
test images from an online database (\verb|http://www.mr-tip.com/|): a
$T_1$-weighted foot image ($512 \times 512$ pixels) and a
$T_1$-weighted knee image ($350 \times 350$ pixels); and a
$T_2$-weighted brain image ($490 \times 490$ pixels) (image courtesy
of A.~Farrall, University of Edinburgh).  Undersampled measurements
were then synthetically created by restricting the Fourier
coefficients to radial sampling lines where the number of lines covers
the range $40,50,\ldots, 100$. Reconstructions were obtained from
these measurements using the TGV+Shearlet model and by
minimizing~\eqref{TTGVrec} where for both methods, parameters were
tuned to yield optimal PSNR.  Figure~\ref{imreconfoot} shows exemplary
results; see the supplementary material (Section C) for more detailed
results. One can see in these results that both the TGV+Shearlet and
the proposed model are able to preserve the textures well.  Further,
one can observe that the proposed model yields more naturally
appearing bones and junctions that are also closer to the ground
truth. The muscular tissues in the foot and knee image as well as the
brain tissue are also reconstructed well. For the latter, the
$\ICTGV^{\osci}$-model also has a desired smoothing effect that
reduces the ringing artifacts originating from the MR
measurements. Furthermore, note that there are some ghost stripes in
the results of TGV+Shearlet model which do not appear in the
$\ICTGV^{\osci}$ approach.
The quantitative comparison in terms of PSNR and SSIM, see
Tables~\ref{table:reconstructionPSNR}
and~\ref{table:reconstructionSSIM}, support these findings.  Except
for the PSNR for the foot image at low sampling rates, the proposed
method consistently outperforms the TGV+Shearlet approach with respect
to these quantitative measures. The only weakness one might observe in
both models may the found in the foot image: There, the ground truth
admits several fine directional structures in the bone (see the
closeup) for which the reconstructions do not correctly recover the
orientations and artificially introduce vertical structures.  This
effect can, however, be mitigated by switching to the extended model
($m=17$) that also includes oscillations of higher frequencies, see
the supplementary material.
\begin{figure}
  \center
  \subfigure[ground truth]{%
    \begin{minipage}{0.21\linewidth}
      \includegraphics[width=\textwidth]{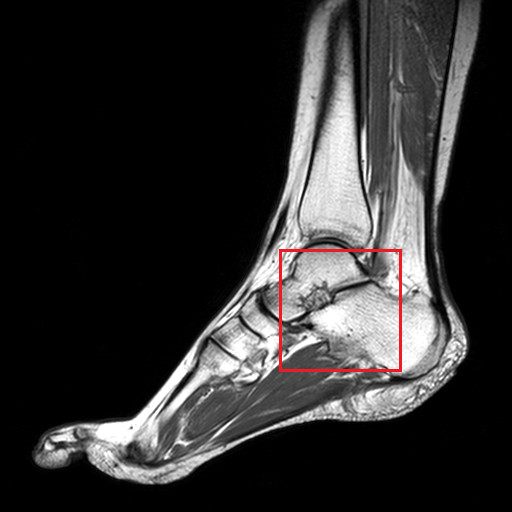}
      \\[\smallskipamount]
      \includegraphics[width=\textwidth]{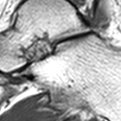}
      \\[-1em]
    \end{minipage}}\quad
  \subfigure[TGV+Shearlet]{%
    \begin{minipage}{0.21\linewidth}
      \includegraphics[width=\textwidth]{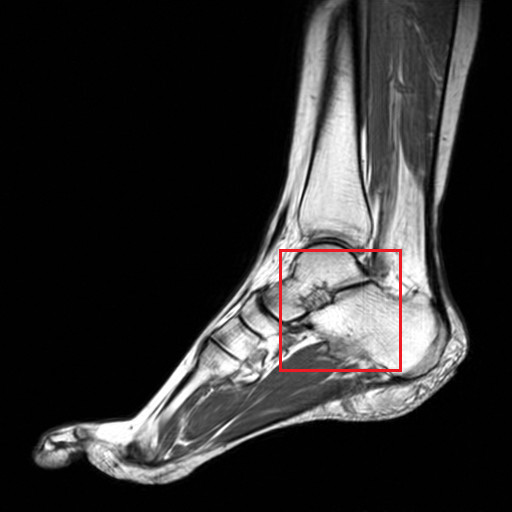}
      \\[\smallskipamount]
      \includegraphics[width=\textwidth]{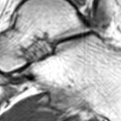}
      \\[-1em]
    \end{minipage}}\ \ 
  \subfigure[proposed model]{%
    \begin{minipage}{0.21\linewidth}
      \includegraphics[width=\textwidth]{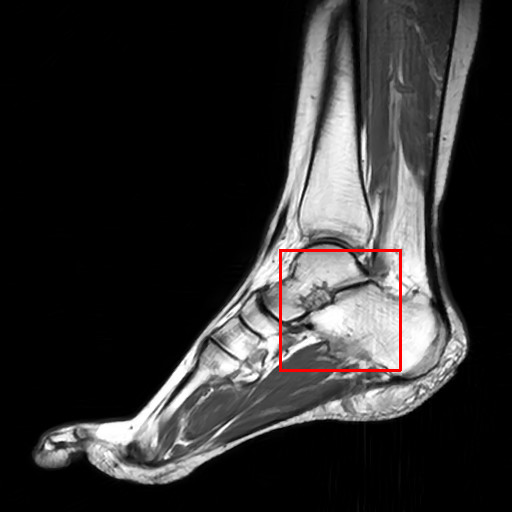}
      \\[\smallskipamount]
      \includegraphics[width=\textwidth]{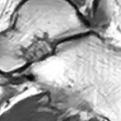}
      \\[-1em]
    \end{minipage}}

  \vspace*{-0.5em}
  \subfigure[ground truth]{%
    \begin{minipage}{0.21\linewidth}
      \includegraphics[width=\textwidth]{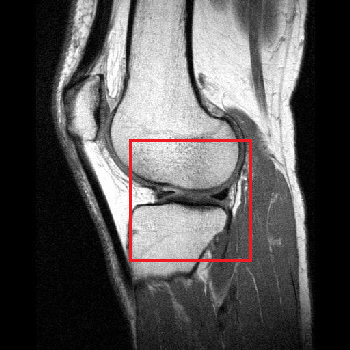}
      \\[\smallskipamount]
      \includegraphics[width=\textwidth]{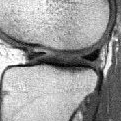}
      \\[-1em]
    \end{minipage}}\quad
  \subfigure[TGV+Shearlet]{%
    \begin{minipage}{0.21\linewidth}
      \includegraphics[width=\textwidth]{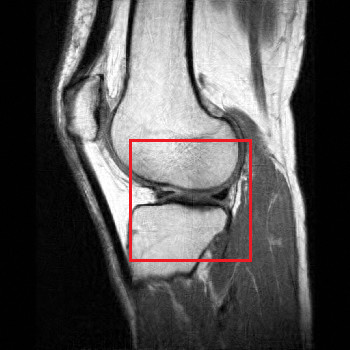}
      \\[\smallskipamount]
      \includegraphics[width=\textwidth]{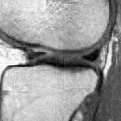}
      \\[-1em]
    \end{minipage}}\ \ 
  \subfigure[proposed model]{%
    \begin{minipage}{0.21\linewidth}
      \includegraphics[width=\textwidth]{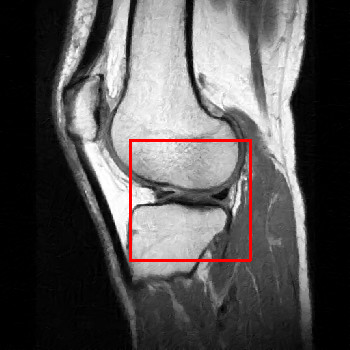}
      \\[\smallskipamount]
      \includegraphics[width=\textwidth]{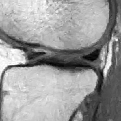}
      \\[-1em]
    \end{minipage}}

  \vspace*{-0.5em}
  \subfigure[ground truth]{%
    \begin{minipage}{0.21\linewidth}
      \includegraphics[width=\textwidth]{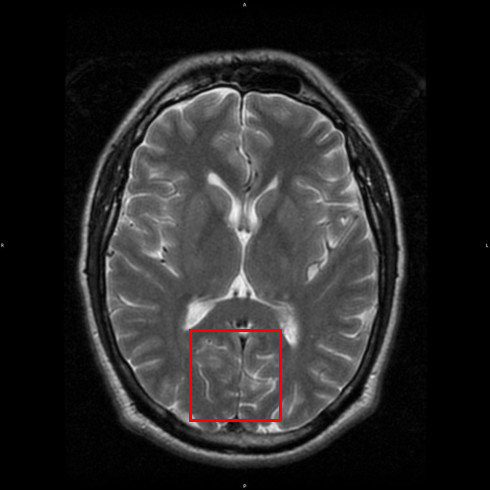}
      \\[\smallskipamount]
      \includegraphics[width=\textwidth]{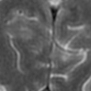}
      \\[-1em]
    \end{minipage}}\quad
  \subfigure[TGV+Shearlet]{%
    \begin{minipage}{0.21\linewidth}
      \includegraphics[width=\textwidth]{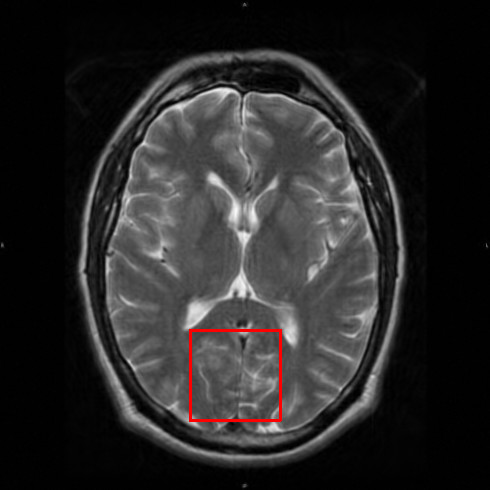}
      \\[\smallskipamount]
      \includegraphics[width=\textwidth]{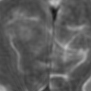}
      \\[-1em]
    \end{minipage}}\ \ 
  \subfigure[proposed model]{%
    \begin{minipage}{0.21\linewidth}
      \includegraphics[width=\textwidth]{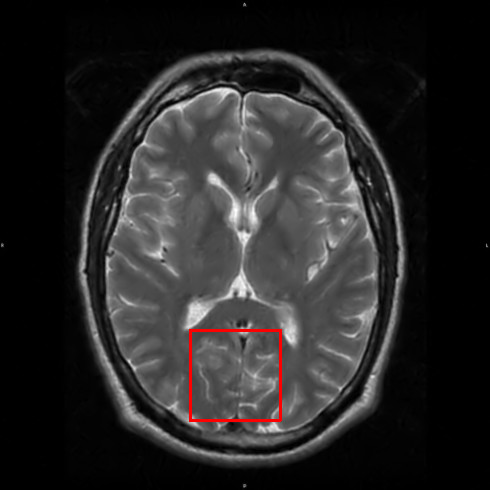}
      \\[\smallskipamount]
      \includegraphics[width=\textwidth]{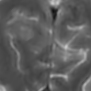}
      \\[-1em]
    \end{minipage}}
  \caption{Comparison of MRI reconstruction results.  (a) Foot image;
    (b)--(c) Recovery from $14.74\%$ radially sampled coefficients;
    (d) Knee image; (e)--(f) Recovery from $21.12\%$ radially sampled
    coefficients; (g) Brain image; (h)--(i) Recovery from $13.47\%$
    radially sampled coefficients. See the supplementary material for
    more detailed results.}
  \label{imreconfoot}
\end{figure}

\section{Summary and conclusions}
\label{sec:conclusions}

In this paper, we proposed the oscillation TGV regularizer which
arises from modifying the second-order TGV functional such that
structured oscillatory functions are in its kernel.
The infimal convolution of these novel regularization functionals can
be used to represent structured texture consisting of oscillations of
different directions and frequencies.  We have shown that the natural
space for oscillation TGV is the space of functions of bounded
variation in the sense that norm equivalence, lower semi-continuity
and coercivity can be established in that space.
Furthermore, we considered the $m$-fold infimal convolution of
oscillation TGV which can be utilized for solving many variational
image processing problems; in particular those for which
$\TV$-regularization is applicable.  The analysis of the
infimal-convolution model based on general properties of infimal
convolution of lower semi-continuous semi-norms. For such functionals,
we obtained general criteria for exactness, lower semi-continuity and
coercivity of the infimal convolution. This allowed us to obtain
existence results for variational problems regularized with this type
of functional. %
We discussed a possible discretization of the oscillation TGV
functional. It turned out that one has to choose the $\mathbf{c}_i$
specifically in order to preserve the kernel in the discretized
version.
Numerical experiments for denoising, inpainting and MRI reconstruction
show the major properties of the proposed model in terms of texture
preservation and reconstruction. %
The general framework obtained in this article allows in particular
the extension to other imaging problems.  A possible line of future
research may focus on reducing the complexity of the proposed model by
studying variants of oscillation TGV that allow for reconstruction of
oscillatory textures for multiple directions and frequencies with a
simple, differentiation-based functional.

\section*{Acknowledgments}

The first author is supported by China Scholarship Council (CSC) Scholarship Program and National Natural Science Foundation of China (Nos.~11531005 and 91330101). The second author acknowledges support by the Austrian Science Fund
(FWF): project number P-29192 ``Regularization Graphs for Variational
Imaging''.

\clearpage
\appendix

}

\section*{Supplementary material}

\tableofcontents

\section{Image denoising}

\subsection{Comparison of PSNR-optimal results}

\subsubsection{Noise level $\sigma=0.05$}

See Figures~\ref{imdenoisebar},~\ref{imdenoisezebra} and~\ref{imdenoiseparrot},
and Table~\ref{table:denoising005}.

\begin{table}[H]
  \center{}
  \begin{tabular}{|c|c|c|c|c|c|c|c|}
    \hline
    \hline
    &image&TGV &NLTV&ICTGV&Fra+LDCT& BM3D& proposed model \\
    \hline
    \multirow{3}{*}{PSNR}
               & barbara & 27.41 & 32.05 &31.15& 31.70 & \textbf{34.43} & 32.21\\
    \cline{2-8}& zebra & 28.74 & 31.16 &31.08& 30.81 & \textbf{32.25} & 31.33\\
    \cline{2-8}& parrots & 34.77 & 35.10 &36.49& 36.10 & \textbf{37.40} & 36.61\\
    \hline
    \hline
    \multirow{3}{*}{SSIM}
           & barbara & 0.8043 & 0.9005 &0.8859& 0.8857 &\textbf{0.9365}& 0.9004\\
    \cline{2-8}&zebra & 0.8386 & 0.8647 &0.8761& 0.8653 & \textbf{0.8998} & 0.8859 \\
    \cline{2-8}& parrots & 0.9157 & 0.8956 &0.9324& 0.9259 & \textbf{0.9416} & 0.9358 \\
    \hline

  \end{tabular}
  \caption{\label{table:denoising005} Comparison of denoising performance of PSNR-optimized results with noise level $\sigma=0.05$ in terms of PSNR and SSIM.}
\end{table}

\begin{figure}
  \center{} 
\subfigure[ground truth]{%
	\begin{minipage}{0.22\linewidth}
		\includegraphics[width=\textwidth]{images_denoise_barbara_barbara2_red.jpg}
		\\[\smallskipamount]
		\includegraphics[width=\textwidth]{images_denoise_barbara_barbara2_closeup.jpg}
		\\[-1em]
\end{minipage}}
  \subfigure[noisy image ($\sigma=0.05$)]{%
    \begin{minipage}{0.22\linewidth}
      \includegraphics[width=\textwidth]{images_denoise_barbara_noisy_005_red.jpg} 
      \\[\smallskipamount]
      \includegraphics[width=\textwidth]{images_denoise_barbara_noisy_005_closeup.jpg}
      \\[-1em]
    \end{minipage}}
  \subfigure[TGV]{%
    \begin{minipage}{0.22\linewidth}
      \includegraphics[width=\textwidth]{images_denoise_barbara_tgv_noise_005_red.jpg}
      \\[\smallskipamount]
      \includegraphics[width=\textwidth]{images_denoise_barbara_tgv_noise_005_closeup.jpg}
      \\[-1em]
    \end{minipage}}
  \subfigure[NLTV]{%
    \begin{minipage}{0.22\linewidth}
      \includegraphics[width=\textwidth]{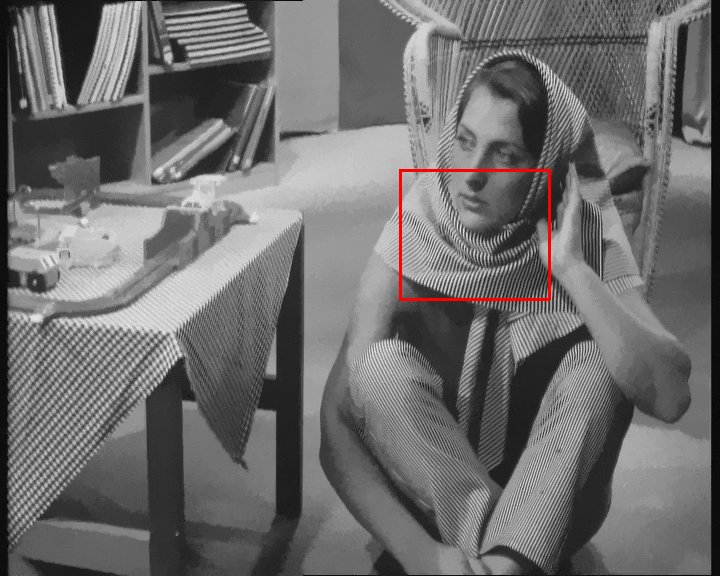}
      \\[\smallskipamount]
      \includegraphics[width=\textwidth]{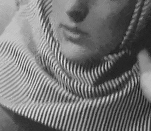}
      \\[-1em]
    \end{minipage}}

  \subfigure[$\ICTGV$]{%
	\begin{minipage}{0.22\linewidth}
		\includegraphics[width=\textwidth]{images_denoise_barbara_barbara_ictgv_005_vision_red.jpg}
		\\[\smallskipamount]
		\includegraphics[width=\textwidth]{images_denoise_barbara_barbara_ictgv_005_vision_closeup.jpg}
		\\[-1em]
\end{minipage}}
  \subfigure[Fra+LDCT]{%
    \begin{minipage}{0.22\linewidth}
      \includegraphics[width=\textwidth]{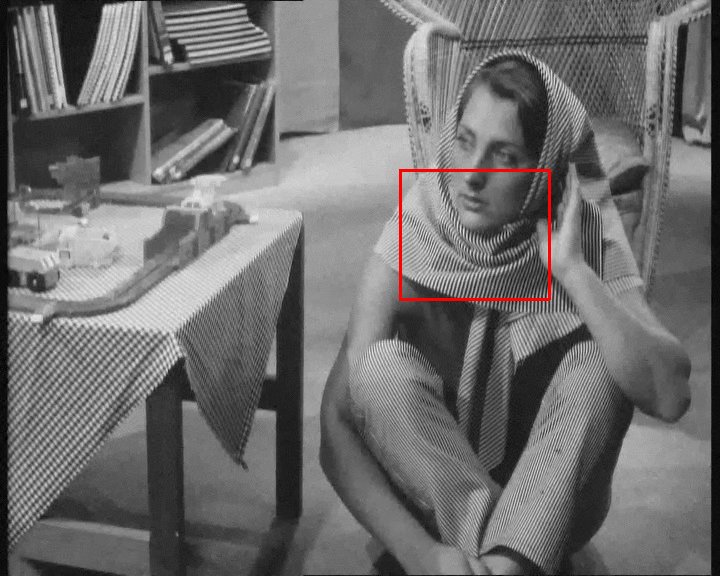}
      \\[\smallskipamount]
      \includegraphics[width=\textwidth]{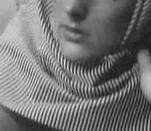}
      \\[-1em]
    \end{minipage}}
  \subfigure[BM3D]{%
    \begin{minipage}{0.22\linewidth}
      \includegraphics[width=\textwidth]{images_denoise_barbara_barbara_bm3d_005_red.jpg}
      \\[\smallskipamount]
      \includegraphics[width=\textwidth]{images_denoise_barbara_barbara_bm3d_005_closeup.jpg}
      \\[-1em]
    \end{minipage}}
  \subfigure[proposed model]{%
    \begin{minipage}{0.22\linewidth}
      \includegraphics[width=\textwidth]{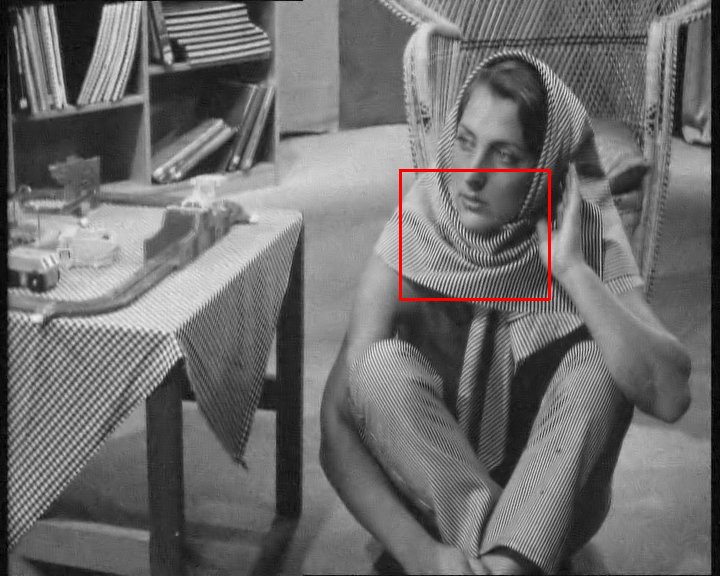}
      \\[\smallskipamount]
      \includegraphics[width=\textwidth]{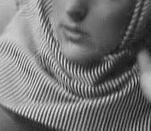}
      \\[-1em]
    \end{minipage}}
  \caption{Denoising results for ``barbara'' image ($720 \times 576$
    pixels) with noise level $\sigma=0.05$. Parameter choice for
    $\ICTGV^{\osci}$:
    $\alpha_1=0.045, \beta_1=0.7\alpha_1, \gamma_1=0,
    \alpha_i=0.9\alpha_1, \beta_i=0.6\alpha_i, \gamma_i=0.1\alpha_i,
    i=2,\ldots,17.$}
  \label{imdenoisebar}
\end{figure}

\begin{figure}
  \center{} 
    \subfigure[ground truth]{%
      \begin{minipage}{0.22\linewidth}
        \includegraphics[width=\textwidth]{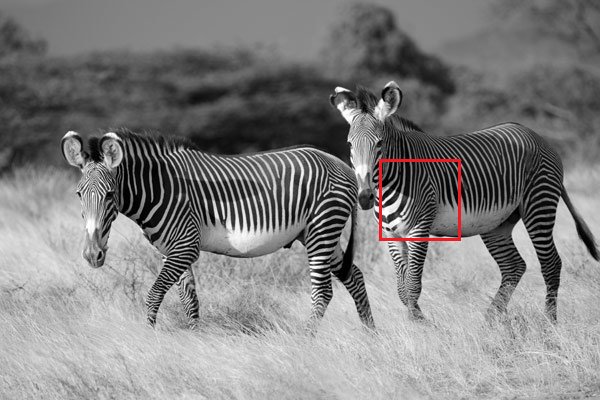}
        \\[\smallskipamount]
        \includegraphics[width=\textwidth]{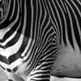}
        \\[-1em]
      \end{minipage}}
  \subfigure[noisy image ($\sigma=0.05$)]{%
    \begin{minipage}{0.22\linewidth}
      \includegraphics[width=\textwidth]{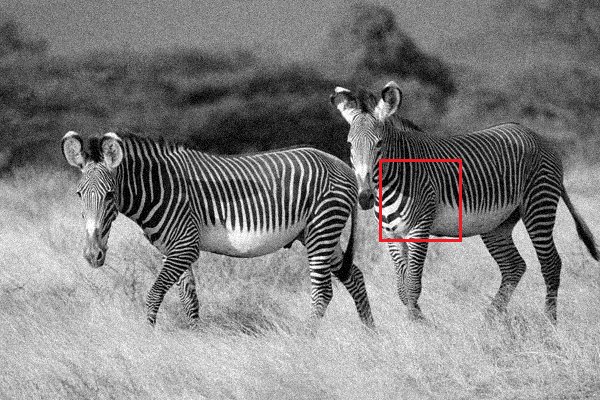}
      \\[\smallskipamount]
      \includegraphics[width=\textwidth]{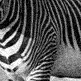}
      \\[-1em]
    \end{minipage}}
  \subfigure[TGV]{%
    \begin{minipage}{0.22\linewidth}
      \includegraphics[width=\textwidth]{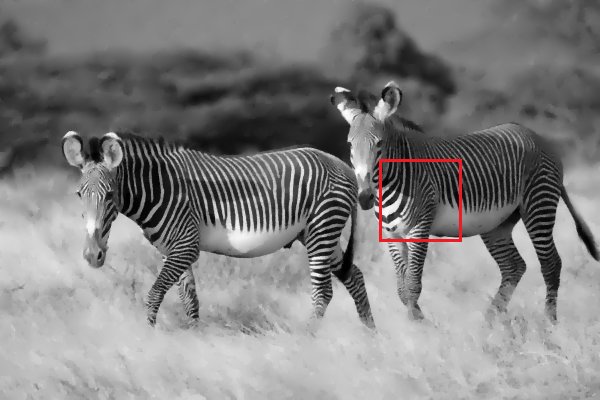}
      \\[\smallskipamount]
      \includegraphics[width=\textwidth]{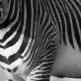}
      \\[-1em]
    \end{minipage}}
  \subfigure[NLTV]{%
    \begin{minipage}{0.22\linewidth}
      \includegraphics[width=\textwidth]{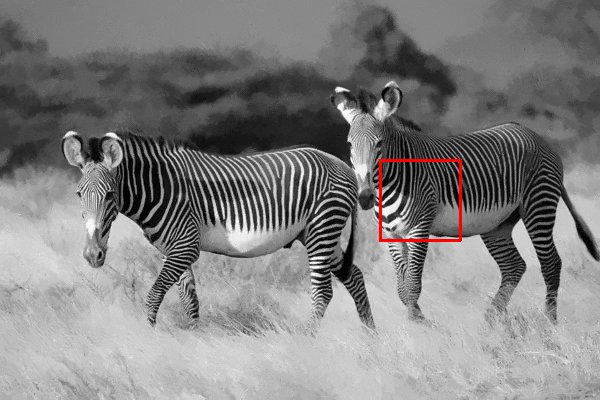}
      \\[\smallskipamount]
      \includegraphics[width=\textwidth]{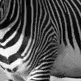}
      \\[-1em]
    \end{minipage}}

  \subfigure[ICTGV]{%
	\begin{minipage}{0.22\linewidth}
		\includegraphics[width=\textwidth]{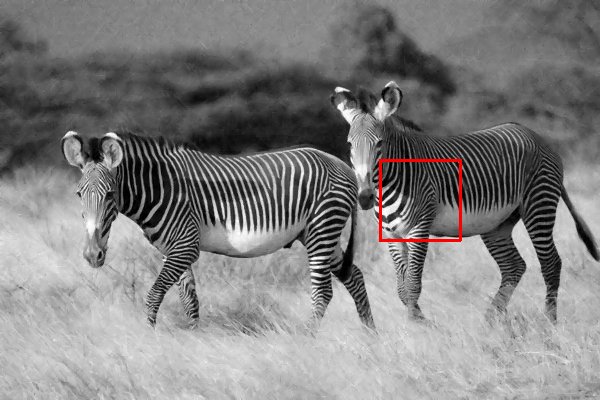}
		\\[\smallskipamount]
		\includegraphics[width=\textwidth]{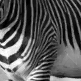}
		\\[-1em]
\end{minipage}}
  \subfigure[Fra+LDCT]{%
    \begin{minipage}{0.22\linewidth}
      \includegraphics[width=\textwidth]{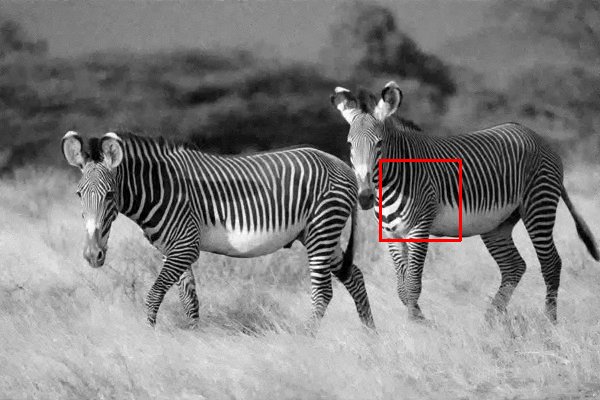}
      \\[\smallskipamount]
      \includegraphics[width=\textwidth]{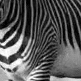}
      \\[-1em]
    \end{minipage}}
  \subfigure[BM3D]{%
    \begin{minipage}{0.22\linewidth}
      \includegraphics[width=\textwidth]{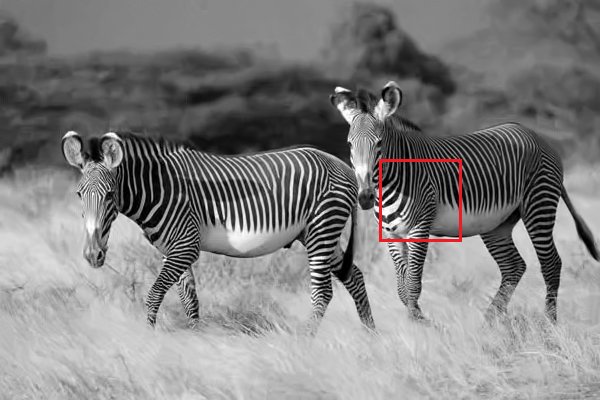}
      \\[\smallskipamount]
      \includegraphics[width=\textwidth]{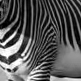}
      \\[-1em]
    \end{minipage}}
  \subfigure[proposed model]{%
    \begin{minipage}{0.22\linewidth}
      \includegraphics[width=\textwidth]{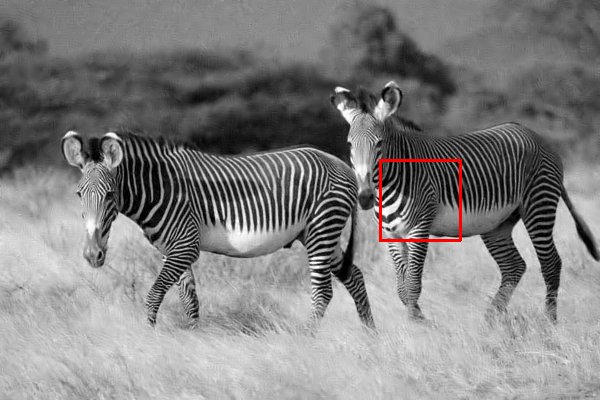}
      \\[\smallskipamount]
      \includegraphics[width=\textwidth]{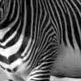}
      \\[-1em]
    \end{minipage}}
  \caption{Denoising results for ``zebra'' image ($640 \times 400$ pixels)
    with noise level $\sigma=0.05$. Parameter choice for
    $\ICTGV^{\osci}$:
    $\alpha_1=0.045, \beta_1=0.6\alpha_1, \gamma_1=0,
    \alpha_i=\alpha_1, \beta_i=0.5\alpha_i, \gamma_i=0.1\alpha_i,
    i=2,\ldots,17$.}
  \label{imdenoisezebra}
\end{figure}

\begin{figure}
  \center{} 
    \subfigure[ground truth]{%
      \begin{minipage}{0.22\linewidth}
        \includegraphics[width=\textwidth]{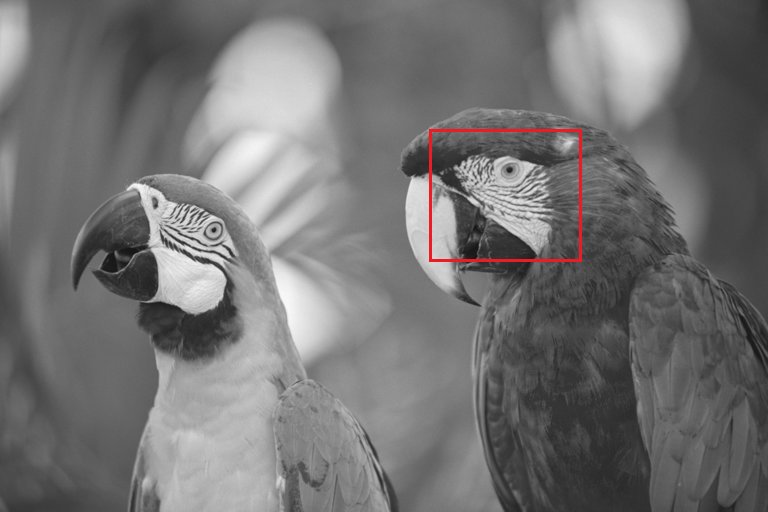}
        \\[\smallskipamount]
        \includegraphics[width=\textwidth]{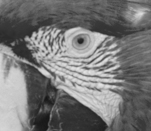}
        \\[-1em]
      \end{minipage}}
  \subfigure[noisy image ($\sigma=0.05$)]{%
    \begin{minipage}{0.22\linewidth}
      \includegraphics[width=\textwidth]{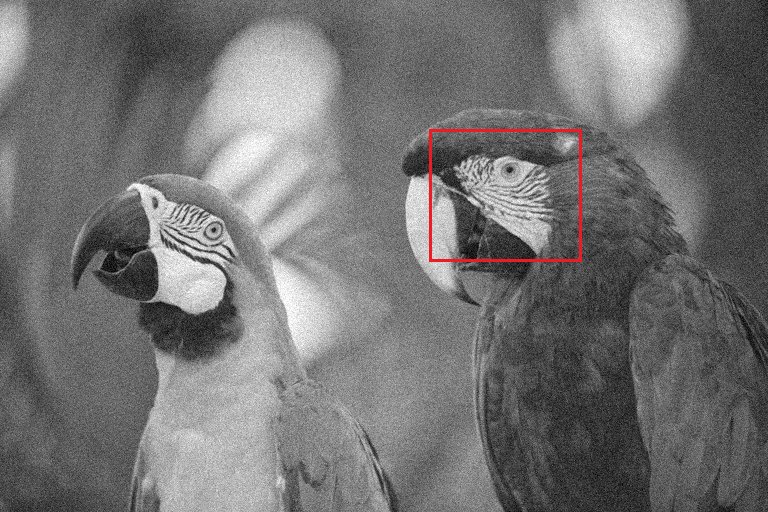}
      \\[\smallskipamount]
      \includegraphics[width=\textwidth]{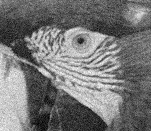}
      \\[-1em]
    \end{minipage}}
  \subfigure[TGV]{%
    \begin{minipage}{0.22\linewidth}
      \includegraphics[width=\textwidth]{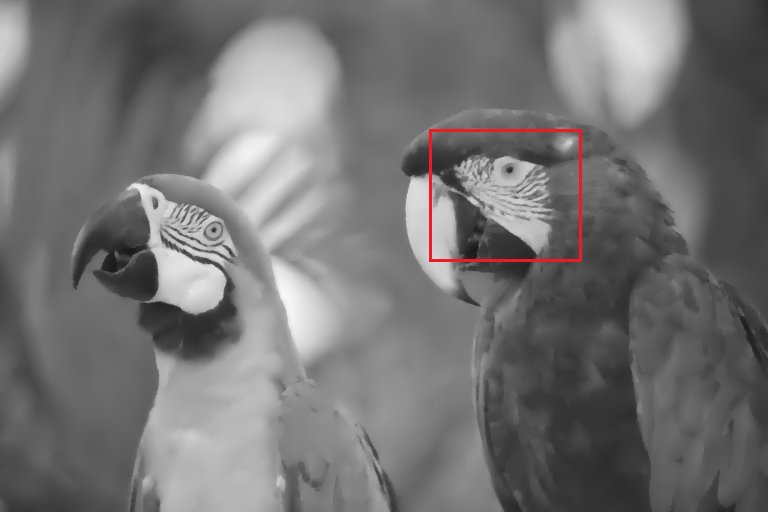}
      \\[\smallskipamount]
      \includegraphics[width=\textwidth]{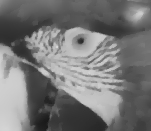}
      \\[-1em]
    \end{minipage}}
  \subfigure[NLTV]{%
    \begin{minipage}{0.22\linewidth}
      \includegraphics[width=\textwidth]{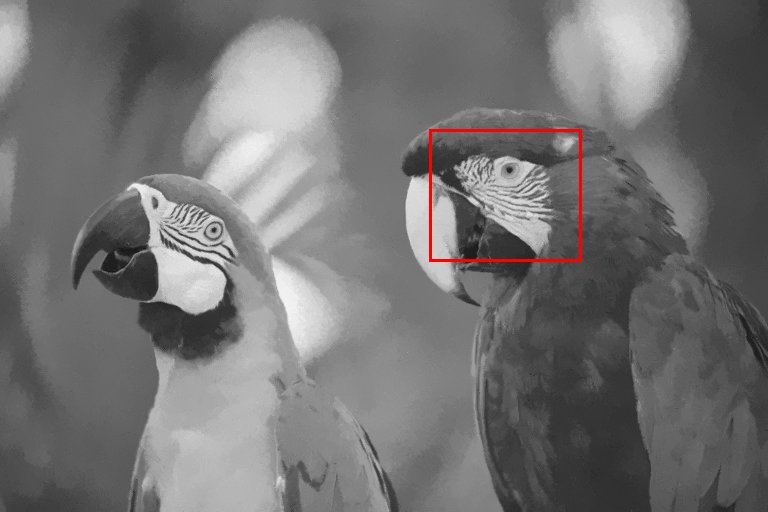}
      \\[\smallskipamount]
      \includegraphics[width=\textwidth]{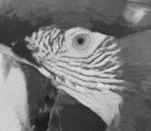}
      \\[-1em]
    \end{minipage}}

  \subfigure[ICTGV]{%
	\begin{minipage}{0.22\linewidth}
		\includegraphics[width=\textwidth]{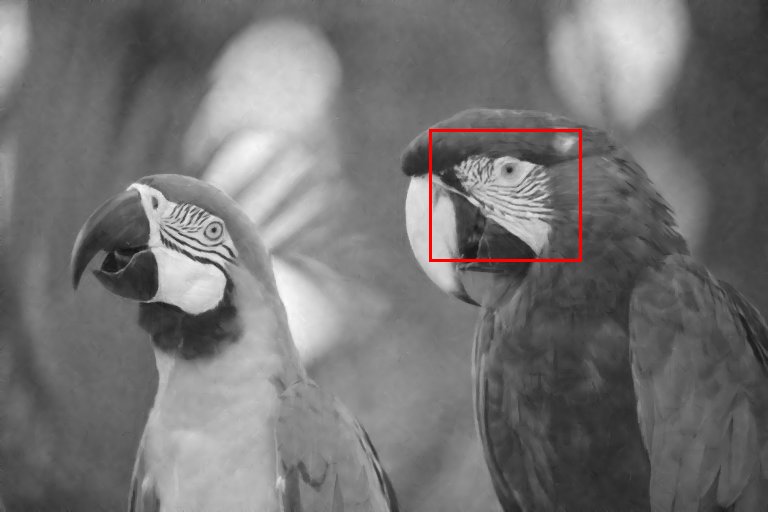}
		\\[\smallskipamount]
		\includegraphics[width=\textwidth]{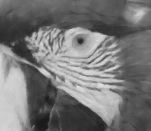}
		\\[-1em]
\end{minipage}}
  \subfigure[Fra+LDCT]{%
    \begin{minipage}{0.22\linewidth}
      \includegraphics[width=\textwidth]{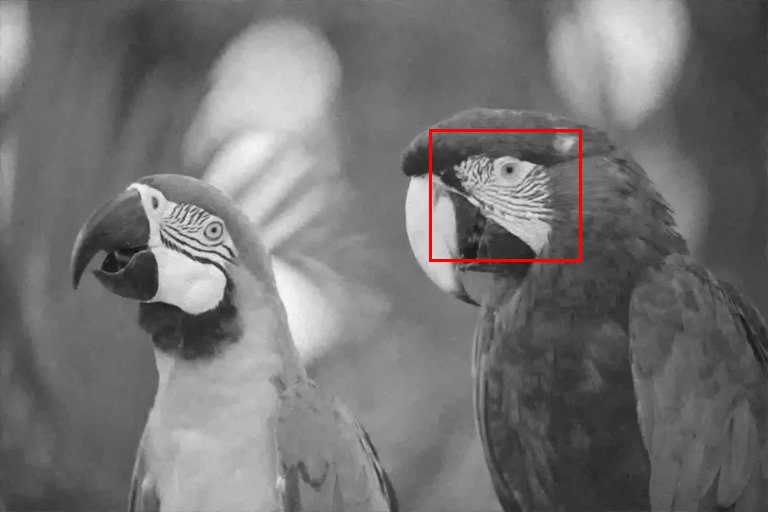}
      \\[\smallskipamount]
      \includegraphics[width=\textwidth]{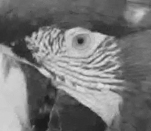}
      \\[-1em]
    \end{minipage}}
  \subfigure[BM3D]{%
    \begin{minipage}{0.22\linewidth}
      \includegraphics[width=\textwidth]{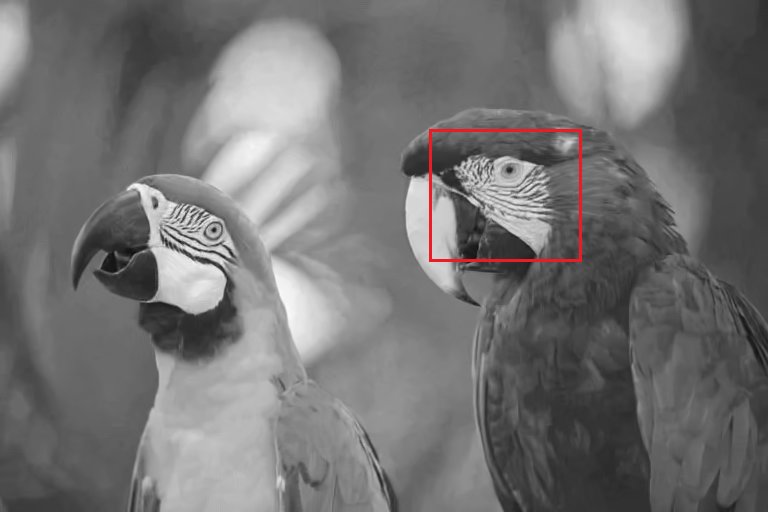}
      \\[\smallskipamount]
      \includegraphics[width=\textwidth]{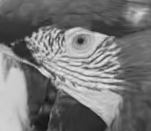}
      \\[-1em]
    \end{minipage}}
  \subfigure[proposed model]{%
    \begin{minipage}{0.22\linewidth}
      \includegraphics[width=\textwidth]{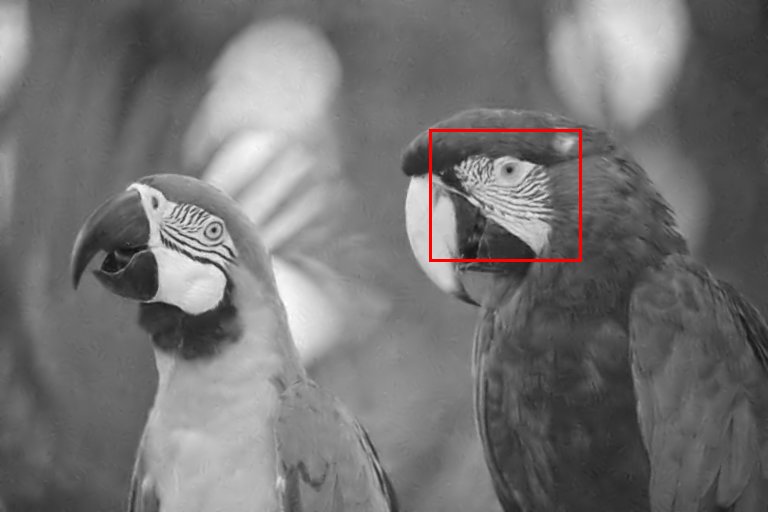}
      \\[\smallskipamount]
      \includegraphics[width=\textwidth]{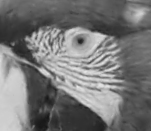}
      \\[-1em]
    \end{minipage}}
  \caption{ Denoising results for ``parrots'' image ($768 \times 512$
    pixels) with noise level $\sigma=0.05$. Parameter choice for
    $\ICTGV^{\osci}$:
    $\alpha_1=0.05, \beta_1=0.9\alpha_1, \gamma_1=0,
    \alpha_i=\alpha_1, \beta_i=0.5\alpha_i, \gamma_i=0.14\alpha_i,
    i=2,\ldots,9$.}
  \label{imdenoiseparrot}
\end{figure}

\clearpage
\subsubsection{Noise level $\sigma=0.1$}

See Figures~\ref{imdenoisebar2},~\ref{imdenoisezebra2}
and~\ref{imdenoiseparrot2}, and Table~\ref{table:denoising01}.

\begin{table}[H]
  \center{}
  \begin{tabular}{|c|c|c|c|c|c|c|c|}
    \hline
    \hline
    &image&TGV &NLTV&ICTGV&Fra+LDCT& BM3D& proposed model \\
    \hline
    \multirow{3}{*}{PSNR}
           & barbara & 25.34 & 27.58 &27.35& 28.05 & \textbf{31.06} & 28.45\\
    \cline{2-8}& zebra & 25.24 & 27.35 &27.35& 27.06 & \textbf{28.80} & 27.65 \\
    \cline{2-8}& parrots & 32.51 & 31.39 &33.28& 32.91 & \textbf{34.08} & 33.32 \\
    \hline
    \hline
    \multirow{3}{*}{SSIM}
           & barbara & 0.7297 & 0.7553 &0.7835& 0.8214 &\textbf{0.8920} & 0.8235\\
    \cline{2-8}&zebra & 0.7472 & 0.7271 &0.7796& 0.7868 & \textbf{0.8237} & 0.7965 \\
    \cline{2-8}& parrots & 0.8887 & 0.8312 &0.8975& 0.8903 & \textbf{0.9025} & 0.8979 \\
    \hline
  \end{tabular}
  \caption{\label{table:denoising01} Comparison of denoising performance of PSNR-optimized results with noise level $\sigma=0.1$ in terms of PSNR and SSIM.}
\end{table}

\begin{figure}
  \center{} 
  \subfigure[noisy image ($\sigma=0.1$)]{%
    \begin{minipage}{0.32\linewidth}
      \includegraphics[width=\textwidth]{images_denoise_barbara_noisy_01.jpg}
      \\[-1em]
    \end{minipage}}

  \subfigure[TGV]{%
    \begin{minipage}{0.32\linewidth}
      \includegraphics[width=\textwidth]{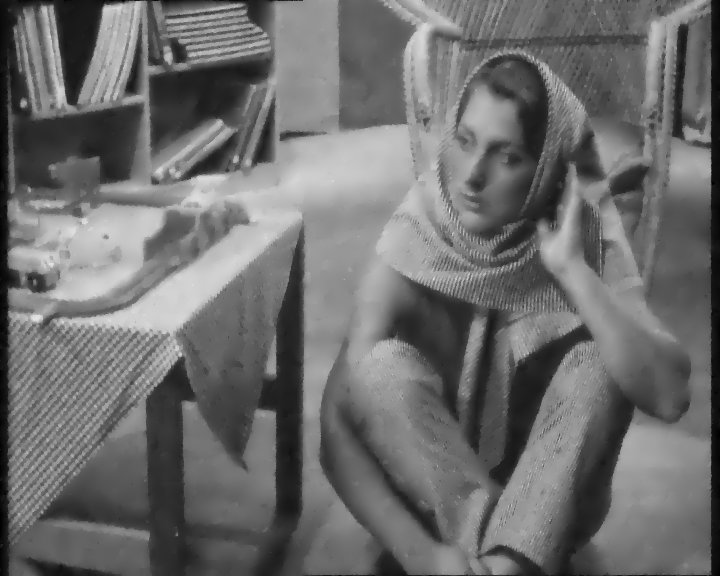}
      \\[-1em]
    \end{minipage}}
  \subfigure[NLTV]{%
    \begin{minipage}{0.32\linewidth}
      \includegraphics[width=\textwidth]{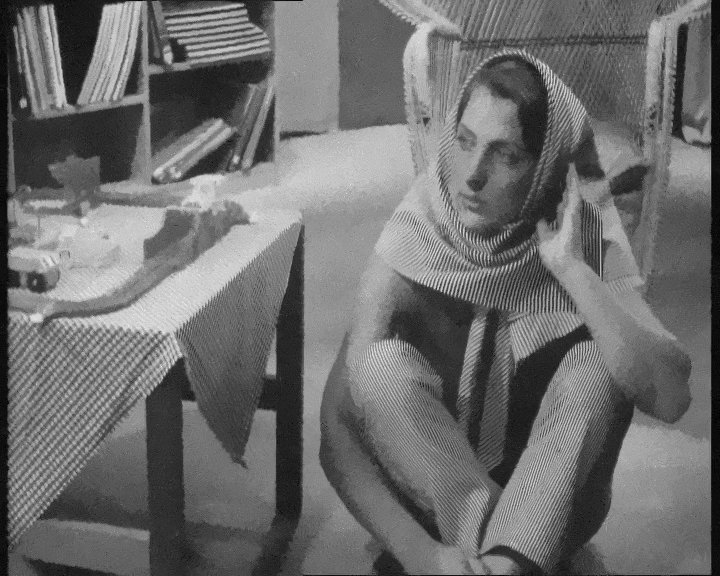}
      \\[-1em]
    \end{minipage}}
    \subfigure[ICTGV]{%
  	\begin{minipage}{0.32\linewidth}
  		\includegraphics[width=\textwidth]{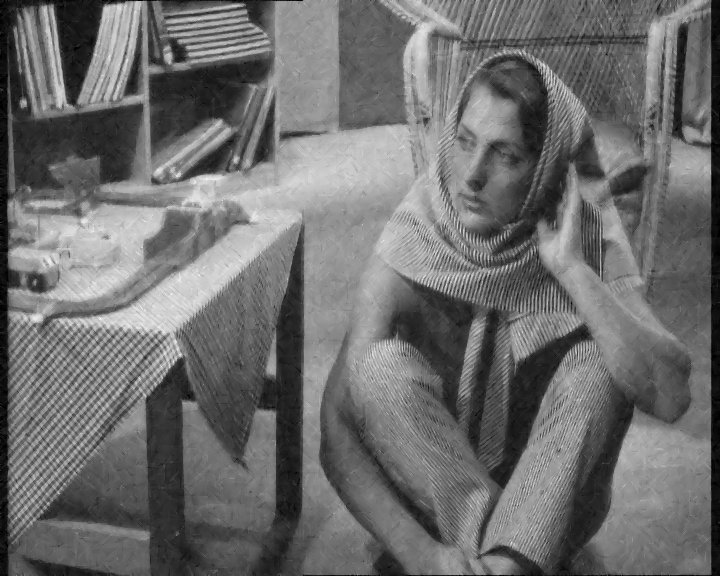}
  		\\[-1em]
  \end{minipage}}

  \subfigure[Fra+LDCT]{%
    \begin{minipage}{0.32\linewidth}
      \includegraphics[width=\textwidth]{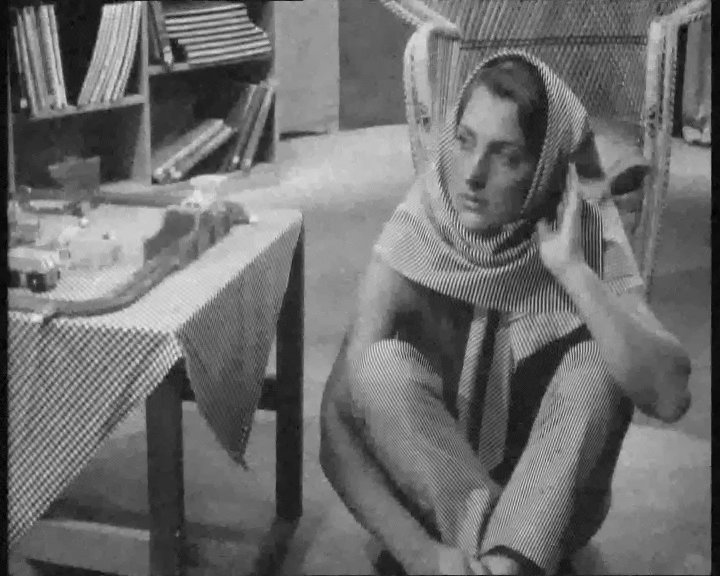}
      \\[-1em]
    \end{minipage}}
  \subfigure[BM3D]{%
    \begin{minipage}{0.32\linewidth}
      \includegraphics[width=\textwidth]{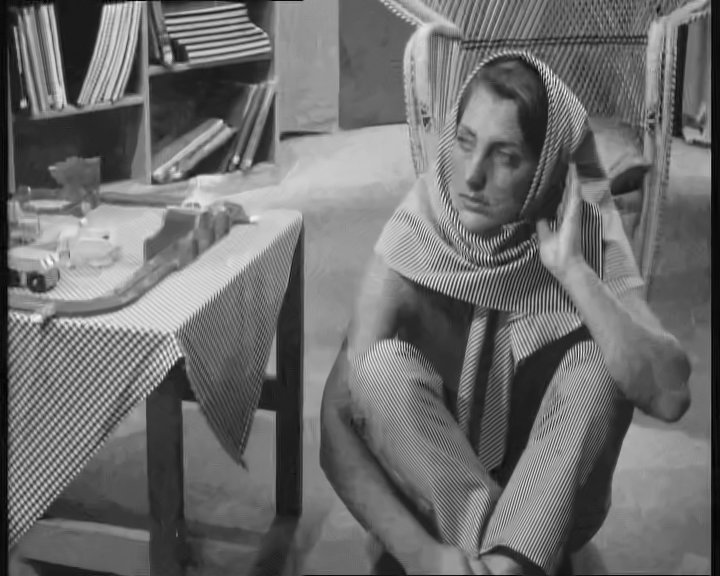}
      \\[-1em]
    \end{minipage}}
  \subfigure[proposed model]{%
    \begin{minipage}{0.32\linewidth}
      \includegraphics[width=\textwidth]{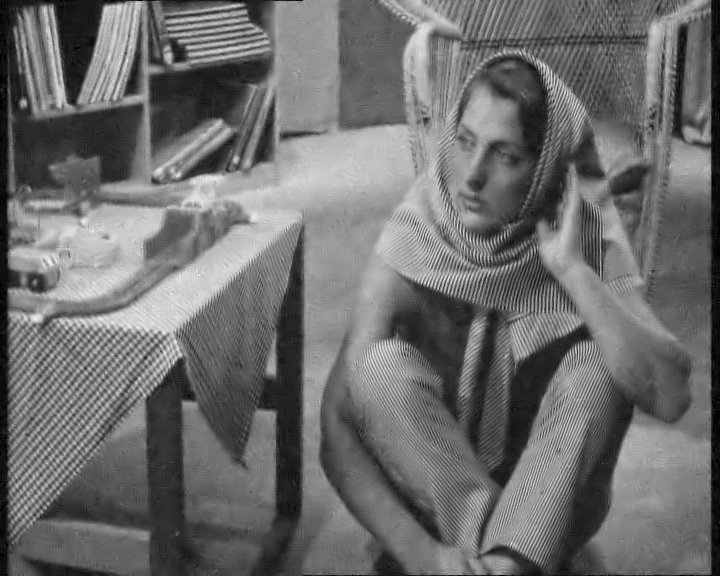}
      \\[-1em]
    \end{minipage}}
  \caption{Denoising results for ``barbara'' image ($720 \times 576$
    pixels) with noise level $\sigma=0.1$. Parameter choice for
    $\ICTGV^{\osci}$:
    $\alpha_1=0.1, \beta_1=0.8\alpha_1, \gamma_1=0,
    \alpha_i=0.8\alpha_1, \beta_i=0.7\alpha_i, \gamma_i=0.15\alpha_i,
    i=2,\ldots,17$.}
  \label{imdenoisebar2}
\end{figure}

\begin{figure}
  \center{} 
  \subfigure[noisy image ($\sigma=0.1$)]{%
    \begin{minipage}{0.32\linewidth}
      \includegraphics[width=\textwidth]{images_denoise_zebra_zebra_noisy_01.jpg}
      \\[-1em]
    \end{minipage}}

  \subfigure[TGV]{%
    \begin{minipage}{0.32\linewidth}
      \includegraphics[width=\textwidth]{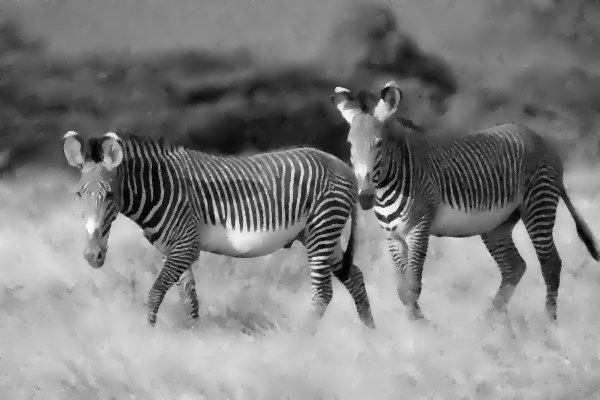}
      \\[-1em]
    \end{minipage}}
  \subfigure[NLTV]{%
    \begin{minipage}{0.32\linewidth}
      \includegraphics[width=\textwidth]{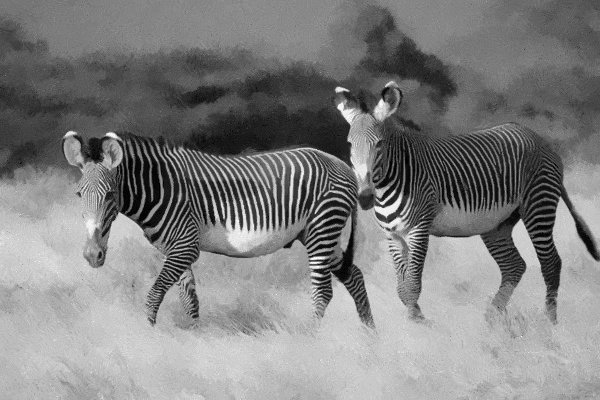}
      \\[-1em]
    \end{minipage}}
  \subfigure[ICTGV]{%
	\begin{minipage}{0.32\linewidth}
		\includegraphics[width=\textwidth]{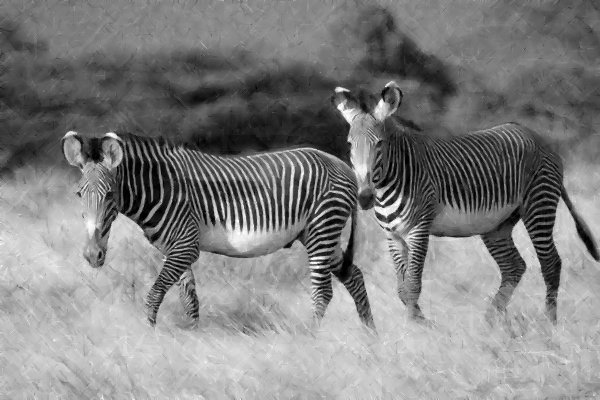}
		\\[-1em]
\end{minipage}}
  
  \subfigure[Fra+LDCT]{%
    \begin{minipage}{0.32\linewidth}
      \includegraphics[width=\textwidth]{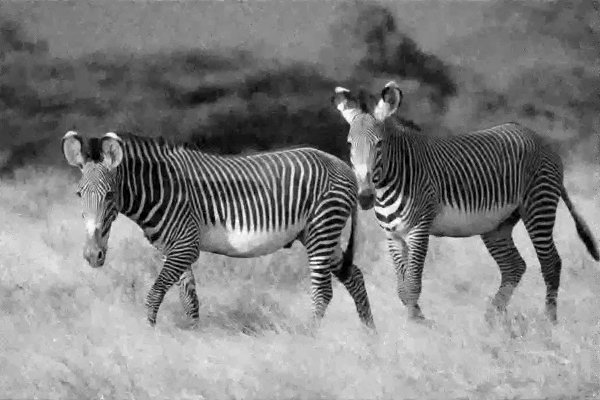}
      \\[-1em]
    \end{minipage}}
  \subfigure[BM3D]{%
    \begin{minipage}{0.32\linewidth}
      \includegraphics[width=\textwidth]{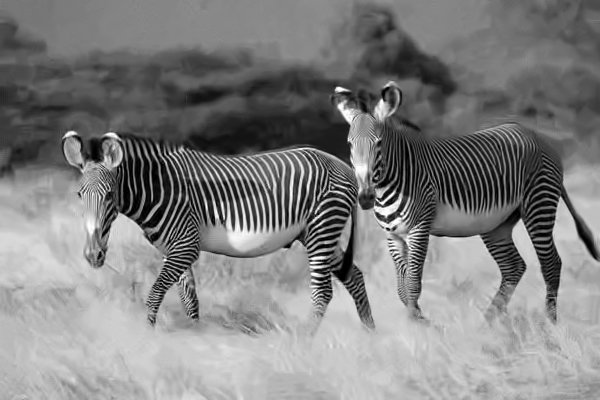}
      \\[-1em]
    \end{minipage}}
  \subfigure[proposed model]{%
    \begin{minipage}{0.32\linewidth}
      \includegraphics[width=\textwidth]{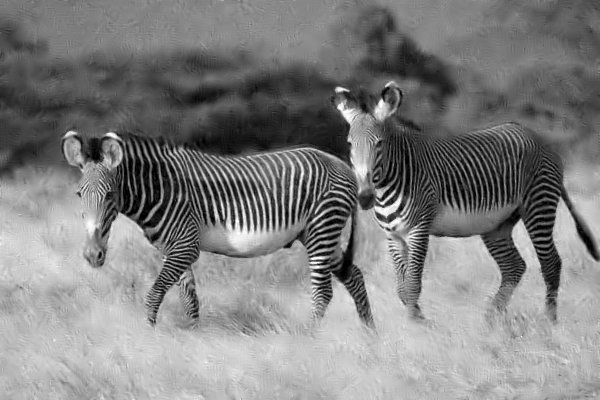}
      \\[-1em]
    \end{minipage}}
  \caption{Denoising results for ``zebra'' image ($640 \times 400$ pixels)
    with noise level $\sigma=0.1$. Parameter choice for
    $\ICTGV^{\osci}$:
    $\alpha_1=0.09, \beta_1=\alpha_1, \gamma_1=0,
    \alpha_i=1.1\alpha_1, \beta_i=0.5\alpha_i, \gamma_i=0.12\alpha_i,
    i=2,\ldots,17$.}
  \label{imdenoisezebra2}
\end{figure}

\begin{figure}
  \center{} 
  \subfigure[noisy image ($\sigma=0.1$)]{%
    \begin{minipage}{0.32\linewidth}
      \includegraphics[width=\textwidth]{images_denoise_bird_bird_noisy_01.jpg}
      \\[-1em]
    \end{minipage}}

  \subfigure[TGV]{%
    \begin{minipage}{0.32\linewidth}
      \includegraphics[width=\textwidth]{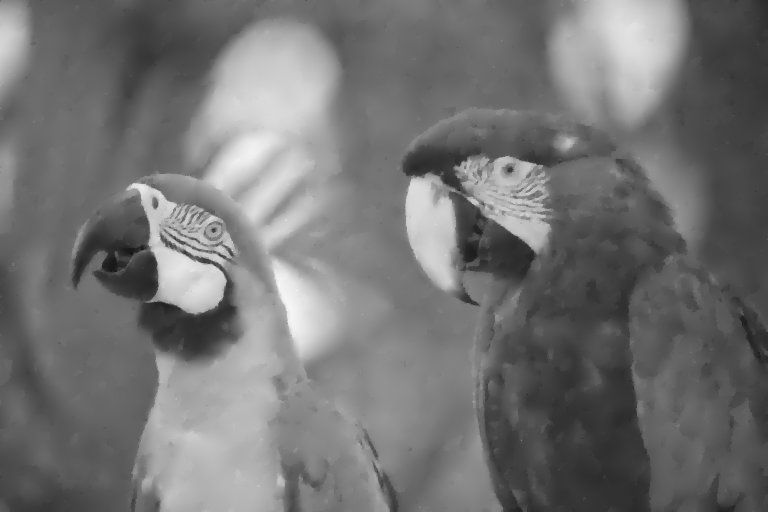}
      \\[-1em]
    \end{minipage}}
  \subfigure[NLTV]{%
    \begin{minipage}{0.32\linewidth}
      \includegraphics[width=\textwidth]{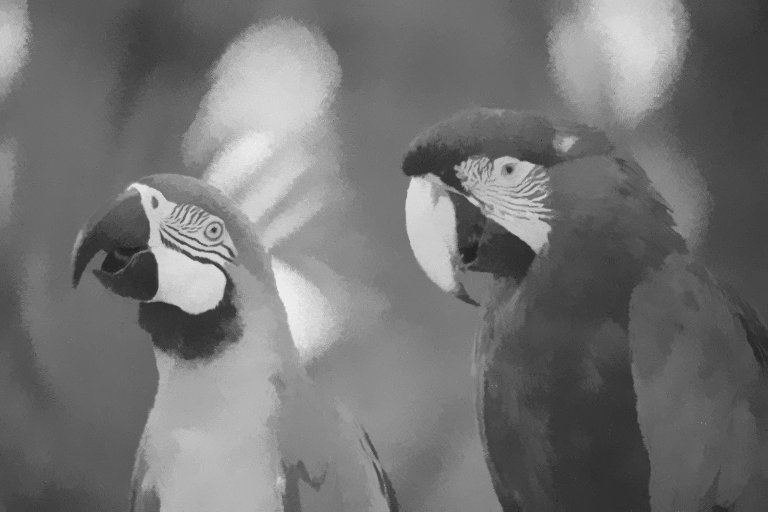}
      \\[-1em]
    \end{minipage}}
  \subfigure[ICTGV]{%
	\begin{minipage}{0.32\linewidth}
		\includegraphics[width=\textwidth]{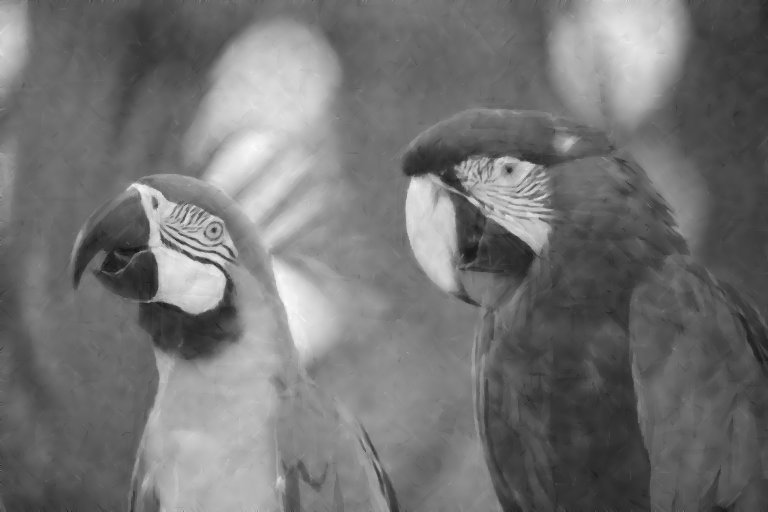}
		\\[-1em]
\end{minipage}}
  
  \subfigure[Fra+LDCT]{%
    \begin{minipage}{0.32\linewidth}
      \includegraphics[width=\textwidth]{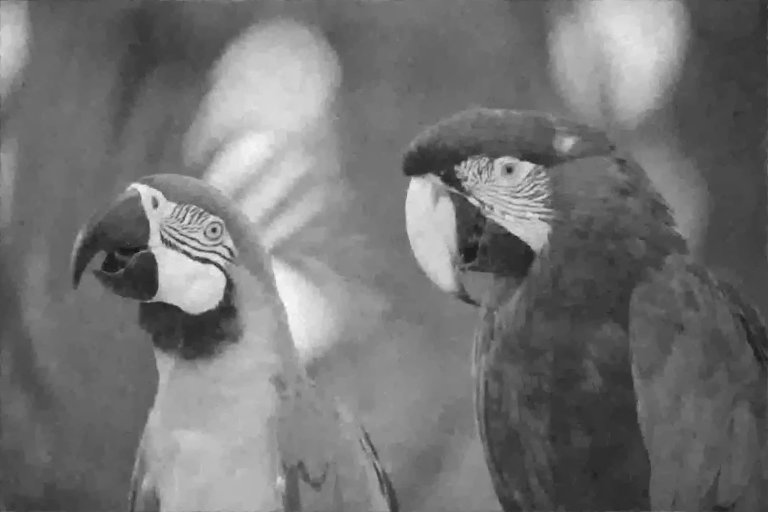}
      \\[-1em]
    \end{minipage}}
  \subfigure[BM3D]{%
    \begin{minipage}{0.32\linewidth}
      \includegraphics[width=\textwidth]{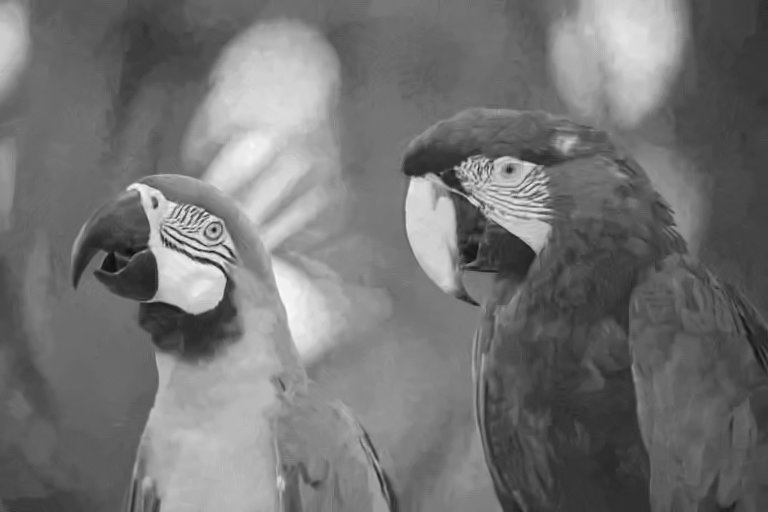}
      \\[-1em]
    \end{minipage}}
  \subfigure[proposed model]{%
    \begin{minipage}{0.32\linewidth}
      \includegraphics[width=\textwidth]{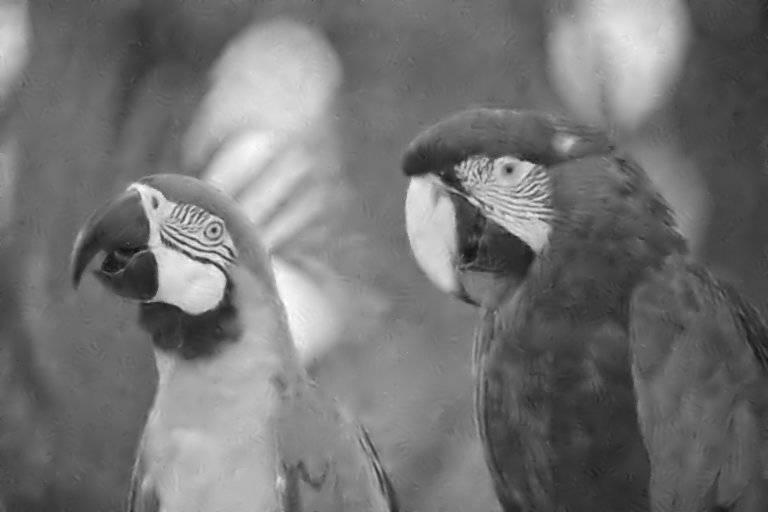}
      \\[-1em]
    \end{minipage}}
  \caption{Denoising results for parrots image ($768 \times 512$
    pixels) with noise level $\sigma=0.1$. Parameter choice for
    $\ICTGV^{\osci}$:
    $\alpha_1=0.1, \beta_1=1.2\alpha_1, \gamma_1=0, \alpha_i=1.2\alpha_1,
    \beta_i=0.5\alpha_i, \gamma_i=0.1\alpha_i, i=2,\ldots,9$.}
  \label{imdenoiseparrot2}
\end{figure}

\clearpage
\subsection{Comparison of visually-optimized results}

\subsubsection{Noise level $\sigma=0.05$}

See Figures~\ref{imdenoisebar_vision},~\ref{imdenoisezebra_vision}
and~\ref{imdenoiseparrot_vision}, and
Table~\ref{table:denoising005_vis}.

\begin{table}[H]
  \center{}
  \begin{tabular}{|c|c|c|c|c|c|c|c|}
    \hline
    \hline
    &image&TGV &NLTV&ICTGV&Fra+LDCT& BM3D& proposed model \\
    \hline
    \multirow{3}{*}{PSNR}
           & barbara & 27.41 & 31.28&30.33 & 31.19 & \textbf{34.43} & 31.44\\
    \cline{2-8}& zebra & 28.74 & 30.07 &30.43& 30.06 & \textbf{32.25} & 30.46 \\
    \cline{2-8}& parrots & 34.77 & 34.08 &35.63& 35.49 & \textbf{37.40} & 36.01 \\
    \hline
    \hline
    \multirow{3}{*}{SSIM}
          	& barbara & 0.8043 & 0.8917&0.8833 & 0.8988 &\textbf{0.9365}& 0.8988\\
          \cline{2-8}&zebra & 0.8386 & 0.8419 &0.8735 & 0.8671 & \textbf{0.8998} & 0.8777 \\
          \cline{2-8}& parrots & 0.9157 & 0.9062 &0.9272 & 0.9257 & \textbf{0.9416} &0.9303 \\
    \hline
  \end{tabular}
  \caption{\label{table:denoising005_vis} Comparison of denoising performance of visually-optimized results with noise level $\sigma=0.05$ in terms of PSNR and SSIM.}
\end{table}

\begin{figure}
  \center{} 
    \subfigure[ground truth]{%
      \begin{minipage}{0.22\linewidth}
        \includegraphics[width=\textwidth]{images_denoise_barbara_barbara2_red.jpg}
        \\[\smallskipamount]
         \includegraphics[width=\textwidth]{images_denoise_barbara_barbara2_closeup.jpg}
        \\[-1em]
      \end{minipage}}
  \subfigure[noisy image ($\sigma=0.05$)]{%
    \begin{minipage}{0.22\linewidth}
      \includegraphics[width=\textwidth]{images_denoise_barbara_noisy_005_red.jpg} 
      \\[\smallskipamount]
      \includegraphics[width=\textwidth]{images_denoise_barbara_noisy_005_closeup.jpg}
      \\[-1em]
    \end{minipage}}
  \subfigure[TGV]{%
    \begin{minipage}{0.22\linewidth}
      \includegraphics[width=\textwidth]{images_denoise_barbara_tgv_noise_005_red.jpg}
      \\[\smallskipamount]
      \includegraphics[width=\textwidth]{images_denoise_barbara_tgv_noise_005_closeup.jpg}
      \\[-1em]
    \end{minipage}}
  \subfigure[NLTV]{%
    \begin{minipage}{0.22\linewidth}
      \includegraphics[width=\textwidth]{images_denoise_barbara_barbara_nltv_005_vision_red.jpg}
      \\[\smallskipamount]
      \includegraphics[width=\textwidth]{images_denoise_barbara_barbara_nltv_005_vision_closeup.jpg}
      \\[-1em]
    \end{minipage}}

  \subfigure[ICTGV]{%
	\begin{minipage}{0.22\linewidth}
		\includegraphics[width=\textwidth]{images_denoise_barbara_barbara_ictgv_005_vision_red.jpg}
		\\[\smallskipamount]
		\includegraphics[width=\textwidth]{images_denoise_barbara_barbara_ictgv_005_vision_closeup.jpg}
		\\[-1em]
\end{minipage}}
  \subfigure[Fra+LDCT]{%
    \begin{minipage}{0.22\linewidth}
      \includegraphics[width=\textwidth]{images_denoise_barbara_barbara_framelet_005_vision_red.jpg}
      \\[\smallskipamount]
      \includegraphics[width=\textwidth]{images_denoise_barbara_barbara_framelet_005_vision_closeup.jpg}
      \\[-1em]
    \end{minipage}}
  \subfigure[BM3D]{%
    \begin{minipage}{0.22\linewidth}
      \includegraphics[width=\textwidth]{images_denoise_barbara_barbara_bm3d_005_red.jpg}
      \\[\smallskipamount]
      \includegraphics[width=\textwidth]{images_denoise_barbara_barbara_bm3d_005_closeup.jpg}
      \\[-1em]
    \end{minipage}}
  \subfigure[proposed model]{%
    \begin{minipage}{0.22\linewidth}
      \includegraphics[width=\textwidth]{images_denoise_barbara_barbara_osci_tgv_005_vision_red.jpg}
      \\[\smallskipamount]
      \includegraphics[width=\textwidth]{images_denoise_barbara_barbara_osci_tgv_005_vision_closeup.jpg}
      \\[-1em]
    \end{minipage}}
  \caption{ Denoising results for ``barbara'' image ($720 \times 576$
    pixels) with noise level $\sigma=0.05$. Parameter choice for
    $\ICTGV^{\osci}$:
    $\alpha_1=0.05, \beta_1=\alpha_1, \gamma_1=0,
    \alpha_i=0.9\alpha_1, \beta_i=\alpha_i, \gamma_i=0.1\alpha_i,
    i=2,\ldots,17.$}
  \label{imdenoisebar_vision}
\end{figure}

\begin{figure}
  \center{} 
    \subfigure[ground truth]{%
      \begin{minipage}{0.22\linewidth}
        \includegraphics[width=\textwidth]{images_denoise_zebra_zebra_red.jpg}
        \\[\smallskipamount]
         \includegraphics[width=\textwidth]{images_denoise_zebra_zebra_closeup.jpg}
        \\[-1em]
      \end{minipage}}
  \subfigure[noisy image ($\sigma=0.05$)]{%
    \begin{minipage}{0.22\linewidth}
      \includegraphics[width=\textwidth]{images_denoise_zebra_zebra_noisy_005_red.jpg}
      \\[\smallskipamount]
      \includegraphics[width=\textwidth]{images_denoise_zebra_zebra_noisy_005_closeup.jpg}
      \\[-1em]
    \end{minipage}}
  \subfigure[TGV]{%
    \begin{minipage}{0.22\linewidth}
      \includegraphics[width=\textwidth]{images_denoise_zebra_zebra_tgv_noise_005_red.jpg}
      \\[\smallskipamount]
      \includegraphics[width=\textwidth]{images_denoise_zebra_zebra_tgv_noise_005_closeup.jpg}
      \\[-1em]
    \end{minipage}}
  \subfigure[NLTV]{%
    \begin{minipage}{0.22\linewidth}
      \includegraphics[width=\textwidth]{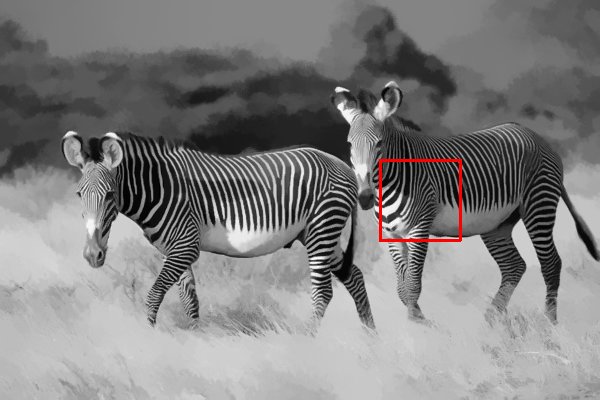}
      \\[\smallskipamount]
      \includegraphics[width=\textwidth]{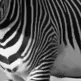}
      \\[-1em]
    \end{minipage}}

  \subfigure[ICTGV]{%
	\begin{minipage}{0.22\linewidth}
		\includegraphics[width=\textwidth]{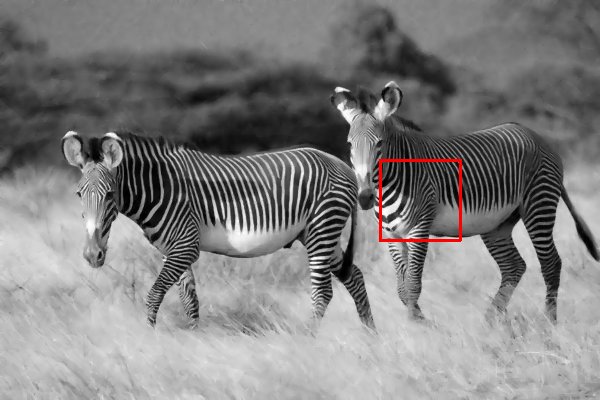}
		\\[\smallskipamount]
		\includegraphics[width=\textwidth]{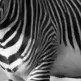}
		\\[-1em]
\end{minipage}}
  \subfigure[Fra+LDCT]{%
    \begin{minipage}{0.22\linewidth}
      \includegraphics[width=\textwidth]{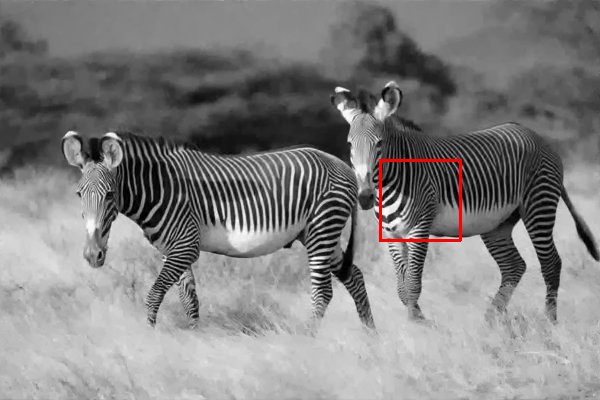}
      \\[\smallskipamount]
      \includegraphics[width=\textwidth]{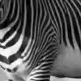}
      \\[-1em]
    \end{minipage}}
  \subfigure[BM3D]{%
    \begin{minipage}{0.22\linewidth}
      \includegraphics[width=\textwidth]{images_denoise_zebra_zebra_bm3d_005_red.jpg}
      \\[\smallskipamount]
      \includegraphics[width=\textwidth]{images_denoise_zebra_zebra_bm3d_005_closeup.jpg}
      \\[-1em]
    \end{minipage}}
  \subfigure[proposed model]{%
    \begin{minipage}{0.22\linewidth}
      \includegraphics[width=\textwidth]{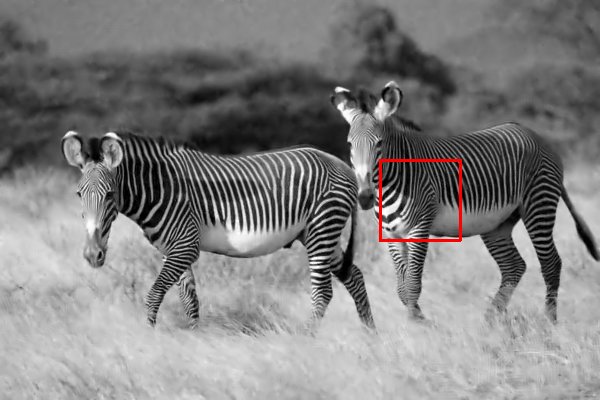}
      \\[\smallskipamount]
      \includegraphics[width=\textwidth]{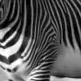}
      \\[-1em]
    \end{minipage}}
  \caption{ Denoising results for ``zebra'' image ($640 \times 400$
    pixels) with noise level $\sigma=0.05$. Parameter choice for
    $\ICTGV^{\osci}$:
    $\alpha_1=0.05, \beta_1=0.6\alpha_1, \gamma_1=0,
    \alpha_i=0.9\alpha_1, \beta_i=\alpha_i, \gamma_i=0.1\alpha_i,
    i=2,\ldots,17.$}
  \label{imdenoisezebra_vision}
\end{figure}

\begin{figure}
  \center{} 
    \subfigure[ground truth]{%
      \begin{minipage}{0.22\linewidth}
        \includegraphics[width=\textwidth]{images_denoise_bird_bird_red.jpg}
         \\[\smallskipamount]
         \includegraphics[width=\textwidth]{images_denoise_bird_bird_closeup.jpg}
        \\[-1em]
      \end{minipage}}
  \subfigure[noisy image ($\sigma=0.05$)]{%
    \begin{minipage}{0.22\linewidth}
      \includegraphics[width=\textwidth]{images_denoise_bird_bird_noisy_005_red.jpg}
      \\[\smallskipamount]
      \includegraphics[width=\textwidth]{images_denoise_bird_bird_noisy_005_closeup.jpg}
      \\[-1em]
    \end{minipage}}
  \subfigure[TGV]{%
    \begin{minipage}{0.22\linewidth}
      \includegraphics[width=\textwidth]{images_denoise_bird_bird_tgv_noise_005_red.jpg}
      \\[\smallskipamount]
      \includegraphics[width=\textwidth]{images_denoise_bird_bird_tgv_noise_005_closeup.jpg}
      \\[-1em]
    \end{minipage}}
  \subfigure[NLTV]{%
    \begin{minipage}{0.22\linewidth}
      \includegraphics[width=\textwidth]{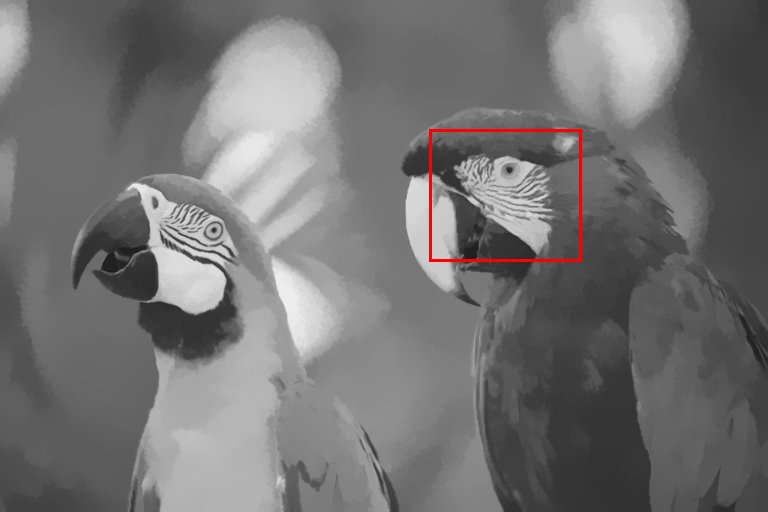}
      \\[\smallskipamount]
      \includegraphics[width=\textwidth]{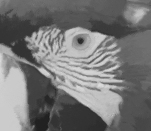}
      \\[-1em]
    \end{minipage}}

  \subfigure[ICTGV]{%
	\begin{minipage}{0.22\linewidth}
		\includegraphics[width=\textwidth]{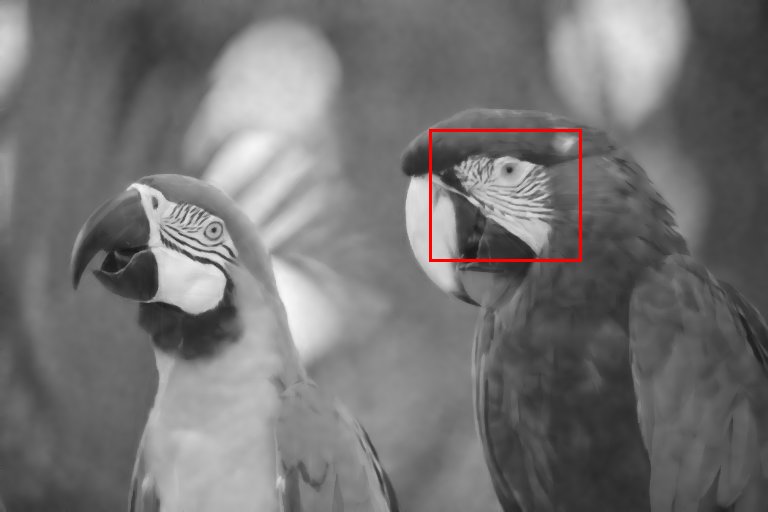}
		\\[\smallskipamount]
		\includegraphics[width=\textwidth]{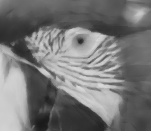}
		\\[-1em]
\end{minipage}}
  \subfigure[Fra+LDCT]{%
    \begin{minipage}{0.22\linewidth}
      \includegraphics[width=\textwidth]{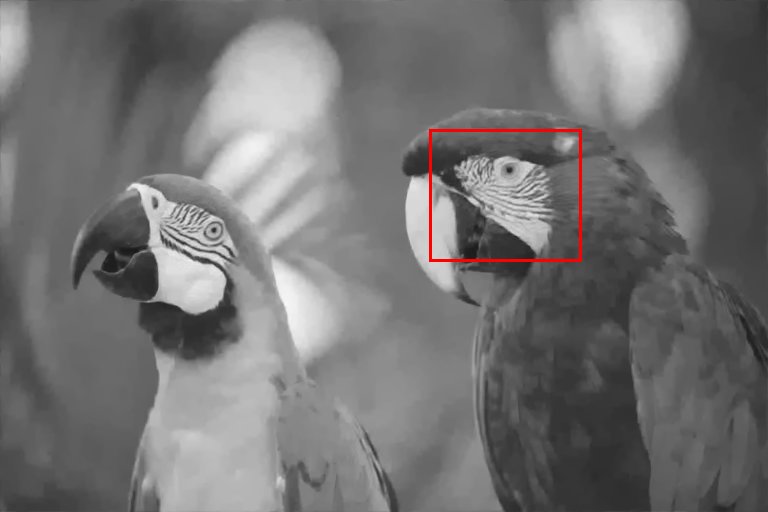}
      \\[\smallskipamount]
      \includegraphics[width=\textwidth]{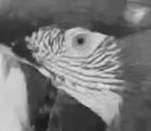}
      \\[-1em]
    \end{minipage}}
  \subfigure[BM3D]{%
    \begin{minipage}{0.22\linewidth}
      \includegraphics[width=\textwidth]{images_denoise_bird_bird_bm3d_005_red.jpg}
      \\[\smallskipamount]
      \includegraphics[width=\textwidth]{images_denoise_bird_bird_bm3d_005_closeup.jpg}
      \\[-1em]
    \end{minipage}}
  \subfigure[proposed model]{%
    \begin{minipage}{0.22\linewidth}
      \includegraphics[width=\textwidth]{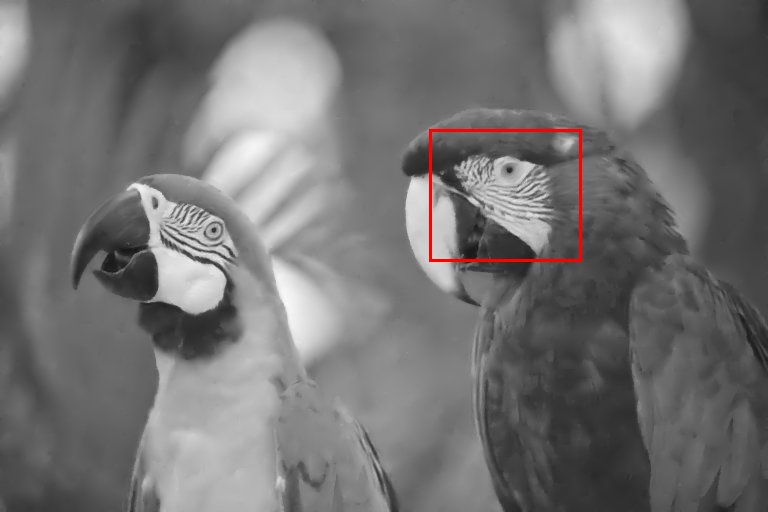}
      \\[\smallskipamount]
      \includegraphics[width=\textwidth]{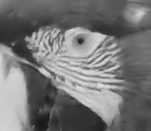}
      \\[-1em]
    \end{minipage}}
  \caption{Denoising results for ``parrots'' image ($768 \times 512$
    pixels) with noise level $\sigma=0.05$. Parameter choice for
    $\ICTGV^{\osci}$:
    $\alpha_1=0.05, \beta_1=1.5\alpha_1, \gamma_1=0,
    \alpha_i=0.75\alpha_1, \beta_i=1.5\alpha_i, \gamma_i=0.2\alpha_i,
    i=2,\ldots,9.$}
  \label{imdenoiseparrot_vision}
\end{figure}

\clearpage
\subsubsection{Noise level $\sigma=0.1$}

See Figures~\ref{imdenoisebar2_vision},~\ref{imdenoisezebra2_vision}
and~\ref{imdenoiseparrot2_vision}, and
Table~\ref{table:denoising01_vis}.

\begin{table}[H]
  \center{}
  \begin{tabular}{|c|c|c|c|c|c|c|c|}
    \hline
    \hline
    &image&TGV &NLTV&ICTGV&Fra+LDCT& BM3D& proposed model \\
    \hline
    \multirow{3}{*}{PSNR}
           & barbara & 25.34 & 27.15 &26.17 &27.66 & \textbf{31.06} & 27.89\\
    \cline{2-8}& zebra & 25.24 & 26.93 & 26.83& 26.44 & \textbf{28.80} & 27.00 \\
    \cline{2-8}& parrots & 32.51 & 30.13 & 32.35& 32.65 & \textbf{34.08} & 32.70 \\
    \hline
    \hline
    \multirow{3}{*}{SSIM}
           & barbara & 0.7297 & 0.7814 &0.7840 & 0.8201 &\textbf{0.8920} & 0.8120\\
    \cline{2-8}&zebra & 0.7472 & 0.7540 &0.7897 & 0.7795 & \textbf{0.8237} & 0.7948 \\
    \cline{2-8}& parrots & 0.8887 & 0.8530 &0.8889& 0.8928 & \textbf{0.9025} & 0.8936 \\
    \hline
  \end{tabular}
  \caption{\label{table:denoising01_vis} Comparison of denoising performance of visually-optimized results with noise level $\sigma=0.1$ in terms of PSNR and SSIM.}
\end{table}

\begin{figure}
  \center{} 
  \subfigure[noisy image ($\sigma=0.1$)]{%
    \begin{minipage}{0.32\linewidth}
      \includegraphics[width=\textwidth]{images_denoise_barbara_noisy_01.jpg}
      \\[-1em]
    \end{minipage}}

  \subfigure[TGV]{%
    \begin{minipage}{0.32\linewidth}
      \includegraphics[width=\textwidth]{images_denoise_barbara_tgv_noise_01.jpg}
      \\[-1em]
    \end{minipage}}
  \subfigure[NLTV]{%
    \begin{minipage}{0.32\linewidth}
      \includegraphics[width=\textwidth]{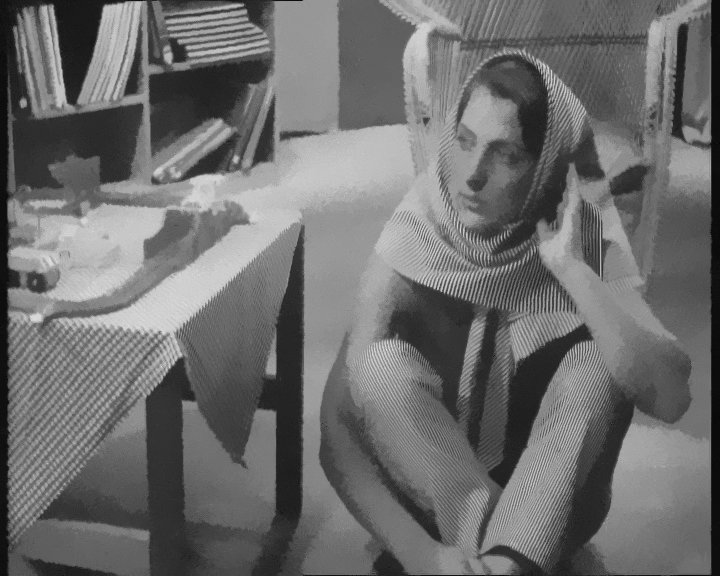}
      \\[-1em]
    \end{minipage}}
  \subfigure[ICTGV]{%
	\begin{minipage}{0.32\linewidth}
		\includegraphics[width=\textwidth]{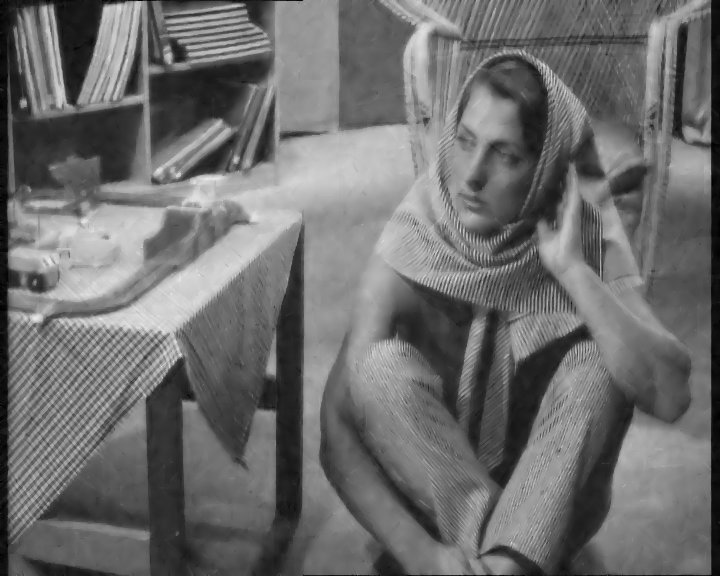}
		\\[-1em]
\end{minipage}}
  
  \subfigure[Fra+LDCT]{%
    \begin{minipage}{0.32\linewidth}
      \includegraphics[width=\textwidth]{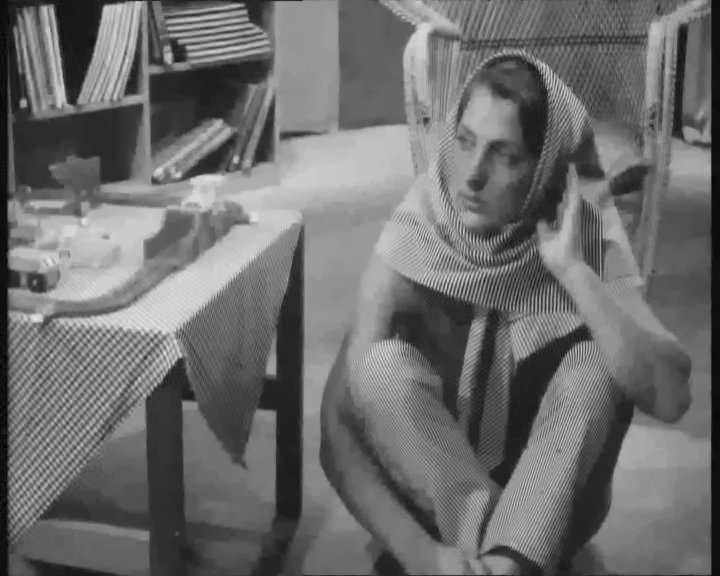}
      \\[-1em]
    \end{minipage}}
  \subfigure[BM3D]{%
    \begin{minipage}{0.32\linewidth}
      \includegraphics[width=\textwidth]{images_denoise_barbara_barbara_bm3d_01.jpg}
      \\[-1em]
    \end{minipage}}
  \subfigure[proposed model]{%
    \begin{minipage}{0.32\linewidth}
      \includegraphics[width=\textwidth]{images_denoise_barbara_barbara_osci_tgv_01_vision.jpg}
      \\[-1em]
    \end{minipage}}
  \caption{Denoising results for ``barbara'' image ($720 \times 576$
    pixels) with noise level $\sigma=0.1$. Parameter choice for
    $\ICTGV^{\osci}$:
    $\alpha_1=0.1, \beta_1=\alpha_1, \gamma_1=0, \alpha_i=0.8\alpha_1,
    \beta_i=\alpha_i, \gamma_i=0.2\alpha_i, i=2,\ldots,17.$}
  \label{imdenoisebar2_vision}
\end{figure}

\begin{figure}
  \center{} 
  \subfigure[noisy image ($\sigma=0.1$)]{%
    \begin{minipage}{0.32\linewidth}
      \includegraphics[width=\textwidth]{images_denoise_zebra_zebra_noisy_01.jpg}
      \\[-1em]
    \end{minipage}}

  \subfigure[TGV]{%
    \begin{minipage}{0.32\linewidth}
      \includegraphics[width=\textwidth]{images_denoise_zebra_zebra_tgv_noise_01.jpg}
      \\[-1em]
    \end{minipage}}
  \subfigure[NLTV]{%
    \begin{minipage}{0.32\linewidth}
      \includegraphics[width=\textwidth]{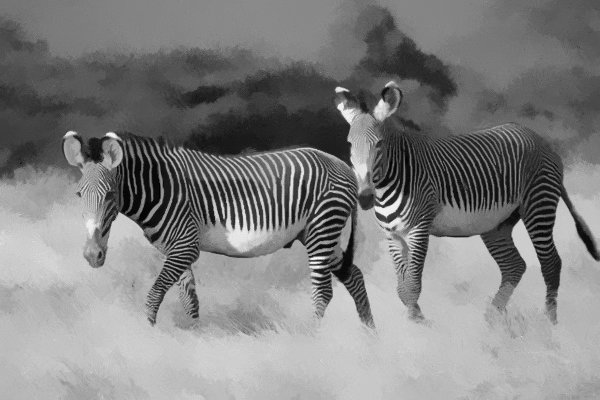}
      \\[-1em]
    \end{minipage}}
  \subfigure[ICTGV]{%
	\begin{minipage}{0.32\linewidth}
		\includegraphics[width=\textwidth]{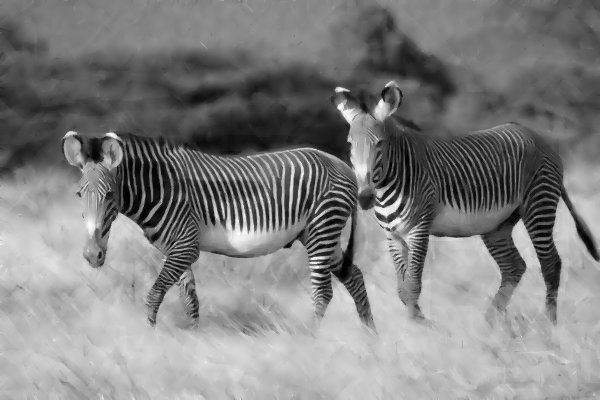}
		\\[-1em]
\end{minipage}}
  
  \subfigure[Fra+LDCT]{%
    \begin{minipage}{0.32\linewidth}
      \includegraphics[width=\textwidth]{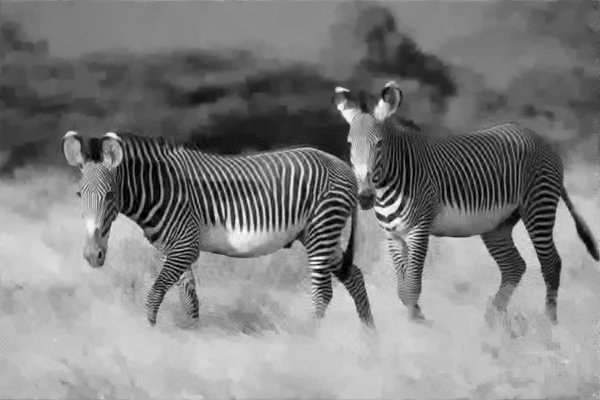}
      \\[-1em]
    \end{minipage}}
  \subfigure[BM3D]{%
    \begin{minipage}{0.32\linewidth}
      \includegraphics[width=\textwidth]{images_denoise_zebra_zebra_bm3d_01.jpg}
      \\[-1em]
    \end{minipage}}
  \subfigure[proposed model]{%
    \begin{minipage}{0.32\linewidth}
      \includegraphics[width=\textwidth]{images_denoise_zebra_zebra_osci_tgv_01_vision.jpg}
      \\[-1em]
    \end{minipage}}
  \caption{Denoising results for ``zebra'' image ($640 \times 400$ pixels)
    with noise level $\sigma=0.1$. Parameter choice for
    $\ICTGV^{\osci}$:
    $\alpha_1=0.1, \beta_1=\alpha_1, \gamma_1=0, \alpha_i=0.8\alpha_1,
    \beta_i=\alpha_i, \gamma_i=0.2\alpha_i, i=2,\ldots,17.$}
  \label{imdenoisezebra2_vision}
\end{figure}

\begin{figure}
  \center{} 
  \subfigure[noisy image ($\sigma=0.1$)]{%
    \begin{minipage}{0.32\linewidth}
      \includegraphics[width=\textwidth]{images_denoise_bird_bird_noisy_01.jpg}
      \\[-1em]
    \end{minipage}}

  \subfigure[TGV]{%
    \begin{minipage}{0.32\linewidth}
      \includegraphics[width=\textwidth]{images_denoise_bird_bird_tgv_noise_01.jpg}
      \\[-1em]
    \end{minipage}}
  \subfigure[NLTV]{%
    \begin{minipage}{0.32\linewidth}
      \includegraphics[width=\textwidth]{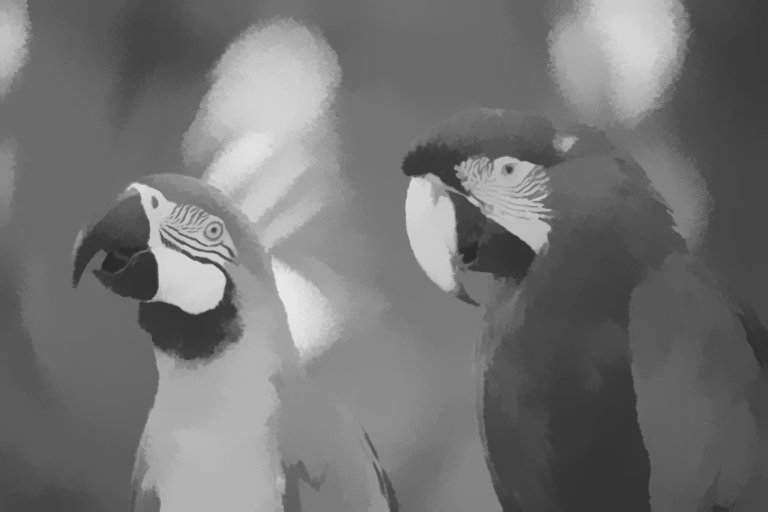}
      \\[-1em]
    \end{minipage}}
  \subfigure[ICTGV]{%
	\begin{minipage}{0.32\linewidth}
		\includegraphics[width=\textwidth]{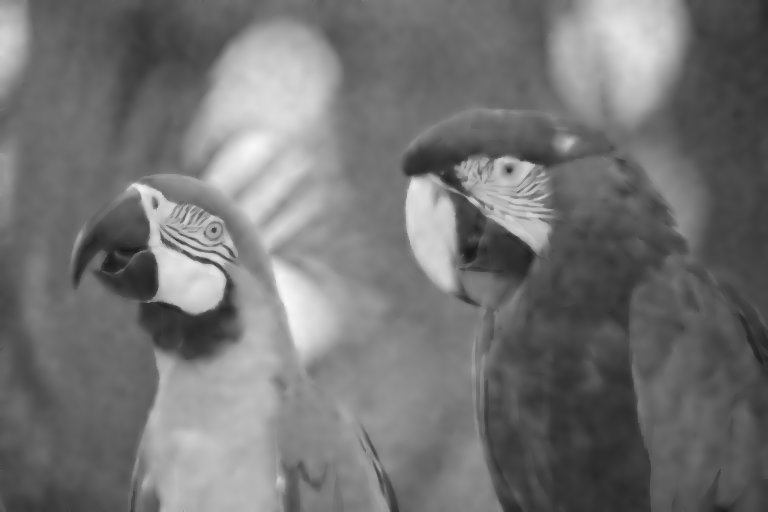}
		\\[-1em]
\end{minipage}}
  
  \subfigure[Fra+LDCT]{%
    \begin{minipage}{0.32\linewidth}
      \includegraphics[width=\textwidth]{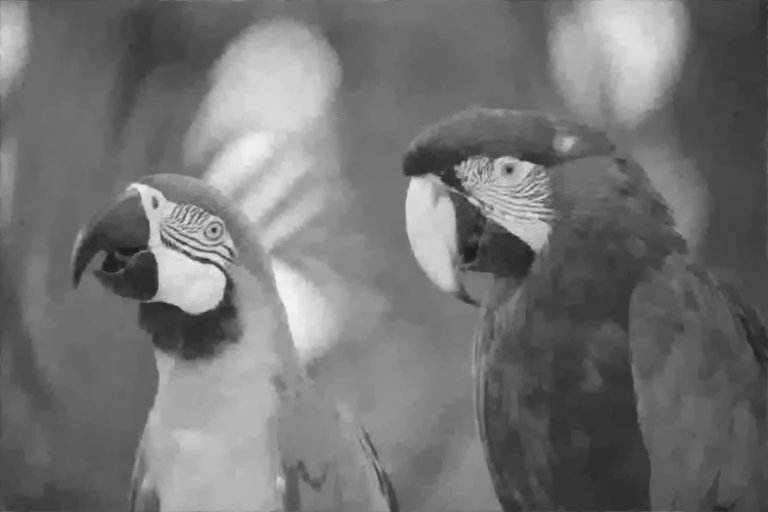}
      \\[-1em]
    \end{minipage}}
  \subfigure[BM3D]{%
    \begin{minipage}{0.32\linewidth}
      \includegraphics[width=\textwidth]{images_denoise_bird_bird_bm3d_01.jpg}
      \\[-1em]
    \end{minipage}}
  \subfigure[proposed model]{%
    \begin{minipage}{0.32\linewidth}
      \includegraphics[width=\textwidth]{images_denoise_bird_bird_osci_tgv_01_vision.jpg}
      \\[-1em]
    \end{minipage}}
  \caption{Denoising results for ``parrots'' image ($768 \times 512$
    pixels) with noise level $\sigma=0.1$. Parameter choice for
    $\ICTGV^{\osci}$:
    $\alpha_1=0.11, \beta_1=\alpha_1, \gamma_1=0,
    \alpha_i=0.75\alpha_1, \beta_i=1.5\alpha_i, \gamma_i=0.3\alpha_i,
    i=2,\ldots,9.$}
  \label{imdenoiseparrot2_vision}
\end{figure}

\clearpage
\subsection{Comparison for varying number of directions}
\begin{figure}
  \center{} 
  \subfigure[ground truth]{%
    \begin{minipage}{0.3\linewidth}
      \includegraphics[width=\textwidth]{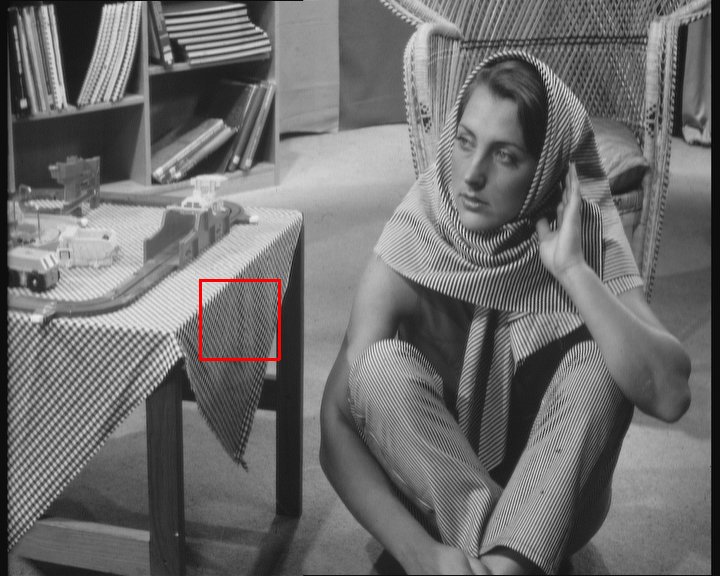}
      \\[\smallskipamount]
      \includegraphics[width=\textwidth]{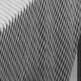}
      \\[-1em]
    \end{minipage}}
  \subfigure[4 directions]{%
    \begin{minipage}{0.3\linewidth}
      \includegraphics[width=\textwidth]{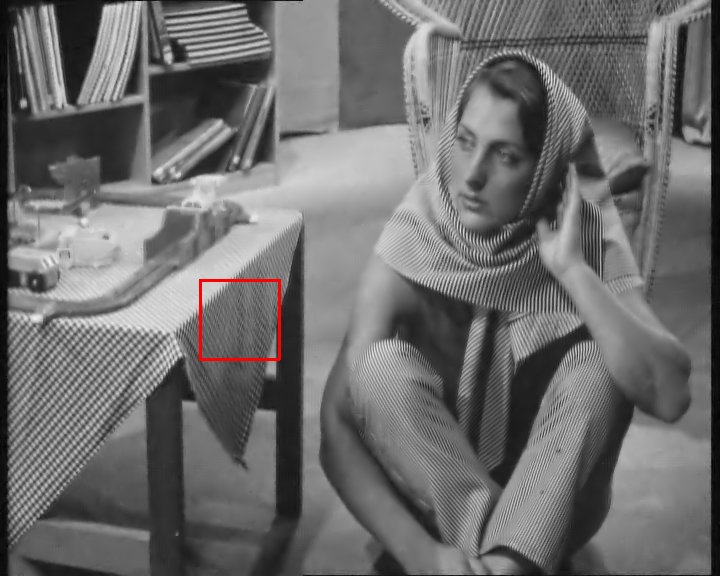}
      \\[\smallskipamount]
      \includegraphics[width=\textwidth]{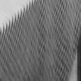}
      \\[-1em]
    \end{minipage}}
  \subfigure[6 directions]{%
    \begin{minipage}{0.3\linewidth}
      \includegraphics[width=\textwidth]{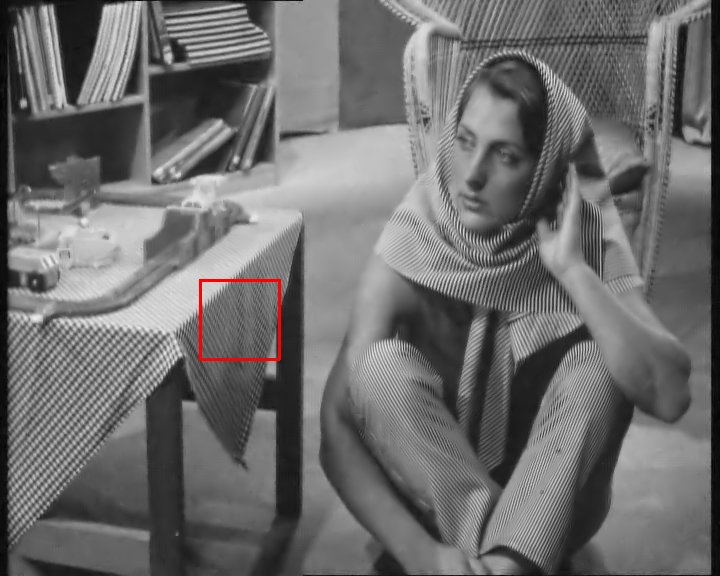}
      \\[\smallskipamount]
      \includegraphics[width=\textwidth]{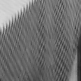}
      \\[-1em]
    \end{minipage}}
  \subfigure[8 directions]{%
    \begin{minipage}{0.3\linewidth}
      \includegraphics[width=\textwidth]{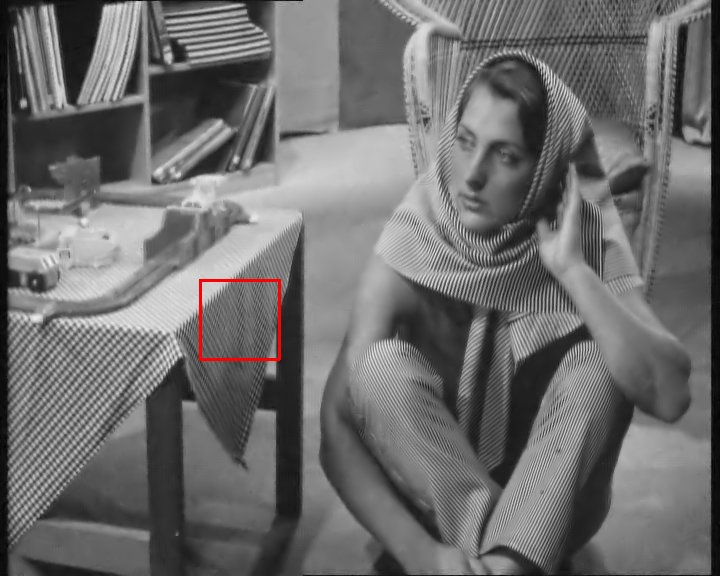}
      \\[\smallskipamount]
      \includegraphics[width=\textwidth]{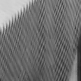}
      \\[-1em]
    \end{minipage}}
  \subfigure[10 directions]{%
    \begin{minipage}{0.3\linewidth}
      \includegraphics[width=\textwidth]{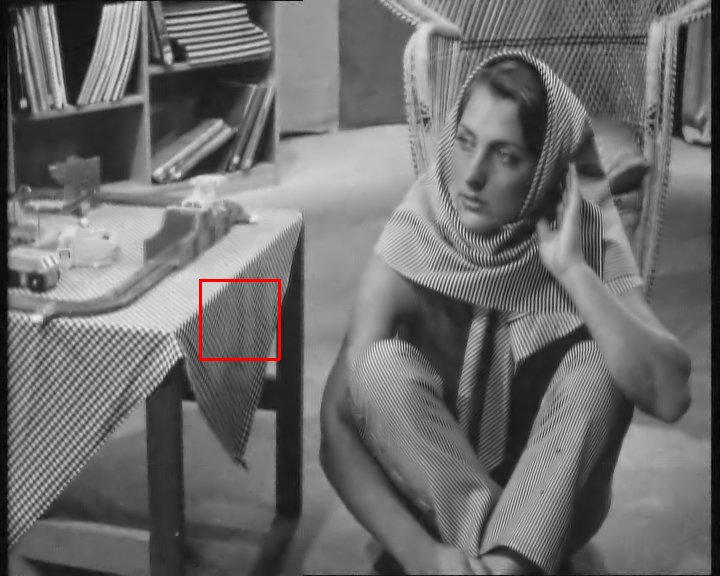}
      \\[\smallskipamount]
      \includegraphics[width=\textwidth]{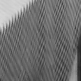}
      \\[-1em]
    \end{minipage}}
  \subfigure[12 directions]{%
    \begin{minipage}{0.3\linewidth}
      \includegraphics[width=\textwidth]{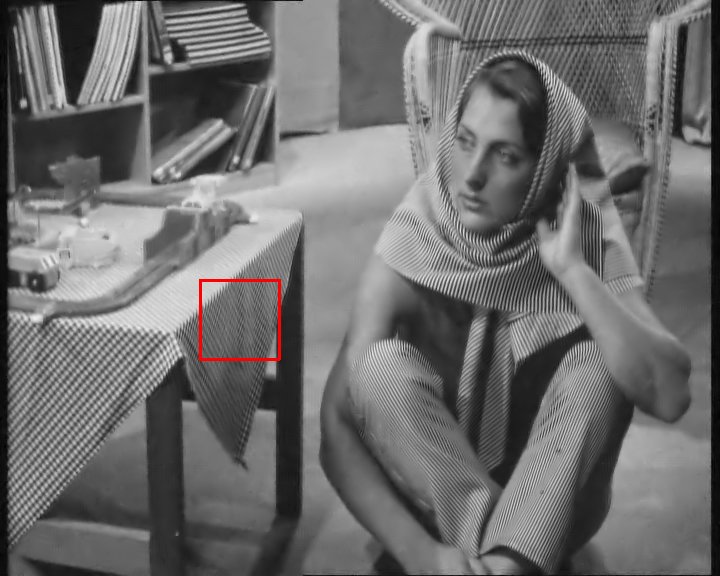}
      \\[\smallskipamount]
      \includegraphics[width=\textwidth]{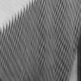}
      \\[-1em]
    \end{minipage}}
  \caption{ Denoising results for ``barbara'' image ($720 \times 576$
    pixels) with noise level $\sigma=0.05$ for varying number of
    directions. Parameter choice for $\ICTGV^{\osci}$:
    $\alpha_1=0.05, \beta_1=\alpha_1, \gamma_1=0,
    \alpha_i=0.9\alpha_1, \beta_i=\alpha_i, \gamma_i=0.1\alpha_i,
    i=2,\ldots,m$.}
  \label{imdenoisebar_vision_comp}
\end{figure}

The choice of directions for the extended $k$-direction model
($m= 2k+1$) is according to
\begin{equation*}
\left\{
\begin{aligned}
\omega_{i,1}=\sin(\tfrac{(i-2)\pi}{k}), \\
\omega_{i,2}=\cos(\tfrac{(i-2)\pi}{k}), \\
\end{aligned}\right.
\quad \text{for} \quad i=2, \ldots, k+1,
\qquad \omega_{i} = 2 \omega_{i-k} \quad \text{for} \quad i=k+2, \ldots, 2k+1,
\end{equation*}
see Figure~\ref{imdenoisebar_vision_comp} and
Table~\ref{table:denoising_directions}.

\begin{table}[H]
  \center{}
  \begin{tabular}{|c|c|c|c|c|c|}
    \hline
    \hline
    number of directions & 4 & 6 & 8 & 10 & 12 \\
    \hline
    PSNR & 
    30.64 & 31.17 & 31.44 & 31.49 &  31.53 \\
    \hline
    SSIM & 
    0.8879 & 0.8950 & 0.8988 & 0.9005 & 0.9005 \\
    \hline
  \end{tabular}
  \caption{\label{table:denoising_directions} Comparison of denoising performance for varying number of oscillation directions (``barbara image'', noise level $\sigma=0.05$) in terms of PSNR and SSIM.}
\end{table}

\section{Image inpainting}

\subsection{Comparison for natural images}

See Figures~\ref{iminpaintbarbara},~\ref{iminpaintzebra}
and~\ref{iminpaintfish}, and Table~\ref{table:inpaintingPSNR}.

\begin{table}[H]\small
  \center{}
  \begin{tabular}{|c|c|c|c|c|c|c|c|c|c|c|}
    \hline
    \hline
    & \multirow{2}{*}{image}
    & \multicolumn{3}{|c|}{TGV} &  \multicolumn{3}{|c|}{Fra+LDCT}&  \multicolumn{3}{|c|}{proposed model} \\
    \cline{3-11}&&$50\%$ & $60\%$ &$70\%$ & $50\%$  &$60\%$ & $70\%$&$50\%$ & $60\%$&$70\%$\\
    \hline 
    \multirow{3}{*}{PSNR}
    & barbara & 27.49 & 26.26 & 25.09 & 32.75 & 30.76 & 28.53 & \textbf{34.03} & \textbf{31.86} & \textbf{29.49}\\
    \cline{2-11} & zebra &26.00 & 24.23 & 22.65 & 28.50 & 26.67 & 25.08 & \textbf{30.09} & \textbf{28.11} & \textbf{26.42}\\
    \cline{2-11} & fish & 22.70 & 20.81 & 19.01 & 24.54 & 23.01 & 21.22 & \textbf{25.10} & \textbf{23.50} & \textbf{21.54} \\
    \hline
    \hline 
    \multirow{3}{*}{SSIM}    
    & barbara & 0.8942 & 0.8562 & 0.8081 & 0.9549 & 0.9320 & 0.8953 & \textbf{0.9591} & \textbf{0.9390} & \textbf{0.9078}\\
    \cline{2-11} & zebra & 0.9016 & 0.8597 & 0.8051 & 0.9213 & 0.8863 & 0.8438 & \textbf{0.9463} & \textbf{0.9194} & \textbf{0.8839}\\
    \cline{2-11} & fish & 0.8676 & 0.8059 & 0.7255 & 0.8923 & 0.8462 & 0.7902 & \textbf{0.9059} & \textbf{0.8596} & \textbf{0.7977} \\
    \hline
  \end{tabular}
  \caption{\label{table:inpaintingPSNR} Comparison of inpainting results with different rates of missing pixels in terms of PSNR and SSIM.}
\end{table}

\begin{figure}
  \center{} 
  \begin{minipage}{1\textwidth}
    \centering
    \subfigure[ground truth]{%
      \begin{minipage}{0.3\linewidth}
        \includegraphics[width=\textwidth]{images_denoise_barbara_barbara2_red.jpg}
        \\[-1em]
      \end{minipage}}
    \subfigure[closeup]{%
      \begin{minipage}{0.276\linewidth}
        \includegraphics[width=\textwidth]{images_denoise_barbara_barbara2_closeup.jpg}
        \\[-1em]
      \end{minipage}}
  \end{minipage}
  
  \subfigure[$50\%$ missing]{%
    \begin{minipage}{0.23\linewidth}
      \includegraphics[width=\textwidth]{images_inp_barbara_miss50_red.jpg}
      \\[\smallskipamount]
      \includegraphics[width=\textwidth]{images_inp_barbara_miss50_closeup.jpg}
      \\[-1em]
    \end{minipage}}
  \subfigure[TGV]{%
    \begin{minipage}{0.23\linewidth}
      \includegraphics[width=\textwidth]{images_inp_barbara_tgv_inpaint_5_red.jpg}
      \\[\smallskipamount]
      \includegraphics[width=\textwidth]{images_inp_barbara_tgv_inpaint_5_closeup.jpg}
      \\[-1em]
    \end{minipage}}
  \subfigure[Fra+LDCT]{%
    \begin{minipage}{0.23\linewidth}
      \includegraphics[width=\textwidth]{images_inp_barbara_framelet_5_red.jpg}
      \\[\smallskipamount]
      \includegraphics[width=\textwidth]{images_inp_barbara_framelet_5_closeup.jpg}
      \\[-1em]
    \end{minipage}}
  \subfigure[proposed model]{%
    \begin{minipage}{0.23\linewidth}
      \includegraphics[width=\textwidth]{images_inp_barbara_barbara_osci_tgv_5_red.jpg}
      \\[\smallskipamount]
      \includegraphics[width=\textwidth]{images_inp_barbara_barbara_osci_tgv_5_closeup.jpg}
      \\[-1em]
    \end{minipage}}

  \vspace*{-0.5em}
  \subfigure[$60\%$ missing]{%
    \begin{minipage}{0.23\linewidth}
      \includegraphics[width=\textwidth]{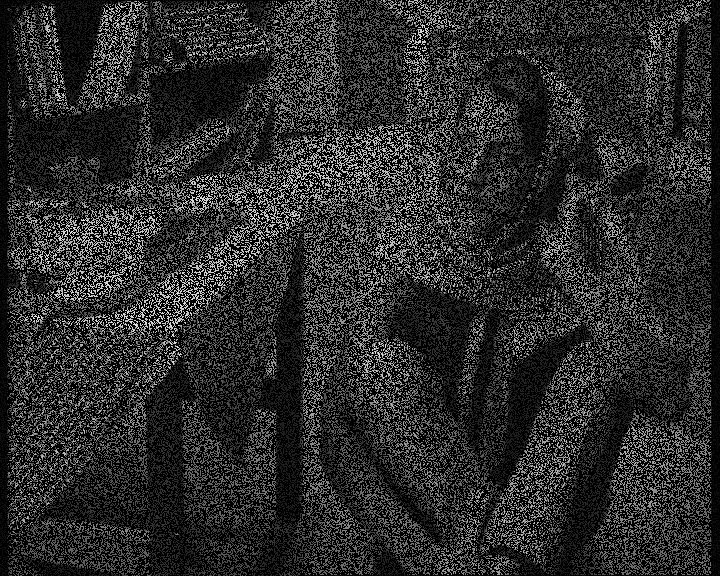}
      \\[-1em]
    \end{minipage}}
  \subfigure[TGV]{%
    \begin{minipage}{0.23\linewidth}
      \includegraphics[width=\textwidth]{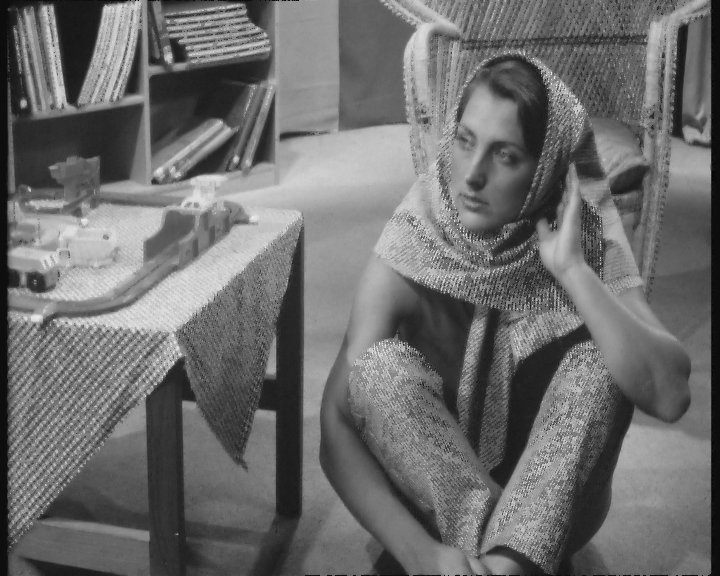}
      \\[-1em]
    \end{minipage}}
  \subfigure[Fra+LDCT]{%
    \begin{minipage}{0.23\linewidth}
      \includegraphics[width=\textwidth]{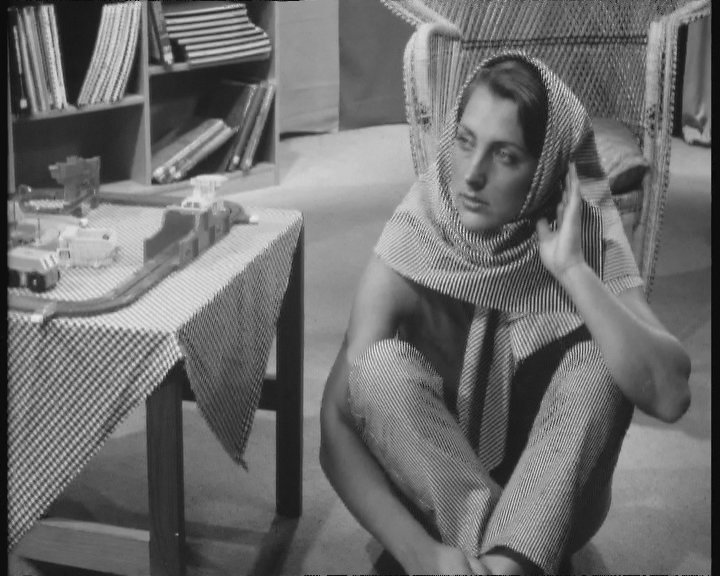}
      \\[-1em]
    \end{minipage}}
  \subfigure[proposed model]{%
    \begin{minipage}{0.23\linewidth}
      \includegraphics[width=\textwidth]{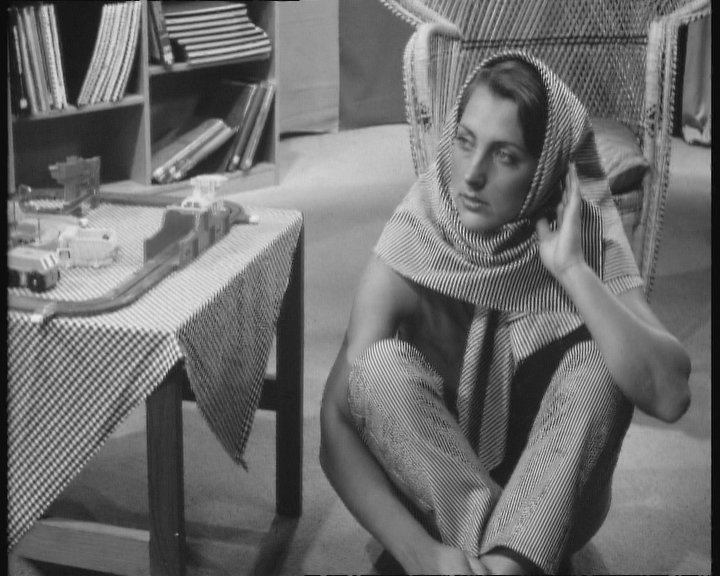}
      \\[-1em]
    \end{minipage}}

  \vspace*{-0.5em}
  \subfigure[$70\%$ missing]{%
    \begin{minipage}{0.23\linewidth}
      \includegraphics[width=\textwidth]{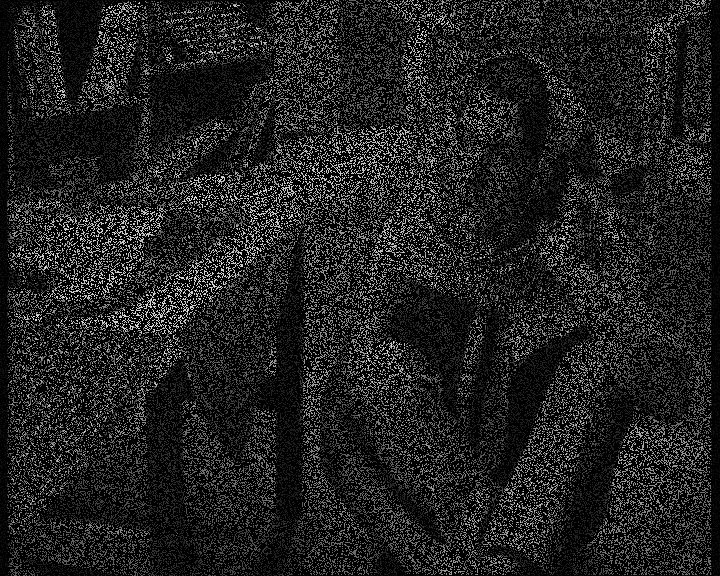}
      \\[-1em]
    \end{minipage}}
  \subfigure[TGV]{%
    \begin{minipage}{0.23\linewidth}
      \includegraphics[width=\textwidth]{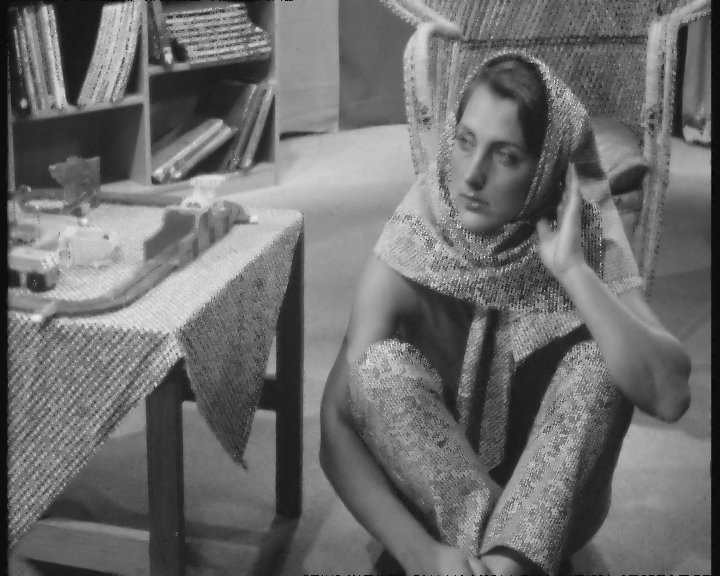}
      \\[-1em]
    \end{minipage}}
  \subfigure[Fra+LDCT]{%
    \begin{minipage}{0.23\linewidth}
      \includegraphics[width=\textwidth]{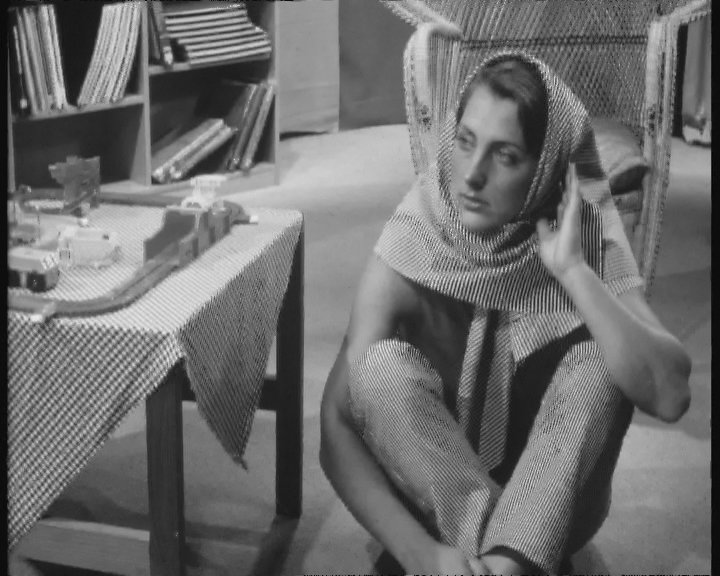}
      \\[-1em]
    \end{minipage}}
  \subfigure[proposed model]{%
    \begin{minipage}{0.23\linewidth}
      \includegraphics[width=\textwidth]{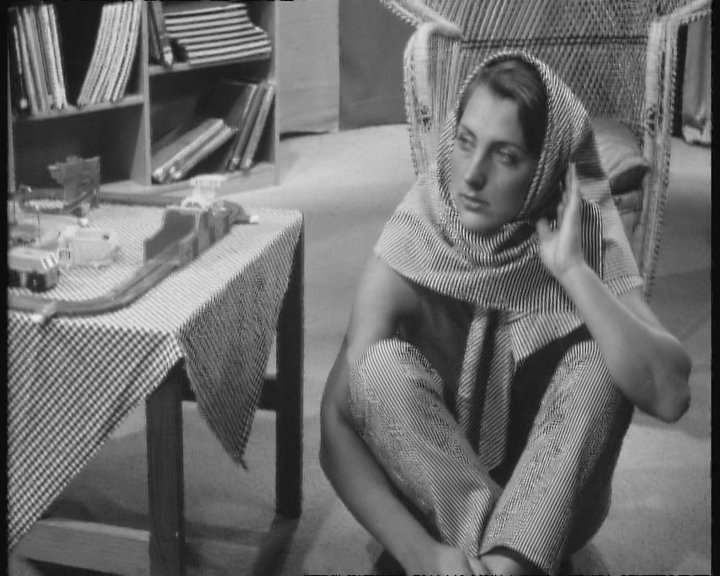}
      \\[-1em]
    \end{minipage}}
  \caption{Inpainting results for ``barbara'' image ($720 \times 576$
    pixels). Parameter choice for $\ICTGV^{\osci}$:
    $\alpha_1=0.03, \beta_1=0.5\alpha_1, \gamma_1=0,
    \alpha_i=0.9\alpha_1, \beta_i=0.4\alpha_i, \gamma_i=0.02\alpha_i,
    i=2,\ldots,17$.}
  \label{iminpaintbarbara}
\end{figure}

\begin{figure}
  \center{} 
  \begin{minipage}{1\textwidth}
    \centering
    \subfigure[ground truth]{%
      \begin{minipage}{0.3\linewidth}
        \includegraphics[width=\textwidth]{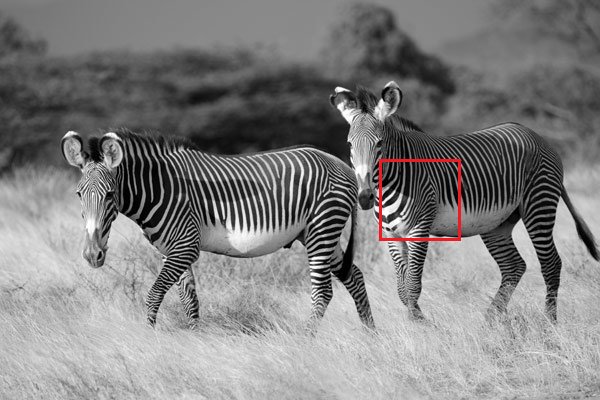}
        \\[-1em]
      \end{minipage}}
    \subfigure[closeup]{%
      \begin{minipage}{0.2\linewidth}
        \includegraphics[width=\textwidth]{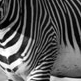}
        \\[-1em]
      \end{minipage}}
  \end{minipage}
  
  \subfigure[$50\%$ missing]{%
    \begin{minipage}{0.23\linewidth}
      \includegraphics[width=\textwidth]{images_inp_zebra_zebra_miss50_red.jpg}
      \\[\smallskipamount]
      \includegraphics[width=\textwidth]{images_inp_zebra_zebra_miss50_closeup.jpg}
      \\[-1em]
    \end{minipage}}
  \subfigure[TGV]{%
    \begin{minipage}{0.23\linewidth}
      \includegraphics[width=\textwidth]{images_inp_zebra_zebra_5_tgv_red.jpg}
      \\[\smallskipamount]
      \includegraphics[width=\textwidth]{images_inp_zebra_zebra_5_tgv_closeup.jpg}
      \\[-1em]
    \end{minipage}}
  \subfigure[Fra+LDCT]{%
    \begin{minipage}{0.23\linewidth}
      \includegraphics[width=\textwidth]{images_inp_zebra_framelet_zebra_5_red.jpg}
      \\[\smallskipamount]
      \includegraphics[width=\textwidth]{images_inp_zebra_framelet_zebra_5_closeup.jpg}
      \\[-1em]
    \end{minipage}}
  \subfigure[proposed model]{%
    \begin{minipage}{0.23\linewidth}
      \includegraphics[width=\textwidth]{images_inp_zebra_zebra_osci_tgv_5_red.jpg}
      \\[\smallskipamount]
      \includegraphics[width=\textwidth]{images_inp_zebra_zebra_osci_tgv_5_closeup.jpg}
      \\[-1em]
    \end{minipage}}

  \vspace*{-0.5em}
  \subfigure[$60\%$ missing]{%
    \begin{minipage}{0.23\linewidth}
      \includegraphics[width=\textwidth]{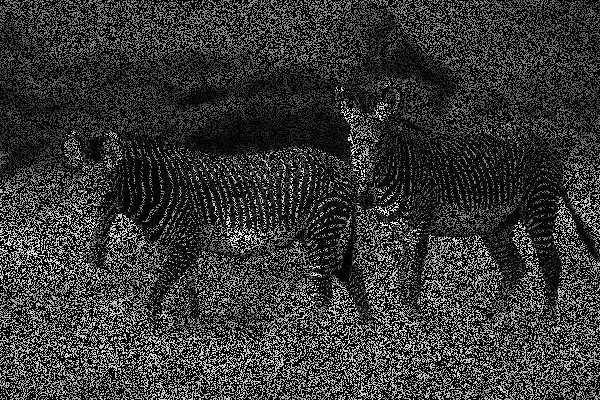}
      \\[-1em]
    \end{minipage}}
  \subfigure[TGV]{%
    \begin{minipage}{0.23\linewidth}
      \includegraphics[width=\textwidth]{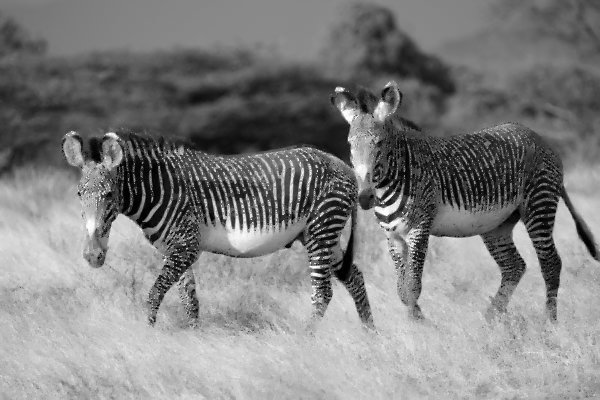}
      \\[-1em]
    \end{minipage}}
  \subfigure[Fra+LDCT]{%
    \begin{minipage}{0.23\linewidth}
      \includegraphics[width=\textwidth]{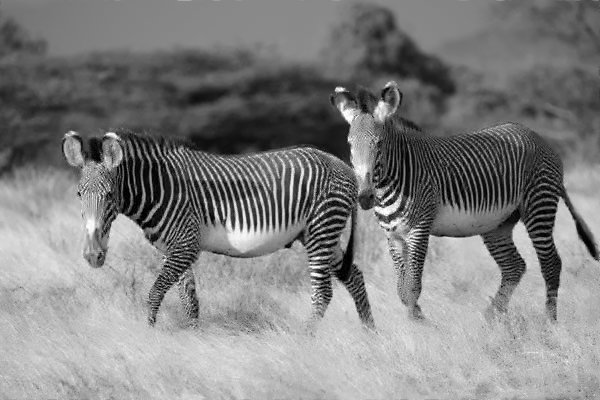}
      \\[-1em]
    \end{minipage}}
  \subfigure[proposed model]{%
    \begin{minipage}{0.23\linewidth}
      \includegraphics[width=\textwidth]{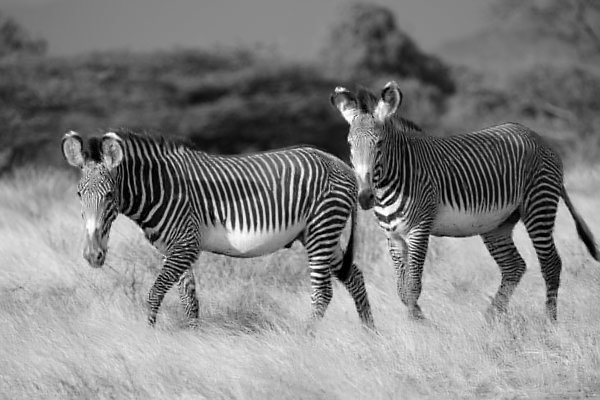}
      \\[-1em]
    \end{minipage}}

  \vspace*{-0.5em}
  \subfigure[$70\%$ missing]{%
    \begin{minipage}{0.23\linewidth}
      \includegraphics[width=\textwidth]{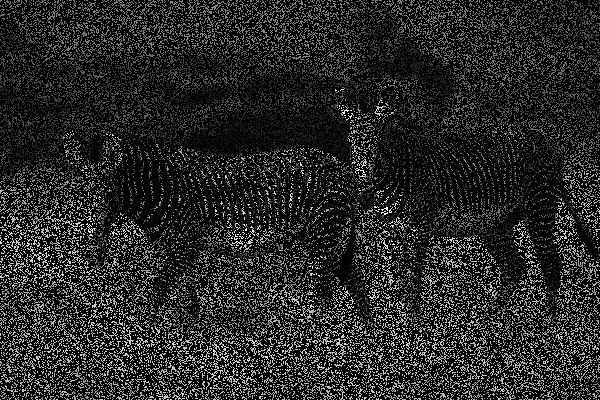}
      \\[-1em]
    \end{minipage}}
  \subfigure[TGV]{%
    \begin{minipage}{0.23\linewidth}
      \includegraphics[width=\textwidth]{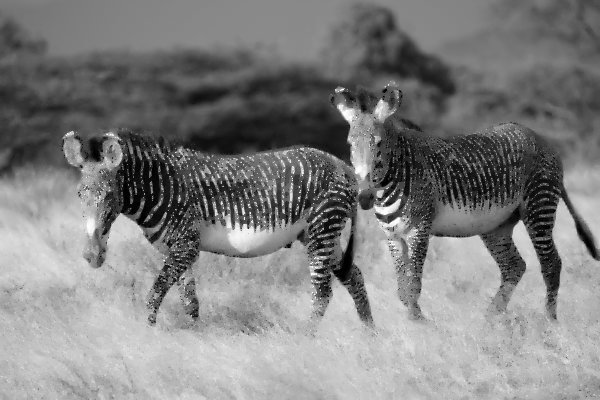}
      \\[-1em]
    \end{minipage}}
  \subfigure[Fra+LDCT]{%
    \begin{minipage}{0.23\linewidth}
      \includegraphics[width=\textwidth]{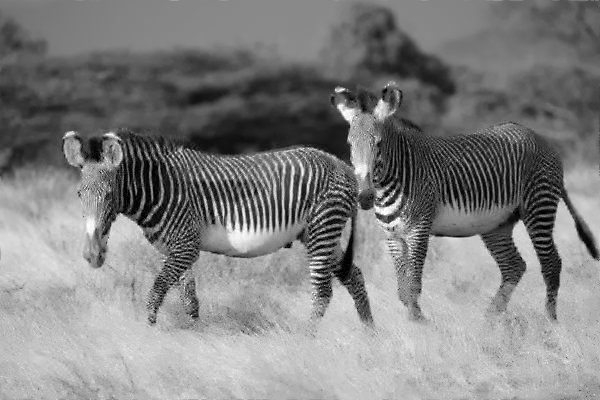}
      \\[-1em]
    \end{minipage}}
  \subfigure[proposed model]{%
    \begin{minipage}{0.23\linewidth}
      \includegraphics[width=\textwidth]{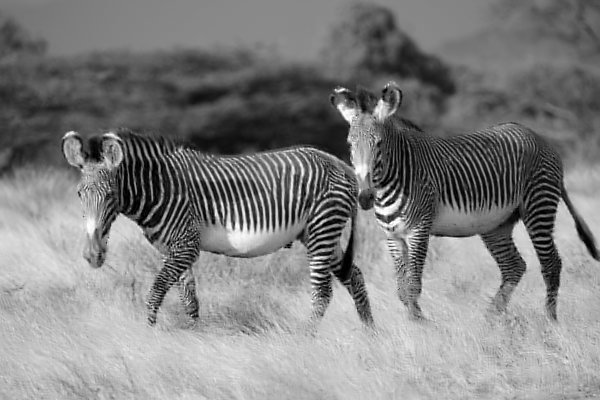}
      \\[-1em]
    \end{minipage}}
  \caption{Inpainting results for ``zebra'' image ($640 \times 400$
    pixels). Parameter choice for $\ICTGV^{\osci}$:
    $\alpha_1=0.05, \beta_1=0.6\alpha_1, \gamma_1=0,
    \alpha_i=\alpha_1, \beta_i=0.3\alpha_i, \gamma_i=0.04\alpha_i,
    i=2,\ldots,17$.}
  \label{iminpaintzebra}
\end{figure}

\begin{figure}
  \center{} 
  \begin{minipage}{1\textwidth}
    \centering
    \subfigure[ground truth]{%
      \begin{minipage}{0.3\linewidth}
        \includegraphics[width=\textwidth]{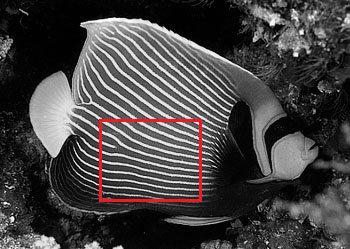}
        \\[-1em]
      \end{minipage}}
    \subfigure[closeup]{%
      \begin{minipage}{0.26667\linewidth}
        \includegraphics[width=\textwidth]{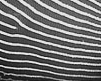}
        \\[-1em]
      \end{minipage}}
  \end{minipage}
  
  \subfigure[$50\%$ missing]{%
    \begin{minipage}{0.23\linewidth}
      \includegraphics[width=\textwidth]{images_inp_fish_fish_miss50_red.jpg}
      \\[\smallskipamount]
      \includegraphics[width=\textwidth]{images_inp_fish_fish_miss50_closeup.jpg}
      \\[-1em]
    \end{minipage}}
  \subfigure[TGV]{%
    \begin{minipage}{0.23\linewidth}
      \includegraphics[width=\textwidth]{images_inp_fish_fish_5_tgv_red.jpg}
      \\[\smallskipamount]
      \includegraphics[width=\textwidth]{images_inp_fish_fish_5_tgv_closeup.jpg}
      \\[-1em]
    \end{minipage}}
  \subfigure[Fra+LDCT]{%
    \begin{minipage}{0.23\linewidth}
      \includegraphics[width=\textwidth]{images_inp_fish_framelet_fish_5_red.jpg}
      \\[\smallskipamount]
      \includegraphics[width=\textwidth]{images_inp_fish_framelet_fish_5_closeup.jpg}
      \\[-1em]
    \end{minipage}}
  \subfigure[proposed model]{%
    \begin{minipage}{0.23\linewidth}
      \includegraphics[width=\textwidth]{images_inp_fish_fish_osci_tgv_5_red.jpg}
      \\[\smallskipamount]
      \includegraphics[width=\textwidth]{images_inp_fish_fish_osci_tgv_5_closeup.jpg}
      \\[-1em]
    \end{minipage}}

  \vspace*{-0.5em}
  \subfigure[$60\%$ missing]{%
    \begin{minipage}{0.23\linewidth}
      \includegraphics[width=\textwidth]{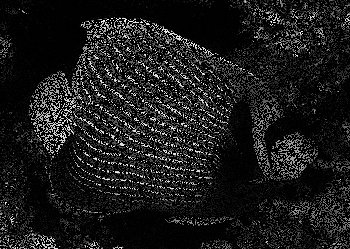}
      \\[-1em]
    \end{minipage}}
  \subfigure[TGV]{%
    \begin{minipage}{0.23\linewidth}
      \includegraphics[width=\textwidth]{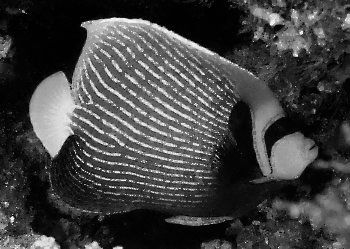}
      \\[-1em]
    \end{minipage}}
  \subfigure[Fra+LDCT]{%
    \begin{minipage}{0.23\linewidth}
      \includegraphics[width=\textwidth]{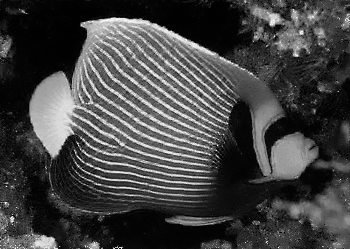}
      \\[-1em]
    \end{minipage}}
  \subfigure[proposed model]{%
    \begin{minipage}{0.23\linewidth}
      \includegraphics[width=\textwidth]{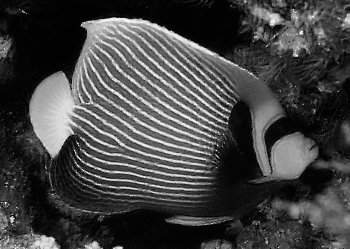}
      \\[-1em]
    \end{minipage}}

  \vspace*{-0.5em}
  \subfigure[$60\%$ missing]{%
    \begin{minipage}{0.23\linewidth}
      \includegraphics[width=\textwidth]{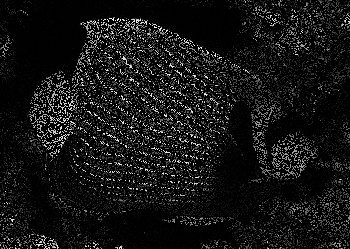}
      \\[-1em]
    \end{minipage}}
  \subfigure[TGV]{%
    \begin{minipage}{0.23\linewidth}
      \includegraphics[width=\textwidth]{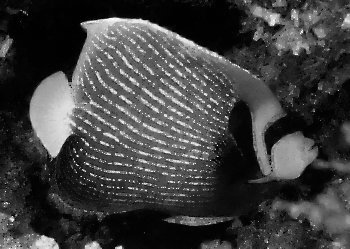}
      \\[-1em]
    \end{minipage}}
  \subfigure[Fra+LDCT]{%
    \begin{minipage}{0.23\linewidth}
      \includegraphics[width=\textwidth]{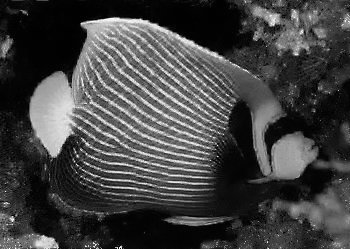}
      \\[-1em]
    \end{minipage}}
  \subfigure[proposed model]{%
    \begin{minipage}{0.23\linewidth}
      \includegraphics[width=\textwidth]{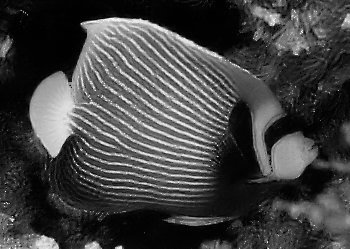}
      \\[-1em]
    \end{minipage}}
  \caption{Inpainting results for ``fish'' image ($350 \times 249$
    pixels). Parameter choice for $\ICTGV^{\osci}$:
    $\alpha_1=0.05, \beta_1=1.2\alpha_1, \gamma_1=0,
    \alpha_i=0.6\alpha_1, \beta_i=2.5\alpha_i, \gamma_i=0.03\alpha_i,
    i=2,\ldots,9$.}
  \label{iminpaintfish}
\end{figure}

\clearpage
\section{Undersampled magnetic resonance imaging}

\subsection{Comparison for radial sampling patterns}

See Figures~\ref{imreconfoot},~\ref{imreconknee}
and~\ref{imreconbrain}, and Tables~\ref{table:reconstructionPSNR}
and~\ref{table:reconstructionSSIM}.

\begin{table}
	\center{}
	\begin{tabular}{|c|c|c|c|c|c|c|c|c|}
		\hline
		\hline
		\multicolumn{2}{|c|}{sampling lines}&40 & 50 &60& 70 &80 & 90&100\\
		\hline
		\multirow{3}{*}{TGV-shearlet}
		&foot & \textbf{30.81} &\textbf{32.59}& 33.75 &34.99 & 36.51&37.34&38.59\\
		\cline{2-9}&knee & 28.01 &28.77& 29.59 &30.10 & 30.84 &31.50 &32.07\\
		\cline{2-9}&brain & 34.05 & 35.71 & 37.18 &38.44 & 39.66 &40.51 &41.43\\
		\hline
		\multirow{3}{*}{proposed model}
		&foot & 29.76 & 32.41 & \textbf{33.89} & \textbf{35.27} & \textbf{37.07} & \textbf{38.25} & \textbf{39.66}\\
		\cline{2-9}&knee & \textbf{28.40} & \textbf{29.16} & \textbf{29.98} & \textbf{30.56} & \textbf{31.23} & \textbf{31.82} & \textbf{32.36}\\
		\cline{2-9}&brain & \textbf{34.64} & \textbf{36.97} & \textbf{38.99} & \textbf{40.33} & \textbf{41.65} & \textbf{42.71} & \textbf{43.67}\\
		\hline
	\end{tabular}
	\caption{\label{table:reconstructionPSNR} Comparison of MRI reconstruction performance for different sampling rates in terms of PSNR.}
\end{table}

\begin{table}
	\center{}
	\begin{tabular}{|c|c|c|c|c|c|c|c|c|}
		\hline
		\hline
		\multicolumn{2}{|c|}{sampling lines}&40 & 50 &60& 70 &80 & 90&100\\
		\hline
		\multirow{3}{*}{TGV-shearlet}
		&foot & 0.8451 & 0.8669 & 0.8823 & 0.8956 & 0.9098 & 0.9175 & 0.9262\\
		\cline{2-9}&knee & 0.7356 & 0.7656 & 0.7950 & 0.8133 & 0.8348 & 0.8508 & 0.8661\\
		\cline{2-9}&brain & 0.8867 & 0.9059 & 0.9209 & 0.9318 & 0.9408 & 0.9466 & 0.9523\\
		\hline
		\multirow{3}{*}{proposed model}
		&foot & \textbf{0.8653} & \textbf{0.9280} & \textbf{0.9465} & \textbf{0.9567} & \textbf{0.9702} & \textbf{0.9774} & \textbf{0.9822}\\
		\cline{2-9}&knee & \textbf{0.7581} & \textbf{0.7861} & \textbf{0.8148} & \textbf{0.8344} & \textbf{0.8570} & \textbf{0.8720} & \textbf{0.8878}\\
		\cline{2-9}&brain & \textbf{0.9055} & \textbf{0.9421} & \textbf{0.9600} & \textbf{0.9681} & \textbf{0.9743} & \textbf{0.9784} & \textbf{0.9818}\\
		\hline
	\end{tabular}
	\caption{\label{table:reconstructionSSIM} Comparison of MRI reconstruction performance for different sampling rates in terms of SSIM.}
\end{table}

\begin{figure}
  \center{} 
  \begin{minipage}{1\textwidth}
    \centering
    \subfigure[ground truth]{%
      \begin{minipage}{0.49\linewidth}
        \includegraphics[width=0.49\textwidth]{images_rec_foot_foot_orig_red.jpg}%
        \hfill
        \includegraphics[width=0.49\textwidth]{images_rec_foot_foot_orig_closeup.jpg}
        \\[-1em]
      \end{minipage}}
  \end{minipage}
  
  \subfigure[TGV+Shearlet]{%
    \begin{minipage}{0.49\linewidth}
      \includegraphics[width=0.49\textwidth]{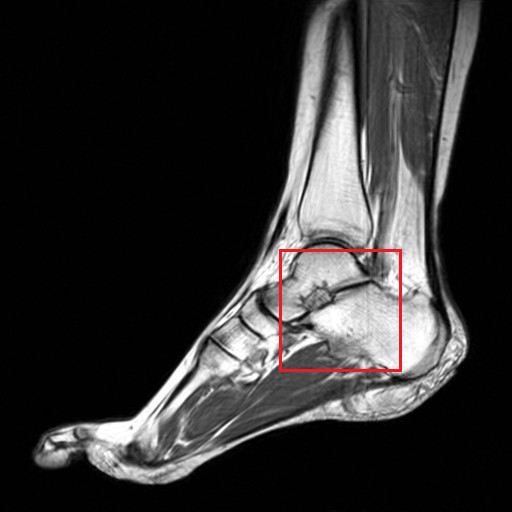}%
      \hfill
      \includegraphics[width=0.49\textwidth]{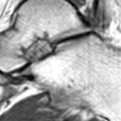}
      \\[-1em]
    \end{minipage}}\ \ 
  \subfigure[proposed model]{%
    \begin{minipage}{0.49\linewidth}
      \includegraphics[width=0.49\textwidth]{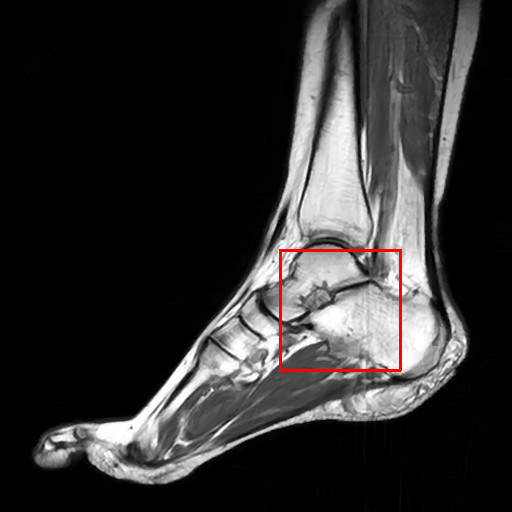}%
      \hfill
      \includegraphics[width=0.49\textwidth]{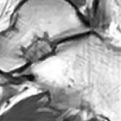}
      \\[-1em]
    \end{minipage}}
  
  \vspace*{-0.5em}
  \subfigure[TGV+Shearlet]{%
    \begin{minipage}{0.49\linewidth}
      \includegraphics[width=0.49\textwidth]{images_rec_foot_foot_tgvshearlet_7_red.jpg}%
      \hfill
      \includegraphics[width=0.49\textwidth]{images_rec_foot_foot_tgvshearlet_7_closeup.jpg}
      \\[-1em]
    \end{minipage}}\ \ 
  \subfigure[proposed model]{%
    \begin{minipage}{0.49\linewidth}
      \includegraphics[width=0.49\textwidth]{images_rec_foot_foot_osci_tgv_7_red.jpg}%
      \hfill
      \includegraphics[width=0.49\textwidth]{images_rec_foot_foot_osci_tgv_7_closeup.jpg}
      \\[-1em]
    \end{minipage}}
  \caption{MRI reconstruction results for different sampling rates for
    ``foot'' image ($512 \times 512$ pixels). The sampling rate in the
    second row and third row is $12.92\%$ and $14.74\%$,
    respectively. Parameter choice for $\ICTGV^{\osci}$:
    $\alpha_1=0.003, \beta_1=3\alpha_1, \gamma_1=0,
    \alpha_i=0.3\alpha_1, \beta_i=5\alpha_i, \gamma_i=0.35\alpha_i,
    i=2,\ldots,9$.}
  \label{imreconfoot}
\end{figure}

\begin{figure}
  \center{} 
  \begin{minipage}{1\textwidth}
    \centering
    \subfigure[ground truth]{%
      \begin{minipage}{0.49\linewidth}
        \includegraphics[width=0.49\textwidth]{images_rec_knee_knee_red.jpg}%
        \hfill
        \includegraphics[width=0.49\textwidth]{images_rec_knee_knee_closeup.jpg}
        \\[-1em]
      \end{minipage}}
  \end{minipage}
  
  \subfigure[TGV+Shearlet]{%
    \begin{minipage}{0.49\linewidth}
      \includegraphics[width=0.49\textwidth]{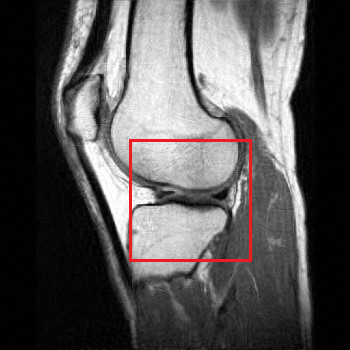}%
      \hfill
      \includegraphics[width=0.49\textwidth]{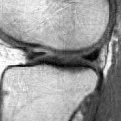}
      \\[-1em]
    \end{minipage}}\ \ 
  \subfigure[proposed model]{%
    \begin{minipage}{0.49\linewidth}
      \includegraphics[width=0.49\textwidth]{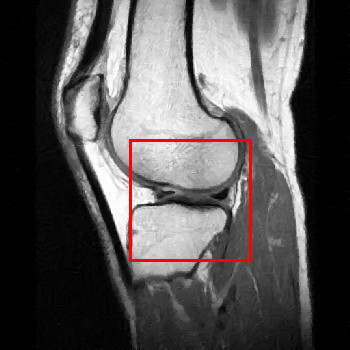}%
      \hfill
      \includegraphics[width=0.49\textwidth]{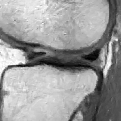}
      \\[-1em]
    \end{minipage}}
  
  \vspace*{-0.5em}
  \subfigure[TGV+Shearlet]{%
    \begin{minipage}{0.49\linewidth}
      \includegraphics[width=0.49\textwidth]{images_rec_knee_knee_tgvshearlet_7_red.jpg}%
      \hfill
      \includegraphics[width=0.49\textwidth]{images_rec_knee_knee_tgvshearlet_7_closeup.jpg}
      \\[-1em]
    \end{minipage}}\ \ 
  \subfigure[proposed model]{%
    \begin{minipage}{0.49\linewidth}
      \includegraphics[width=0.49\textwidth]{images_rec_knee_knee_osci_tgv_7_red.jpg}%
      \hfill
      \includegraphics[width=0.49\textwidth]{images_rec_knee_knee_osci_tgv_7_closeup.jpg}
      \\[-1em]
    \end{minipage}}
  \caption{MRI reconstruction results for different sampling rates for
    ``knee'' image ($350 \times 350$ pixels). The sampling rate in the
    second row and third row is $18.56\%$ and $21.12\%$,
    respectively. Parameter choice for $\ICTGV^{\osci}$:
    $\alpha_1=0.003, \beta_1=4.5\alpha_1, \gamma_1=0,
    \alpha_i=0.3\alpha_1, \beta_i=5\alpha_i, \gamma_i=0.45\alpha_i,
    i=2,\ldots,9$.}
  \label{imreconknee}
\end{figure}

\begin{figure}
  \center{} 
  \begin{minipage}{1\textwidth}
    \centering
    \subfigure[ground truth]{%
      \begin{minipage}{0.49\linewidth}
        \includegraphics[width=0.49\textwidth]{images_rec_brain2_humanbrain_red.jpg}%
        \hfill
        \includegraphics[width=0.49\textwidth]{images_rec_brain2_humanbrain_closeup.jpg}
        \\[-1em]
      \end{minipage}}
  \end{minipage}
  
  \subfigure[TGV+Shearlet]{%
    \begin{minipage}{0.49\linewidth}
      \includegraphics[width=0.49\textwidth]{images_rec_brain2_humanbrain_tgvshearlet_6_red.jpg}%
      \hfill
      \includegraphics[width=0.49\textwidth]{images_rec_brain2_humanbrain_tgvshearlet_6_closeup.jpg}
      \\[-1em]
    \end{minipage}}\ \ 
  \subfigure[proposed model]{%
    \begin{minipage}{0.49\linewidth}
      \includegraphics[width=0.49\textwidth]{images_rec_brain2_humanbrain_osci_tgv_6_red.jpg}%
      \hfill
      \includegraphics[width=0.49\textwidth]{images_rec_brain2_humanbrain_osci_tgv_6_closeup.jpg}
      \\[-1em]
    \end{minipage}}
  
  \vspace*{-0.5em}
  \subfigure[TGV+Shearlet]{%
    \begin{minipage}{0.49\linewidth}
      \includegraphics[width=0.49\textwidth]{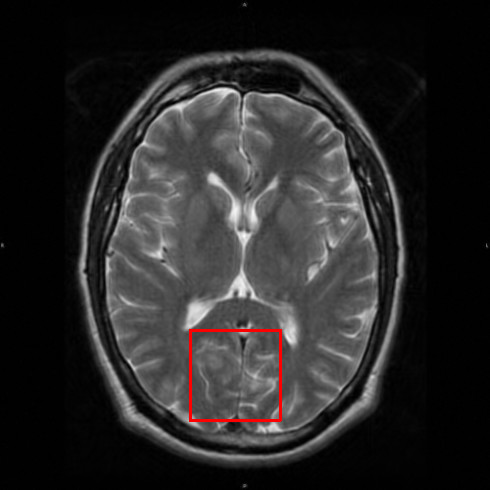}%
      \hfill
      \includegraphics[width=0.49\textwidth]{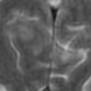}
      \\[-1em]
    \end{minipage}}\ \ 
  \subfigure[proposed model]{%
    \begin{minipage}{0.49\linewidth}
      \includegraphics[width=0.49\textwidth]{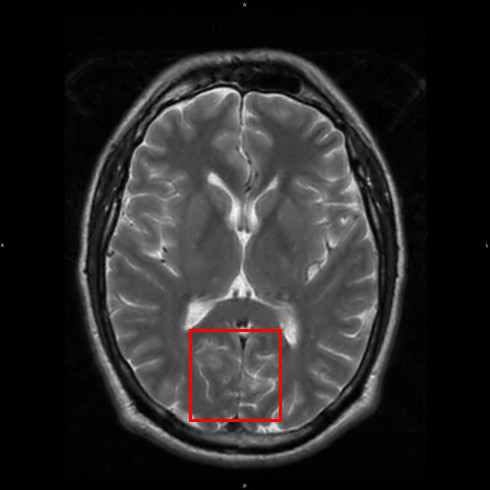}%
      \hfill
      \includegraphics[width=0.49\textwidth]{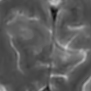}
      \\[-1em]
    \end{minipage}}
  \caption{MRI reconstruction results for different sampling rates for
    ``brain'' image ($490 \times 490$ pixels). The sampling rate in the
    second row and third row is $13.47\%$ and $15.35\%$,
    respectively. Parameter choice for $\ICTGV^{\osci}$:
    $\alpha_1=0.002, \beta_1=2.5\alpha_1, \gamma_1=0,
    \alpha_i=0.6\alpha_1, \beta_i=\alpha_i, \gamma_i=0.15\alpha_i,
    i=2,\ldots,9$.}
  \label{imreconbrain}
\end{figure}

\clearpage
\subsection{Comparison basic versus extended model}

See Figure~\ref{imreconfoot2}.

\begin{figure}
  \center
  \subfigure[ground truth]{%
    \begin{minipage}{0.3\linewidth}
      \includegraphics[width=\textwidth]{images_rec_foot_foot_orig_red.jpg}
      \\[\smallskipamount]
      \includegraphics[width=\textwidth]{images_rec_foot_foot_orig_closeup.jpg}
      \\[-1em]
    \end{minipage}}\quad
  \subfigure[basic model ($m=9$)]{%
    \begin{minipage}{0.3\linewidth}
      \includegraphics[width=\textwidth]{images_rec_foot_foot_osci_tgv_7_red.jpg}
      \\[\smallskipamount]
      \includegraphics[width=\textwidth]{images_rec_foot_foot_osci_tgv_7_closeup.jpg}
      \\[-1em]
    \end{minipage}}\ \ \subfigure[extended model ($m=17$)]{%
    \begin{minipage}{0.3\linewidth}
      \includegraphics[width=\textwidth]{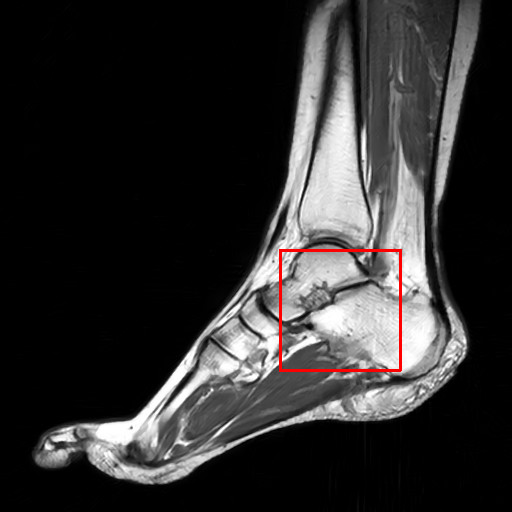}
      \\[\smallskipamount]
      \includegraphics[width=\textwidth]{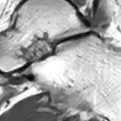}
      \\[-1em]
    \end{minipage}}
  \caption{Comparison of $\ICTGV^{\osci}$-regularized MRI
    reconstruction for ``foot'' image ($350 \times 350$ pixels) with
    different parameters. Parameter choice for (b): $m=9$,
    $\alpha_1=0.003, \beta_1=3\alpha_1, \gamma_1=0,
    \alpha_i=0.3\alpha_1, \beta_i=5\alpha_i, \gamma_i=0.35\alpha_i,
    i=2,\ldots,9$; Parameter choice for (c): $m=17$,
    $\alpha_1=0.003, \beta_1=3\alpha_1, \gamma_1=0,
    \alpha_i=0.3\alpha_1, \beta_i=5\alpha_i, \gamma_i=0.35\alpha_i$ for
    $i=2,\ldots,9$ and
    $\alpha_i=0.15\alpha_1, \beta_i=2.5\alpha_i, \gamma_i=0.35\alpha_i$
    for $i=10,\ldots,17$.}
	\label{imreconfoot2}
\end{figure}

\clearpage
\section{%
  Information on parameter choice and implementations}

\subsection{The ICTGV$^{\text{osci}}$ methods}

For all experiments, the parameter choice for $\ICTGV^{\osci}$ was
done by manually defining reasonable ranges of parameters along with
step-lengths and performing an exhaustive search over the combinations
of each parameter in order to determine the parameters that yield the
best result in terms of PSNR. Table~\ref{table:para_denoise} shows the
parameter ranges and step lengths that were used for this process
depending on the application. Here, we tuned $(\alpha_1,\beta_1)$, the
parameters for the TGV-part in $\ICTGV^{\osci}$ as well as
$(\alpha_i,\beta_i,\gamma_i)$ for all $i=2,\ldots,m$, the parameters
for the texture parts.
The parameters for the visually-optimal results were obtained in the
same manner, i.e., by visual inspection of the outcome of the
algorithms for a manually-determined range of parameters.

\begin{table}
  \center{}
  \scalebox{0.91}{%
  \begin{tabular}{|c|c|c|c|c|c|c|}
    \hline
    \hline
    \multicolumn{2}{|c|}{application} 
    &$\alpha_1$ &$\beta_1$&$\alpha_i$&$\beta_i$& $\gamma_i$\\
    \hline
    \multirow{2}{*}{denoising}
    & $\sigma=0.05$ & $[0.04,0.06]$ & $[0.5\alpha_1,1.2\alpha_1]$&$[0.5\alpha_1,1.2\alpha_1]$ &$[0.4\alpha_i,1.0\alpha_i]$ & $[0.08\alpha_i,0.15\alpha_i]$\\
    \cline{2-7}& $\sigma=0.1$ & $[0.08,0.12]$ & $[0.5\alpha_1,1.3\alpha_1]$&$[0.5\alpha_1,1.3\alpha_1]$ &$[0.4\alpha_i,1.0\alpha_i]$ & $[0.09\alpha_i,0.2\alpha_i]$\\
    \hline
    \hline
    \multicolumn{2}{|c|}{inpainting}
    & $[0.02,0.06]$ & $[0.3\alpha_1,1.5\alpha_1]$&$[0.5\alpha_1,1.2\alpha_1]$ &\tabincell{c}{$[0.2\alpha_i,1.0\alpha_i]$\\
    ($[1.0\alpha_i,4.0\alpha_i]$\\ for ``fish'')} & $[0.01\alpha_i,0.05\alpha_i]$\\
    \hline
    \hline
    \multicolumn{2}{|c|}{MRI reconstruction}
    & $[0.001,0.004]$ & $[1.5\alpha_1,5.0\alpha_1]$&$[0.1\alpha_1,1.0\alpha_1]$ &$[0.5\alpha_i,6.0\alpha_i]$ & $[0.1\alpha_i,0.5\alpha_i]$\\
    \hline
  \end{tabular}}
\caption{\label{table:para_denoise} Parameters ranges used for manual tuning 
  of $\ICTGV^{\osci}$ for the PSNR-optimal case.
  The ranges were discretized with a step-length of $0.01$ for 
  $\alpha_1$ (denoising of $\sigma=0.05$: $0.005$ and MRI reconstruction: $0.001$), $0.1$ for the factors 
  in $\beta_1, \alpha_i, \beta_i$ (inpainting of ``fish'': $0.5$ for $\beta_i$ and MRI reconstruction: 
  $0.5$ for $\beta_1$ and $\beta_i$) and $0.01$ for the factors in $\gamma_i$ (MRI reconstruction: 
  $0.05$).}
\end{table}

\subsection{Other methods}

\begin{table}
  \center{}
  \begin{tabular}{|c|c|c|}
    \hline
    \hline
    & method & code source \\
    \hline
    \cline{1-3}1& $\TV$-$G$-norm & \tabincell{c}{\url{https://www.mathworks.com/matlabcentral/fileexchange/} \\ \url{35462-bregman-cookbook}}\\
    \cline{1-3}2& $\TV$-$H^{-1}$ & \tabincell{c}{\url{http://www.numerical-tours.com/matlab/
                                   } \\ \url{inverse_6_image_separation/}} \\
    \cline{1-3}3& NLTV &
                         \tabincell{c}{\url{https://htmlpreview.github.io/?https://github.com/} \\ \url{xbresson/old_codes/blob/master/codes.html}}\\
    \cline{1-3}4& BM3D & \url{https://www.cs.tut.fi/~foi/GCF-BM3D/}\\
    \cline{1-3}5& Fra+LDCT & \url{https://www.math.ust.hk/~jfcai/}\\
    \cline{1-3}6& TGV+Shearlet & \url{http://www.montana.edu/jqin/research.html}\\
    \cline{1-3}7&ICTGV & code provided by the author Martin Holler\\
    \cline{1-3}8&TGV & own implementation\\
    \hline
  \end{tabular}
  \caption{\label{table:code} %
    Sources of the MATLAB implementations for the methods used for comparison.
    (Online resources accessed on 02/07/2018.)
  }
\end{table}

The comparisons with other methods were performed with MATLAB
implementations provided by the authors or from publicly available
toolboxes. We thank the authors for sharing their code. The sources of
the implementations are summarized in Table~\ref{table:code}. The
parameter tuning for these methods was performed in analogy to
$\ICTGV^{\osci}$, i.e., exhaustive search over a manually-determined
set of parameters and visual inspection. In particular, we chose the
parameter ranges such that the results for each method were of
comparable quality as reported in the provided codes. In the
following, we give details on the exact parameters that were chosen in
the computations.

\textbf{Remarks on the implementation and parameters for each method:}
\par
1. One can obtain the ``Bregman Cookbook'' toolbox provided by Jerome
Gilles from the MATLAB Central website, where the $\TV$-$G$-norm
method is located in the folder ``Examples''. With this
implementation, however, we did not obtain the expected
cartoon-texture decomposition result
even though we have tuned many parameters. For this reason, we decided
to write the decomposition code by ourselves using a first-order
primal-dual method.  This implementation is available to interested
readers on demand. We set $\lambda=0.6$ and $\mu=0.5$ for the model in [3].

2. The website ``Numerical tours in MATLAB'' provided by Gabriel
Peyr\'e gives a review of many cartoon+texture variational image
decomposition models and a toolbox including the $\TV$-$H^{-1}$
decomposition approach that we used for comparison. We set the
parameter $\lambda=20$ for the decomposition case.

3. The non-local TV code for denoising was obtained from Xavier
Bresson's GitHub page (Section ``An Algorithm for Nonlocal TV
Minimization''). For the case of noise level
$\sigma=0.05$, we chose $\mu=30$ for visually-optimal and $\mu=50$
(``zebra'': $\mu=55$) for PSNR-optimal results, respectively. For
$\sigma=0.1$, we chose $\mu=15$ (``parrots'': $\mu=10$) for
visually-optimal and $\mu=20$ (``parrots'': $\mu=15$) for PSNR-optimal
results, respectively.

4. This website provides the BM3D code in terms of precompiled MEX
files and we use the code directly without parameter tuning.

5. One can find the source code for framelet/LDCT on Jianfeng Cai's
homepage (Journal Paper No.~34 ``Split Bregman Methods and Frame Based
Image Restoration''). For the decomposition experiments for the
``synthetic'' image (see Figure~4 in the main publication), we set
$\mu_1=0.1$, $\mu_2=0.04$. The optimal parameters for denoising were
chosen according to Table~\ref{table:fra_ldct}.

\begin{table}
	\center{}
	\begin{tabular}{|c|c|c|c|}
		\hline
		\hline
		noise level & image & visually-optimal & PSNR-optimal\\
		\hline
		\multirow{3}{*}{$\sigma=0.05$}
		& barbara & $\mu_1=0.1$, $\mu_2=0.06$ & $\mu_1=0.08$, $\mu_2=0.04$\\
		\cline{2-4}& zebra & $\mu_1=0.1$, $\mu_2=0.06$ & $\mu_1=0.07$, $\mu_2=0.04$ \\
		\cline{2-4}& parrots & $\mu_1=0.13$, $\mu_2=0.06$ & $\mu_1=0.09$, $\mu_2=0.05$ \\
		\hline
		\multirow{3}{*}{$\sigma=0.1$}
		& barbara  & $\mu_1=0.25$, $\mu_2=0.11$ & $\mu_1=0.18$, $\mu_2=0.1$\\
		\cline{2-4}& zebra & $\mu_1=0.25$, $\mu_2=0.11$ & $\mu_1=0.16$, $\mu_2=0.1$\\
		\cline{2-4}& parrots & $\mu_1=0.25$, $\mu_2=0.11$ & $\mu_1=0.2$, $\mu_2=0.11$\\
		\hline
	\end{tabular}
	\caption{\label{table:fra_ldct} The optimal choice of parameters for the Fra+LDCT method for denoising.}
\end{table}

The parameters for inpainting case are given as: $\mu_1=0.1$, $\mu_2=0.05$~(``barbara'': $\mu_2=0.04$).

6. One can obtain the TGV+Shearlet code for MRI reconstruction on Jing
Qin's homepage. We also tuned the parameters, however, the performance
in terms of PSNR was very close to that with the original parameters
provided by the authors, so we used the code directly.

7. Martin Holler, who is author of the paper ``On Infimal Convolution
of TV Type Functionals and Applications to Video and Image
Reconstruction'', gave us the code and we extended it to the 8
directions used in the $\ICTGV^{\osci}$ models. We also tuned the
parameters manually to obtain an optimal set of parameters (summarized
in Table~\ref{table:ictgv}).
\begin{table}
  \center{}
  \begin{tabular}{|c|c|c|c|}
    \hline
    \hline
    noise level & image & visually-optimal & PSNR-optimal\\
    \hline
    \multirow{3}{*}{$\sigma=0.05$}
		& barbara & $\lambda=0.09$, $\alpha=1.6$ & $\lambda=0.12$, $\alpha=1.5$\\
    \cline{2-4}& zebra & $\lambda=0.09$, $\alpha=1.6$ & $\lambda=0.12$, $\alpha=1.5$ \\
    \cline{2-4}& parrots & $\lambda=0.07$, $\alpha=1.6$ & $\lambda=0.09$, $\alpha=1.5$ \\
    \hline
    \multirow{3}{*}{$\sigma=0.1$}
		& barbara  & $\lambda=0.04$, $\alpha=1.5$ & $\lambda=0.05$, $\alpha=1.4$\\
    \cline{2-4}& zebra & $\lambda=0.04$, $\alpha=1.5$ & $\lambda=0.05$, $\alpha=1.4$\\
    \cline{2-4}& parrots & $\lambda=0.03$, $\alpha=1.6$ & $\lambda=0.04$, $\alpha=1.5$\\
    \hline
  \end{tabular}
  \caption{\label{table:ictgv} The optimal choice of parameters for ICTGV for denoising.}
\end{table}

\enlargethispage{1em}
8. We set $\alpha=0.06$ and $\beta=2\alpha$ for denoising and $\alpha=0.01$ and $\beta=2\alpha$ for inpainting of the TGV model.
 
\end{document}